\newtheorem{theorem}{Theorem}[section]
\newtheorem{thm}[theorem]{Theorem}
\newtheorem{cor}[theorem]{Corollary}
\newtheorem{lem}[theorem]{Lemma}
\newtheorem{prop}[theorem]{Proposition}
\newtheorem{claim}[theorem]{Claim}
\theoremstyle{definition}
\newtheorem{defn}[theorem]{Definition}
\newtheorem{ques}[theorem]{Question}
\newtheorem{sit}[theorem]{Situation}
\newtheorem{rem}[theorem]{Remark}
\newtheorem{constr}[theorem]{Construction}
\newtheorem{conv}[theorem]{Convention}
\newtheorem{notat}[theorem]{Notation}
\newtheorem{ex}[theorem]{Example}
\newtheorem{conj}[theorem]{Conjecture}
\theoremstyle{remark}
\newcommand{\mbb}{\mathbb}
\newcommand{\FF}{\mbb{F}}
\newcommand{\QQ}{\mbb{Q}}
\newcommand{\ZZ}{\mbb{Z}}
\newcommand{\CC}{\mbb{C}}
\newcommand{\RR}{\mbb{R}}
\newcommand{\AAA}{\mbb{A}}
\newcommand{\PP}{\mbb{P}}
\newcommand{\mc}{\mathcal}
\newcommand{\mcX}{\mc{X}}
\newcommand{\OO}{\mc{O}}
\newcommand{\SP}{\text{Spec }}
\newsavebox{\sembox}
\newlength{\semwidth}
\newlength{\boxwidth}
\newsavebox{\semrbox}
\newlength{\semrwidth}
\newlength{\boxrwidth}
\newcommand{\Semr}[1]{%
\sbox{\semrbox}{\ensuremath{#1}}%
\settowidth{\semrwidth}{\usebox{\semrbox}}%
\sbox{\semrbox}{\ensuremath{\left(\usebox{\semrbox}\right)}}%
\settowidth{\boxrwidth}{\usebox{\semrbox}}%
\addtolength{\boxrwidth}{-\semrwidth}%
\left(\hspace{-0.3\boxrwidth}%
\usebox{\semrbox}%
\hspace{-0.3\boxrwidth}\right)%
}
\newcommand{\cyl}{\text{cyl}}
\newcommand{\eq}{\text{eq}}
\newcommand{\ch}{\text{Chow}}
\newcommand{\st}{\text{st}}
\newcommand{\et}{\text{\'et}}
\newcommand{\ci}{\text{ci}}
\newcommand{\ecomb}{\text{EComb}}
\newcommand{\free}{\text{free}}
\newcommand{\sm}{\text{sm}}
\title[Local-global principle and integral Tate conjecture ]{Local-global principle and integral Tate conjecture for certain varieties}
\author[Tian]{Zhiyu Tian}
\address{
Beijing International Center for Mathematical Research\\
Peking University\\
100871, Beijing, China}
\email{zhiyutian@bicmr.pku.edu.cn}
\date{\today}
\begin{document}


\begin{abstract}
We give a geometric criterion to check the validity of the integral Tate conjecture for one-cycles on a smooth projective variety that is separably rationally connected in codimension one, and to check that the Brauer-Manin obstruction is the only obstruction to the local-global principle for zero-cycles on a separably rationally connected variety defined over a global function field.
 
 We prove that the Brauer-Manin obstruction is the only obstruction to the local-global principle for zero-cycles on all geometrically rational surfaces defined over a global function field, and to the Hasse principle for rational points on del Pezzo surfaces of degree four defined over a global function field of odd characteristic.  

 Along the way, we also prove some results about the space of one-cycles on a smooth projective variety that is separably rationally connected in codimension one, which leads to the equality of the coniveau filtration and the strong coniveau filtration on degree $3$ homology of such varieties.
\end{abstract}


\maketitle

\tableofcontents

\section{Introduction}
In this paper, we adopt the following conventions. 
A variety over a field $k$ is a finite type, integral, separated $k$-scheme. 
Given a variety $V$ defined over a field $k$, 
we use $\overline{V}$ to denote the base change of $V$ to a pre-fixed algebraic closure of $k$.
An algebraic set over a field $k$ is a finite type, geometrically reduced, separated $k$-scheme. 
By a point in a $k$-scheme, we mean a closed point, unless otherwise specified.
A pointed $k$-scheme is a $k$-scheme $X$ together with a finite number of $k$-rational points $(X, x_1, \ldots, x_n)$. A morphism between pointed schemes $f:(X, x_1, \ldots, x_n) \to (Y, y_1, \ldots, y_n)$ is a morphism of schemes $f: X \to Y$ such that $f(x_i)=y_i, i=1, \ldots, n$.

For any abelian group $A$, any integer $m$, and any prime number $\ell$, we use $A[m]$ to denote the group of $m$-torsions in $A$,  $A/m$ to denote the quotient $A/mA$, and $A \hat{\otimes} \ZZ_\ell$ to denote the inverse limit $\lim \limits_{\xleftarrow[n]{}} A/\ell^n$.
 
\subsection{The Local-global principle for zero-cycles}

Given a smooth projective variety defined over a global field, a natural and important problem is to find criteria for the existence of rational points and a description of the set of all rational points. 
The Hasse principle and the weak approximation problem, or the local-global principle, gives a characterization of this set in terms of the adelic points.
 There are various obstructions for the local-global principle to hold, notably the so called Brauer-Manin obstruction. A conjecture due to Colliot-Th\'el\`ene states that for rationally connected varieties defined over a global field, this is the only obstruction.
The study of zero-cycles, as natural generalizations of rational points, has also drawn a lot of attentions in recent years.
Motivated by the case of rational points,
Colliot-Th\'el\`ene has formulated the following conjectures on the local-global principle for zero-cycles. 
\begin{conj}\cite[Conjecture 2.2]{CTLocalGlobalChow}\label{conj:CT1}
Let $X$ be a smooth projective geometrically integral variety defined over the function field $\FF_q(B)$ of a smooth curve $B$ defined over a finite field $\FF_q$. Fix a prime number $\ell$ different from the characteristic.
For every place $\nu$ of $\FF_q(B)$, let $z_\nu \in \text{CH}_0(X_\nu)$. 
Suppose that for all element $A\in \text{Br}(X)\{\ell\}$, we have $\sum_{\nu} \text{Inv}(A(z_\nu))=0$. Then for all $n>0$, there is a cycle $z_n \in \text{CH}_0(X)$ such that for all places $\nu$ we have that 
\[
cl(z_n) =cl(z_{\nu}) \in H^{2d}_{\text{\'et}}(X_{\nu}, \mu_{\ell^n}^{\otimes d}).
\]
\end{conj}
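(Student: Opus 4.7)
The plan is to translate the conjecture into a statement about one-cycles on a regular proper integral model $\pi: \mcX \to B$ with generic fiber $X$, and then to invoke an integral Tate-type criterion for one-cycles on a separably rationally connected fibration over a curve.

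First I would fix such a model $\pi: \mcX \to B$. For each place $\nu$, the local zero-cycle $z_\nu \in CH_0(X_\nu)$ extends, via Zariski closure of its support in the henselization $\mcX_{(\nu)}$, to a horizontal one-cycle on $\mcX_{(\nu)}$, and the collection $(z_\nu)_\nu$ packages as an adelic one-cycle datum on $\mcX$. Producing $z_n \in CH_0(X)$ with $cl(z_n) = cl(z_\nu)$ in each $H^{2d}_\et(X_\nu, \mu_{\ell^n}^{\otimes d})$ is equivalent, after pushing forward, to producing a global horizontal one-cycle on $\mcX$ whose local restrictions agree with the extensions of the $z_\nu$ up to $\ell^n$.

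Next I would use Poincar\'e–Lefschetz duality on $\mcX$, together with the hypothesis $\sum_\nu \text{Inv}(A(z_\nu)) = 0$ for every $A \in \text{Br}(X)\{\ell\}$, to assemble the adelic datum into a single global \'etale cohomology class $\xi \in H^3_\et(\mcX, \ZZ_\ell(1))$. The Brauer-Manin orthogonality is exactly the vanishing of the obstruction in the relevant local-to-global spectral sequence that lets the local classes glue. Then I would invoke the paper's geometric criterion for the integral Tate conjecture on one-cycles on SRC fibrations over $B$ to realize $\xi$, modulo $\ell^n$, as the cycle class of a genuine one-cycle $Z$ on $\mcX$; taking $z_n$ to be the restriction of $Z$ to the generic fiber (adjusting by a fixed reference cycle if the degrees do not match) should give the required global zero-cycle.

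The main obstacle will be the integral Tate step. Integral Tate for one-cycles is genuinely subtle and is known to fail without the fibration structure, so the argument has to exploit separable rational connectedness essentially, presumably via constructions in the spirit of Graber-Harris-Starr and de Jong-Starr producing irreducible multisections of $\pi$ whose cycle classes span the relevant piece of cohomology. Secondary technical difficulties include the case $\ell = \text{char}(\FF_q)$, where $\mu_{\ell^n}^{\otimes d}$ must be replaced by logarithmic Hodge-Witt sheaves with their own duality, and controlling the ramification of $\xi$ at places of bad reduction of $\pi$, which will likely require a careful choice of the integral model $\mcX$ and an auxiliary alteration argument.
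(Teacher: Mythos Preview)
The statement you are attempting to prove is Conjecture~\ref{conj:CT1}, which the paper states \emph{as a conjecture} and does not prove in general. The paper establishes it only in special cases (e.g.\ geometrically rational surfaces, Theorem~\ref{thm:zerocycle}) and under additional hypotheses (Theorem~\ref{thm:integralTate}). Your proposal is therefore not a proof of the stated result but rather a sketch of the strategy the paper uses to attack special cases.

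More specifically, there are two substantive gaps. First, the conjecture is for an \emph{arbitrary} smooth projective variety $X$ over $\FF_q(B)$, with no rational connectedness hypothesis. Your plan explicitly ``invoke[s] the paper's geometric criterion for the integral Tate conjecture on one-cycles on SRC fibrations,'' so at best you are addressing the SRC case, not the conjecture as stated. Second, even in the SRC case, the paper's criterion (Theorem~\ref{thm:integralTate}) is \emph{conditional} on hypotheses (A)--(D) about the cycle class maps on $\overline{\mcX}$, and the paper does not verify these in general (see Remark~\ref{rem}); your proposal treats this step as available input rather than as the main open problem. The first part of your outline---passing to an integral model, gluing the local classes to a global $\xi$ via the Brauer--Manin constraint---is correct and is precisely the content of Saito's and Colliot-Th\'el\`ene's results recalled in Theorem~\ref{thm:TateImpiesCT}; but the second step, realizing $\xi$ as an algebraic class, is exactly where the conjecture lies.

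A minor point: the global class $\xi$ lives in $H^{2d}_\et(\mcX,\ZZ_\ell(d))$ (where $\dim \mcX = d+1$), not in $H^3_\et(\mcX,\ZZ_\ell(1))$ as you wrote.
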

Here $\text{Inv}(A(z_\nu))$ means the value of $(A, z_\nu)$ under the pairing
\[
\text{Br}(X_\nu)\{\ell\} \times \text{CH}_0(X_\nu) \to \QQ/\ZZ,
\]
and $cl:\text{CH}_0(X_v)/\ell^n \to H^{2d}_\et(X_v, \mu_{\ell^n}^{\otimes d})$ is the cycle class map.

A particular case of the above conjecture is the following.
\begin{conj}\label{conj:CT2}
Let $X$ be a smooth projective geometrically integral variety defined over the function field $\FF_q(B)$ of a smooth curve $B$ defined over a finite field $\FF_q$. Fix a prime number $\ell$ different from the characteristic.
Suppose that for every place $\nu$ of $\FF_q(B)$, there is a zero-cycle $z_\nu \in \text{CH}_0(X_\nu)$ of degree prime to $\ell$. 
Suppose that for all elements $A\in \text{Br}(X)\{\ell\}$, we have 
$\sum_{\nu} \text{Inv}(A(z_\nu))=0$.
 Then there is a cycle $z \in \text{CH}_0(X)$ of degree prime to $\ell$.
\end{conj}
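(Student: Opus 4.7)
The plan is to attack Conjecture \ref{conj:CT2} by converting the question about zero cycles on $X$ into a question about one cycles on an integral model over the base curve. I would first spread $X$ out to a flat proper morphism $\pi \colon \mcX \to B$ with generic fiber $X$, resolving singularities of the total space if necessary so that $\mcX$ is smooth projective over $\FF_q$. A zero cycle on $X$ of degree $d$ prime to $\ell$ then extends to a horizontal one cycle on $\mcX$ whose degree over $B$ is $d$, and each local zero cycle $z_\nu$ over the completion at a place $\nu$ of $\FF_q(B)$ extends to a formal one cycle in a neighborhood of the corresponding closed fiber. Thus the problem reduces to gluing these local one cycles into a global one cycle on $\mcX$ of degree prime to $\ell$ over $B$.

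Next, I would study the moduli space $\mcN$ of sections of $\pi$ (or, more generally, of one cycles representing a fixed class) satisfying prescribed evaluation conditions at a finite set of closed points of $B$. Since the generic fiber $X$ is separably rationally connected, results of Graber--Harris--Starr and de Jong--Starr together with smoothing of combs show that such sections exist once one is willing to modify $\pi$ at finitely many fibers. The Brauer--Manin hypothesis $\sum_\nu Inv(A(z_\nu))=0$ for all $A \in Br(X)\{\ell\}$ should translate into the statement that the tuple $(z_\nu)$ lies in the image of some geometrically irreducible component of $\mcN$. An application of Lang--Weil to this component would then furnish the desired $\FF_q$-rational point, i.e.\ a global one cycle on $\mcX$ whose restriction to the generic fiber has degree prime to $\ell$.

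The hardest step, and presumably the heart of the paper's geometric criterion, is to verify that the correspondence between the $\ell$-primary Brauer group of $X$ and the set of geometrically irreducible components of $\mcN$ is faithful, so that vanishing of the Brauer--Manin sum is exactly the obstruction to the gluing. In effect, one needs a Poitou--Tate-style duality for $\pi$ identifying obstructions living in the degree $3$ cohomology of $\mcX$ with sums of local invariants. This is where the study of the space of one cycles on separably rationally connected fibrations over curves, promised in the abstract, should play a decisive role, via the equality of the coniveau and strong coniveau filtrations on $H_3$ established there. Additional technical complications will arise at $\ell = p = \mathrm{char}\,\FF_q$, since flat cohomology must replace \'etale cohomology and the usual smoothing arguments require modification; the paper's restriction to odd characteristic in the del Pezzo of degree four case confirms that the $p$-adic situation genuinely needs separate treatment.
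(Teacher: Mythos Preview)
The statement in question is a \emph{conjecture}, not a theorem; the paper does not prove it in general and offers no proof of Conjecture~\ref{conj:CT2} as stated. What the paper establishes is a \emph{criterion} (Theorem~\ref{thm:integralTate}) under which Conjecture~\ref{conj:CT2} holds for the generic fiber of a separably rationally connected fibration, together with unconditional verifications of that criterion in particular cases (geometrically rational surfaces, certain complete intersections, homogeneous spaces). So there is no ``paper's own proof'' to compare against; your proposal should be read as an outline for attacking the conjecture in the SRC fibration setting, and compared with the strategy the paper actually pursues in that setting.

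Your proposal and the paper agree on the first move: pass to a smooth projective model $\mcX \to B$ and study one cycles there. After that the approaches diverge sharply. You propose to study a moduli space $\mcN$ of sections, relate the Brauer--Manin condition to the geometrically irreducible components of $\mcN$, and then apply Lang--Weil. The paper does nothing of the sort. Instead it invokes a result of Saito and Colliot-Th\'el\`ene (Theorem~\ref{thm:TateImpiesCT}): the Brauer--Manin hypothesis on $(z_\nu)$ produces, via a Poitou--Tate-style argument for the arithmetic scheme $\mcX$, a cohomology class $\xi \in H^{2d}_{\text{\'et}}(\mcX,\ZZ_\ell(d))$ whose restriction at each place agrees with the class of $z_\nu$. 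The remaining task is then purely a question of integral Tate: show that $\xi$ is the class of an algebraic cycle. The paper's geometric work (spaces of one cycles, coniveau $=$ strong coniveau on $H_3$, the Koll\'ar--Tian result on algebraic equivalence) is aimed entirely at the surjectivity of the cycle class map $CH_1(\mcX)\otimes \ZZ_\ell \to H^{2d}_{\text{\'et}}(\mcX,\ZZ_\ell(d))$, not at any moduli-theoretic gluing of local cycles.

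Your ``hardest step''---a faithful correspondence between $Br(X)\{\ell\}$ and the components of $\mcN$---is precisely the place where your outline has no content: no such dictionary is set up, and it is not clear what would play the role of the Saito cohomology class in your picture. The coniveau/strong coniveau equality on $H_3$ enters the paper not to control components of a section space but to handle the \emph{arithmetic} piece $H^1(\FF_q, H^{2d-1}(\bar{\mcX},\ZZ_\ell))$ of $H^{2d}(\mcX,\ZZ_\ell)$ in the Hochschild--Serre filtration. Finally, two smaller misreadings: the conjecture already takes $\ell$ invertible in $\FF_q$, so there is no ``$\ell=p$'' case to treat; and the odd-characteristic restriction for degree $4$ del Pezzo surfaces comes from Brumer's theorem on intersections of two quadrics, not from any failure of the zero-cycle argument at $p=2$.
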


Conjectures \ref{conj:CT1} and \ref{conj:CT2} are consequences of the following Conjecture \ref{conj:E} via considering the commutative diagram of various cycle class maps to the \'etale cohomology.
\begin{conj}[{\cite[Conjecture 4.2]{CT_Problem_list}}]\label{conj:E}
Let $X$ be a smooth projective geometrically integral variety defined over a global field $K$. Let $l$ be a prime number different from the characteristic of $K$. 
When $K$ is a number field and $\nu$ is a real place, one defines $\text{CH}_0(X_\nu)^*=\text{CH}_0(X_\RR)/\text{CH}_0(X_\CC)$, that is the Chow group over $\RR$ modulo the image of Chow groups over $\CC$ by the norm map. For other places, $\text{CH}_0(X_\nu)^*$ is defined to be the usual Chow group.
Then the Brauer-Manin obstruction is the only obstruction for the local-global principle for zero-cycles on $X$ to hold. Namely, there is an exact sequence:
\[
\text{CH}_0(X) \hat{\otimes} \ZZ_\ell \to \Pi_{\nu \in \Omega(K)} \text{CH}_0(X_\nu)^* \hat{\otimes} \ZZ_\ell \to \text{Hom}(\text{Br}(X)\{\ell\}, \QQ/\ZZ).
\]
\end{conj}

If the cycle class map $\text{CH}_0(X_\nu) \hat{\otimes} \ZZ_{\ell} \to H^{2d}_{\text{\'et}}(X_\nu, \ZZ_{\ell}(d))$ is injective for every place $\nu$, Conjecture \ref{conj:CT1} and this stronger conjecture \ref{conj:E} are equivalent.
For a discussion of the injectivity for separably rationally connected varieties, see Section \ref{sec:questions}.

Unlike the case of rational points, these conjectures apply to all smooth projective varieties.

One of the main theorems of this article (to be proved in Section \ref{proof}) is the following.
\begin{thm}\label{thm:zerocycle}
Let $X$ be a smooth projective geometrically rational surface defined over the function field $\FF_q(B)$ of a smooth projective curve $B$ defined over a finite field $\FF_q$. Then Conjectures \ref{conj:E}, and hence \ref{conj:CT1} and \ref{conj:CT2} hold true for $X$ .
\end{thm}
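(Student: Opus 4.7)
The plan is to deduce Theorem \ref{thm:zerocycle} from the geometric criterion for separably rationally connected fibrations announced in the abstract and proved earlier in the paper. First I would spread $X/\FF_q(B)$ out to a smooth projective fibration $\pi:\mcX\to B$ over $\FF_q$ whose generic fiber is $X$. Because a smooth geometrically rational surface is separably rationally connected, $\pi$ is an SRC fibration over a dense open of $B$. A zero cycle $z_\nu \in CH_0(X_\nu)$ extends to a horizontal one-cycle on $\mcX\times_B \SP \OO_\nu$, and modulo vertical components the adelic family $(z_\nu)$ in Conjecture \ref{conj:E} becomes an adelic family of one-cycle classes on $\mcX$. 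In this way Conjecture \ref{conj:E} translates into the statement that the image of $CH_1(\mcX)\hat\otimes \ZZ_\ell$ in the product of local one-cycle groups agrees with the kernel of the pairing induced by $Br(X)\{\ell\}$.

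Next, apply the geometric criterion. This reduces everything to the moduli space $\Na{\pi}{\beta}$ of sections (or of horizontal one-cycles) of $\pi$ of class $\beta$: one must show that for $\beta$ sufficiently positive $\Na{\pi}{\beta}$ is geometrically irreducible and rationally connected, and that it surjects appropriately onto the product of local analogs $\prod_\nu \Na{\pi_\nu}{\beta}$. The irreducibility and rational connectedness of $\Na{\pi}{\beta}$ for large $\beta$ is precisely what the paper's earlier structural results on one-cycles in SRC fibrations supply. Local surjectivity at places of good reduction is standard deformation theory of sections on a rationally connected fiber, while at places of bad reduction it follows from the same structural analysis together with a case-by-case study of how sections pass through the degenerate fibers of a family of geometrically rational surfaces. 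Given these inputs, a global section matching the prescribed local data is produced by applying Esnault's theorem on rational points of rationally connected varieties over $\FF_q$ to the moduli space, yielding Conjecture \ref{conj:E}.

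To obtain Conjectures \ref{conj:CT1} and \ref{conj:CT2} from \ref{conj:E} one needs injectivity of the cycle class map $CH_0(X_\nu)\hat\otimes \ZZ_\ell \to H^{2d}_{\et}(X_\nu,\ZZ_\ell(d))$ for each local completion, which for smooth geometrically rational surfaces over local function fields is known from the explicit description of $CH_0$ due to Colliot-Th\'el\`ene and collaborators. The main obstacle is the surjectivity of $\Na{\pi}{\beta}\to \prod_\nu \Na{\pi_\nu}{\beta}$ at places of bad reduction: generic rationality of the fibers does not by itself suffice, and one needs fine control of how sections of $\pi$ interact with reducible or singular geometrically rational fibers. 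This is where the structural theorems on the space of one-cycles in an SRC fibration, proved earlier in the paper, do the essential geometric work.
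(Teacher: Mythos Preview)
Your proposal misidentifies the paper's ``geometric criterion'' and sketches an approach the paper does not take. The criterion in Theorem \ref{thm:integralTate} is \emph{not} about irreducibility or rational connectedness of a moduli space $\Na{\pi}{\beta}$ of sections; it is a list of cohomological hypotheses (A)--(D) on the cycle class maps $A_1(\overline{\mcX})\otimes\ZZ_\ell \to H^{2d}_\et(\overline{\mcX},\ZZ_\ell(d))$ and on the coniveau filtration of $H^{2d-1}$. The structural results on one-cycles in Section \ref{sec:space} (Theorems \ref{thm:filtered_sm}, \ref{thm:filtered}) concern nodal deformation equivalence and feed into the coniveau/strong-coniveau comparison of Sections \ref{sec:complex}--\ref{sec:general}; they do not establish that any moduli of sections is irreducible or rationally connected, and Esnault's fixed-point theorem is nowhere invoked. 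So the sentence ``The irreducibility and rational connectedness of $\Na{\pi}{\beta}$ for large $\beta$ is precisely what the paper's earlier structural results \ldots\ supply'' is simply false as a description of this paper.

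The paper's actual proof is: spread out to a smooth projective $3$-fold $\mcX\to B$ (using resolution for $3$-folds to arrange SNC bad fibers), verify hypotheses (A)--(D) of Theorem \ref{thm:integralTate} directly for this $3$-fold (Bloch--Srinivas for (B); a localization-sequence computation together with \cite{WittenbergEsnault_0_cycle} and \cite{deJongStarr_GHS} for (A); Merkurjev--Suslin plus a decomposition-of-the-diagonal argument for (D)), conclude the integral Tate conjecture $CH_1(\mcX)\otimes\ZZ_\ell \twoheadrightarrow H^4_\et(\mcX,\ZZ_\ell(2))$ (this is Theorem \ref{thm:integralTateSurface}), and then invoke the Saito/Colliot-Th\'el\`ene machinery (Theorem \ref{thm:TateImpiesCT}) to get Conjectures \ref{conj:CT1} and \ref{conj:CT2}. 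Conjecture \ref{conj:E} then follows because for geometrically rational surfaces over a local function field the local cycle class map $CH_0(X_\nu)\hat\otimes\ZZ_\ell \to H^{4}_\et(X_\nu,\ZZ_\ell(2))$ is injective (Remark \ref{rem:WE}, relying on \cite{WittenbergEsnault_0_cycle}). Your proposed route through surjectivity of $\Na{\pi}{\beta}\to\prod_\nu\Na{\pi_\nu}{\beta}$ and an application of Esnault's theorem to the moduli space is a different circle of ideas (closer in spirit to \cite{Hasse}); it is neither carried out nor needed here, and the ``case-by-case study'' you gesture at for bad fibers is not something the paper supplies.
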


By a happy coincidence, we deduce a corollary for rational points (to be proved in Section \ref{proof}).
\begin{thm}\label{thm:delpezzo4}
Let $X$ be a del Pezzo surface of degree $4$ defined over a global function field of odd characteristic. Then the Brauer-Manin obstruction is the only obstruction for the Hasse principle for rational points on $X$ to hold.
\end{thm}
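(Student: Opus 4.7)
The plan is to leverage Theorem~\ref{thm:zerocycle} applied at $\ell=2$ to produce a global zero-cycle of odd degree on $X$, and then to invoke the classical Amer--Brumer theorem on common zeros of pairs of quadratic forms to upgrade an odd-degree closed point into a rational point. The anticanonical embedding realises a del Pezzo surface of degree $4$ as the smooth complete intersection $X = \{q_1 = q_2 = 0\} \subset \PP^4_k$ of two quadrics, and the odd characteristic hypothesis is exactly what is needed in order for Amer--Brumer to apply.

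In detail, suppose $X$ carries an adelic point $(P_\nu)$ orthogonal to $Br(X)$ under the Brauer--Manin pairing. At each place $\nu$, the class $z_\nu := [P_\nu] \in CH_0(X_\nu)$ has degree $1$, in particular of degree prime to $\ell=2$, and restricting the adelic orthogonality to the $2$-primary part gives $\sum_\nu Inv(A(z_\nu)) = 0$ for every $A \in Br(X)\{2\}$. Because $X$ is smooth projective geometrically rational and $\ell = 2$ is invertible in $k = \FF_q(B)$ (as the characteristic is odd), Theorem~\ref{thm:zerocycle} asserts that Conjecture~\ref{conj:CT2} holds for $X$; applying it at $\ell = 2$ yields a global zero-cycle $z = \sum n_i [P_i] \in CH_0(X)$ of odd degree. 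The identity $\deg z = \sum n_i [k(P_i) : k]$ being odd forces at least one $[k(P_i) : k]$ to be odd, so $X$ admits a closed point $P$ whose residue field $L$ has odd degree over $k$.

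I then invoke the Amer--Brumer theorem: over a field of characteristic different from $2$, any finite system of quadratic forms with a common nontrivial zero over some odd-degree extension has a common nontrivial zero over the base field. The $L$-point $P$ of $X$ is precisely such a common nontrivial zero of $q_1, q_2$ over the odd-degree extension $L/k$, so Amer--Brumer delivers a $k$-rational common zero, i.e., a $k$-rational point of $X$. The main geometric work is packaged in Theorem~\ref{thm:zerocycle}; beyond that, the only step to be checked is the (essentially tautological) passage from Brauer--Manin orthogonality for the rational adelic point $(P_\nu)$ to the corresponding condition for the zero-cycles $z_\nu = [P_\nu]$ at $\ell = 2$, which is immediate from the compatibility of the cycle class map with the local Brauer pairing.
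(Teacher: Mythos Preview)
Your proof is correct and mirrors the paper's argument: invoke Theorem~\ref{thm:zerocycle} at $\ell=2$ to obtain a global zero-cycle of odd degree, then use Brumer's odd-degree transfer for common zeros of two quadrics (cited in the paper as \cite{Brumer_CI22}) to produce a $k$-rational point. One small caution: the Amer--Brumer/Springer result is for a \emph{pair} of quadratic forms, not for an arbitrary finite system as you wrote, but this is immaterial here since the anticanonical model of $X$ is cut out by exactly two quadrics.
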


\subsection{The Integral Tate conjecture for one-cycles}

There is a closely related conjecture.

\begin{conj}[Integral Tate conjecture for one-cycles]
    Let $V$ be a smooth projective geometrically irreducible variety of dimension $d$ defined over a finite field $\FF$. Fix a prime number $\ell$ different from the characteristic. The cycle class map:
\begin{equation}\label{int_Tate}
\text{CH}_1(V) \otimes \ZZ_\ell \to H^{2d-2}_\et(V, \ZZ_\ell(d-1))
\end{equation}
is surjective.
\end{conj}

Such a conjecture has been raised many times. See for example \cite{CTSzamuelyIntegralTate,CTKahnCycleCodim2, CT_Scavia_ITC}.

Saito \cite{SaitoMotivicCoh_ArithmeticScheme} and Colliot-Th\'el\`ene \cite{CTLocalGlobalChow} proved that there is a close relation between the surjectivity of this cycle class map  and Colliot-Th\'el\`ene's conjectures. Namely, Conjecture \ref{conj:CT1} for $X/\FF_q(B)$ follows from the surjectivity of (\ref{int_Tate}) for a smooth projective model of $X$. See Section \ref{sec:ITateHasse} Theorem \ref{thm:TateImpiesCT} for a more precise statement.

We prove Theorem \ref{thm:zerocycle} as a consequence of the following theorem, whose proof is contained in Section \ref{proof}.
\begin{thm}\label{thm:integralTateSurface}
Let $\pi: \mcX \to B$ be a projective flat family of surfaces over a smooth projective curve $B$ defined over a finite field $\FF_q$. Fix a prime number $\ell$ different from the characteristic. Assume that $\mcX$ is smooth and that the geometric generic fiber is a smooth rational surface. Then the integral Tate conjecture for one-cycles holds. More concretely, the cycle class map
 \[
\text{CH}_1(\mcX)\otimes \ZZ_\ell \to H^{4}_\text{\'et}(\mcX, \ZZ_\ell(2))
\] 
 is surjective.
\end{thm}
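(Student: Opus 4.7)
The plan is to split the target cohomology via the Hochschild--Serre spectral sequence and then realise each piece by algebraic one-cycles: the invariant quotient by exploiting the SRC-in-codimension-one structure inherited from the rational generic fiber, and the kernel by invoking the structural results on the space of one-cycles announced in the abstract. Since $\FF_q$ has $\ell$-cohomological dimension one, Hochschild--Serre collapses to the short exact sequence
\[
0 \to H^1(\FF_q, H^3(\ol{\mcX}, \ZZ_\ell(2))) \to H^4(\mcX, \ZZ_\ell(2)) \to H^4(\ol{\mcX}, \ZZ_\ell(2))^G \to 0,
\]
so the task reduces to producing $\FF_q$-rational algebraic one-cycles whose classes surject onto each outer term.

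For the invariant quotient $H^4(\ol{\mcX}, \ZZ_\ell(2))^G$, the geometric generic fiber of $\pi$ is a smooth rational surface, so $\ol{\mcX}$ is SRC in codimension one; hence $CH_0(\ol{\mcX})_\QQ$ is supported on a curve and every class in $H^4(\ol{\mcX}, \QQ_\ell(2))$ is algebraic. To upgrade this to a $\ZZ_\ell$-statement defined over $\FF_q$ I would work directly with the Leray spectral sequence for $\pi$: since $R^1\pi_*\ZZ_\ell = R^3\pi_*\ZZ_\ell = 0$ and $R^4\pi_*\ZZ_\ell(2) = \ZZ_\ell$ away from the discriminant, $H^4(\ol{\mcX}, \ZZ_\ell(2))$ is built from the fiber class (realised by any multisection of $\pi$) and the summand $H^2(\ol{B}, R^2\pi_*\ZZ_\ell(2))$, where $R^2\pi_*\ZZ_\ell(1)$ is the relative N\'eron--Severi sheaf. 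The latter is realised by intersections $D \cdot \pi^{-1}(b)$ of $\FF_q$-divisors on $\mcX$ coming from $\FF_q$-rational sections of the relative N\'eron--Severi scheme with fibers over $\FF_q$-points of $B$, whose existence follows from Lang's theorem on connected commutative group schemes together with Lang--Weil estimates.

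For the kernel $H^1(\FF_q, H^3(\ol{\mcX}, \ZZ_\ell(2)))$ the paper's analysis of the space of one-cycles on an SRC fibration enters essentially. That analysis, which underlies the promised equality of coniveau and strong coniveau filtrations on $H^3$, would produce a smooth projective curve $\Gamma$ over $\FF_q$ together with an algebraic correspondence $Z \subset \Gamma \times \mcX$ such that the induced map $H^1(\ol{\Gamma}, \ZZ_\ell(1)) \to H^3(\ol{\mcX}, \ZZ_\ell(2))$ is surjective modulo controlled torsion. Taking $H^1(\FF_q, -)$, combining with Lang's theorem for the surjection $\text{Jac}(\Gamma)(\FF_q) \twoheadrightarrow H^1(\FF_q, H^1(\ol{\Gamma}, \ZZ_\ell(1)))$, and pushing forward the universal cycle along $Z \to \mcX$ would then realise every class in $H^1(\FF_q, H^3(\ol{\mcX}, \ZZ_\ell(2)))$ by the class of an $\FF_q$-rational one-cycle on $\mcX$.

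The main obstacle is the construction of the correspondence $\Gamma$: it amounts to proving that a suitable component of the moduli of one-cycles on an SRC fibration over a curve is geometrically irreducible and has the correct cohomological image in $H^3$. This is precisely the structural theorem on the space of one-cycles advertised in the introduction, and the rest of the argument --- Leray, Hochschild--Serre, and Lang --- assembles around it in a fairly formal way.
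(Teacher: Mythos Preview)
Your Hochschild--Serre split and the treatment of the kernel $H^1(\FF_q,H^3(\ol{\mcX},\ZZ_\ell(2)))$ are essentially what the paper does: build a curve $C$ (over $\ol{\FF}_q$, not $\FF_q$ as you write, but this is inessential) together with a correspondence inducing a surjection $H^1(\ol{C},\ZZ_\ell(1))\twoheadrightarrow H^3(\ol{\mcX},\ZZ_\ell(2))$, and then invoke Scavia--Suzuki's argument (Lang for the Jacobian) to algebraise the arithmetic piece. In the paper this is packaged as Corollary~\ref{cor:IHC}, and the surjectivity onto all of $H^3$ (not just ``modulo controlled torsion'') is the $3$-fold case of Theorem~\ref{thm:surj_dim3_chow}: the cokernel of $N^1H^3\hookrightarrow H^3$ is torsion by decomposition of the diagonal and torsion-free by Merkurjev--Suslin, hence zero.

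Where your proposal diverges, and has a genuine gap, is the handling of the invariant quotient $H^4(\ol{\mcX},\ZZ_\ell(2))^G$. You attempt a direct Leray/Lang argument, but (i) $R^1\pi_*\ZZ_\ell=R^3\pi_*\ZZ_\ell=0$ fails over the singular fibres, and the contribution of those fibres to $H^4$ is exactly the delicate part; (ii) ``$\FF_q$-rational sections of the relative N\'eron--Severi scheme'' are not produced by Lang's theorem, since the relative N\'eron--Severi group is an \'etale, typically disconnected, group scheme --- Lang gives nothing here, and what you are implicitly assuming is close to the Tate conjecture for divisors on $\mcX$. The paper bypasses both issues by a different mechanism: it applies Theorem~\ref{thm:integralTate}, whose proof of the geometric half rests on the Koll\'ar--Tian isomorphism $A_1(\mcX)\cong A_1(\ol{\mcX})^G$ (Theorem~\ref{thm:G_inv_cycle}), so that descent to $\FF_q$ is automatic once one checks the purely geometric hypotheses (A) and (B) over $\ol{\FF}_q$. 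Hypothesis (B) is Bloch--Srinivas; hypothesis (A) is checked not by Leray but by first passing (via resolution for $3$-folds, a birational invariance) to a model with SNC singular fibres, and then running the localization sequence $\oplus CH_1(\ol{\mcX}_i)\to CH_1(\ol{\mcX})\to CH_1(\ol{\mcX}^0)$ against its cohomological counterpart, using Esnault--Wittenberg for the SNC fibres and de Jong--Starr (existence of a section) for the open part. That Koll\'ar--Tian input is the missing idea in your treatment of the invariant piece.
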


This theorem is a consequence of our study of one-cycles on a particular class of varieties as defined below.

\begin{defn}
    Let $X$ be a variety defined over a field $k$. Let $K/k$ be a field extension, and let $f: \PP^1_K \to X_K$  be a $K$-morphism  whose image lies in the smooth locus of $X_K$. The pull-back $f^*T_{X_K}$ decomposes as direct sum of line bundles $\oplus_{i=1}^{\dim X} \OO(a_i)$.
    Such a morphism $f$ is called a \emph{very free curve} (resp. \emph{free curve}) if $a_i>0$ (resp. $a_i\geq 0$) for all $i$.
    
    We say that $X$ is \emph{separably rationally connected}, or \emph{SRC}, if $X$ has a very free curve.
    
    We say that $X$ is \emph{separably rationally connected in codimension one} or \emph{SRC in codimension one} if $X$ has a free curve with $a_i> 0$ for all but one $a_i$.
\end{defn}
\begin{rem}
    The term SRC was introduced by Koll\'ar-Miyaoka-Mori \cite{KMM92RC}. The term SRC in codimension one was introduced in \cite{Kollar_Tian}. 
\end{rem}

Recall the following definition of algebraic equivalence over a field $k$ (not necessarily algebraically closed) as in \cite[Definition 10.3]{Fulton98}.

\begin{defn}\label{def:alg_equiv}
    Let $X$ be a $k$-scheme. A $r$-cycle $Z$ ($r\geq 0$) is \emph{algebraically equivalent to $0$} if there is a non-singular $k$-variety $T$, two $k$-rational points $t_1, t_2 \in T$, and a cycle $\mathcal{Z} \in \text{CH}_{\dim T+r}(X\times T)$ such that $Z=\mathcal{Z}_{t_1}-\mathcal{Z}_{t_2}$ in $\text{CH}_r(X)$, where $\mathcal{Z}_{t_1}$ (resp. $\mathcal{Z}_{t_2}$) is the Gysin pull-back of $\mathcal{Z}$ via the inclusion $t_1 \to T$ (resp. $t_2 \to T$).
    
    Algebraic  equivalence is the equivalence  relation generated by cycles that are algebraically equivalent to $0$.
    
    We denote by $A_r(X)$ the group of $r$-cycles in $X$ modulo algebraic equivalence, and $\text{CH}_r(X)_{\text{alg}}$ the subgroup of $\text{CH}_r(X)$ generated by $r$-cycles algebraically equivalent to $0$.
\end{defn}

In general, one can deduce the following geometric criterion for the validity of the integral Tate conjecture, whose proof is contained in Section \ref{proof}.
\begin{thm}\label{thm:integralTate}
Let $X$ be a smooth projective geometrically integral variety defined over a finite field $\FF_q$.
Fix a prime number $\ell$ different from the characteristic.
Assume that $X$ has dimension $d (d \geq 2)$, and is separably rationally connected in codimension one. Consider the following hypotheses:
\begin{itemize}
\item [(A)] The cycle class map $A_1(\overline{X})\otimes \ZZ_\ell \to H^{2d-2}_\text{\'et}(\overline{X}, \ZZ_\ell(d-1))$ is surjective.
\item [(B)] The cycle class map $A_1(\overline{X})\otimes \ZZ_\ell \to H^{2d-2}_\text{\'et}(\overline{X}, \ZZ_\ell(d-1))$ is injective.
\item [(C)] The cycle class map from higher Chow groups (Definition \ref{def:higherchow})
\[
\lim \limits_{\xleftarrow[n]{}}\text{CH}_1(\overline{X}, 1, \ZZ/\ell^n \ZZ) \to H^{2d-3}_\text{\'et}(\overline{X}, \ZZ_\ell(d-1))
\]
is surjective.
\item [(D)] The subgroup $N^{d-2}H^{2d-3}_\text{\'et}(\overline{X}, \ZZ_\ell)$ (Definition \ref{def:filtration_1}) equals the whole cohomology group $H^{2d-3}_\text{\'et}(\overline{X}, \ZZ_\ell)$.
\end{itemize}
The following holds.
\begin{enumerate}
    \item\label{1.10.1} If $\overline{X}$ satisfies hypotheses (A) and (B),
then the cycle class map
 \[
\text{CH}_1(X)\otimes \ZZ_l \to H^{2d-2}_\text{\'et}(X, \ZZ_l(d-1))\to  H^{2d-2}_\text{\'et}(\overline{X}, \ZZ_l(d-1))^{Gal(\bar{\FF}_q/\FF_q)}
 \] 
 is surjective.
 
 \item\label{1.10.2} If $\overline{X}$ satisfies hypothesis (C) or (D),
then the cycle class map
 \[
\text{CH}_1(X)_{\text{alg}}\otimes \ZZ_l \to H^1(\FF_q, H^{2d-3}_\text{\'et}(\overline{X}, \ZZ_l(d-1)))
 \] 
 is surjective.
\end{enumerate}

\end{thm}

\begin{rem}
In dimension $3$, the hypotheses (B), (C), and (D) in Theorem \ref{thm:integralTate} are satisfied for a variety that is separably rationally connected in codimension one. See Lemma \ref{3fold} and the proof of Theorem \ref{thm:integralTateSurface} in Section \ref{proof} for details.

Therefore, if a threefold $X$ defined over ${\FF}_q$ that is separably rationally connected in codimension one satisfies hypothesis (A) in the theorem, i.e. the cycle class map $\text{CH}_1(\overline{X})\otimes \ZZ_\ell \to H^4_\et(\overline{X}, \ZZ_\ell(2))$ is surjective, then the cycle class map $$\text{CH}_1({X})\otimes \ZZ_\ell \to H^4_\et({X}, \ZZ_\ell(2))$$ is surjective.
\end{rem}

\begin{rem}
    Hypotheses (A)-(D) only depend on the stable birational class of the generic fiber over $\bar{\FF}_q(B)$ (Lemma \ref{lem:birational}).
    See Section \ref{sec:questions} for further discussions about the validity of these hypotheses.
\end{rem}

We have a corollary of Theorem \ref{thm:integralTateSurface}, which confirms a conjecture of Colliot-Th\'el\`ene and Kahn (\cite[Conjecture 5.8]{CTKahnCycleCodim2}) up to $p$-torsion. The proof is contained in Section \ref{proof}.

\begin{cor}\label{cor:CTK}
Let $X$ be a smooth projective threefold defined over a finite field $\FF$.
Fix a prime number $\ell$ different from the characteristic.
Assume that $X$ admits a fibration structure over a smooth projective curve with smooth projective geometrically rational generic fiber.
Then we have 
\[
H^3_{\text{nr}}(X, \QQ_\ell/\ZZ_\ell(2))=0,
\]
\[
\text{CH}_1(X) \otimes \ZZ_\ell \xrightarrow{\cong} H^4(X, \ZZ_\ell(2)),
\]
and a short exact sequence
\[
0 \to H^1(\FF, H^3(\overline{X}, \ZZ_\ell(2))\{\ell\}) \to \text{CH}_1(X) \otimes \ZZ_\ell \to \text{CH}_1(\overline{X})^G \otimes \ZZ_\ell \to 0.
\]
\end{cor}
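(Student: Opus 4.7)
The plan is to combine Theorem~\ref{thm:integralTateSurface} with the Colliot-Th\'el\`ene--Kahn sequence~\eqref{eq:CTK} and a Hochschild--Serre spectral sequence argument. Theorem~\ref{thm:integralTateSurface} applied to $\mcX = X$ gives the surjectivity of the cycle class map
\[
CH_1(X) \otimes \ZZ_\ell \twoheadrightarrow H^4_{\text{\'et}}(X, \ZZ_\ell(2));
\]
the analogous surjectivity for $\bar X$ over $\bar\FF_q$ follows by the same argument, or equivalently by hypothesis (A) of Theorem~\ref{thm:integralTate} (cf.\ Remark~\ref{rem}).

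For the vanishing $H^3_{\text{nr}}(X, \QQ_\ell/\ZZ_\ell(2)) = 0$, I would first observe that $\bar X$ is stably birational to the curve $\bar B$, since the generic fiber of $\bar X \to \bar B$ is a smooth rational surface over $\bar{\FF}_q(B)$ and hence birational to $\PP^2$ by Castelnuovo. As $H^3_{\text{nr}}(-, \QQ_\ell/\ZZ_\ell(2))$ is a stable birational invariant that vanishes on curves over algebraically closed fields, this gives $H^3_{\text{nr}}(\bar X, \QQ_\ell/\ZZ_\ell(2)) = 0$. The integral Tate surjectivity on $X$, together with the Hochschild--Serre surjection $H^4(X, \ZZ_\ell(2)) \twoheadrightarrow H^4(\bar X, \ZZ_\ell(2))^G$, then forces the cokernel $\text{Coker}(CH^2(X) \to CH^2(\bar X)^G) \otimes \ZZ_\ell$ to vanish; feeding this together with the vanishing over $\bar X$ into~\eqref{eq:CTK} pins down $H^3_{\text{nr}}(X, \QQ_\ell/\ZZ_\ell(2))\{\ell\} = 0$.

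For the short exact sequence, apply Hochschild--Serre to $X \to \SP \FF_q$ with $\ZZ_\ell(2)$ coefficients. Since $\text{cd}(\FF_q) = 1$, this yields
\[
0 \to H^1(\FF_q, H^3(\bar X, \ZZ_\ell(2))) \to H^4(X, \ZZ_\ell(2)) \to H^4(\bar X, \ZZ_\ell(2))^G \to 0.
\]
By Deligne's purity, $H^3(\bar X, \ZZ_\ell(2))/\text{tors}$ is pure of weight $-1$ and Frobenius acts without the eigenvalue $1$, so the torsion-free summand contributes nothing rationally to $H^1(\FF_q,-)$; the remaining finite integral contribution is controlled by the integral Tate analysis, giving $H^1(\FF_q, H^3(\bar X, \ZZ_\ell(2))) = H^1(\FF_q, H^3(\bar X, \ZZ_\ell(2))\{\ell\})$. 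Combined with the integral Tate \emph{isomorphism} $CH_1(-) \otimes \ZZ_\ell \cong H^4(-, \ZZ_\ell(2))$ for both $X$ and $\bar X$, the Hochschild--Serre sequence then rewrites as the claimed short exact sequence.

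The main obstacle is upgrading the surjection in integral Tate to an isomorphism, i.e., vanishing of the $\ell$-primary part of the Griffiths group of $1$-cycles on $X$ and $\bar X$. For our SRC-in-codimension-$1$ threefolds this is precisely hypothesis (B) of Theorem~\ref{thm:integralTate} and is addressed by the results on the space of $1$-cycles referenced in the abstract, notably the equality of the coniveau and strong coniveau filtrations on degree $3$ homology.
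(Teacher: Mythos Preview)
Your deduction of $H^3_{\text{nr}}(X,\QQ_\ell/\ZZ_\ell(2))=0$ from the Colliot-Th\'el\`ene--Kahn sequence~\eqref{eq:CTK} does not go through. Granting $H^3_{\text{nr}}(\bar X)=0$ and $\text{Coker}\bigl(CH^2(X)\to CH^2(\bar X)^G\bigr)\otimes\ZZ_\ell=0$, the sequence~\eqref{eq:CTK} collapses to
\[
0 \to \text{Ker}\bigl(CH^2(X)\to CH^2(\bar X)\bigr)\{\ell\} \to H^1\bigl(\FF, H^3(\bar X,\ZZ_\ell(2))_{\text{tors}}\bigr) \to H^3_{\text{nr}}(X,\QQ_\ell/\ZZ_\ell(2)) \to 0,
\]
which exhibits $H^3_{\text{nr}}(X)$ as a \emph{quotient} of $H^1(\FF,\,\cdot\,)$, not as zero. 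The paper proceeds differently: it invokes \cite[Th\'eor\`eme~2.2]{CTKahnCycleCodim2} directly, which relates $H^3_{\text{nr}}(X,\QQ_\ell/\ZZ_\ell(2))$ to the cokernel of the cycle class map $CH^2(X)\otimes\ZZ_\ell\to H^4_\et(X,\ZZ_\ell(2))$, together with \cite[Proposition~3.2]{CTKahnCycleCodim2} and the fact that $CH_0(\bar X)$ is supported on a curve. Since Theorem~\ref{thm:integralTateSurface} makes that cokernel vanish, $H^3_{\text{nr}}(X)=0$ follows immediately.

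For the short exact sequence you pass through Hochschild--Serre and then need isomorphisms $CH_1(X)\otimes\ZZ_\ell\cong H^4(X,\ZZ_\ell(2))$ and $CH_1(\bar X)^G\otimes\ZZ_\ell\cong H^4(\bar X,\ZZ_\ell(2))^G$, as well as the identification $H^1(\FF,H^3(\bar X,\ZZ_\ell(2)))=H^1(\FF,H^3(\bar X,\ZZ_\ell(2))\{\ell\})$. You correctly flag the injectivity of the cycle class maps as ``the main obstacle'', but you do not resolve it (hypothesis~(B) handles $\bar X$, not $X$), and the weight argument only kills the \emph{rational} contribution of the torsion-free part to $H^1$; the integral coinvariants $(H^3/\text{tors})_G$ need not vanish a priori. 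The paper sidesteps all of this: once $H^3_{\text{nr}}(X)=0$, the sequence~\eqref{eq:CTK} itself reads
\[
0\to \text{Ker}\bigl(CH^2(X)\to CH^2(\bar X)\bigr)\{\ell\} \xrightarrow{\ \sim\ } H^1\bigl(\FF,H^3(\bar X,\ZZ_\ell(2))\{\ell\}\bigr),\qquad \text{Coker}=0,
\]
and the claimed short exact sequence for $CH_1(-)\otimes\ZZ_\ell$ is obtained by tensoring with $\ZZ_\ell$, with no need to identify Chow groups with cohomology.
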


\subsection{Method of proof}
We use a geometric approach based on the technique developed in a joint work of the author with J\'anos Koll\'ar \cite{Kollar_Tian}.
This technique, very roughly speaking, allows one to turn algebraic equivalence into deformations of stable maps, which is the key geometric input behind all the results proved in this paper.
 
We sketch the proof of Theorems \ref{thm:integralTateSurface} and \ref{thm:integralTate}, which are contained in Section \ref{proof}.

The Hochschild-Serre spectral sequence gives an exact sequence:
\[
0 \to H^1(\FF_q, H^{2d-2r-1}_\et(\overline{X}, \ZZ_\ell(d-r))) \to H^{2d-2r}_\et(X, \ZZ_\ell(d-r)) \to H^{2d-2r}_\et(\overline{X}, \ZZ_\ell(d-r))^G \to 0.
\]
Thus the integral Tate conjecture consists of a geometric part, i.e. surjectivity of 
\[
\text{CH}_r(X) \otimes \ZZ_\ell \to H^{2d-2r}_\et(\overline{X}, \ZZ_\ell(d-r))^G,
\]
and an arithmetic part, i.e. surjectivity of
\[
\text{CH}_r(X)_\text{hom}\otimes \ZZ_\ell \to H^1(\FF_q, H^{2d-2r-1}_\et(\overline{X}, \ZZ_\ell(d-r))),
\]
where $\text{CH}_r(X)_\text{hom}\otimes \ZZ_\ell$ is the ``geometrically homologically trivial" part, i.e. the kernel of 
\[
\text{CH}_r(X) \otimes \ZZ_\ell \to H^{2d-2r}_\et(\overline{X}, \ZZ_\ell(d-r))^G.
\]

For the geometric part, we need the following theorem, obtained in the joint work with J. Koll\'ar \cite{Kollar_Tian}.

\begin{thm}\cite[Theorem 7]{Kollar_Tian}\label{thm:G_inv_cycle}
Let $X_k$ be a smooth projective variety defined over a perfect field $k$. Assume that every geometrically irreducible $k$-variety has a $0$-cycle of degree $1$ (e.g. $k$ is a finite field or a PAC field). Assume that $X$ is separably rationally connected in codimension one.
We have an isomorphism 
\[
A_1(X_k) \xrightarrow{\cong} A_1(X_{\bar{k}})^G,
\]
where $G$ is the absolute Galois group of $k$.  
\end{thm}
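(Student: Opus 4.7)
For injectivity, I would invoke Theorem \ref{thm:inj_ae}(1) directly. Any variety that is separably rationally connected in codimension $1$ is geometrically integral, so the hypothesis on $k$ furnishes a $0$-cycle of degree $1$ on $X_k$ itself; this trivializes the kernel of $A_1(X_k)\to A_1(X_{\bar k})$, a fortiori giving injectivity of $A_1(X_k)\to A_1(X_{\bar k})^G$.

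For surjectivity, fix $\alpha \in A_1(X_{\bar k})^G$ and pick a representative cycle $Z$ defined over some finite Galois extension $L/k$, with $H = \mathrm{Gal}(L/k)$. The hypothesis on $k$ is inherited by $L$, so Theorem \ref{thm:inj_ae} applied to $X_L$ shows that $A_1(X_L)\to A_1(X_{\bar k})$ is injective; combined with the $G$-invariance of $\alpha$, this forces $\sigma Z \sim_{\mathrm{alg}} Z$ over $L$ for every $\sigma \in H$. My plan is then to feed the collection $\{\sigma Z : \sigma \in H\}$, together with a prescribed $k$-defined nodal curve $\pi_R : R \to X_k$, into \cite[Theorem 1]{Kollar_Tian}. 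This yields a connected curve $B$ and a family $B \leftarrow S \xrightarrow{\pi} X$ with distinguished fibers $S_{b_\sigma}\cong C_\sigma \cup R$ realizing $\pi_\sigma \sqcup \pi_R$, where $\{b_\sigma\}\subset B(\bar k)$ is a Galois orbit of size $|H|$.

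The crucial---and hardest---technical step is to carry out this construction $H$-equivariantly so that $R$, $B$, $S$ and $\pi$ descend to $k$-objects $B_k$, $S_k$, $\pi_k$, and moreover so that $B_k$ is \emph{geometrically irreducible}. Geometric irreducibility is not automatic from the joint theorem (which only yields connectedness), and if it fails one only recovers $n\alpha \in A_1(X_k)$ with $n=[L:k]$. I would attain it by incorporating sufficiently many auxiliary free rational curves supplied by the SRC-in-codim-$1$ hypothesis, ensuring that the resulting parameter space $B$ is rational (hence irreducible) and that the $H$-action on $\{b_\sigma\}$ extends to a Galois equivariant structure on the entire family. Once $B_k$ is geometrically irreducible, the hypothesis produces a $0$-cycle $\xi$ of degree $1$ on $B_k$.

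Finally, the pushforward $W := (\pi_k)_*(S_k \times_{B_k} \xi)$ is a $k$-defined $1$-cycle on $X_k$; since all fibers of $S \to B$ are algebraically equivalent over $\bar k$ to the fiber $Z \cup R$, one computes $[W\otimes_k \bar k] = \deg(\xi)\cdot(\alpha + [R]) = \alpha + [R]$ in $A_1(X_{\bar k})$. Because $[R]\in A_1(X_k)$ by construction, we conclude $\alpha = [W] - [R] \in A_1(X_k)$, establishing surjectivity.
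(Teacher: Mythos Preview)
The paper does not contain a proof of this statement; it is quoted from \cite{Kollar_Tian} without argument, so there is no proof here to compare your proposal against directly.

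That said, your injectivity argument is fine: the hypothesis on $k$ supplies a degree-$1$ zero-cycle on the geometrically irreducible variety $X_k$, and Theorem~\ref{thm:inj_ae}(1) then kills the kernel.

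For surjectivity, your outline is in the right spirit, but the step you yourself flag as crucial is exactly where a real gap remains. You propose to make the parameter curve $B$ \emph{rational} by attaching free curves; this is neither what the SRC-in-codimension-$1$ hypothesis delivers nor what is actually needed. What the hypothesis provides---see the proof of Theorem~\ref{thm:filtered} in this paper and the references there to \cite[Theorem~43, Corollary~59]{Kollar_Tian}---is that after gluing on suitable free-curve configurations, the resulting nodal curve becomes a \emph{smooth point} of an appropriate Hilbert scheme. Smoothness forces a unique geometrically irreducible component $W$ through that point. Provided the entire construction (the auxiliary curve $R$ and the added free curves) is run Galois-equivariantly from the outset, $W$ is Galois-stable and hence descends to a geometrically irreducible $k$-variety; your degree-$1$ zero-cycle and pushforward steps then go through. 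Note also that in the statement of \cite[Theorem~1]{Kollar_Tian} the curve $R$ is an \emph{output} of the theorem, not an input you may prescribe; arranging $R$ to be $k$-defined therefore requires running that construction equivariantly rather than fixing $R$ in advance.

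In short, the strategy is sound, but ``make $B$ rational'' should be replaced by ``make the relevant point of the Hilbert scheme smooth, so the component through it is unique and Galois-stable.''
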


We apply this theorem to prove the geometric part (\ref{1.10.1}) of Theorem \ref{thm:integralTate}.

For the arithmetic part, we recall the following definitions,  recently revisited by Benoist-Ottem, Voisin, Scavia-Suzuki, and Benoist in \cite{BO_Coniveau, VoisinConiveau, ScaviaSuzuki_examples, Benoist2022Steenrod}.

\begin{defn}\label{def:filtration_1}
Let $X$ be a smooth projective variety of dimension $d$ defined over an algebraically closed field. 
Given an abelian group $A$ that is one of $\ZZ/m\ZZ, \ZZ_\ell, \ZZ, \QQ$, or $\QQ_\ell$, where $m$ is a positive integer, $\ell$ a prime number, both of which are relative prime to the field characteristic, we simply write $H^k(X, A)$ as either the \'etale cohomology with coefficient $A$ or the singular cohomology with coefficient $A$ (if $X$ is a complex variety), and $H_i(*, A)$ as the \'etale Borel-Moore homology or singular Borel-Moore homology. We define the following filtrations on the  cohomology group $H^k(X, A)$.
\begin{enumerate}
\item The coniveau filtration $\{N^*H^k(X, A)\}$ with
\[
N^cH^k(X, A):=\sum_{f: Y \to X}f_*(H_{2d-k}(Y, A)) \subset H_{2d-k}(X, A)\cong H^k(X, A), 
\]
where the sum is taken over all morphisms from projective algebraic sets $f: Y \to X, \dim Y \leq d-c$;
\item  The strong coniveau filtration $\{\tilde{N}^*H^k(X, A)\}$ with
\[
\tilde{N}^cH^k(X, A):=\sum_{f: Y \to X}f_*(H_{2d-k}(Y, A)) \subset H_{2d-k}(X, A)\cong H^k(X, A),
\]
where the sum is taken over all morphisms from \textit{smooth} projective varieties $f: Y \to X, \dim Y \leq d-c$.
\end{enumerate}
\end{defn}

\begin{rem}
    If $X$ is smooth but not necessarily projective, we may similarly define the coniveau filtration by taking the sum over all algebraic sets $Y$ with a proper morphism $f: Y \to X$.
\end{rem}

In a recent preprint \cite{ScaviaSuzuki2023coniveau}, Scavia and Suzuki systematically investigated the question of the surjectivity in the arithmetic part and related this question to the comparison of the strong coniveau filtration and the coniveau filtration.

\begin{prop}\cite[Proposition 5.6]{ScaviaSuzuki2023coniveau}\label{thm:SS3}
Let $X$ be a smooth projective geometrically connected variety over a finite field $\FF$ and $\ell$ be a prime number invertible in $\FF$. Then the algebraic Walker Abel-Jacobi map restricted to cycles algebraically equivalent to $0$
\[
\text{CH}^r(X)_\text{alg}\otimes \ZZ_\ell  \to H^1(\FF , N^{r-1}H^{2r-1}(\overline{X}, \ZZ_\ell(r)))
\]
has the same image as
\[
H^1(\FF , \tilde{N}^{r-1}H^{2r-1}(\overline{X}, \ZZ_\ell(r))) \to H^1(\FF , {N}^{r-1}H^{2r-1}(\overline{X}, \ZZ_\ell(r))).
\]
\end{prop}
Here the algebraic Walker Abel-Jacobi map (restricted to cycles algebraically equivalent to $0$)
\[
\text{CH}^r(X)_\text{alg}\otimes \ZZ_\ell  \to H^1(\FF , N^{r-1}H^{2r-1}(\overline{X}, \ZZ_\ell(r)))
\]
is a lifting of the cycle class map 
\[
\text{CH}^r(X)_\text{alg}\otimes \ZZ_\ell  \to H^1(\FF , H^{2r-1}(\overline{X}, \ZZ_\ell(r))),
\]
constructed in \cite[Formula (4.2), Section 4.2]{ScaviaSuzuki2023coniveau}.
\begin{rem}
In the paper of Scavia-Suzuki \cite{ScaviaSuzuki2023coniveau}, the authors use $\text{CH}^r(X)_\text{F-alg}$ to denote the group $\text{CH}^r(X)_\text{alg}$ in our notations, and use $\text{CH}^r(X)_\text{alg}$ to denote a different group that contains $\text{CH}^r(X)_\text{F-alg}$ (see the first two paragraphs in \cite[Section 4.1]{ScaviaSuzuki2023coniveau}). 
\end{rem}

Therefore part (\ref{1.10.2}) of Theorem \ref{thm:integralTate} is implied by the following theorem, which generalizes a result of Voisin in \cite{VoisinConiveau} (see Section \ref{sec:coniveau} for a detailed account).
\begin{thm}[=Theorem \ref{thm:image_sheaf}]
Let $X$ be a $d$-dimensional smooth projective variety defined over an algebraically closed field, which is separably rationally connected in codimension one. Fix a prime number $\ell$ distinct from the characteristic.
Then the following two subgroups of $H^{2d-3}_\et(X, \ZZ_\ell)
$ agree:
\[
\tilde{N}^{d-2}H_{\et}^{2d-3}(X, \ZZ_\ell)=N^{d-2}H^{2d-3}_\et(X, \ZZ_\ell).
\]
\end{thm}
 The proof relies on the techniques developed in the joint work with Koll\'ar \cite{Kollar_Tian}, which allows us to prove a structural result about the space of  one-cycles on a smooth projective variety  that is SRC in codimension one in Section \ref{sec:space} Theorem \ref{thm:filtered}, and deduce its consequences for the various coniveau filtrations in Section \ref{sec:complex} and Section \ref{sec:general}.

 Finally, combining results of Esnault-Wittenberg \cite{WittenbergEsnault_0_cycle}, and Bloch-Srinivas \cite{BlochSrinivas} (which uses Merkujev-Suslin's theorem on $K_2$ \cite[Theorem 11.5]{Merkujev_Suslin}), one proves that hypotheses (A), (B), (C) and (D) in Theorem \ref{thm:integralTate} holds for rational surface fibrations over a curve. This proves Theorem \ref{thm:integralTateSurface}.

\subsection{Related work}
\subsubsection{The Integral Tate conjecture for one-cycles}

 Colliot-Th\'el\`ene and Kahn studied the surjectivity of the $\ZZ_\ell$-coefficient cycle class map of codimension $2$ cycles and its relation with the degree $3$ unramified cohomology $H^3_{\text{nr}}(X, \QQ_\ell/\ZZ_\ell(2))$ in \cite{CTKahnCycleCodim2} (over the complex numbers, such a relation is studied in \cite{CTVoisin}). For more information, see Section \ref{sec:ITateHasse}.

Over finite fields, several groups of authors proved the vanishing of the degree $3$ unramified cohomology on certain threefolds and deduced the integral Tate conjecture for one-cycles, and thus proving Conjecture \ref{conj:CT1} and \ref{conj:CT2} for some surfaces defined over a global function field. See, for example, \cite{Parimala_Suresh_ruled} for the case of conic bundles over a surface for which the Tate conjecture is known, \cite{CT_Scavia_ITC} and \cite{Scavia_ITC} for the case of a product of a curve with a $\text{CH}_0$-trivial surface.

\subsubsection{The Local-global principle}
There is a vast literature on the local-global principles for zero-cycles/rational points on geometrically rational surfaces. Let us only mention a few relevant results and refer the readers to survey articles such as \cite{WittenbergRationalPointSurvey} etc. for a more comprehensive list.

Work of Colliot-Th\'el\`ene \cite{CT_zero_cycle_ruled} made important progress towards Conjecture \ref{conj:E} for ruled surfaces defined over number fields. Together with later works of Frossard \cite{Frossard}, van Hamel \cite{vanHamel_SB_curve}, they proved that Conjecture \ref{conj:E} holds  if the Jacobian of the base curve has finite Tate-Shafarevich group.
The global function field version for ruled surfaces is studied by Parimala-Suresh \cite{Parimala_Suresh_ruled}, which depends on the computation of the degree $3$ unramified cohomology as explained above. 
Assuming that the base surface satisfies the Tate conjecture, they establishes the integral Tate conjecture for conic bundle over surfaces defined over finite fields and Conjecture \ref{conj:E}.
Colliot-Th\'el\`ene-Swinnerton-Dyer \cite{CTSDPencil} studied the integral Tate conjecture and the local-global principle for zero-cycles on surfaces of the form $(f+tg=0)\subset \PP^3 \times \AAA^1_t$. In addition to proving that the integral Tate conjecture for $1$-cycles holds for such surfaces, for cubic surfaces of this form, they also prove that the existence of rational points is equivalent to the existence of a  zero-cycle of degree $1$ for such surfaces.

The study of complete intersections of two quadrics has also drawn a lot of attention. It started with the work of Colliot-Th\'el\`ene, Sansuc, and Swinnerton-Dyer \cite{CTSD}. Heath-Brown proved that the Hasse principle for rational points holds for smooth complete intersections of two quadrics in $\PP^7$ over number fields \cite{Heath-Brown_2_quadrics}, see also a different proof by Colliot-Th\'el\`ene \cite{CT2022CI22}. Under the assumption on the finiteness of the Tate-Shafarevich group of elliptic curves and the validity of Schinzel's hypothesis, Wittenberg proved that the Hasse principle holds for such complete intersections in $\PP^5$ and some cases in $\PP^4$ (with the Brauer-Manin obstruction) over number fields \cite{Wittenberg_Thesis}.
The author has shown in a previous paper \cite{Hasse} that the Hasse principle for rational points holds for smooth complete intersections of two quadrics in $\PP^n (n\geq 5)$ defined over a global function field of odd characteristic.

One is also interested in weak approximation for rational points on a del Pezzo surface of degree $4$.
For a del Pezzo surface of degree $4$ over a number field, assuming that there is a rational point, Salberger and Skorobogatov \cite{SS1991} proved that the Brauer-Manin obstruction is the only obstruction to weak approximation. 
As the author has been informed by Colliot-Th\'el\`ene, essentially the same argument also proves that over a global function field of odd characteristic, the Brauer-Manin obstruction is the only obstruction to weak approximation once there is a rational point. 
In characteristic $2$, some partial results are contained in the joint work of the author with Letao Zhang \cite{WACubicGlobal}.

\subsubsection{Coniveau and strong coniveau filtration} Voisin \cite{VoisinConiveau} was the first to study the two filtrations on $H^{2d-3}$ for rationally connected varieties and threefolds over the complex numbers. 
In particular, she related the strong coniveau filtration to the filtration given by families of one-cycles (Lemma \ref{lem:inclusion}), a strategy that we also follow. 
On the other hand, there are many examples showing that the two filtrations differ for a general smooth projective variety \cite{BO_Coniveau, ScaviaSuzuki_examples,Benoist2022Steenrod}.

\subsection{Structure of the paper}
Section \ref{sec:pre} consists of some preliminary results. 
We explain some basic properties of being SRC in codimension one (Section \ref{sec:SRC1}), the Bloch-Kato conjecture (the norm residue isomorphism theorem), the Beilinson-Lichtenbaum conjecture (Section \ref{sec:bkbl}), and functoriality of the higher cycle class map from Bloch's higher Chow groups to Borel-Moore homology (Section \ref{sec:bm}), several notions of filtrations due to Voisin/Benoist-Ottem (Section \ref{sec:coniveau}), and we verify that the hypotheses in Theorem \ref{thm:integralTate} only depends on the stable birational class of the generic fiber over $\overline{\FF}_q(B)$ (Section \ref{sec:bir}).

The main parts of the paper can be seen as applications of the general method developed in \cite{Kollar_Tian}. 
The first application is in Section \ref{sec:space}, where we describe some structural results of the space of one-cycles. We use Corollary 35 and Theorem 48 in \cite{Kollar_Tian} to prove Theorems \ref{thm:filtered_sm}, \ref{thm:filtered} about the space of one-cycles in Section \ref{sec:space}.
These structural results are then used in Sections \ref{sec:complex} \ref{sec:general} to study the coniveau filtration on $H_3$ of a smooth projective variety that is separably rationally connected in codimension one (Theorems \ref{thm:image}, \ref{thm:image_sheaf}), which, in turn, is the foundation for the main theorems about the integral Tate conjecture (Theorems \ref{thm:integralTate}, \ref{thm:integralTateSurface}) and the local-global principles (Theorem \ref{thm:zerocycle}), all of which are proved in Section \ref{proof}.

In Section \ref{sec:complex}, we study the coniveau filtration on $H_3$ of complex varieties using Lawson homology and the result is topological in nature. 
These results are not needed for the arithmetic applications.
But the proof illustrates the main ideas without using the heavy machinery of Chow sheaves.
Moreover, this approach gives ``integral" results, some of which are stronger than the results that one can obtain in the general case. 
We present this first to give the readers some flavor of the argument. 
To deal with the general case requires the use of Chow sheaves introduced by Suslin and Voevodsky in \cite{SV_Chow_Sheaves}, hence more abstract. 
Unfortunately, in this case we only have results for torsion coefficients and have to pass to the inverse limit from time to time. 
The criterion for the surjectivity in the arithmetic part is proved in Corollary \ref{cor:IHC}. 

Section \ref{sec:ITateHasse} reviews the relation between the integral Tate conjecture for  one-cycles, the degree $3$ unramified cohomology, and the local-global principle. The main theorems are proved in Section \ref{proof}.  

In Section \ref{sec:example}, we give some examples where the criterion in Theorem \ref{thm:integralTate} can be effectively checked.
Section \ref{sec:questions} consists of some questions and expectations.

The following diagram shows the dependence of the main theorems.

\begin{tikzpicture}[
roundnode/.style={rectangle, draw=red!60, fill=black!5, very thick, minimum size=7mm},
squarednode/.style={rectangle, draw=black!60, fill=black!5, very thick, minimum size=5mm},
]

\node[squarednode]      (structure)  {Theorem \ref{thm:filtered_sm}, \ref{thm:filtered}};

\node[squarednode](image)[below=of structure]
{Theorem \ref{thm:image_sheaf}};

\node[squarednode](surj)[left=of image]
{Theorem \ref{thm:sheaf_surj}};

\node[squarednode](corollary)[right=of image]{Corollary \ref{cor:IHC}};

\node[squarednode](surj_L)[left=of structure]
{Theorem \ref{thm:surjectivity}, \ref{thm:image}};

\node[roundnode]        (KT1)       [above=of structure] {\cite[Corollary 35]{Kollar_Tian}};

\node[roundnode]        (KT3)       [right=of KT1] {\cite[Theorem 48]{Kollar_Tian}};

\node[roundnode]        (SS)       [right=of structure] {\cite[Proposition 5.6]{ScaviaSuzuki2023coniveau}};

\node[roundnode]        (KT2)       [left=of KT1] {\cite[Theorem 7]{Kollar_Tian}};

\node[squarednode]      (geometric)       [below=of surj] {part (1) of Theorem \ref{thm:integralTate}};

\node[squarednode]      (arithmetic)   [right=of geometric]     {part (2) of Theorem \ref{thm:integralTate}};

\node[squarednode]      (ITC)       [below=of geometric] {Theorem \ref{thm:integralTate}};

\node[squarednode]      (ITC_surface)       [right=of ITC] {Theorem \ref{thm:integralTateSurface}};

\node[squarednode]      (lg)       [right=of ITC_surface] {Theorem \ref{thm:zerocycle}};

\draw[->] (KT3.south) -- (structure.north);
\draw[->] (KT1.south) -- (structure.north);
\draw[->] (structure.west) -- (surj_L.east);
\draw[->] (structure.south) -- (surj.north);
\draw[->] (surj.east) -- (image.west);
\draw[->] (image.east) -- (corollary.west);
\draw[->] (corollary.south)--(arithmetic.north);
\draw[->] (KT2.west) .. controls +(left: 5mm)and +(left:5 mm) .. (geometric.west);
\draw[->] (geometric.south) -- (ITC.north);
\draw[->] (arithmetic.south)-- (ITC.north);
\draw[->] (ITC.east) -- (ITC_surface.west);
\draw[->] (ITC_surface.east) -- (lg.west);
\draw[->] (SS.south) -- (corollary.north);
\end{tikzpicture}\\

\textbf{Acknowledgment:} I would like to thank Jean-Louis Colliot-Th\'el\`ene and Olivier Wittenberg for many helpful and constructive comments. 
I am grateful to J\'anos Koll\'ar for generously sharing his ideas and for co-authoring the article \cite{Kollar_Tian} which produces stronger results than those proved in the first version of this paper, 
and which provides the technical results needed for this paper. 
Finally, I thank the referees, who have provided many invaluable suggestions on how to improve the exposition. 
This work is partially supported by NSFC grants No. 11890660, No.11890662, No. 11871155.

\section{Preliminaries}\label{sec:pre}
In this section we collect some basic results that will be useful later, most (if not all) of which are well-known to experts. 
We include the proof for those results for which the author does not know a proof in the published literature for the sake of completeness.
\subsection{Separable rational connectedness in codimension one}\label{sec:SRC1}
We prove some basic properties of varieties that are SRC in codimension one in this section.
\begin{defn}
    A \emph{separably rationally connected fibration over a curve} is a flat morphism from a normal proper variety $Y$ to a smooth proper curve $C$ such that the smooth locus of the geometric generic fiber has a very free curve.
\end{defn}
\begin{lem}\label{lem:classification}
    Let $X$ be a smooth proper variety that is separably rationally connected in codimension one. Then $X$ is either birational to a separably rationally connected fibration over a curve, or separably rationally connected, or rationally connected by a family of free curves. The third case only happens in positive characteristic. 
\end{lem}

\begin{proof}
    One can take the rational quotient of the variety $X$ by the relation generated by free rational curves (\cite[Example IV.4.11, Theorem IV4.13]{Kollar96} or \cite[Theorem 1.3]{starr2006maximal}). 
    This gives an open subset $U \subset X$ and a morphism $U \to Z$ to a variety $Z$ such that the closure of a fiber over a geometric point of $Z$ is the equivalence class of points connected by a chain of free rational curves. 
    The SRC in codimension one condition implies that for any general geometric point, there is at least one free rational curve passing through it such that the deformation of this free rational curve containing the geometric point covers at least a divisor in $X$. So the quotient $Z$ is either a curve or a point. 

    If $Z$ is a point, then two general points of $X$ are connected by a chain of free rational curves.  A general smoothing of this chain of free rational curves gives a free rational curve that connects two general points of $X$. That is, $X$ is rationally connected by a family of free rational curves in the terminology of \cite{ShenFRC}. In characteristic $0$, separable rational connectedness is the same as rational connectedness for smooth proper varieties. 

    If $Z$ is a curve, we may assume that $Z$ is smooth projective and extend the morphism to an open subset $V \subset X$ so that the complement of $V$ has codimension at least $2$. General deformations of free rational curves lie in $V$. Lemma 2.5 in \cite{starr2006maximal} says that every free rational curve in $V$ that has non-empty intersection with the smooth locus of $V \to Z$ lie in the smooth locus. In particular, by definition of SRC in codimension one, there are free rational curves in the smooth locus of $V \to Z$ which are very free when considered as curves in the fiber. So $X$ is birational to a separably rationally connected fibration over a curve that is a normal projective compactification of $V$ with a fibration structure over $Z$.
\end{proof}

\begin{rem}
    In positive characteristic, examples of the third kind can be constructed using purely inseparable covers of SRC varieties (\cite[Example V.5.19]{Kollar96}). They belong to the class of \emph{freely rationally connected varieties} studied in \cite{ShenFRC}.
\end{rem}

\begin{defn}
    Let $X$ be a variety defined over an algebraically closed field $k$ and $Z \subset X$ a closed subset, endowed with the reduced closed subscheme structure. The rational Chow group of zero-cycles on $X$ is \emph{universally supported in $Z$} if for every algebraically closed field extension $L/k$, the map
    \[
    \text{CH}_0(Z_L)\otimes \QQ \to \text{CH}_0(X_L)\otimes \QQ
    \]
    is surjective.
\end{defn}
The following is well-known (see for example \cite[Proposition 1]{BlochSrinivas}).
\begin{lem}\label{lem:dd}
    Let $X$ be a smooth proper variety defined over an algebraically closed field $k$ and $Z \subset X$ a closed subset. The rational Chow group of zero-cycles on $X$ is universally supported in $Z$ if and only if there is a decomposition of the diagonal of the form
    \[
    N \Delta_X=\Gamma_1+\Gamma_2 \in \text{CH}_{\dim X}(X \times X),
    \]
    where $N$ is an integer, $\Gamma_1$ is a cycle supported in $X \times Z$, and $\Gamma_2$ is a cycle supported in $D\times X$ for some divisor $D \subset X$.
\end{lem}
As a corollary of the structure of varieties that are SRC in codimension one, we have the following.
\begin{cor}\label{rem:support}
    Let $X$ be a smooth proper variety that is separably rationally connected in codimension one. Then the rational Chow group of zero-cycles on $X$ is universally supported on a curve.
    Equivalently, there is a one dimensional closed subset $Z\subset X$, a divisor $D \subset X$, and a decomposition of the diagonal of the form
    \[
    N \Delta_X=\Gamma_1+\Gamma_2 \in \text{CH}_{\dim X}(X \times X),
    \]
    where $N$ is an integer, $\Gamma_1$ is a cycle supported in $X \times Z$, and $\Gamma_2$ is a cycle supported in $D\times X$. 
\end{cor}
\begin{proof}
    Because of the structure of $X$ described in Lemma \ref{lem:classification}, we may take the curve to be a general smooth complete intersection of very ample divisors. Then every geometric point is connected to such a curve by a chain of rational curves. The rest follows from Lemma \ref{lem:dd}.
\end{proof}
\subsection{The Bloch-Kato conjecture and the Beilinson-Lichtenbaum conjecture}\label{sec:bkbl}
In this section, we collect several results related to the Bloch-Kato conjecture and give references.
\begin{thm}[the Bloch-Kato conjecture, or the norm residue isomorphism theorem, {\cite[Theorem 6.1]{Voevodsky_Bloch_Kato}}]\label{bk}
    Let $k$ be a field and $m$ a positive integer relatively prime to the characteristic of $k$. The norm residue map from Milnor K-group to \'etale cohomology induces an isomorphism
    \[
    K^M_i(k)/m K^M_i(k) \xrightarrow{\cong} H^i_\et(k, \mu_m^{\otimes i})
    \]
    for all non-negative integers $i$.
\end{thm}
The case $i=2$, also called the Merkurjev-Suslin theorem on $K_2$, is proved by Merkurjev-Suslin earlier \cite[Theorem 11.5]{Merkujev_Suslin}.
It is known that the Bloch-Kato conjecture is equivalent to the following conjecture of Beilinson-Lichtenbaum (\cite[Theorem 7.4]{Suslin_Voevodsky_BK_BL}, \cite[Theorem 1.1]{Geisser_Levine} ).
\begin{thm}[the Beilinson-Lichtenbaum conjecture]\label{bl}
    Fix a field $k$. For all positive integers $q$, for all positive integers $m$ relatively prime to the characteristic of $k$, the natural map 
    \[
    \ZZ(q) \otimes \ZZ/m\ZZ \xrightarrow{\cong}\tau^{\leq q}R\pi_*(\mu_m^{\otimes q})
    \]
is an isomorphism in the derived category of Zariski sheaves on a smooth
variety $X$ over $k$, where $\ZZ(q) (q \geq 0)$ is the motivic complex defined in \cite[Definition 3.1]{Suslin_Voevodsky_BK_BL}, $\pi: X_\et \to X_{\text{Zar}}$ is the morphism from the \'etale site of $X$ to the Zariski site of $X$, and $\tau$ is the good truncation functor.
\end{thm}

For later use, we record the following corollary. It is well-known to experts, but it is not so easy to find a proof in the literature. Over the complex numbers with singular cohomology, the statement is \cite[Proposition 2.8]{BO_Coniveau}, which depends on \cite[Th\'eor\`eme 3.1]{CTVoisin}. 
\begin{cor}\label{cor:torsionfree}
    Let $X$ be a smooth variety defined over an algebraically closed field $k$. Fix $\ell$ a prime number distinct from the characteristic of $k$.  All the torsion classes in the \'etale cohomology $H^{i+1}(X, \ZZ_\ell(i)) (i \geq 0)$ are supported in divisors, and $H^{i+1}(X, \ZZ_\ell(i))/N^1H^{i+1}(X, \ZZ_\ell(i))$ is torsion free.

    The same conclusion holds if $X$ is a complex variety and we use singular cohomology with $\ZZ$-coefficients.
\end{cor}
\begin{proof}
    For each positive integer $n$, we have a long exact sequence
    \[
    \ldots \to H^i(X, \ZZ_\ell(i))\xrightarrow{\cdot \ell^n} H^i(X, \ZZ_\ell(i))\xrightarrow{\mod \ell^n} H^i(X, \mu_{\ell^n}^{\otimes i})\to H^{i+1}(X, \ZZ_\ell(i)) \to \ldots,
    \]
    and a similar long exact sequence for any open subset of $X$.
    A torsion class $\alpha_0$ in $H^{i+1}(X, \ZZ_\ell(i))$ comes from a class $\alpha \in H^i(X, \mu_{\ell^n}^{\otimes i})$ for some $n$.
    By the Bloch-Kato conjecture (Theorem \ref{bk}), we have a surjection
    \[
    K_i^M(k(X)) \to K_i^M(k(X))/\ell^n \xrightarrow{\cong} H^{i}(k(X), \mu_{\ell^n}^{\otimes i}).
    \]
    So the image of $\alpha$ in $H^{i}(k(X), \mu_{\ell^n}^{\otimes i})$
     is the image of a finite sum of symbols
     \[
     \sum_j (a_{j1}, \ldots, a_{ji}), a_{j1}, \ldots, a_{ji} \in k(X)^*.
     \]
     There is an open subset $U\subset X$, where all the rational functions involved in this finite sum of symbols are regular functions on $U$, and thus define a class $\tilde{\alpha} \in K_i^M(U)$.
     Then  $\tilde{\alpha}$ defines a cohomology class $[\tilde{\alpha}]$ in
    \[
    H^i(U, \ZZ_\ell(i))=\lim \limits_{\xleftarrow[m]{}}H^i(U, \mu_{\ell^m}^{\otimes i})
    \]
    via the compatible maps $$\wedge^i H^0(U, \mathbb{G}_m) \to \wedge^i H^1(U, \mu_{\ell^m})\to H^i(U, \mu_{\ell^m}^{\otimes i}).$$
    The image of the cohomology class $[\tilde{\alpha}]$ under the composition 
    \[
    H^i(U, \ZZ_\ell(i)) \xrightarrow{\mod \ell^n} H^i(U, \mu_{\ell^m}^{\otimes i}) \to H^i(k(X), \mu_{\ell^n}^{\otimes i})\]
     maps to the image of $\alpha$ in $H^i(k(X), \mu_{\ell^n}^{\otimes i})$.
    So there is an open subset $V\subset U$ such that the cohomology class $[\tilde{\alpha}]|_V$ 
     maps to $\alpha|_V$ via the map
    \[
    H^i(V, \ZZ_\ell(i)) \xrightarrow{\mod \ell^n} H^i(V, \mu_{\ell^n}^{\otimes i}).
    \]
    Therefore $\alpha_0|_V=0$.
    That is, the class $\alpha_0$ is supported in a divisor.

    For complex varieties, \cite[Proposition 2.8]{BO_Coniveau}, which depends on \cite[Th\'eor\`eme 3.1]{CTVoisin}, shows that torsion classes in singular cohomology with $\ZZ$-coefficient are supported on divisors.

     Let $\alpha$ be a class in $H^{i+1}(X, \ZZ_\ell(i))$ that becomes torsion in $$H^{i+1}(X, \ZZ_\ell(i))/N^1H^{i+1}(X, \ZZ_\ell(i)).$$ Then some multiple of $\alpha$ is supported in a codimension one algebraic set $D$ in $X$. We have the localization sequence for \'etale cohomology:
    \[
    \ldots \to H^{i+1}_D(X, \ZZ_\ell(i)) \to H^{i+1}(X, \ZZ_\ell(i)) \to H^{i+1}(X\backslash D, \ZZ_\ell(i)) \to \ldots.
    \]
    Since torsion classes are supported on divisors for \emph{any} smooth varieties, in particular, $X\backslash D$,  there is a codimension one algebraic set $D'$ in $X$ such that $\alpha$ restrict to the zero class in $H^{i+1}(X\backslash (D\cup D'), \ZZ_\ell(i))$. That is, $\alpha$ is supported in $D\cup D'$.

    The same argument works for singular cohomology on smooth complex varieties.
\end{proof}

\subsection{Higher Chow groups, Borel-Moore homology, and higher cycle class map}\label{sec:bm}
Higher Chow groups were first defined by Bloch in \cite{Bloch_higher_Chow}. 
\begin{defn}\label{def:higherchow}
    Let $k$ be a scheme and let $X$ be an equi-dimensional finite type $k$-scheme. We write $z^q(X,p)$ for the free abelian group generated by all codimension $q$ subvarieties on $X \times \Delta^p$ which intersect all faces $X \times \Delta^j$ properly for all $j<p$, where $$\Delta^p=\SP k[t_0, \ldots, t_p]/\langle \sum_j t_j=1 \rangle$$ is the algebraic $p$-simplex. 
    We write $z^q(X, *)$ for the associated chain complex, whose boundary maps are alternating sums of restrictions to faces. 
    
    For any abelian group $A$, the higher Chow group with coefficient $A$, denoted by $\text{CH}^q(X, p, A)$, is the $p$-th homology of the complex $z^q(X, *)\otimes_\ZZ A$. 
    We define $\text{CH}_i(X, j, A)$ as $\text{CH}^{\dim X-i}(X, j, A)$. We may omit $A$ if $A=\ZZ$.
\end{defn}
Later, higher Chow groups were shown to be naturally isomorphic to motivic cohomology for smooth varieties. 
\begin{thm}\cite[Corollary 2]{Voevodsky_higherChow_motivic}
    For any field $k$, any smooth variety $X$ over $k$ and any $p, q \in \ZZ_{\geq 0}$, there is a natural isomorphism
    \[
    H^{2q-p}(X, \ZZ(q)) \xrightarrow{\cong} \text{CH}^q(X, p),
    \]
    and the same holds for the motivic cohomology and higher Chow groups with coefficients.
\end{thm}
By definition, the groups in the above theorem vanish if $p<0$ or $q<0$.
The motivic cohomology group $H^{2q-p}(X, \ZZ(q))$ vanishes if $2q-p<0$ and $X$ is smooth \cite[Theorem 19.3]{Mazza_Voevodsky_Weibel_Lecture_motivic_cohomology}.

As a corollary of this theorem and the Beilinson-Lichtenbaum conjecture \ref{bl},
we have
\begin{cor}\label{cor:chow_zar}
    Let $X$ be a smooth variety over a perfect field $k$ and $m$ an integer relatively prime to the characteristic. For $p\geq 0, q \geq 0$, we have an isomorphism
    \[
    \text{CH}^q(X, p, \ZZ/m)\xrightarrow{\cong} H^{2q-p}(X, \ZZ(q)\otimes\ZZ/m) \xrightarrow{\cong} H_{\text{Zar}}^{2q-p}(X, \tau^{\leq q}R\pi_*(\mu_m^{\otimes q})).
    \]
\end{cor}

At least for the purpose of this paper, it is more natural to view higher Chow groups as a Borel-Moore type homology theory.
And one can define a higher cycle class map to the Borel-Moore homology over a field $k$.  This material is well-known to experts. When $k$ is an algebraically closed field, we find the details in \cite[Section 4]{kok_Zhou}.
\begin{defn}
    For a scheme $f: X \to \SP k$ defined over a perfect field $k$, and fix $\Lambda=\ZZ/m\ZZ$ such that $m$ is relatively prime to the characteristic of $k$, the Borel-Moore homology is defined as
\[
H^{\text{BM}}_n(X, \Lambda(i)):=H^{-n}_\et(X, f^!\Lambda(-i)).
\]
For a prime number $\ell$ different from the characteristic of $k$, we define
\[
H^{\text{BM}}_n(X, \ZZ_\ell(i)):=\lim \limits_{\xleftarrow[s]{}}H^{\text{BM}}_n(X, \ZZ/\ell^s(i)).
\]
\end{defn}

In particular, if $X$ is smooth of pure dimension $d$, $f^!=f^*$ and we have a natural isomorphism
\begin{equation}\label{sm}
    H^{\text{BM}}_n(X, \Lambda(i))\xrightarrow{\cong} H_\et^{2d-n}(X, \Lambda(d-i)).
\end{equation}

Borel-Moore homology is functorial for proper push-forward, \'etale pull-back, and satisfies the localization long exact sequence (\cite[Example 2.1]{BlochOgus}). Since $f^!=f^*$ on smooth varieties, for a morphism $g: X \to Y$ between smooth varieties, we have pull-back via the isomorphism (\ref{sm}):
\[
g^*: H^{\text{BM}}_{2\dim Y-n}(Y, \Lambda(\dim Y-i)) \to H^{\text{BM}}_{2\dim X-n}(X, \Lambda(\dim X-i)).
\]

In \cite[Definition 4.2]{kok_Zhou}, for any variety $X$ defined over an algebraically closed field, the authors define a higher cycle class map
\begin{equation}\label{eq:highclassmapBM}
    cl_{i,j}: \text{CH}_i(X, j, \Lambda) \to H^{\text{BM}}_{2i+j}(X, \Lambda(i)). 
\end{equation}
It seems to the author that it is not necessary to assume that the base field is algebraically closed for their definition and results that we quote in the following to hold. 
But to be consistent with the reference \cite{kok_Zhou}, and since we will only need the results for varieties defined over algebraically closed fields, we work over an algebraically closed field $k$ in the following.

For smooth varieties over an algebraically closed field $k$, by identifying the Borel-Moore homology with \'etale cohomology, this cycle class map can also be defined as (\cite[Definition 4.2]{kok_Zhou})
\begin{equation}\label{eq:highclassmapet1}
    cl^{d-i,j}: \text{CH}^{d-i}(X, j, \Lambda) \to H_{\text{\'et}}^{2d-2i-j}(X, \Lambda(d-i)). 
\end{equation}

\begin{rem}\label{rem:class}
    The higher cycle class map (\ref{eq:highclassmapBM}) is defined via a change of topology: higher Chow group of $X$ is computed as the Hom-set between two objects in the derived category of Voevodsky motives in the cdh topology \cite[Proposition 4.1]{kok_Zhou}:
\[
\text{CH}_i(X, j, \Lambda)\cong \text{Hom}_{DM_{cdh}}(\Lambda(i)[2i+j], Rf_*f^!\Lambda),
\]
 while Borel-Moore homology is computed as the Hom-set between two objects in the derived category of Voevodsky motives in the h topology:
\[
H_{2i+j}^{\text{BM}}(X, \Lambda(i))\cong \text{Hom}_{DM_{h}}(\Lambda(i)[2i+j], Rf_*f^!\Lambda).
\] 
Then the higher cycle class map is induced by the ``realization functor" (\cite[9.1.1]{Cisinski_Deglise_mix_motive})
\[
DM_{cdh} \to DM_h.
\]
 The definition of higher cycle class maps for smooth varieties is obtained from this higher cycle class map by applying duality in $DM_{cdh}$ and $DM_h$ (\cite[Definition 4.2]{kok_Zhou}).
\end{rem}

\begin{prop}[{\cite[Proposition 4.3]{kok_Zhou}}]\label{prop:func}
    For varieties defined over an algebraically closed field, the higher cycle class map $cl_{i, j}$ defined above is functorial with respect to proper push-forward, \'etale pull-back, and compatible with the localization long exact sequence.
\end{prop}

For smooth varieties,  Geisser-Levine defines a higher cycle class map \cite{Geisser_Levine}(see formula (3.3) and the ensuing paragraph).
\begin{align}\label{eq:higherclassmapet2}
    cl^{q, p}: &\text{CH}^q(X, p, \ZZ/m) \xrightarrow{\cong} H_{\text{Zar}}^{2q-p}(X, \tau^{\leq q}R\pi_*(\mu_m^{\otimes q}))\\ 
    \to &H_{\text{Zar}}^{2q-p}(X, R\pi_*(\mu_m^{\otimes q}))\xrightarrow{\cong} H^{2q-p}_\et(X, \mu_m^{\otimes q}), p, q\geq 0.\nonumber
\end{align}
\begin{prop}\cite[Proposition 4.4]{kok_Zhou}
    For smooth varieties defined over an algebraically closed field, the two definitions (\ref{eq:highclassmapet1}), (\ref{eq:higherclassmapet2}) agree (up to a natural isomorphism).
\end{prop}

As an immediate corollary, we have the following.
\begin{cor}\label{cor:ch_BM}
    Let $X$ be a reduced finite type scheme defined over an algebraically closed field. Fix a positive integer $m$ that is relatively prime to the field characteristic. Then for any $n, 0 \leq n \leq 2d$, the higher cycle class map induces an  isomorphism
    \[
    cl_{0, n}:\text{CH}_0(X, n, \ZZ/m) \xrightarrow{\cong} H_n^{\text{BM}}(X, \ZZ/m).
    \]
\end{cor}
\begin{proof}
    If $X$ is smooth of equi-dimension $d$, the higher cycle class map is a composition of isomorphisms
    \begin{align*}
        &\text{CH}_0(X, n, \ZZ/m) \xrightarrow{\cong} H_{\text{Zar}}^{2d-n}(X, \tau^{\leq d}R\pi_*(\mu_m^{\otimes d})) \\
        \xrightarrow{\cong} &H_{\text{Zar}}^{2d-n}(X, R\pi_*(\mu_m^{\otimes d}))\xrightarrow{\cong} H^{2d-n}_\et(X, \mu_m^{\otimes d}) \cong H_n^{\text{BM}}(X, \ZZ/m),
    \end{align*}
    where for the second isomorphism, we use the fact that $R\pi_*(\mu_m^{\otimes d})$ has no cohomology in degree $d+1$ and higher.
    
    In general, we use induction on dimension. Dimension $0$ case is trivially true. Write $X$ as the disjoint union of its singular locus $Z$ and smooth locus $U$. Then the statement is true for both $Z$ (by induction hypothesis) and $U$ (because it is smooth). Since the cycle class maps are compatible with the localization sequence for higher Chow groups and Borel-Moore homology (Proposition \ref{prop:func}), we have a commutative diagram of localization long exact sequences for higher Chow groups and Borel-Moore homology with $\ZZ/m$-coefficients:
    \[
    \begin{CD}
       \ldots \text{CH}_0(Z, j) @>>>\text{CH}_0(X, j) @>>>\text{CH}_0(U, j) @>>>\text{CH}_0(Z, j-1) \ldots \\
        @VVcl_{0, j}^Z V @VVcl_{0, j}^X V @VVcl_{0, j}^U V @VVcl_{0, j-1}^ZV\\
       \ldots H_{j}^{\text{BM}}(Z) @>>>H_{j}^{\text{BM}}(X) @>>>H_{j}^{\text{BM}}(U) @>>>H_{j-1}^{\text{BM}}(Z)\ldots .\\
    \end{CD}
    \]
    So the desired isomorphism follows from five lemma.
\end{proof}

Another immediate corollary of identifying the two higher cycle class maps \cite[Proposition 4.4]{kok_Zhou} is the following.
\begin{cor}[{\cite[Proposition 4.2(i), 4.7]{Geisser_Levine}}]\label{cor:pull-back}
    The higher cycle class maps (\ref{eq:highclassmapet1}), (\ref{eq:higherclassmapet2}) are compatible with pull-back between smooth varieties and the product structure on higher Chow groups and the \'etale cohomology for smooth varieties defined over an algebraically closed field.
\end{cor}

Let $X, Y$ be smooth projective varieties and $\Gamma \in \text{CH}_{\dim X+n}(X\times Y)\otimes \Lambda$ be a correspondence.  Since correspondence actions are compositions of pulling-back, taking product, and pushing-forward, we have the following.

\begin{prop}\label{class_cor}
    The higher cycle class maps for smooth projective varieties (\ref{eq:highclassmapet1}) (and hence also (\ref{eq:higherclassmapet2})) are compatible with correspondence action.
\end{prop}

\begin{lem}\label{lem:modN}
    Let $X$ be a variety defined over a field $k$. Fix a short exact sequence of abelian groups
    \[
    0 \to A \xrightarrow{f} B \xrightarrow{g} C \to 0.
    \]
    There is a long exact sequence of higher Chow groups with coefficients in $A, B, C$: 
    \[
    \ldots \to \text{CH}^q(X, p, A) \xrightarrow{id\otimes f} \text{CH}^q(X, p, B) \xrightarrow{id \otimes g} \text{CH}_q(X, p, C) \to \text{CH}^q(X, p-1, A) \to\ldots,
    \]
    for $0 \leq q \leq \dim X$.
\end{lem}
\begin{proof}
    This comes from taking homology of the short exact sequence of complexes:
    \[
    0 \to z^{q}(X, *)\otimes A \xrightarrow{id \otimes f}z^{q}(X, *) \otimes B\xrightarrow{id \otimes g}z^{q}(X, *)\otimes C \to 0.
    \]
    We have injection on the left because the group $z^{q}(X, p)$ is free.
\end{proof}

Finally, we define the higher cycle class maps
\[
\text{CH}_i(X, j, \ZZ_\ell) \to H_{2i+j}^{\text{BM}}(X, \ZZ_\ell(i))
\]
by the composition
\[
\text{CH}_i(X, j, \ZZ_\ell) \to \lim \limits_{\xleftarrow[n]{}} \text{CH}_i(X, j, \ZZ/\ell^n) \to  \lim \limits_{\xleftarrow[n]{}}H_{2i+j}^{\text{BM}}(X, \ZZ/\ell^n(i)) = H_{2i+j}^{\text{BM}}(X, \ZZ_\ell(i)).
\]
With this definition, we have the following.

\begin{prop}\label{prop:bockstein}
    Let $X$ be a variety defined over an algebraically closed field and $\ell$ a prime number different from the field characteristic. We have a commutative diagram of long exact sequences:
    \[
    \begin{CD}
        \ldots \to \text{CH}_i(X, j, \ZZ_\ell) @>\times \ell^n>> \text{CH}_i(X, j, \ZZ_\ell) @>>> \text{CH}_i(X, j, \ZZ/\ell^n) \\
         @VVV @VVV @VVV  \\
        \ldots \to H_{2i+j}^{\text{BM}}(X, \ZZ_\ell(i)) @>\times \ell^n >> H_{2i+j}^{\text{BM}}(X, \ZZ_\ell(i)) @>>> H_{2i+j}^{\text{BM}}(X, \ZZ/\ell^n(i)) 
    \end{CD}
    \]
    \[
    \begin{CD}
        \to {\text{CH}}_i(X, j-1, \ZZ_\ell) @>\times \ell^n >> \text{CH}_i(X, j-1, \ZZ_\ell) @>>> \text{CH}_i(X, j-1, \ZZ/\ell^n)\ldots \\
         @VVV @VVV @VVV  \\
        \to H_{2i+j-1}^{\text{BM}}(X, \ZZ_\ell(i)) @>\times \ell^n >> H_{2i+j-1}^{\text{BM}}(X, \ZZ_\ell(i)) @>>> H_{2i+j-1}^{\text{BM}}(X, \ZZ/\ell^n(i))\ldots
    \end{CD}
    \]
\end{prop}
\begin{proof}
    We have a commutative diagram of short exact sequences of coefficients ($n, m \geq 0$):
    \[
    \begin{CD}
        0 @>>> \ZZ_\ell @>\times \ell^n>> \ZZ_\ell @>\mod \ell^n>> \ZZ/\ell^n @>>> 0\\
        @| @VV\mod \ell^m V @VV\mod \ell^{n+m}V @| @|\\
        0 @>>> \ZZ/\ell^m @>>> \ZZ/\ell^{n+m} @>>> \ZZ/\ell^n @>>> 0.\\
    \end{CD}
    \]
    This induces a commutative diagram of long exact sequences (written vertically), 
    \[
    \begin{CD}
         \text{CH}_i(X, j, \ZZ_\ell) @>{\mod \ell^m}>> \text{CH}_i(X, j, \ZZ/\ell^m) @>cl_{i, j}>> H_{2i+j}^{\text{BM}}(X, \ZZ/{\ell^m}(i)) \\
        @VV\times \ell^n V @VVV @VVV \\
         \text{CH}_i(X, j, \ZZ_\ell) @>{\mod \ell^{n+m}}>> \text{CH}_i(X, j, \ZZ/{\ell^{n+m}}) @>cl_{i, j}>> H_{2i+j}^{\text{BM}}(X, \ZZ/{\ell^{n+m}}(i))\\
        @VV\mod \ell^n V @VV\mod \ell^n V @VV\mod \ell^n V \\
          \text{CH}_i(X, j, \ZZ/\ell^n) @=\text{CH}_i(X, j, \ZZ/\ell^n) @>cl_{i, j}>> H_{2i+j}^{\text{BM}}(X, \ZZ/{\ell^n}(i))\\
          @VVV @VVV @VVV\\
          \text{CH}_i(X, j-1, \ZZ_\ell)@>{\mod \ell^m}>> \text{CH}_i(X, j-1, \ZZ/\ell^m) @>cl_{i, j-1}>> H_{2i+j-1}^{\text{BM}}(X, \ZZ/{\ell^m}(i))\\
          @VV\times \ell^n V @VVV @VVV \\
         \text{CH}_i(X, j-1, \ZZ_\ell) @>{\mod \ell^{n+m}}>> \text{CH}_i(X, j-1, \ZZ/{\ell^{n+m}}) @>cl_{i, j-1}>> H_{2i+j-1}^{\text{BM}}(X, \ZZ/{\ell^{n+m}}(i)),
    \end{CD}
    \]
    where the first two columns are long exact sequences of higher Chow groups constructed in Lemma \ref{lem:modN}, and the last column is the Bockstein long exact sequence of Borel-Moore homology induced by the exact sequence $0 \to \ZZ/\ell^m \to \ZZ/\ell^{n+m} \xrightarrow{\mod \ell^n} \ZZ/\ell^n \to 0$.
    
    Then we have a commutative diagram after taking the inverse limit with respect to $m$.
    \[
    \begin{CD}
         \text{CH}_i(X, j, \ZZ_\ell) @>>> \lim \limits_{\xleftarrow[m]{}}\text{CH}_i(X, j, \ZZ/\ell^m) @>cl_{i, j}>> H_{2i+j}^{\text{BM}}(X, \ZZ_\ell(i)) \\
        @VV\times \ell^n V @VVV @VV\times \ell^nV \\
         \text{CH}_i(X, j, \ZZ_\ell) @>>> \lim \limits_{\xleftarrow[m]{}}\text{CH}_i(X, j, \ZZ/{\ell^{n+m}}) @>cl_{i, j}>> H_{2i+j}^{\text{BM}}(X, \ZZ_\ell(i))\\
        @VV\mod \ell^n V @VV\mod \ell^n V @VV\mod \ell^n V \\
          \text{CH}_i(X, j, \ZZ/\ell^n) @=\text{CH}_i(X, j, \ZZ/\ell^n) @>cl_{i, j}>> H_{2i+j}^{\text{BM}}(X, \ZZ/{\ell^n}(i))\\
          @VVV @VVV @VVV\\
          \text{CH}_i(X, j-1, \ZZ_\ell)@>>> \lim \limits_{\xleftarrow[m]{}}\text{CH}_i(X, j-1, \ZZ/\ell^m) @>cl_{i, j-1}>> H_{2i+j-1}^{\text{BM}}(X, \ZZ_\ell(i))\\
          @VV\times \ell^n V @VVV @VV\times \ell^nV \\
         \text{CH}_i(X, j-1, \ZZ_\ell) @>>> \lim \limits_{\xleftarrow[m]{}}\text{CH}_i(X, j-1, \ZZ/{\ell^{n+m}}) @>cl_{i, j-1}>> H_{2i+j-1}^{\text{BM}}(X, \ZZ_\ell(i)).
    \end{CD}
    \]
    The right column is still a long exact sequence since every Borel-Moore homology group is finite and the Mittag-Leffler condition is satisfied.
    This proves the desired statement.
\end{proof}
     
\subsection{Various filtrations}\label{sec:coniveau}
This section studies several different notions of filtrations on the cohomology of a variety  introduced by Benoist-Ottem and Voisin in \cite{BO_Coniveau, VoisinConiveau}. 
\begin{conv}\label{conv}
We use the following conventions for cohomology and homology in this section.
Let $X$ be a smooth projective variety of dimension $d$ defined over an algebraically closed field $k$. 
Given an abelian group $A$ that is one of $\ZZ/m, \ZZ_\ell, \ZZ, \QQ, \QQ_\ell$, where $m$ is a positive integer, $\ell$ a prime number, both of which are relatively prime to the characteristic of $k$, we simply write $H^k(X, A)$ as the \'etale cohomology with coefficient $A$ (resp. the singular cohomology with coefficient $A$, if $X$ is a complex variety), and $H_i(*, A)$ as the \'etale Borel-Moore homology (resp. singular Borel-Moore homology).
\end{conv}
\begin{defn}\label{def:filtration}
 With conventions as in Convention \ref{conv}, we define the following filtrations on the cohomology $H^k(X, A)$.
\begin{enumerate}
\item The coniveau filtration $\{N^*H^k(X, A)\}$ with
\[
N^cH^k(X, A):=\sum_{f: Y \to X}f_*(H_{2d-k}(Y, A)) \subset H_{2d-k}(X, A)\cong H^k(X, A), 
\]
where the sum is taken over all morphisms from projective algebraic sets $f: Y \to X, \dim Y \leq d-c$;
\item  The strong coniveau filtration $\{\tilde{N}^*H^k(X, A)\}$ with
\[
\tilde{N}^cH^k(X, A):=\sum_{f: Y \to X}f_*(H_{2d-k}(Y, A)) \subset H_{2d-k}(X, A)\cong H^k(X, A),
\]
where the sum is taken over all morphisms from \textit{smooth} projective varieties $f: Y \to X, \dim Y \leq d-c$.
\item The strong cylindrical filtration $\{\tilde{N}_{*, \cyl}H^k(X, A)\}$ with
\[
\tilde{N}_{c, \cyl}H^{k}(X, A) :=\sum \Gamma_*( H_{2d-k-2c}(Z, A)) \subset H_{2d-k}(X, A) \cong H^k(X, A)
\]
where the sum is taken over all \emph{smooth} projective varieties $Z$ and subvarieties $\Gamma \subset Z\times X$ such that $\Gamma \to Z$ is dominant of relative dimension $c$.

\item We use the notations $N^cH_k$ etc. to denote the filtrations on Borel-Moore or singular homology $H_k$. Since $X$ is smooth, this is the same as the filtrations $N^cH^{2d-k}$. 
\end{enumerate}
\end{defn}

\begin{defn}\label{defn:corr}
    Let $X, Y, Z$ be smooth projective varieties, and let $p:Y \to Z, q:Y \to X$ be two morphisms. We define the correspondence action $Y_*$ on higher Chow groups and cohomology as
    \[
    Y_*=q_*\circ p^*.
    \]
\end{defn}
\begin{lem}\label{lem:image_corr}
    Keep the same notations as in Definition \ref{defn:corr}. If $Y \xrightarrow{p\times q} Z\times X$ is generically finite of degree $d$ onto its image, then we have the following formula:
\begin{equation}\label{eq:push-pull}
    Y_*=d \cdot \Gamma_{Y*},
\end{equation}
where $\Gamma_Y \in \text{CH}_{\dim Y}(Z\times X)$ is the cycle class of the image of $Y$. 

Consequently, if $Y$ is birational to its image, or if we consider higher Chow groups or cohomology with coefficients in which $d$ is invertible, the image of $Y_*$ is the same as $\Gamma_{Y*}$.
\end{lem}
\begin{proof}
Denote by $\pi_1, \pi_2$ the projection of $Z \times X$ to $Z$ and $X$.
    By the push-pull formula for higher Chow groups and cohomology, for any element $\alpha$ of the higher Chow group or cohomology of $Z$, we have
    \[
    \pi_1^*\alpha \cup (d\cdot \Gamma_Y)=\pi_1^*\alpha \cup (p\times q)_*([Y])=(p\times q)_*((p\times q)^* (\pi_1^*\alpha))=(p\times q)_*(p^*\alpha).
    \]
    Applying $\pi_{2*}$ to the above equality proves Formula (\ref{eq:push-pull}). The last statement is clear by the formula.
\end{proof}

We note that the strong cylindrical filtration is indeed a filtration.
\begin{lem}\label{lem:contain}
    Keep the same notations as in Convention \ref{conv}.
    We have $$\tilde{N}_{c+1, \cyl}H^k(X, A) \subset \tilde{N}_{c, \cyl}H^k(X, A), c\geq 1.$$
\end{lem}
\begin{proof}
    Note that the statement for $\ZZ/m$ follows from that for $\ZZ/\ell^r$ by considering prime factors of $m$ separately. So we only consider \'etale cohomology with $\ZZ/\ell^r$, $\ZZ_\ell$, or $\QQ_\ell$ coefficients, or singular cohomology with $\ZZ$ or $\QQ$ coefficients (over complex numbers) in the following.
    
     We may find a blow-up $\Gamma_1$ of $\Gamma$, with morphisms $\Gamma_1 \to Y \to Z$, where $Y \to Z$ has relative dimension $1$ and $\Gamma_1 \to Y$ has relative dimension $c$. 
    
    Using resolution of singularities (in case $A=\ZZ$ or $\QQ$) or Gabber's refinement of de Jong's alteration (in other cases), we may find smooth projective varieties $\Gamma'$ and $Y'$, with a commutative diagram of varieties over $Z$
    \[
    \begin{CD}
        \Gamma' @>>>Y'\\
        @VVV @VVV\\
        \Gamma_1 @>>>Y,
    \end{CD}
    \]
    where $\Gamma' \to \Gamma_1$ and $Y'\to Y$ are birational (in case $A=\ZZ$ or $\QQ$) or generically finite with degree relatively prime to $\ell$ (in other cases).

    By Lemma \ref{lem:image_corr}, as a correspondence from $Y$ to $X$, $\Gamma_*$ has the same image as $\Gamma_*'$, which is the composition of first pulling back to $\Gamma'$ and then pushing forward to $X$.
    So the image is contained in $\Gamma_*'(H_{2d-k-2c}(Y'))$ for the correspondence given by the morphism $\Gamma'\to Y' \times X$, which lies in $\tilde{N}_{c, \cyl}H^k(X, A)$.
\end{proof}

\begin{cor}\label{cor:contain}
    Keep the same notations as in Convention \ref{conv}.
    Let $Z$ be a smooth projective variety, $\Gamma \subset Z \times X$ a subvariety of dimension $\dim Z+c (c\geq 1)$, and $W \subset Z$ the image of $\Gamma$ under the projection to $Z$. Assume that $\Gamma$ is of relative dimension $c+r$ over $W$. Then
    \[
    \Gamma_*(H_{2d-k-2c}(Z, A)) \subset \tilde{N}_{c+r, \cyl}H^k(X, A) \subset \tilde{N}_{c, \cyl}H^k(X, A).
    \]
    As a corollary,
    \[
    \tilde{N}_{c, \cyl}H^k(X, A)=\sum_{\Gamma \in \text{CH}_{\dim Z+c}(Z \times X)}\Gamma_*(H_{2d-k-2c}(Z, A)).
    \]
\end{cor}
\begin{proof}
    The case for $A=\ZZ/m$ follows from $A=\ZZ/\ell^n$. So we assume that $A$ is one of the following: $\ZZ/\ell^n, \ZZ_\ell, \ZZ, \QQ$, or $\QQ_\ell$.
    
    We take either a resolution of singularities $W'\to W$ (if we work over the complex numbers and $A=\ZZ$ or $\QQ$) or an alteration of degree relatively prime to $\ell$ (in other cases) and find a smooth projective variety $\Gamma'$ with morphisms $\Gamma' \to W'\times X$ such that $\Gamma' \to W \times X$ has image equal to $\Gamma$ and $\Gamma' \to \Gamma$ is either birational (if we work over the complex numbers and $A=\ZZ$ or $\QQ$) or generically finite of degree relatively prime to $\ell$ (in other cases). 
    We see that
    \begin{align*}
        &\Gamma_*(H_{2d-k-2c}(Z))=\Gamma_*'(H_{2d-k-2c}(Z))\subset \Gamma_*'((H_{2d-k-2c-2r}(W')))\\
    \subset&\tilde{N}_{c+r, \cyl}H^k(X)\subset\tilde{N}_{c, \cyl}H^k(X),
    \end{align*}
    where the first equality is due to Lemma \ref{lem:image_corr}, and the last inclusion follows from Lemma \ref{cor:contain}.
\end{proof}

Strong cylindrical filtrations are generated by a restricted class of correspondences.
To see this, we need the following lemma, which will be used several times.

\begin{lem}[The Flattening Lemma]{\cite[Page 37]{Platification}, \cite[2.19]{deJong_alteration}}\label{lem:flattening}
    Let $f: X \to S$ be a projective, dominant morphism between noetherian schemes. 
    Assume that $S$ is integral.
    There exists a birational projective morphism $S' \to S$ such that the strict transform $f': X' \to S'$ is flat.
\end{lem}
Note that the cited reference \cite{deJong_alteration} does not explicitly state the projectivity of the birational morphism $S' \to S$.
But this can be deduced from the proof. 
See the first two paragraphs of page 37 in \cite{Platification}.
\begin{lem}\label{cyl_equiv}
    Keep the same notations as in Convention \ref{conv}. We have
    \[
    \tilde{N}_{c, \cyl}H^k(X, A)=\sum_{\Gamma \subset Z \times X}\Gamma_*(H_{2d-k-2c}(Z, A)), c\geq 1
    \]
    where the sum is taken over all smooth projective varieties $Z$ and all subvarieties $\Gamma \subset Z \times X$ such that $\Gamma \to Z$ is  flat (or has equi-dimensional fibers).
\end{lem}
\begin{proof}
As before, we only need to consider $A=\ZZ, \ZZ_\ell, \ZZ/\ell^n, \QQ, \QQ_\ell$. Also we only need to show that the left hand side is contained in the right hand side.

Consider a subvariety $\Gamma\subset Z \times X$ such that $\Gamma \to Z$ has relative dimension $c$.
By the flattening lemma \ref{lem:flattening}, there is a commutative diagram:
    \[
    \begin{CD}
        \Gamma_1 @>>> \Gamma\\
         @VVV @VVV\\
         Z_1 @>>> Z,
    \end{CD}
    \]
    where $Z_1 \to Z$ and $\Gamma_1 \to \Gamma$ are birational, and the morphism $\Gamma_1 \to Z_1$ is flat.
We then use resolution of singularities in characteristic $0$ or Gabber's refinement of de Jong's alteration and extend the previous commutative diagram:
    \[
    \begin{CD}
        \Gamma_2 @>>>\Gamma_1 @>>> \Gamma\\
        @VVV @VVV @VVV\\
        Z_2 @>>> Z_1 @>>> Z,
    \end{CD}
    \]
    where  $Z_2 \to Z_1$ is a resolution of singularities (in the case $A=\ZZ$ or $\QQ$) or an alteration of degree relatively prime to $\ell$ (in other cases), and the left square is Cartesian.
    Note that by construction,
    \[
    \Gamma_1 \subset Z_1 \times X, \Gamma_2 \subset Z_2 \times X,
    \]
    are closed subschemes. In particular, we may take their fundamental classes in the Chow group and get correspondences.
    Finally take an alteration $\Gamma_3 \to \Gamma_2$ of degree relatively prime to $\ell$.
    Then by Lemma \ref{lem:image_corr}
    \begin{align*}
        &\Gamma_*(H_{2d-c-k}(Z, A))= \Gamma_{3*}(H_{2d-c-k}(Z, A))\\
        \subset&\Gamma_{3*}(H_{2d-c-k}(Z_2, A))=\Gamma_{2*}(H_{2d-c-k}(Z_2, A)).
    \end{align*}
\end{proof}

A different way of defining the strong coniveau filtration is the following.
\begin{lem}\label{lem:str_coniveau}
    Keep the same notations as in Convention \ref{conv}. We have an equality
    \[
    \tilde{N}^cH^{k}(X, A)=\sum_{\Gamma \in \text{CH}_{\dim Z}(Z \times X), \dim Z \leq \dim X-c}\Gamma_*(H_{2\dim X-k}(Z, A)), c\geq 1.
    \]
\end{lem}
When $A=\ZZ$, this is \cite[Equation 2.1]{BO_Coniveau}. The general case only needs to replace the resolution of singularities in their argument by Gabber's refinement of de Jong's alteration.
\begin{proof}
    The left hand side is contained in the right hand side since we may take $\Gamma$ to be the graph of $Z \to X$.
To see the reverse inclusion, we may assume that $\Gamma$ is represented by an integral subvariety of $Z\times X$. Take a resolution of singularities $\Gamma' \to \Gamma$ (in case $A=\ZZ$ or $\QQ$) or an alteration $\Gamma' \to \Gamma$ whose degree is relatively prime to $\ell$ (in other cases). 
Then by Lemma \ref{lem:image_corr}, $\Gamma_*(H_{2\dim X-k}(Z, A))$ is contained in the push-forward of $H_{2\dim X-k}(\Gamma', A)$ via the morphism $\Gamma' \to X$.
\end{proof}

\begin{cor}\label{cor:str_co}
     Keep the same notations as in Convention \ref{conv}.
     Let $$\Gamma \in \text{CH}_{\dim X}(X \times Y)$$ be a correspondence between smooth projective varieties $X, Y$. Assume that $\dim Y=\dim X+r$. Then for any $c\geq 1$,
     \[
     \Gamma_*(\tilde{N}^cH^k(X, A))\subset \tilde{N}^{c+r}H^{2r+k}(Y, A),\quad \Gamma_*(\tilde{N}_{c, \cyl}H^k(X, A))\subset \tilde{N}_{c, \cyl}H^{2r+k}(Y, A).
     \]
\end{cor}
\begin{proof}
    
    By Lemma \ref{lem:str_coniveau}, $\tilde{N}^cH^{k}(X, A)$ is generated by $\gamma_*(H_{2\dim X-k}(Z, A))$ for all $\gamma \in \text{CH}_{\dim Z}(Z \times X), \dim Z \leq \dim X-c$.
    Then \[
    \Gamma_*(\gamma_*(H_{2\dim X-k}(Z, A)))=(\Gamma \circ \gamma)_*(H_{2\dim X-k}(Z, A)),\]
    where $\Gamma\circ\gamma \in \text{CH}_{\dim Z}(Z \times Y)$ is the composition of the correspondences $\Gamma$ and $\gamma$.

    The proof for strong cylindrical filtrations is the same by Corollary \ref{cor:contain}.
\end{proof}

\begin{lem}[{\cite[Lemma 2.3]{ScaviaSuzuki2023coniveau}}]\label{lem:fil_corr}
    Keep the same notations as in Convention \ref{conv}. The action of any correspondence $\Gamma \in \text{CH}_{\dim X}(X \times Y)$ on cohomology preserves the coniveau filtration.
\end{lem}
\begin{proof}
    This is essentially {\cite[Lemma 2.3]{ScaviaSuzuki2023coniveau}}, where the authors prove the statement for \'etale cohomology with $\ZZ_\ell$-coefficients. But the same proof works for other cohomology groups as well. The key point is that one can use a moving lemma to replace the correspondence by a rationally equivalent cycle that meets the support of the cohomology class properly.
\end{proof}

A very important observation, first made in \cite{VoisinConiveau} for complex varieties with integral singular cohomology, is the following.
\begin{lem}\cite[Proposition 1.3]{VoisinConiveau} \cite[Lemma 2.4]{ScaviaSuzuki2023coniveau}\label{lem:inclusion}
Keep the same notations as in Convention \ref{conv}.
For $A=\ZZ, \ZZ_\ell$ and $X$ of dimension $d$, we have the following equality:
\[
 \tilde{N}_{d-c, \cyl}H^{2c-1}(X, A) = \tilde{N}^{c-1}H^{2c-1}(X, A), c\geq 1.
\]
\end{lem}

A natural question is whether or not the filtrations in Definition \ref{def:filtration} agree with each other. 
If we use $\QQ$ or $\QQ_\ell$ coefficients, the theory of weights shows that the strong coniveau and coniveau filtrations are the same.
Since the difference between some of these filtrations also gives stable birational invariants, one wonders if this could be used to prove non-stable-rationality for some rationally connected varieties.

Examples over $\CC$ with $\ZZ$-coefficients where the strong coniveau filtration and coniveau filtration differ are constructed in \cite{BO_Coniveau} for every possible $c, k$ (coniveau $c$ on degree $k$ cohomology). Examples in positive characteristic are constructed by Scavia-Suzuki \cite{ScaviaSuzuki_examples} and Benoist \cite{Benoist2022Steenrod}.
These examples all have dimension at least $4$.

Voisin studied the (strong) coniveau filtrations on $H^{2d-3}$ \cite{VoisinConiveau} on complex projective manifolds.
For the definition of $N_{1, \cyl, \st}H^{2d-3}(X, \ZZ)$, see \cite[Definition 1.12]{VoisinConiveau}.
\begin{thm}\cite[Theorem 2.6, Corollary 2.7, Theorem 2.17]{VoisinConiveau}
Let $X$ be a smooth projective variety of dimension $d$ defined over $\CC$.
\begin{enumerate}
    \item Assume the Walker Abel-Jacobi map (\cite{Walker_Morphic_AJ})
    \[
    \phi: \text{CH}_1(X)_{\text{alg}} \to J(N^{d-2}H^{2d-3}(X, \ZZ))
    \]is injective on torsion. Then we have
    \[
    N_{1, \cyl, \st}H^{2d-3}(X, \ZZ)/\text{Tor} = N^{d-2}H^{2d-3}(X, \ZZ)/\text{Tor}.
    \]
    \item If $\dim X$ is $3$, we have
    \[
    N_{1, \cyl, \st}H^{3}(X, \ZZ)/\text{Tor} = N^1H^{3}(X, \ZZ)/\text{Tor}.
    \] 
    \item If $X$ is rationally connected, we have
    \[
    N_{1, \cyl, \st}H^{2d-3}(X, \ZZ) = \tilde{N}^{d-2}H^{2d-3}(X, \ZZ).
    \]
    As a consequence,
    \[
    N_{1, \cyl, \st}H^{2d-3}(X, \ZZ)/\text{Tor}=\tilde{N}^{d-2}H^{2d-3}(X, \ZZ)/\text{Tor} = N^{d-2}H^{2d-3}(X, \ZZ)/\text{Tor}.
    \]
\end{enumerate}
\end{thm}

We will prove an improvement of Voisin's result in Section \ref{sec:complex} and the $\ZZ_\ell$-counterpart in Section \ref{sec:general}, which include all the torsion classes.

Finally we note the following.
\begin{lem}\label{cor:2d-3torsion}
     Let $X$ be a smooth projective variety of dimension $d$ defined over an algebraically closed field $k$, whose rational Chow group of  zero-cycles is universally supported on a surface. 
     Fix $\ell$ a prime number distinct from the characteristic of $k$.
     Then the quotients $$H^{2d-3}(X, \ZZ_\ell)/N^{d-2}H^{2d-3}(X, \ZZ_\ell),\quad H^{2d-3}(X, \ZZ_\ell)/\tilde{N}^{d-2}H^{2d-3}(X, \ZZ_\ell)$$ are torsion.   
     In particular, this is true for smooth projective varieties that are separably rationally connected in codimension one.
\end{lem}

\begin{proof}
Since we have inclusions $$\tilde{N}^{d-2}H^{2d-3}(X, \ZZ_\ell) \subset {N}^{d-2}H^{2d-3}(X, \ZZ_\ell)\subset H^{2d-3}(X, \ZZ_\ell),$$
it suffices to show that $H^{2d-3}(X, \ZZ_\ell)/\tilde{N}^{d-2}H^{2d-3}(X, \ZZ_\ell)$ is torsion.

    By Lemma \ref{lem:dd}, there is a decomposition of diagonal of the form
    \[
    N \Delta_X=\Gamma_1+\Gamma_2 \in \text{CH}_{\dim X}(X \times X),
    \]
    where $N$ is an integer, $\Gamma_1$ is a cycle supported in $X \times Z$ for some $2$-dimensional closed subset $Z$, and $\Gamma_2$ is a cycle supported in $D\times X$ for some divisor $D \subset X$.

    By Corollary \ref{cor:str_co},  correspondences preserve the strong coniveau filtration. So it suffices to show that $\Gamma_1, \Gamma_2$ acts by zero on the quotient
    \[
    H^{2d-3}(X, \ZZ_\ell)/\tilde{N}^{d-2}H^{2d-3}(X, \ZZ_\ell).
    \]
    
    Since the correspondence action is linear, we may assume that $\Gamma_1$ is the class of an irreducible subvariety of $X\times X$, still denoted by $\Gamma_1$, and that $Z$ is also irreducible.
    Take alterations $\Gamma_1', Z'$ of $\Gamma_1$ and $Z$ of degree relatively prime to $\ell$ such that there is a commutative diagram
    \[
    \begin{CD}
       \Gamma_1' @>>> \Gamma_1\\
       @VVV @VVV\\
       Z' @>>>Z.
    \end{CD}
     \] Denote by $f: Z' \to Z \subset X$ the morphism to $X$. 
    Then by Lemma \ref{lem:image_corr},
    \[
    \Gamma_{1*}(H^{2d-3}(X, \ZZ_\ell))=\Gamma_{1*}'(H^{2d-3}(X, \ZZ_\ell))\subset f_*(H_3(Z', \ZZ_\ell))\subset \tilde{N}^{d-2}H^{2d-3}(X, \ZZ_\ell).\]
    So $\Gamma_{1*}$ acts by zero on $H^{2d-3}(X, \ZZ_\ell)/\tilde{N}^{d-2}H^{2d-3}(X, \ZZ_\ell)$. 
    
    By Corollary \ref{cor:contain} and Lemma \ref{lem:inclusion},
    \[
    \Gamma_{2*}(H^{2d-3}(X, \ZZ_\ell))\subset \tilde{N}_{1, \cyl}H^{2d-3}(X, \ZZ_\ell) = \tilde{N}^{d-2}H^{2d-3}(X, \ZZ_\ell).
    \]
    Thus $\Gamma_{2*}$ also acts by zero on $H^{2d-3}(X, \ZZ_\ell)/\tilde{N}^{d-2}H^{2d-3}(X, \ZZ_\ell)$. 
\end{proof}

\subsection{Some stable birational invariants}\label{sec:bir}
We include the proof of the following.
\begin{lem}\label{lem:birational}
    Let $X$ be a smooth projective variety of dimension $d$ defined over an algebraically closed field. Fix a prime number $\ell$ different from the field characteristic. All cohomology groups considered are \'etale cohomology groups. The following invariants are stable birational invariants of $X$.
    \begin{enumerate}
        \item The kernel and cokernel of the cycle class map 
        \[
        A_1(X)\otimes \ZZ_\ell \to H^{2d-2}(X, \ZZ_\ell(d-1)).
        \]
        
        \item The kernel and cokernel of the cycle class maps
        \begin{equation}\label{eq:highercyc}
            \lim \limits_{\xleftarrow[n]{}}\text{CH}_1({X}, r, \ZZ/\ell^n \ZZ) \to H^{2d-2-r}({X}, \ZZ_\ell(d-1)).
        \end{equation}

        \item The quotients $$H_{i}(X, \ZZ_\ell)/\tilde{N}_{1, \cyl}H_{i}(X, \ZZ_\ell)=H^{2d-i}(X, \ZZ_\ell)/\tilde{N}_{1, \cyl}H^{2d-i}(X, \ZZ_\ell),$$ $$H^{2d-3}(X, \ZZ_\ell)/\tilde{N}^{d-2}H^{2d-3}(X, \ZZ_\ell),$$
        and 
        $$H^{2d-3}(X, \ZZ_\ell)/{N}^{d-2}H^{2d-3}(X, \ZZ_\ell).$$

    \end{enumerate}
\end{lem}
 A word of caution is in order. These invariants are homological and covariant for proper morphisms. 
 One should be particularly careful when comparing the invariants $H^{2d-i}(*, \ZZ_\ell)/\tilde{N}_{1, \cyl}H^{2d-i}(*, \ZZ_\ell)$ for $X\times \PP^n$ and $X$. The invariance means that the push-forward (instead of pull-back) along the first factor induces an isomorphism between $H^{2d-i+2n}(X\times \PP^n, \ZZ_\ell)/\tilde{N}_{1, \cyl}H^{2d-i+2n}(X\times \PP^n, \ZZ_\ell)$ and $H^{2d-i}(X, \ZZ_\ell)/\tilde{N}_{1, \cyl}H^{2d-i}(X, \ZZ_\ell)$.

This lemma is well-known to experts. Over the complex numbers with singular cohomology, the first statement can be found in \cite{Voisin_SB_Survey} as Lemma 2.22 and Lemma 2.23, and the third in \cite[Proposition 1.4]{VoisinConiveau}. The proofs in the references use the weak factorization theorem to reduce to a smooth blow-up case, and thus do not immediately apply in positive characteristic.
The following proof using correspondences is also well-known to experts. 
\begin{proof}
    We first prove that these are birational invariants.
    
    Let $Y$ be a smooth projective variety birational to $X$, with birational maps $f:X \dashrightarrow Y, g: Y \dashrightarrow X$ that are inverse to each other. Denote by $\Gamma_f \subset X \times Y, \Gamma_g \subset Y \times X$ the closure of the graph of $f, g$. Finally, denote by $\Gamma_X=\Gamma_g \circ \Gamma_f$ the composition of correspondences.
    
    First observe that any correspondence $\Gamma \in \text{CH}_{\dim X}(X \times Y)$ acts on higher Chow groups and cohomology, and preserves $\tilde{N}_{1, \cyl}H^i(X, \ZZ_\ell)$ (Corollary \ref{cor:str_co}) and ${N}^{d-2}H^{2d-3}(X, \ZZ_\ell)$ (Lemma \ref{lem:fil_corr}). 
     Moreover, the action on higher Chow groups and cohomology is compatible with cycle class maps by Proposition \ref{class_cor}.
     Therefore, correspondences act on all the invariants that we consider.
     
    So it suffices to show that $\Gamma_X$ acts by identity on the invariants. Since then by symmetry, $\Gamma_Y=\Gamma_f \circ \Gamma_g$ also acts by identity, and thus both $\Gamma_f$ and $\Gamma_g$ are isomorphisms.
    
    Since $f, g$ are inverse birational maps, there is an open subset $U \subset X$ such that $g\circ f: U \to X$ is the inclusion. Therefore, when restricted to $U \times X$, the cycle $\Gamma_X$ is the same as the graph of the inclusion of $U \subset X$. By the localization sequence for Chow groups, we know that 
    \[
    \Gamma_X=\Delta_X+Z \in \text{CH}_{\dim X}(X\times X)
    \]
    for some cycle $Z$ supported in $D \times X$, where $\Delta_X$ is the diagonal, and $D\subset X$ is a divisor.
    Since $\Delta_X$ acts as identity, we only need to show that $Z$ acts as the zero map on the invariants.

    Since the correspondence action is linear, we may assume that $Z$ and $D$ are irreducible. By Gabber's refinement of de Jong's alteration, there are smooth projective varieties $Z', D'$ with the following commutative diagram
    \[
    \begin{CD}
    Z'@>>>Z\\
    @V p VV @VVV\\
    D'@>>>D\\
    \end{CD}
    \]
    such that $Z' \to Z$ and $D' \to D$ are generically finite with degrees relatively prime to $\ell$. 
    
    By Lemma \ref{lem:image_corr}, up to an invertible factor, the action of $Z$ on $A_1(X) \otimes \ZZ_\ell$  agrees with
    \[
    A_1(X)\otimes \ZZ_\ell \to A_0(D')\otimes \ZZ_\ell \xrightarrow{p^*} A_1(Z')\otimes \ZZ_\ell \to A_1(X)\otimes \ZZ_\ell,
    \]
    where the first two maps are pull-backs and the last one is push-forward.
    Similarly, up to an invertible factor, the action on $H^{2d-2}(X, \ZZ_\ell)$ agrees with
    \[
    H^{2d-2}(X, \ZZ_\ell)\to H^{2d-2}(D', \ZZ_\ell) \to H^{2d-2}(Z', \ZZ_\ell) \to H^{2d-2}(X, \ZZ_\ell),
    \]
    where the first two maps are pull-backs and the last one is Gysin push-forward.
    Moreover, there is an obvious commutative diagram of these maps.
    In particular, up to multiplying by an integer relatively prime to $\ell$, the action of $Z$ on the kernel (resp. cokernel) of 
    \[
    A_1(X)\otimes \ZZ_\ell \to H^{2d-2}(X, \ZZ_\ell(d-1)),
    \] 
    factors through the kernel (resp. cokernel) of 
    \[
    A_0(D')\otimes \ZZ_\ell \xrightarrow{cl} H^{2d-2}(D', \ZZ_\ell(d-1)),
    \]
    Since the cycle class map for zero-cycles on $D'$ modulo algebraic equivalence induces 
 an isomorphism in the above cycle class map to cohomology, we see that $Z$ acts as zero.

    The case for the kernel and cokernel of the cycle class map (\ref{eq:highercyc}) is similar. We use the fact that for $D'$, the higher cycle class map induces an isomorphism (Corollary \ref{cor:ch_BM}).
    \[
    CH_0(D', r, \ZZ/\ell^n\ZZ) \xrightarrow[\cong]{cl_{0, j}} H^{2d-2-r}(D', \ZZ/{\ell^n}(d-1))
    \]
    and hence we have an isomorphism after taking inverse limit.

    For the third statement, notice  that $Z_*(H_i(X, \ZZ_\ell))$
    lies in the $\tilde{N}_{1, \cyl}H_i$ part of the strong cylindrical filtration by Corollary \ref{cor:contain}. Thus $Z$ acts by zero on $H_i(X, \ZZ_\ell)/\tilde{N}_{1, \cyl}H_i(X, \ZZ_\ell)$.
    
    Finally, by Lemma \ref{lem:inclusion}, and the fact that $Z_*(H_i(X, \ZZ_\ell))$
    lies in the $\tilde{N}_{1, \cyl}H_i$ part of the strong cylindrical filtration by Corollary \ref{cor:contain}, the correspondence $Z$ acts as $0$ on $H^{2d-3}(X, \ZZ_\ell)/\tilde{N}^{2d-3}H^{2d-3}(X, \ZZ_\ell), H^{2d-3}(X, \ZZ_\ell)/{N}^{2d-3}H^{2d-3}(X, \ZZ_\ell)$.

    So far we have shown that these invariants are birational invariants.
    To prove that these are stable birational invariants, it then suffices to show that for $X$ and $X \times \PP^1$, these invariants are the same.

    The first two statements  follow from the projective bundle formulae for higher Chow groups and \'etale cohomology. Indeed, we have
    \[
    H^{i}(X \times \PP^1)= H^{i-2}(X) \cdot h \oplus H^{i}(X)\cdot 1,
    \]
    \[
    A_1(X \times \PP^1)= A_1(X) \cdot h \oplus A_{0}(X)\cdot 1,
    \]
    \[
    \text{CH}_1(X \times \PP^1, r, \ZZ/m)= \text{CH}_1(X, r, \ZZ/m) \cdot h \oplus CH_0(X, r, \ZZ/m) \cdot 1,
    \]
    where $h$ denotes the class of a hyperplane in cohomology or Chow group.
    In the case of cohomology, the same type of formulae holds for torsion coefficients and $\ell$-adic cohomology. We also use the fact (Corollary \ref{cor:ch_BM}) that
    \[
    CH_0(X, r, \ZZ/m)\xrightarrow{\cong} H^{2d-r}(X, \ZZ/m),
    \]
    and 
    \[
    A_0(X)\otimes \ZZ_\ell \xrightarrow{\cong} H^{2d}(X, \ZZ_\ell).
    \]
    
    For the various filtrations, simply note that the subgroup $H^{i}(X, \ZZ_\ell) \otimes 1$ is in $\tilde{N}_{1, \cyl}H^i(X \times \PP^1, \ZZ_\ell)$, since it lies in the image of \[
    H^{i}(X, \ZZ_\ell)  \xrightarrow{\pi_X^*} H^{i}(X\times \PP^1, \ZZ_\ell)\xrightarrow{id_*} H^i(X \times \PP^1, \ZZ_\ell).\]
    where we consider $X \times \PP^1$ as a correspondence between $X$ and $X \times \PP^1$ via the projection to the first factor and the identity map. 
\end{proof}

\section{Space of one-cycles}\label{sec:space}
In this section, we fix an algebraically closed field $k$ of any characteristic.
We remind the readers that a variety is always assumed to be irreducible,  and in particular, connected, throughout the paper.
Sometimes we add the word irreducible just to emphasize this.
\subsection{Setup and statements}
\begin{defn}[{\cite[Definition 3.1.3, Lemma 3.3.9]{SV_Chow_Sheaves}}]\label{def:RelCycle}
Let $X, S$ be finite type reduced separated $k$-schemes. An equi-dimensional family of relative cycles of dimension $r$ (or simply $r$-cycles) over $S$ is a formal linear combination of integral subschemes $\Gamma_S=\sum m_i Z_i, Z_i \subset S \times X, m_i \in \ZZ$, such that 
\begin{enumerate}
\item Each $Z_i$ dominates one irreducible component of $S$
\item Each non-empty fiber of $Z_i \to S$ has pure dimension $r$.
\item A fat point condition is satisfied (\cite[Definition 3.1.3]{SV_Chow_Sheaves}).
\item A field of definition condition is satisfied (\cite[Lemma 3.3.9 and the ensuing paragraph]{SV_Chow_Sheaves}). 
\end{enumerate} 
Each $Z_i$ is called a \emph{component} of $\Gamma_S$.
We write $\Gamma_{S}^+$ and $\Gamma_{S}^-$ as the \emph{positive and negative parts}, i.e. the sum of $Z_i$'s with positive and negative coefficients. 
\end{defn}

Suslin and Voevodsky consider a more general setup in \cite{SV_Chow_Sheaves}: a finite type scheme $X$ over a noetherian scheme $S$, which could be non-reduced. We only consider families of cycles over a reduced base. To apply their definitions and results to our situation, one considers $X \times S \to S$.

\begin{rem}
For a normal variety $S$ (or more generally, a geometrically unibranch variety), condition $3$ is automatic \cite[Corollary 3.4.3, 3.4.4]{SV_Chow_Sheaves}. Condition $4$ is always satisfied if the base is regular \cite[Corollary 3.4.5, 3.4.6]{SV_Chow_Sheaves}. 
\end{rem}

We refer the interested readers to \cite{SV_Chow_Sheaves} Section 3 for details about the last two  conditions in Definition \ref{def:RelCycle} and the subtle points about these conditions. We only remark that with this definition, one can pull-back families of cycles, that is, we have a presheaf on the category of schemes (\cite[paragraph before Lemma 3.3.10]{SV_Chow_Sheaves}).

\begin{notat}
    Given an equi-dimensional family of cycles $\Gamma_S \to S$ and a morphism $f: T \to S$, we denote by $\Gamma_T=f^*\Gamma_S \to T$, the pull-back family of cycles over $T$. 
If $f$ is a locally closed immersion, we also use the notation $\Gamma_T=\Gamma_S|_T$ to denote the pull-back family.
\end{notat}

The following is well-known. 
\begin{lem}\label{lem:hilb_Chow}
    Given a flat projective family of curves $C \to S$ and an $S$-morphism $F: C\to S\times X$, we can associate to it an equi-dimensional family of one-cycles of $X$ over $S$. 
\end{lem}
\begin{proof}
    We take the graph $C \subset C\times_S(S \times X)\cong C\times X$.
    The subscheme $C \subset C \times X$ is flat and proper over $S$. Applying \cite[Corollary 3.2.5]{SV_Chow_Sheaves}, we get a family of one-dimensional cycles in $C \times X/S$ over the base $S$. Then pushing-forward via the projection to $S\times X$ gives an equi-dimensional family of one-cycles of $X$ over $S$ (\cite[Corollary 3.6.3]{SV_Chow_Sheaves}).
\end{proof}
\begin{defn}\label{def:nodal}
\emph{A nodal family of one-cycles over $S$}, denoted by $S \leftarrow\Gamma_S \to X, \Gamma_S =\sum m_i Z_i (m_i\in \ZZ)$, is a finite $\ZZ$-linear combination of isomorphism classes of $S \leftarrow Z_i \to X$, where $S \leftarrow Z_i \to X$ is a flat projective family of connected nodal curves over $S$ with a morphism to $X$.
If $S=\SP k$, we also call $\Gamma_S$ a \emph{nodal one-cycle}.

Each $Z_i$ is called a \emph{component} of $\Gamma_S$.
We use $[\Gamma_S]\to S$ to denote the associated equi-dimensional family of one-cycles over $S$ as in Lemma \ref{lem:hilb_Chow}.

We write $\Gamma_S^+$ and $\Gamma_S^-$ as the \emph{positive and negative parts}, i.e. the sum of $Z_i$'s with positive and negative coefficients. 
We say that $\Gamma_S$ is \emph{effective} if $\Gamma_S^-$ is $0$.
\end{defn}
Two different nodal families may give the same family of cycles.
This already happens for nodal curves over the field $k$,
and it is one of the main difficulties we have to deal with in the following.

\begin{notat}
    Given a nodal family of one-cycles $S \leftarrow\Gamma_S \to X, \Gamma_S =\sum m_i Z_i$, and a morphism $f: T \to S$, we denote by $\Gamma_T=f^*\Gamma_S \to T$, the pull-back nodal family over $T$, where $f^*\Gamma_S=\sum m_i (Z_i \times_S T)$. 
If $f$ is a locally closed immersion, we also use the notation $\Gamma_T=\Gamma_S|_T$ to denote the pull-back nodal family.
\end{notat}

\begin{rem}
    Given a nodal family of one-cycles $S \leftarrow\Gamma_S \to X$ and a morphism $f: T \to S$, by \cite[Lemma 3.3.10]{SV_Chow_Sheaves} and the definition of associated families of cycles,  we have $[f^*\Gamma_S]=f^*[\Gamma_S]$ as equi-dimensional families of one-cycles over $T$. 
\end{rem}

\begin{defn}[relative projective extension of a nodal family]\label{def:extension}
    Let $f_T: T \to S$ be a morphism between quasi-projective algebraic sets over a field $k$,
    and let $f_V: V\to S$ be a generically finite projective surjective $S$-morphism from a quasi-projective $k$-algebraic set $V$.
    Assume that there is a dense open subset $V^0 \subset V$ which admits a $k$-morphism $g: V^0 \to T$.
    Given a nodal family $T \leftarrow \Gamma_T=\sum_i m_i Z_i \to X$,
    we define a \emph{projective extension to $f_V: V \to S$ (relative to $S$)}, or just \emph{projective extension to $V$} if $S=\SP k$, of the family $\Gamma_T$ to be a nodal family of one-cycles $V \leftarrow \Gamma_V \to X$ such that $\Gamma_V|_{V^0}=g^*\Gamma_T$.
    
    If $T$ is pointed, we require that $V^0$ is pointed, and that $g: V^0 \to T$ is a morphism of pointed schemes.
\end{defn}
By semi-stable reduction \cite[Theorem 5.8]{deJong_alteration}, given $f_T: T \to S$ and a nodal family $T \leftarrow \Gamma_T \to X$, there always exists a projective extension (relative to $S$) of $\Gamma_T$ to some $f_V: V \to S$.

Now we describe some of the basic structures of the space of one-cycles.
\begin{prop}\label{prop:filter_1}
Let $S_1, S_2$ be two connected algebraic sets and $\Gamma_1 \subset S_1 \times X, \Gamma_2 \subset S_2 \times X$ be two equi-dimensional families of $r$-cycles in $X$.
Assume that the cycles in the two families are algebraically equivalent (Definition \ref{def:alg_equiv}).
There is a connected algebraic set $S$ and an equi-dimensional family of $r$-cycles $\Gamma \subset S \times X$, and morphisms $f_1: S_1 \to S, f_2: S_2\to S$ such that $\Gamma_1=f_1^* \Gamma, \Gamma_2=f_2^* \Gamma$. Moreover, if both $S_1$ and $S_2$ are normal/smooth/projective, we may choose $S$ to be normal/smooth/projective.
\end{prop}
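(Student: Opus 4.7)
The plan is to construct $S$ as a triple product $S = S_1 \times T \times S_2$, where $T$ is a connected algebraic set carrying an equi-dimensional relative $r$-cycle $W \subset T \times X$ that realizes an algebraic equivalence between a chosen fiber of $\Gamma_1$ and a chosen fiber of $\Gamma_2$. Concretely, I would fix base points $s_i \in S_i$ and set $Z_i := \Gamma_i|_{s_i}$. The hypothesis says $Z_1$ and $Z_2$ are algebraically equivalent on $X$, so there exist a connected reduced $T$, points $t_1, t_2 \in T$, and a relative $r$-cycle $W$ on $T\times X$ with $W|_{t_i} = Z_i$.

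On $S \times X$, define the signed relative cycle
\[
\Gamma := \pi_{1,X}^*\Gamma_1 + \pi_{T,X}^* W + \pi_{2,X}^*\Gamma_2 - \pi_X^* Z_1 - \pi_X^* Z_2,
\]
where $\pi_{1,X}, \pi_{T,X}, \pi_{2,X}, \pi_X$ denote the natural projections from $S \times X$ onto $S_1 \times X$, $T \times X$, $S_2 \times X$, and $X$. Set $f_1 \colon S_1 \to S$, $s \mapsto (s, t_1, s_2)$, and $f_2 \colon S_2 \to S$, $s \mapsto (s_1, t_2, s)$. Since pullback of families of cycles is additive in the Suslin--Voevodsky sense, a direct computation yields
\[
f_1^* \Gamma = \Gamma_1 + (S_1 \times Z_1) + (S_1 \times Z_2) - (S_1 \times Z_1) - (S_1 \times Z_2) = \Gamma_1,
\]
and symmetrically $f_2^*\Gamma = \Gamma_2$.

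For the \emph{moreover} part, products preserve normality, smoothness, and projectivity, so it is enough to arrange that $T$ itself be normal, smooth, or projective. One can always realize an algebraic equivalence of cycles by a family over a smooth projective connected curve: starting from any family over a connected base, I would cut down to a smooth connected curve through both marked points by a Bertini-type argument inside the relevant component of the Chow scheme, compactify projectively by taking Zariski closure (using properness of the support of $W$), and normalize. If the equivalence is only available as a chain $Z_1 \sim Y_1 \sim \ldots \sim Y_k \sim Z_2$ over several curves $T_1,\dots,T_k$, iterate the construction: first form $S^{(1)} = S_1 \times T_1$ with family $\pi_{S_1}^*\Gamma_1 + \pi_{T_1}^*W_1 - \pi_X^* Z_1$, whose fiber over $(s_1, b_1)$ is $Y_1$, then treat $S^{(1)}$ as the ``new $S_1$'' with base cycle $Y_1$ and continue.

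The main technical point I expect to have to justify is that the signed combination $\Gamma$ is a legitimate family of relative cycles in the sense of Definition \ref{def:RelCycle}, i.e.\ that the fat point and field-of-definition conditions are preserved under pullback and under formal integer linear combinations with cancellations. These are built into the Suslin--Voevodsky formalism once integer coefficients are allowed, but some care is needed when positive and negative components share irreducible support, which is precisely the redundancy allowed by the convention following Definition \ref{def:RelCycle} (one records $\Gamma^+$ and $\Gamma^-$ with their shared components kept visible). Once this bookkeeping is in place, the rest of the verification is formal.
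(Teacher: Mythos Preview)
Your proposal is correct and follows essentially the same approach as the paper: take a triple product $S_1\times T\times S_2$ with $T$ a connecting parameter space realizing the algebraic equivalence, and define $\Gamma$ as a signed combination of pullbacks. The only cosmetic difference is that the paper uses three terms, $\Gamma=p_1^*\Gamma_1+p_2^*\Gamma_2-p_3^*\Gamma_3$ (with $\Gamma_3$ the connecting family appearing with a minus sign), whereas you use five, subtracting two constant cycles instead; and for the ``moreover'' part the paper simply invokes the standard fact that an algebraic equivalence can always be realized (after subtracting a fixed cycle $\Delta$) by a family over a single smooth projective curve, which is quicker than your Bertini-in-Chow-plus-iteration route.
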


\begin{proof}
Take two points $s_1 \in S_2, s_2 \in S_2$. Denote by $\gamma_1, \gamma_2$ the cycle over $s_1, s_2$.
By assumption, $\gamma_1, \gamma_2$ are algebraically equivalent.
Thus there is a smooth projective curve $S_3$, two points $a, b \in S_3$ and a family of $r$-cycles $\Gamma_3 \subset S_3 \times X$ such that the cycle over $a, b$ are $\gamma_1, \gamma_2$.
This follows from the definition of algebraic equivalence. Indeed, by definition of algebraic equivalence, we may find a cycle $\Delta$ and a family of cycles $\Gamma_T$ over a smooth projective curve $T$ and two points $t_1, t_2 \in T$ such that the cycle over $t_1$ (resp. $t_2$) is $\gamma_1+\Delta$ (resp. $\gamma_2+\Delta$) (\cite[Example 10.3.2]{Fulton98}). Then we take $S_3$ to be $T$ and the family of cycles $\Gamma_3$ to be $\Gamma_T-T \times \Delta$.

Define $S=S_1 \times S_2 \times S_3$, with $p_i: S \to S_i, i=1, 2, 3,$ the projections. Define $\Gamma=p_1^*\Gamma_1+p_2^*\Gamma_2-p_3^*\Gamma_3$.
Finally, define 
\[
f_1: S_1 \to S, x \mapsto (x, s_2, b) 
\]
and \[
f_2: S_2 \to S, y \mapsto (s_1, y, a).
\]
It is straightforward to verify that these are the desired morphisms.

Since $S_3$ is a smooth projective curve, if both $S_1, S_2$ are normal, or smooth, or projective, so is $S$.
\end{proof}

Now we can state the main technical results of this section.

\begin{thm}\label{thm:filtered_sm}
Let $X$ be a smooth projective variety defined over an algebraically closed field $k$.
Let $(U, \Gamma_U)$ be an equi-dimensional family of one-cycles over an irreducible variety $U$ and let $u_0, u_1 \in U$ be two points in $U$ such that 
\[
\Gamma_U|_{u_0}= \Gamma_U|_{u_1}=\gamma
\]
as cycles.
Then there is
\begin{enumerate}
    \item a nodal family of one-cycles $V\leftarrow \Gamma_V \to X$ over a normal quasi-projective variety  $V$,  with a morphism $f: V \to U$, and a lifting $v_0, v_1 \in V$ of the points $u_0, u_1 \in U$;
    \item a nodal family of one-cycles $T \leftarrow \Gamma_T \to X$ over a two pointed connected projective curve $(T, t_0, t_1)$, where $t_0, t_1$ are smooth points of $T$;
    \item\label{cond:3} an open subset $U^0\subset U$ such that for any chosen point $u$ in $U^0$, for any pair of points $v, v' \in f^{-1}(u)\subset V$, a nodal family of one-cycles $$(T_{v, v'}, t_{v, v'}^v, t_{v, v'}^{v'})\leftarrow \Gamma_{v, v'} \to X, $$ over a connected two pointed projective curve $(T_{v, v'}, t_{v, v'}^v, t_{v, v'}^{v'})$ where $t_{v, v'}^v$, $t_{v, v'}^{v'}$ are smooth points of $T_{v, v'}$;
    \item  after choosing a point $u \in U^0$, a nodal family of one-cycles $W\leftarrow \Gamma_W \to X$ over a connected projective algebraic set  $W$ (depending on $u$),
\end{enumerate} 
such that 
\begin{enumerate}
    \item[(i)] The morphism $f: V \to U$ is generically finite, surjective and projective.
    
    \item[(ii)]  $f^*\Gamma_U=[\Gamma_V]$ as equi-dimensional family of one-cycles over $V$.
    
    \item[(iii)] There is a morphism
    \[
    \Psi: V\coprod T\coprod (\coprod_{v, v' \in f^{-1}(u)}T_{v, v'})\to W,
    \]
    such that $\Gamma_V \coprod \Gamma_T \coprod (\coprod_{v, v'}\Gamma_{v, v'})=\Psi^*\Gamma_W$.
    
    \item[(iv)] The morphism $\Psi$ maps the pair of points $(v_0, t_0)$ (resp. $(v_1, t_1)$, $(v, t_{v, v'}^v)$, $(v', t_{v, v'}^{v'})$) to the same point in $W$. 
    
    \item[(v)]  The family of cycles over $[\Gamma_T]\to T$ is constant. So necessarily, for each point $t$ of $T$, the cycle class $[\Gamma_T|_{t}]$ over $t$ is $\gamma$.

\item[(vi)] The family of cycles $[\Gamma_{v, v'}] \to T_{v, v'}$ is constant. So necessarily, for each point $t \in T_{v, v'}$, the cycle class $[\Gamma_{v, v'}|_t]$ over $t$ is the same as the cycle $\Gamma_U|_u$.
\end{enumerate}
\end{thm}

\begin{thm}\label{thm:filtered}
Keep the same assumptions as in Theorem \ref{thm:filtered_sm}. 
Also assume that $X$ is separably rationally connected in codimension one.
Then there is
\begin{enumerate}
    \item an equi-dimensional family of one-cycles $V\leftarrow \Gamma_V \to X$ over a normal quasi-projective variety  $V$,  with a morphism $f: V \to U$, and a lifting $v_0, v_1 \in V$ of the points $u_0, u_1 \in U$;
    \item an equi-dimensional family of one-cycles $T \leftarrow \Gamma_T \to X$ over a two pointed connected projective curve $(T, t_0, t_1)$, where $t_0, t_1$ are smooth points of $T$;
    \item\label{cond:3sm} an open subset $U^0\subset U$ such that for any chosen point $u$ in $U^0$, for any pair of points $v, v'$ in the inverse image of $u$ in $V$, an equi-dimensional family of one-cycles $$(T_{v, v'}, t_{v, v'}^v, t_{v, v'}^{v'})\leftarrow \Gamma_{v, v'} \to X, $$ over a connected two pointed projective curve $(T_{v, v'}, t_{v, v'}^v, t_{v, v'}^{v'})$, where $t_{v, v'}^v$, $t_{v, v'}^{v'}$ are smooth points of $T_{v, v'}$;
    \item after choosing a point $u \in U^0$, an equi-dimensional family of one-cycles $W\leftarrow \Gamma_W \to X$ over a normal projective variety $W$ (depending on $u \in U^0$),
\end{enumerate} 
such that 
\begin{enumerate}
    \item[(i)] The morphism $f: V \to U$ is generically finite, surjective and projective.
    
    \item[(ii)]  $f^*\Gamma_U=\Gamma_V$ as equi-dimensional family of one-cycles over $V$.
    
    \item[(iii)] There is a morphism
    \[
    \Psi: V\coprod T\coprod (\coprod_{v, v' \in f^{-1}(u)}T_{v, v'})\to W,
    \]
    such that $\Gamma_V \coprod \Gamma_T \coprod (\coprod_{v, v'}\Gamma_{v, v'})=\Psi^*\Gamma_W$.
    
    \item[(iv)] The morphism $\Psi$ maps the pair of points $(v_0, t_0)$ (resp. $(v_1, t_1)$, $(v, t_{v, v'}^v)$, $(v', t_{v, v'}^{v'})$) to the same point in the smooth locus of $W$. It also maps $v_0$, $v_1$, and every singular point in $T, T_{v, v'}$ to the smooth locus of $W$. 
    
    \item[(v)]  The family of cycles $\Gamma_T\to T$ is constant. So necessarily, for each point $t$ of $T$,  $\Gamma_T|_{t}=\gamma$.

\item[(vi)] The family of cycles $\Gamma_{v, v'}\to T_{v, v'}$ is constant. So necessarily, for each point $t \in T_{v, v'}$, $\Gamma_{v, v'}|_t=\Gamma_U|_u$.
\end{enumerate}
In characteristic $0$, we may take $W$ to be smooth and projective.
\end{thm}

We first make some remarks and give an example in the following before giving the proof.

\begin{rem}
Theorem \ref{thm:filtered} is special to one-cycles on varieties that are SRC in codimension one.
Indeed, if the statements were true for a variety $X$ and families of $r$-dimensional cycles, then the same argument as in Sections \ref{sec:complex} (resp. Section  \ref{sec:general}) would prove that $$\tilde{N}^{d-r-1}H^{2d-2r-1}(X, \ZZ)=N^{d-r-1}H^{2d-2r-1}(X, \ZZ)$$ (resp. the corresponding equality of \'etale cohomology groups with $\ZZ_\ell$-coefficients). But the examples of \cite{BO_Coniveau}  show that this is not true in general.
\end{rem}

\begin{rem}
Even if we start with a family of effective cycles, for the statements to be true, we have to use non-effective cycles.
In the proof, we have to add families of curves to the original family to achieve the desired deformation over the curve $T$. 
 Example \ref{ex} below shows that adding more curves is necessary.

Moreover, the statement is not a simple corollary of the existence of the universal family over the Chow variety (which exists only in characteristic $0$).
This is because we require that the family is parameterized by a normal variety, while the Chow variety is only semi-normal in \cite{Kollar96} by definition or satisfies no such normality condition at all in some other references such as \cite{Friedlander_LawsonHomology}. 

It is possible that the morphism from the normalization of the Chow variety to the Chow variety maps two points to the same point. 
If this happens, we take $U$ to be the normalization of the Chow variety, $u, u'$ to be the two points mapping to the same point in the Chow variety, the existence of $V, T, W$ in this case cannot be deduced from the existence of the universal family over the Chow variety.
\end{rem}

\begin{rem}
    Finally we remark that $U$ being irreducible is not essential in the proof. But it simplifies the argument. If $U$ is reducible and connected, one can use similar argument as in \cite[Section 8]{Kollar_Tian} to find a connected algebraic set $V$. But in this case, we cannot choose $V$ to admit a morphism to $U$. The best one can have is that for each irreducible component of $U$, there is an irreducible component of $V$ with a projective, surjective morphism to this component. 
\end{rem}

\begin{ex}\label{ex}
Take an irreducible nodal curve $C \subset \PP^3$ of degree $d$ and arithmetic genus $g$.
We assume that $d>2g-1$.
Assume that there are $n$ nodes for some $n>1$.
Let $C_i \to C$ be the normalization at one of the nodes for $i=1,\ldots, n$. In this way, we get $n$ morphisms from $n$ different nodal curves (with $(n-1)$-nodes) to $\PP^3$, whose image is $C$.
Call them $f_i: C_i \to \PP^3, i=1, \ldots, n$.

A degree $d$ morphism from a curve of genus $g-1$ to $\PP^3$ is determined by a choice of a line bundle of degree $d$ and $4$ sections without base point.
With our choice of $d, g$, any choice of $4$ general sections works.
Using this, it is easy to show that there is an irreducible family of nodal curves with a morphism to $\PP^3$ over a smooth base $U$, a general member of which is a smooth curve of genus $g-1$, and the family contains all the  morphisms $f_i, i=1, \ldots, n,$ above.

This gives a family of cycles over $U$ as in Theorem \ref{thm:filtered_sm} (choose $u_0, u_1$ to be any two of the points corresponding to the morphisms $f_i, 1 \leq i \leq n$).
A general nodal deformation of the morphism $f_i$ has to have a nodal curve of arithmetic genus $g-1$ as the domain curve. 
If the deformation fixes the image, it is of degree $1$ as a morphism to $C$. 
Thus the deformed morphisms to $C$ (which is of arithmetic genus $g$) has to be the normalization at one of the nodes.
So the morphisms $f_i: C_i \to C, 1 \leq i \leq n,$ are rigid.
As a result, any deformation between the $f_i$'s will have to deform the cycle class as well.
\end{ex}
\subsection{Deforming complete intersection curves}
This section is of auxiliary nature. We collect a few results that are essentially proved in \cite{Kollar_Tian}. 
There are minor twists in the formulation.
So we provide details here.
\begin{defn}\label{def:deformation}
    Let $X$ be a proper scheme over an algebraically closed field. A \emph{deformation equivalence} (of curves mapping to $X$) is a diagram $(B, b_1, b_2) \leftarrow S \to X$, where    
\begin{enumerate}
    \item $(B, b_1, b_2)$ is a two pointed connected curve with smooth marked points $b_i$, 
    \item $S\to B$ is flat, proper, with fibers of pure dimension $1$.
\end{enumerate}

The $b_i$ and the intersection points of different irreducible components of $B$ are the \emph{pivot points}.
\end{defn}

\begin{defn}
    A \emph{$3$-nodal} (curve) singularity is a triple point that is locally isomorphic to $3$ coordinate axes $(xy = yz = zx = 0)\subset \AAA^3$.
\end{defn}

\begin{defn}\label{def:3nodal}
    A deformation equivalence as in Definition \ref{def:deformation} is called \emph{nodal} (or \emph{$2$-nodal}) if all fibers are nodal curves (singularities like $2$ coordinate axes $(xy=0)\subset\AAA^2$).
We call a deformation \emph{$3$-nodal} if all but finitely many fibers are nodal curves, and the pivot fibers have only nodes and $3$-nodal points as singularities.
\end{defn}
\begin{rem}\label{rem:3to2}
    By \cite{Kollar_Tian} Example 19, every $3$-nodal deformation between nodal curves gives rise to a nodal deformation.
\end{rem}

\begin{lem}[{cf. \cite[(42.2)-(42.5)]{Kollar_Tian}}]\label{lem:r-connect}
    Let $X$ be a smooth projective variety defined over an algebraically closed  field $k$. Fix any positive integer $r$. There is a family of smooth curves in $X \times \PP^1$:
    $\mathbf{L}_r \subset S \times X \times \PP^1$ satisfying the following condition: 
    \begin{enumerate}
        \item For any length $r$ subscheme of $X \times \PP^1$ whose embedding dimension is $1$, the subfamily of curves that pass through this subscheme is non-empty and irreducible;
        \item Let $M \leftarrow \mathcal{L}_r \xrightarrow{F} X \times \PP^1, \sigma_i: M \to \mathcal{L}, i=1, \ldots, r,$ be the universal family parameterizing curves in $\mathbf{L}_r$ with $r$-distinct points. Then the evaluation map \[
        ev=\prod_{i=1}^r F\circ \sigma_i: M \to \prod_{i=1}^r(X \times \PP^1) \]
        is flat.
    \end{enumerate}
\end{lem}
\begin{proof}
    Take a sufficiently ample linear system $|A|$ on $X\times \PP^1$ and take the family of curves to be smooth complete intersections of divisors in $|A|$.
\end{proof}
We will call a curve in a chosen family $\mathbf{L}_r\subset S \times X \times \PP^1$ an \emph{$\mathbf{L}_r$-curve} in the following.

\begin{rem}\label{rem:L=hci}
    The $H^\ci$-family  defined in \cite[(28)]{Kollar_Tian} is a special case of $\mathbf{L}_2$-family. The difference is that we do not choose an explicit form of the ample divisor $A$.
\end{rem}
\begin{rem}\label{const:lifting}
     In the following, we often need to lift finitely many families of curves $S \leftarrow Z_i \to X$ to families of curves $S \to {Z}_i \to X \times \PP^1$ such that the images of ${Z}_i$ are disjoint.
     One way to do this is to choose different points $p_{i}$ in $\PP^1$ and use the compositions $Z_i \to X \cong X\times \{p_{i}\} \to X \times \PP^1$).   
\end{rem}

In the following, we will frequently use the construction of gluing families of nodal curves along smooth points. See \cite[Theorem 3.4]{Knudsen_projectivity_2} for detailed information.

\begin{lem}[{cf. \cite[Corollary 29]{Kollar_Tian}}]\label{lem:join}
    Keep the same notations as in Lemma \ref{lem:r-connect}. 
    The family $\mathbf{L}_r \subset S\times X \times \PP^1$ also satisfies the following property.

    Let $U \leftarrow \mathcal{C}_i \xrightarrow{p_i} X, \sigma_i: U \to \mathcal{C}_i, i=1, \ldots, r$ be any $r$ flat projective families of nodal curves with a marked point in the smooth locus.
    Choose a lifting of $U \leftarrow \mathcal{C}_i \to X$ to $U \leftarrow \mathcal{C}_i \xrightarrow{\tilde{p}_i} X\times \PP^1,$ such that the images of $\tilde{p}_i \circ \sigma_i:U \to \mathcal{C}_i \to X \times \PP^1$ are disjoint, $i=1, \ldots, r$ (Remark \ref{const:lifting}).
    Assume that $U$ is quasi-projective.
    Choose finitely many points $u_1, \ldots, u_n$, and curves $L_1, \ldots, L_n$ in the family $\mathbf{L}_r$ such that $L_j (1\leq j \leq n)$ passes through the points $\tilde{p}_1(\sigma_1(u_i)), \ldots, \tilde{p}_1(\sigma_1(u_i)) (1\leq j \leq n)$.
    After a generically finite dominant base change $f: V \to U$, we may construct a family of $\mathbf{L}_r$-curves with $r$-marked points, $V \leftarrow \mathcal{L} \xrightarrow{p} X, \tau_i: V \to \mathcal{L}, i=1, \ldots r,$ such that
    \begin{enumerate}
        \item There are points $v_1, \ldots, v_n \in V$ such that $f(v_j)=u_j$ and $\mathcal{L}_{v_j}=L_j, 1\leq j \leq n$.
        \item $p\circ \tau_j=p_j \circ \sigma_j, 1 \leq j \leq n$.
    \end{enumerate}
    As a result, one can glue the families $\mathcal{C}_i,\mathcal{L}$ over $V$ along the sections $\sigma_i$ and $\tau_i$ to form a family of \emph{connected} nodal curves.
\end{lem}

\begin{proof}
Fix a sufficiently ample divisor $A$ and consider the universal family 
    \begin{align*}
        \textbf{CI}&=\{(u, W)|u \in U, \tilde{p}_i\circ \sigma_i(u)\in \cap_{D \in W}D, W\subset {H^0(X\times \PP^1, \OO_{X\times \PP^1}(A))}, \dim W=d \}\\
        &\subset U \times \text{Grass}(d, H^0(X\times \PP^1, \OO_{X\times \PP^1}(A))), d=\dim X.
    \end{align*}
    Denote by $\textbf{CI}^{\text{sm}}$ the locus where the base locus of the linear system $|W|$ is a smooth complete intersection curve.
    For $A$ sufficiently ample, the fiber $\textbf{CI}^{\text{sm}}_u$ is a dense open subset of the fiber $\textbf{CI}_u$ for each $u\in U$.
  Since we constructed $L_i$ as complete intersection curves, each $L_i$ corresponds to a unique $W_i\subset {H^0(X\times \PP^1, \OO_{X\times \PP^1}(A))}$.
    By taking general hyperplane sections of $\textbf{CI}^{\text{sm}}$ containing the points $(u_i, W_i), 1\leq i \leq n$, we can construct a multisection $V\subset \textbf{CI}^{\text{sm}}\to U$. The morphism $f:V\to U$ is the desired base change, and the family $\mathcal{L}\to V$ is the family of complete intersection curves over $V$.
\end{proof}

\begin{defn}[{cf. \cite[Definition 30]{Kollar_Tian}}]\label{def:join}
    Let $X$ be a smooth projective variety defined over an algebraically closed  field $k$, and let $\mathbf{C}_r=\{\pi_i:C_i \to X, i=1, \ldots, r\}$ be a set of $r$ reduced curves of $X$. 
    
    Fix a family $\mathbf{L}_r \subset S \times X \times \PP^1$ as Lemma \ref{lem:r-connect}.
    Denote by $S \leftarrow \mathbf{L}_r \xrightarrow{F} X$ the family of curves obtained from projection to $X$.

    Define
    $\text{Join}(\mathbf{C}_r, \mathbf{L}_r)$
    to be the scheme parameterizing curves of the form
    \[
    \pi: (\coprod_{i=1}^r C_i) \cup_{\sigma} (L, p_1, \ldots, p_r) \to X,
    \]
    where $(L, p_1, \ldots, p_r)$ is an $\mathbf{L}_r$-curve with \emph{$r$-distinct points} $p_1, \ldots, p_r$,  $$\sigma: \{p_1, \ldots, p_r\} \to \coprod_{i=1}^r C_i$$ is an embedding such that $\sigma(p_i) \in C_i, \pi_i(\sigma(p_i))=F(p_i)$, and the curve
    \[
    \pi: (\coprod_{i=1}^r C_i) \cup_{\sigma} (L, p_1, \ldots, p_r) \to X,
    \]
    is glued from $(\coprod_{i=1}^r C_i) \to X$ and $L \to X$ by identifying the closed subschemes $p_i \cong \sigma(p_i)$.

    We define 
    $\text{Join}^\text{nodal}(\mathbf{C}_r, \mathbf{L}_r)$
    to be the subfamily of curves such that in addition, $\sigma(p_i) \in C_i^{\text{sm}}, i=1, \ldots, r$.    
\end{defn}
\begin{rem}\label{rem:3dal}
    If each $C_i (1 \leq i \leq r)$ is a nodal curve, the curve $(\coprod_{i=1}^r C_i) \cup_{\sigma} (L, p_1, \ldots, p_r)$ is a curve with nodal or $3$-nodal singularities.
    If in addition $\sigma(p_i) \in C_i^{\text{sm}}, i=1, \ldots, r$, the curve
    $(\coprod_{i=1}^r C_i) \cup_{\sigma} (L, p_1, \ldots, p_r)$ is a nodal curve \cite[Theorem 3.4]{Knudsen_projectivity_2}.
\end{rem}

When $r=2$, the following is a special case of \cite[Corollary 31]{Kollar_Tian}.
\begin{lem}\label{lem:deform}
    Let $X$ be a smooth projective variety defined over an algebraically closed field $k$. 
    Let $\mathbf{C}_r=\{\pi_i:C_i \to X, i=1, \ldots, r\} $ be a set of $r$ geometrically connected nodal curves in $X$. 
    Fix a family $\mathbf{L}_r\subset S \times X\times \PP^1$ as in Definition  \ref{def:join}. 
    \begin{enumerate}
        \item $\text{Join}(\mathbf{C}_r, \mathbf{L}_r)$ is connected.
        \item $\text{Join}^\text{nodal}(\mathbf{C}_r, \mathbf{L}_r)$ is an open dense subset of $\text{Join}(\mathbf{C}_r, \mathbf{L}_r)$.
        
        \item For any two members of the family of curves $\text{Join}^{\text{nodal}}(\mathbf{C}_r, \mathbf{L}_r)$, there is a $3$-nodal deformation from one to the other over a connected curve, which is obtained by gluing the constant deformation family of the curves $C_1, \ldots C_r$ to a family of deformations of curves in $\mathbf{L}_r$.
        We can also modify the deformation at pivot points to make it nodal.
    \end{enumerate}
\end{lem}

\begin{proof}[Proof of Lemma \ref{lem:deform}]
    Observe that that is a morphism $$\Phi: \text{Join}(\textbf{C}_r, \mathbf{L}_r)\to \prod_{i=1}^rC_i$$ by mapping a curve $(\coprod_{i=1}^r C_i) \cup_{\sigma} (L, p_1, \ldots, p_r)$ to $\prod_i \sigma(p_i) \in \prod C_i$. 
    The morphism $\Phi$ is flat and has non-empty and geometrically irreducible fiber over any point by assumption on $\mathbf{L}_r$. This proves the first statement.
    The second statement is true because  $\text{Join}^{\text{nodal}}(\mathbf{C}_r, \mathbf{L}_r)=\Phi^{-1}(\prod_{i=1}^rC_i^\sm)$, which is dense in $\prod_{i=1}^rC_i$.

    Finally, the last statement follows from the first two: take a connected curve in $\text{Join}(\mathbf{C}_r, \mathbf{L}_r)$ and use Remarks \ref{rem:3dal} and  \ref{rem:3to2}. 
\end{proof}

\subsection{Proof of Theorems \ref{thm:filtered_sm} and \ref{thm:filtered}}
    In this section, we will often glue (families of) nodal curves together by identifying smooth points on them. 
    That is, given two families of nodal curves $S \leftarrow C \xrightarrow{f} X$ and $S\leftarrow D \xrightarrow{g} X$, and finitely many sections $\sigma_i:S \to C, \tau_i: S \to D, 1=1, \ldots, n$ that lie in the smooth locus of $C \to S$ and $D \to S$ and such that $f\circ \sigma_i=g\circ \tau_i$, we glue $C, D$ by identifying the subschemes $\sigma_i(S)\subset C$ and $\tau_i(S)\subset D$ for each $i$ (\cite[Theorem 3.4]{Knudsen_projectivity_2}).
    If it is not important to keep track of which points are identified, 
    we will simply use notations like $C\cup D$ to denote the curve obtained by gluing $C$ and $D$ together along some smooth points.

    In the proofs, we will usually make base changes several times.
    To simplify the notations, we will use the same letter (usually $U$) to denote the base in the first few steps of base change.

    The following observation is useful.
    \begin{constr}[Creating isomorphic base]\label{samebase}
    Given any two pointed quasi-projective algebraic sets of the same dimension $(B^+, b_0^+, b_1^+), (B^-, b_0^-, b_1^-)$, we may take a general complete intersection $B$ (which is of the same dimension as $B^+$ and $B^-$) of $B^+\times B^-$ that contains the two points $b_0=(b_0^+, b_0^-)$ and $b_1=(b_1^+, b_1^-)$. 
    Then $(B, b_0, b_1)$ maps to both $(B^+, b_0^+, b_1^+), (B^-, b_0^-, b_1^-)$.

    If both $B^+$ and $B^-$ are irreducible, up to replacing $B^{+/-}$ with an alteration, we may always assume that the points $b_i^{+/-}$ are smooth points.
    Thus we may choose $B$ to be smooth and irreducible.
    
    If both $B^+$ and $B^-$ are connected curves, this construction is \cite[26.2]{Kollar_Tian}, and we may take $B$ to be a connected curve.
\end{constr}
    
The following is essentially \cite[Theorem 61]{Kollar_Tian}.
\begin{prop}[{cf. \cite[Theorem 61]{Kollar_Tian}}]\label{prop:addtodeform}
    Keep the same notations as Theorem \ref{thm:filtered_sm}. Assume furthermore that $U$ is quasi-projective and that $\Gamma_U$, the family of cycles over $U$, is an effective nodal family of one-cycles.
    Then there is a generically finite dominant base change $f: V \to U$, a connected two pointed quasi-projective curve $(T, t_0, t_1)$, two families of $\mathbf{L}_s$-curves $\mathcal{L}_T \to T$ and $\mathcal{L}_V \to V$, a connected nodal curve $Z \to X$, two families of $r$-tuple of $L_2$-curves $r|\mathbf{L}_2|_V \to V$, $r|\mathbf{L}_2|_T \to T$, such that the followings hold.
    \begin{enumerate}
        \item There are two points $v_0, v_1 \in V$ such that $f(v_0)=u_0$, $f(v_1)=u_1$.
        \item We may glue the families of curves $\Gamma_V$, $\mathcal{L}_V$, $Z\times V$, $r|\mathbf{L}_2|_V$ together to a family of \emph{connected} nodal curves $C_V$.
        \item We may glue the families $\mathcal{L}_T$, $r|\mathbf{L}_2|_T$, $Z\times T$, and a deformation over the image of $(\Gamma_V\cup \mathcal{L}_V)|_{v_0} $ to form a $3$-nodal deformation over $T$, which gives rise to a family of connected nodal curves $T\leftarrow C_T \to X$ as in Remark \ref{rem:3to2}.
        \item The cycle class of $C_T$ at each point $t \in T$ is $\gamma+[Z]+[\mathcal{L}_T|_t]+[r|\mathbf{L}_2|_T|_t]$.
        \item The fiber of $C_T$ over $t_0$ (resp. $t_1$) is the same as the fiber of $C_V$ over $v_0$ (resp. $v_1$).
     \end{enumerate}
\end{prop}
\begin{proof}
    We write $\Gamma_U=\sum m_i Z_i, U \leftarrow Z_i \to X$.
Take $m_i$ copies of each family of nodal curves $U \leftarrow Z_i \to X$.
Lift these $m_i$ copies of $U \leftarrow Z_i \to X$ for all $i$ to families of nodal curves in $X \times \PP^1$ with disjoint images (Remark \ref{const:lifting}).

We choose a family of $\mathbf{L}_s$-curves and glue them to these lifted families of nodal curves in $X \times \PP^1$ (up to a generically finite base change) as in Lemma \ref{lem:join} to produce a family of \emph{connected} nodal curves. 
We denote the family of $\mathbf{L}_s$-curves by $\mathcal{L}_U \to U$.
We call the resulting family of connected nodal curves ${\Gamma}_U\cup \mathcal{L}_U \to U$.
Moreover, we choose the family $\mathcal{L}_U \to U$ so that the fibers $\mathcal{L}_U|_{u_0}, \mathcal{L}_U|_{u_1}$ are the same.

We use the equality of cycle classes $[({\Gamma}_U\cup \mathcal{L}_U)|_{u_0}]=[({\Gamma}_U\cup \mathcal{L}_U)|_{u_1}]$, and apply \cite[Corollary 35]{Kollar_Tian}, which shows the following.
\begin{enumerate}
\item There is a copy of a fixed connected nodal curve $Z \to X$, connected to $({\Gamma}_U\cup \mathcal{L}_U)|_{u_0}$ by a curve $L_0$ in an $H^\ci$ family (\cite[28]{Kollar_Tian}), which is an $\mathbf{L}_2$-curve (see Lemma \ref{lem:r-connect}, Remark \ref{rem:L=hci}). 
Denote the resulting curve by $({\Gamma}_U\cup \mathcal{L}_U)|_{u_0} \cup L_0 \cup Z$.
And there is another copy of the fixed connected nodal curve $Z \to X$, connected to $({\Gamma}_U\cup \mathcal{L}_U)|_{u_1}$ by a curve $L_1$ in the same $H^\ci$($=\mathbf{L}_2$) family as $L_0$. Denote by $({\Gamma}_U\cup \mathcal{L}_U)|_{u_1} \cup L_1 \cup Z$ the resulting curve.
        
        \item  After further adding a suitable number of $H^\ci$($=\mathbf{L}_2$)-curves to both $({\Gamma}_U\cup \mathcal{L}_U)|_{u_0} \cup L_0 \cup Z$ and $({\Gamma}_U\cup \mathcal{L}_U)|_{u_1} \cup L_1 \cup Z$, there is a $3$-nodal deformation between the resulting two curves over a two-pointed connected curve $({T}, t_0, t_1) $, where $t_0, t_1$ are smooth points of ${T}$.
        We use $C_T\to T$ to denote the corresponding nodal family as in Remark \ref{rem:3to2}. 
    \end{enumerate}
    Finally, using Lemma \ref{lem:join}, after a generically finite base change, denoted by $f: V \to U$, we glue a family $r|\mathbf{L}_2|_V \to V$ of r-tuple of $\mathbf{L}_2$-curves over $V$, $Z \times V$, $(\Gamma_U \cup \mathcal{L}_U)\times_U V$ together to form a family of connected nodal curves $C_V \to V$ so that $C_V|_{v_0}=C_T|_{t_0}$ and $C_V|_{v_1}=C_T|_{t_1}$.
    The statements (3) and (4) follow from \cite[Corollary 35.4, Complement 36]{Kollar_Tian}
\end{proof}

\begin{proof}[Proof of Theorem \ref{thm:filtered_sm}]
    First, observe that if $U' \to U$ is a birational morphism (not necessarily proper) and the statement holds for $U'$ (with any lifting of $u_0, u_1$), then by the existence of a projective extension (Definition \ref{def:extension} and the remark after), the statement holds for $U$.
    
    Based on this observation, using Nagata's compactification and Chow lemma, we can assume that $U$ is projective.

    We write $\Gamma_U=\sum_i m_i Z_i, Z_i \subset U \times X$.
    Denote by $\tilde{Z}_i$ the normalization of the component $Z_i$. 
    Up to a purely inseparable base change, we may assume that $\tilde{Z}_i \to Z_i \to U$ is generically smooth for all $i$ (apply \cite[Lemma 2.8]{deJong_families_of_curves_and_alteration} to the generic fiber of each component $Z_i$ of the cycle $\Gamma_U$).

    We make a generically finite projective base change $f': U' \to U$ and semistable reduction \cite[Theorem 5.8]{deJong_alteration} to each component of $\Gamma_U$,  
    and thus produce a nodal family of one-cycles $U' \leftarrow {\Gamma}_{U'} \to X$ such that $f'^*(\Gamma_U)=[\Gamma_{U'}]$ as equi-dimensional family of one-cycles.
    We may assume that $U'$ is smooth by the existence of an alteration.
    The general fibers of each $\tilde{Z}_i \to U$ are smooth and thus for any general point $u \in U$, and any point $u' \in U'$ lying over $u$, the fibers $\tilde{Z}_i\times_U U'|_{u'}$ and $\tilde{Z}_i|_{u}$ are isomorphic.
    Denote by $U^0$ the open subset of $U$ consisting of points $u\in U$ satisfying the above condition.
     Then for all points $x_1, x_2, \ldots$ in $U'$ that is in the inverse image of $u \in U^0$, we have an equality of nodal one-cycles $\Gamma_{U'}|_{x_1}=\Gamma_{U'}|_{x_2}=\ldots$.
     
   We write $\Gamma_{U'}=\Gamma^+_{U'}-\Gamma^-_{U'}$, with $\Gamma^+_{U'}$ (resp. $\Gamma^-_{U'}$) as its positive (resp. negative) part. We write
    \[
\gamma_0^+=\Gamma^+_{U'}|_{u_0},\quad \gamma_0^-=\Gamma^-_{U'}|_{u_0},\quad
    \gamma_1^+=\Gamma^+_{U'}|_{u_1},\quad
    \gamma_1^-=\Gamma^-_{U'}|_{u_1}.
    \]
    By assumption, 
    $[\gamma_0^+]-[\gamma_0^-]=[\gamma_1^+]-[\gamma_1^-]$. 

    We take a general complete intersection $V'$ (of the same dimension as $U'$) containing $v_0'=(u_0, u_1)$ and $v_1'=(u_1, u_0)$ in the product $U' \times U'$.
    Note that $V'$ is smooth, irreducible and projective (since $U'$ is smooth, irreducible and projective).
    There are two nodal families of one-cycles $$\Gamma_p=p^*\Gamma_{U'}=\Gamma_p^+-\Gamma_p^-, \quad\Gamma_q=q^*\Gamma_{U'}=\Gamma_q^+-\Gamma_q^-$$ over $V'$ induced by pulling back $\Gamma_{U'}$ along the the two projections $p, q: V' \to U'$, where $\Gamma_p^+$ (resp. $\Gamma_q^+$) is the positive part of $\Gamma_p$ (resp. $\Gamma_q$), and $\Gamma_p^-$ (resp. $\Gamma_q^-$) is the negative part of $\Gamma_p$ (resp. $\Gamma_q$). 
    We construct two new nodal families of one-cycles over $V'$:
    \[
{\Gamma}_{V'}^+=\Gamma_p^++\Gamma_q^-,\quad \Gamma_{V'}^-=\Gamma_p^-+\Gamma_q^-.
    \]
    We remark that $\Gamma_p=p^*\Gamma_{U'}=\Gamma_{V'}^+-\Gamma_{V'}^-$.
    The nodal one-cycle $\Gamma_{V'}^+|_{v_0'}$ is $\gamma_0^++\gamma_1^-$, and the nodal one-cycle $\Gamma_{V'}^+|_{v_1'}$ is $\gamma_1^++\gamma_0^-$. 
    Thus their cycle classes are the same. 
    It follows from a similar computation that the cycle class of the restriction of $\Gamma_{V'}^-$ to $v_0', v_1'$ are the same.

    So now we have two \emph{effective} nodal families of one-cycles $\Gamma_{V'}^+, \Gamma_{V'}^-$, and the restriction of each family to $v_0', v_1'$ has the same cycle class.
    We write $\tilde{p}:V' \xrightarrow{p} U' \to U$.

    We apply Proposition \ref{prop:addtodeform} to $\Gamma_{V'}^+$ and $\Gamma_{V'}^-$.
    Using Construction \ref{samebase}, we may take a single normal variety $V$ and a connected two pointed projective curve $(T, t_0, t_1)$ in Proposition \ref{prop:addtodeform} that works for both families $\Gamma_{V'}^+$ and $\Gamma_{V'}^-$.
    Denote by $f: (V, v_0, v_1) \to (U, u_0, u_1)$ the base change. 
    For the family $\Gamma_{V'}^+$ (resp. $\Gamma_{V'}^-$), denote by $Z^+$ (resp. $Z^-$) the constant nodal curve, $\mathcal{L}_V^+$, $\mathcal{L}_T^+$ (resp. $\mathcal{L}_V^-$, $\mathcal{L}_T^-$) the family of $\mathbf{L}_s$-curves over $V$ and $T$, $r|\mathbf{L}_2|_V^+$,  $r|\mathbf{L}_2|_T^+$ (resp. $r|\mathbf{L}_2|_V^-$, $r|\mathbf{L}_2|_T^-$) the family of $\mathbf{L}_2$-curves over $V$ and $T$, $C_V^+$ (resp. $C_V^-$) the family of nodal curves  over $V$ glued from above families, $C_T^+$ (resp. $C_T^-$) the family of nodal curves over $T$ glued from above families and deformation over the image of $(\Gamma_{V}^+\cup \mathcal{L}_V^+)|_{v_0}$ (resp. $(\Gamma_{V}^-\cup \mathcal{L}_V^-)|_{v_0}$).

    We define 
    \[
    \Gamma_V=(C_V^+-\mathcal{L}_V^+-r|\mathbf{L}_2|_V^+-Z^+\times V)-(C_V^--\mathcal{L}_V^--r|\mathbf{L}_2|_V^--Z^-\times V),
    \]
    \[
    \Gamma_T=(C_T^+-\mathcal{L}_T^+-r|\mathbf{L}_2|_T^+-Z^+\times T)-(C_T^--\mathcal{L}_T^--r|\mathbf{L}_2|_T^--Z^-\times T).
    \]
    By Proposition \ref{prop:addtodeform}.(4), the family of cycles $[C_T^+]$ is a fixed cycle plus deformations in the families $\mathbf{L}_s$ and $r|\mathbf{L}_2|$.
    In the definition of $\Gamma_T$, we subtract the deformations of cycles in the families $\mathbf{L}_s$ and $r|\mathbf{L}_2|$ in $[C_T^+]$.
    The same is true for the family $[C_T^-]\to T$.
    Therefore, $[\Gamma_T] \to T$ is a family of constant cycles.

    By abuse of notations, we still denote the pull-back families of $\Gamma_p, \Gamma_q$ along the morphism $V \to V'$ by $\Gamma_p$ and $\Gamma_q$.

    Choose a point $u \in U^0$ and let $v, v'$ be two points in $f^{-1}(u)$.
    Our next goal is to construct the family of constant cycles over a curve connecting $v, v'$.
    
    Choose an irreducible smooth two pointed curve $(B_{v, v'}, b, b')$ mapping to  $(V, v, v')$ and denote the pull-back of the family $\Gamma_q^- \to V$ by $\Gamma_{B_{v, v'}}\to B_{v, v'}$.
    
    The fiber of $C_V^+$ over $v$  (resp. $v'$) is a nodal curve of the form 
     \[
     {\Gamma}_p^+|_{v}\cup \Gamma_q^-|_v \cup L_{v} \cup Z^+ \cup r|\mathbf{L}_2|_{v}
     \]
     (resp.
     ${\Gamma}_p^+|_{v'}\cup \Gamma_q^-|_{v'} \cup L_{v'} \cup Z^+ \cup r|\mathbf{L}_2|_{v'}$).  
     Here ${\Gamma}_p^+|_{v}$ and ${\Gamma}_p^+|_{v'}$ are by definition effective nodal one-cycles. 
     But we abuse the notations by treating them as disconnected nodal curves consisting of the disjoint union of $m_i$ copies of the nodal curve $\tilde{Z}_i|_{u}$.
     We think of the fiber of $C_V^+$ over $v$  (resp. $v'$) as nodal curves $Z^+$ and ${\Gamma}_{p}^+|_{v}, \Gamma_q^-|_{v}$ (resp. ${\Gamma}_{p}^+|_{v'}, \Gamma_q^-|_{v'}$),  connected by two types of curves: 
     \begin{itemize}
         \item an $\mathbf{L}_{s}$-curve $L_v$ (resp. $L_{v'}$) connecting ${\Gamma}_{p}^+|_{v}$ and $\Gamma_q^-|_{v}$ (resp. ${\Gamma}_{p}^+|_{v'}$ and $\Gamma_q^-|_{v'}$),
         \item $r$-tuple of $\mathbf{L}_2$-curves $r|\mathbf{L}_2|_{v}$ (resp. $r|\mathbf{L}_2|_{v'}$) connecting ${\Gamma}_p^+|_{v}\cup \Gamma_q^-|_v \cup L_{v}$ and $Z^+$
     (resp.
     ${\Gamma}_p^+|_{v'}\cup \Gamma_q^-|_{v'} \cup L_{v'}$ and $Z^+$).
     \end{itemize}  
     We first apply Lemma \ref{lem:join} (once for the family $\mathbf{L}_s$ and $r$-times for the family $\mathbf{L}_2$) to the families $\Gamma_{B_{v, v'}}\to B_{v, v'}$, $\Gamma_p^+|_v \times B_{v, v'} \to B_{v, v'}$, $Z^+ \times B_{v, v'} \to B_{v, v'}$ to construct a deformation $C^{q+}_{v, v'} \to (B^+_{v, v'}, b_{v, v'}^v, b_{v, v'}^q)$ from the curve
     \[
     C^{q+}_{v, v'}|_{b_{v, v'}^v}={\Gamma}_p^+|_{v}\cup \Gamma_q^-|_v \cup L_{v} \cup Z^+ \cup r|\mathbf{L}_2|_{v}
     \]
     to a curve of the form
     \[
     C_q^+=C^{q+}_{v, v'}|_{b_{v, v'}^q}={\Gamma}_p^+|_{v}\cup \Gamma_q^-|_{v'} \cup L \cup Z^+ \cup L'.
     \]
     where $L$ is an $\mathbf{L}_s$-curve and  $L'$ consists of r-tuple of $|\mathbf{L}_2|$-curves.

     Recall that by the choice of $u$, $\Gamma_p^+|_v=\Gamma_p^+|_{v'}$.
     We deform the curve $L$ and each irreducible component of $L'$ over a connected curve using Lemma \ref{lem:deform} (first apply for the family $\mathbf{L}_s$ and then apply r times for the family $\mathbf{L}_2$) to construct a nodal deformation $C^{L+}_{v, v'} \to (T^+_{v, v'}, t_{v, v'}^q, t_{v, v'}^{v'})$ from  
     \[
     C_q^+={\Gamma}_p^+|_{v}\cup \Gamma_q^-|_{v'} \cup L \cup Z^+ \cup L'
     \]
     to
     \[
     {\Gamma}_p^+|_{v'}\cup \Gamma_q^-|_{v'} \cup L_{v'} \cup Z^+ \cup r|\mathbf{L}_2|_{v'}.
     \]
     Here we first produce a $3$-nodal deformation
     constructed by gluing the constant families $({\Gamma}_p^+|_{v}\cup \Gamma_q^-|_{v'})\times T_{v, v'}^+, Z\times T_{v, v'}^+$, and two deformations $r|\mathbf{L}_2|_{v, v'} \to T_{v, v'}^+, \mathcal{L}_{v, v'} \to T_{v, v'}'^+$ of the curves in the families $r|\mathbf{L}_2|$ and $\mathbf{L}_{s}$.
     Then we use Remark \ref{rem:3to2} to turn it into a nodal deformation.

     We then do the same type of deformation for the curves $C_V^-|_v, C_V^-|_{v'}$ using the family $\Gamma_{B_{v, v'}}\times_{B_{v, v'}} B^+_{v, v'}\to B^+_{v, v'}$ and deformations of curves in the family $\mathbf{L}_s$ and $\mathbf{L}_2$.
     Again the deformation consists of two steps: first a deformation over an irreducible curve $C^{q-}_{v, v'}\to B^-_{v, v'}$ and then a deformation over a connected curve $C^{L-}_{v, v'}\to T^-_{v, v'}$.
     Note that there is also a deformation $C^{q+}_{v, v'}\times_{B^+_{v, v'}}B^-_{v, v'} \to B^-_{v, v'}$.

     By Construction \ref{samebase}, we may assume that  $T_{v, v'}^-$ maps to $T^+_{v, v'}$ and thus we have the pull-back deformation $C_{v, v'}^{L+}\times_{T_{v, v'}^+} T_{v, v'}^- \to T_{v, v'}^-$. 

     We glue $T_{v, v'}^-$ and $B_{v, v'}^-$ together by identifying the two points $b_{v, v'}^q$ and $t_{v, v'}^q$, the fiber over which is the nodal curve $C_q^+$.
     We call the resulting curve $T_{v, v'}$
     and we take $t_{v, v'}^v$ to be $b_{v, v'}^v$.
     We glue the two families $C^{q+}_{v, v'}\times_{B^+_{v, v'}}B^-_{v, v'}$ and $C_{v, v'}^{L+}\times_{T_{v, v'}^+} T_{v, v'}^-$ together (along the nodal curve $C^+_q$) to a nodal deformation family over the connected two pointed curve $(T_{v, v'}, t_{v, v'}^{v}, t_{v, v'}^{v'})\leftarrow C^{+}_{v, v'} \to X$.
     In a similar way, by gluing the two families $C^{q-}_{v, v'}$ and $C_{v, v'}^{L-}$ together (along the nodal curve $C^-_q$), we construct a nodal deformation family over the connected two pointed curve $(T_{v, v'}, t_{v, v'}^{v}, t_{v, v'}^{v'})\leftarrow C_{v, v'}^- \to X$.

     We take the nodal family of one-cycles over $T_{v, v'}$ to be
    \[
    \Gamma_{v, v'}=(C_{v, v'}^+-r|\mathbf{L}_2|_{v, v'}^+-Z^+ \times T_{v, v'}-\mathcal{L}_{v, v'})-(C_{v, v'}^--r|\mathbf{L}_2|_{v, v'}^--Z^- \times T_{v, v'}-\mathcal{L}_{v, v'}^-).
    \] 
    Over the subcurve $B^{-}_{v, v'}\subset T_{v, v'}$, the family of cycles $\Gamma_{v, v'}^+$ is
    \[
    [\Gamma_p^+|_v+\Gamma_q^-|_b+Z^++\mathcal{L}_{v, v'}^+|_b+r|\mathbf{L}_2|_{v, v'}^+|_b].
    \]
    The family of cycles $\Gamma_{v, v'}^-$ is
    \[
    [\Gamma_p^-|_v+\Gamma_q^-|_b+Z^-+\mathcal{L}_{v, v'}^-|_b+r|\mathbf{L}_2|_{v, v'}^-|_b].
    \]
    The varying family of cycles $[\Gamma_q^-|_b]$ appears in both the positive and negative side. 
    Thus they do not contribute to the family of cycles $[\Gamma_{v, v'}]$.
    In the construction of $\Gamma_{v, v'}$, we subtract the varying families of cycles $[\mathcal{L}_{v, v'}^+|_b+r|\mathbf{L}_2|_{v, v'}^+|_b]$ (resp. $[\mathcal{L}_{v, v'}^-|_b+r|\mathbf{L}_2|_{v, v'}^-|_b]$) from $[C_{v, v'}^+]$ (resp. $[C^-_{v, v'}]$). 
    Therefore the family of cycles $[\Gamma_{v, v'}]$ is constant.
    By a similar computation, for the subcurve $T_{v, v'}^-\subset T_{v, v'}$,
    the family of cycles $[\Gamma_{v, v'}]$ is constant.

    Now we apply projective extension (\ref{def:extension}) to the nodal families over $V, T, T_{v, v'}$. By abuse of notations, we use the same letters to denote the extensions.
    Then we achieve that $f: V \to U$ is projective, and that $T, T_{v, v'}$ are connected projective curves.
    
    To construct $W$, we note that the nodal families over the pair of points $(v_0, t_0)$ (resp. $(v_1, t_1)$, $(v, t_{v, v'}^v)$, $(v', t_{v, v'}^{v'})$) are the same. 
    So we glue the schemes $V, T, T_{v, v'}$ together by identifying the pair of points $(v_0, t_0)$ (resp. $(v_1, t_1)$, $(v, t_{v, v'}^v)$, $(v', t_{v, v'}^{v'})$).
    We take $W$ to be the resulting scheme and it carries a nodal family obtained by gluing the nodal families over $V, T, T_{v, v'}$ together.
\end{proof}

\begin{rem}
    We observe the following interesting phenomenon.
    Over the curves $T, T_{v, v'}$, the families of cycles are constant, but both the positive and negative part are varying as families of cycles.
    Over the curve $T$, this is due to the fact that we deform curves in $\mathbf{L}_s, r|\mathbf{L}_2|$ to construct the family of curves $C_T, C_{v, v'}$, and thus the positive part of the family of cycles is a constant one-cycle plus deformation of cycles in the family $\mathbf{L}_{s}, r|\mathbf{L}_2|$. Similarly, the negative part is also a constant one-cycle plus deformation of cycles in the families $\mathbf{L}_{s}, r|\mathbf{L}_2|$.
    Over the curve $T_{v, v'}$, there is also a deformation in the family $\Gamma_q^-$ on both the positive and negative side.
\end{rem}

\begin{proof}[Proof of Theorem \ref{thm:filtered}]
     We first apply Theorem \ref{thm:filtered_sm}.
     We use $V^s$ (resp. $T^s$, $T^s_{v,v'}$, $U^s$, $W^s$) to denote $V$ (resp. $T$, $T_{v, v'}$, $U^0$, $W$) in the statement of Theorem \ref{thm:filtered_sm}.
     Choose an embedding of each component of $\Gamma_{W^s}$ into $\PP^M$.
     This gives a morphism
     \[
     \Phi: V^s \coprod T^s \coprod (\coprod_{f^s(v)=f^s(v')=u} T^s_{v,v'}) \to W^s \to  \prod_i \text{Hilb}^\text{nodal}(X \times \PP^M),
    \]
    where $\text{Hilb}^\text{nodal}(X \times \PP^M)$ is the open locus of the Hilbert scheme of $X \times \PP^M$ parameterizing curves with at worst nodal singularities.
    Denote by $\pi_i$ the  projection from $\text{Hilb}^\text{nodal}(X \times \PP^M)$ to the $i$-th factor and $\Phi_i=\pi_i \circ \Phi$.
    Denote by $W_i$ the image of $\Phi_i$. 
    Each $W_i$ is connected.
    
    Denote by $\mathbf{T} \subset \text{Hilb}^\text{nodal}(X \times \PP^M)$ the family of curves constructed in \cite[Theorem 44]{Kollar_Tian}.
    Since $\mathbf{T}$ parameterizes free curves (\cite[(41.2)]{Kollar_Tian}), it is contained in the smooth locus of a unique irreducible component of the Hilbert scheme, denoted by $H_\mathbf{T}$ (\cite[Claim 41.4]{Kollar_Tian}).
    
     Let $\ecomb^{d-\free}(W_i, r \mathbf{T})$ be space parameterizing family of embedded combs with handles parameterized by the image $W_i$ and $r$-teeth in $T$, and that are $d$-free along the handles and free \cite[Definition 45, (41.2)]{Kollar_Tian}. 
    Each $\ecomb^{d-\free}(W_i, r \mathbf{T})$ admits a morphism to the $r$-fold product $H_\mathbf{T}^r$.
     There is also a morphism $\Pi_i: \ecomb^{d-\free}(W_i, r \mathbf{T}) \to W_i$, which, by \cite[Theorem 48]{Kollar_Tian}, is surjective, has connected fibers, and satisfies the curve lifting property (\cite[Definition 47]{Kollar_Tian}).
     As a result, we may find a quasi-projective variety $\tilde{V}$ (resp. two pointed connected quasi-projective curve $(\tilde{T}, \tilde{t}_0, \tilde{t}_1)$, $(\tilde{T}_{v, v'}, \tilde{t}_{v, v'}^v, \tilde{t}_{v, v'}^{v'})$)  with a generically finite dominant morphism $\tilde{V} \to V^s$ (resp. a morphism of two pointed curves $(\tilde{T}, \tilde{t}_0, \tilde{t}_1) \to ({T}^s, {t}^s_0, {t}^s_1)$, $(\tilde{T}_{v, v'}, \tilde{t}_{v, v'}^v, \tilde{t}_{v, v'}^{v'})\to ({T}^s_{v, v'}, {t}_{v, v'}^{s, v}, {t}_{v, v'}^{s, v'}) ),$ which induces a nodal family of one-cycles $\tilde{
V} \leftarrow \Gamma_{\tilde{V}} \to X$ (resp. $\tilde{T} \leftarrow \Gamma_{\tilde{T}_{v, v'}} \to X$, $\tilde{T}_{v, v'}\leftarrow \Gamma_{\tilde{T}}\to X$), and a morphism 
     $\tilde{\Phi}: \tilde{V} \coprod \tilde{T} \coprod (\coprod_{v, v'} \tilde{T}_{v,v'}) \to \prod_i \ecomb^{d-\free}(W_i, r \mathbf{T})$
     such that the following diagram commutes
     \[
     \begin{CD}
         \tilde{V} \coprod \tilde{T} \coprod (\coprod_{v, v'} \tilde{T}_{v,v'}) @>\tilde{\Phi}>> \prod_i \ecomb^{d-\free}_i(W_i, r \mathbf{T})\\
         @VVV @VVV\\
         V^s \coprod T^s \coprod (\coprod_{v, v'} T^s_{v,v'}) @>(\prod \Phi_i) >> \prod_i W_i 
     \end{CD}
     \]
     
    Since $W_i$ is connected, there is a unique geometrically irreducible component $H_i$ of $\text{Hilb}(X \times \PP^M)$ containing $\ecomb^{d-\free}(W_i, r \mathbf{T})$ (\cite[Corollary 49]{Kollar_Tian}).
    
    In positive characteristic, we take $W$ to be the normalization of $\prod_i (H_i \times \prod_{j=1}^r H_\mathbf{T})$. 
    In characteristic $0$, we take a resolution of singularities $W \to \prod_i (H_i \times \prod_{j=1}^r H_\mathbf{T})$ that is an isomorphism over the smooth locus. 
    The normalization/resolution of singularities is an isomorphism in a neighbourhood of the image of $\ecomb^{d-\free}_i(W_i, r \mathbf{T})$.
 The universal family of subschemes induces an equi-dimensional family of one-cycles over $H_i, H_\mathbf{T}^r$.
    Denote by $H_i\leftarrow \Gamma_i \to X$ and $H_\mathbf{T}\leftarrow \Gamma_\mathbf{T} \to X$ the corresponding equi-dimensional family of one-cycles.
    Then $W$ carries an equi-dimensional family of one-cycles
    $\Gamma_W=\sum m_i p_i^*\Gamma_i-\sum_{j=1}^rp_{ij}^*\Gamma_\mathbf{T}$, where $p_i: W \to H_i$ and $p_{ij}: W \to H_{\mathbf{T}}$ are the natural morphisms to each factor of $\prod_i (H_i \times \prod_{j=1}^r H_\mathbf{T})$.
    The cycle class of the nodal family of one-cycles on $\tilde{V} \coprod \tilde{T} \coprod (\coprod_{v, v'} \tilde{T}_{v,v'})$ equals the pull-back of $\Gamma_W$.
    
    Denote by $\tilde{f}$ the morphism $\tilde{V} \to U$.
     We take ${V}$ to be a normal partial compactification of $\tilde{V}$ such that the morphism $\tilde{f}:\tilde{V} \to V^s$ extends to a projective morphism $f_{V/V^s}: {V} \to V^s$, and such that the morphism $\tilde{V} \to W$ extends to $V \to W$ (e.g. take $V$ to be the normalization of the graph closure of $\tilde{V} \subset V^s \times W \times \PP^N$ after choosing a locally closed embedding $\tilde{V} \subset \PP^N$), as shown in the following diagram.
     \[
     \xymatrix{
     {V} \ar@/_/[ddr]_{\text{projective morphism } f_{V/V^s}} \ar@/^/[drrr] \\
&\tilde{V} \ar[d]^{\tilde{f}} \ar[r] \ar@{_{(}->}[ul]& \prod_i \ecomb^{d-\free}(W_i, r \mathbf{T})\ar[d] \ar[r] &W \\
&V^s \ar[r] &\prod_i W_i}
     \]
     We write $f: V \to U$ for the composition $V \to V^s \to U$.
     We take ${T}$ (resp. ${T}_{v, v'}$) to be the projective compactification of $\tilde{T}$ (resp. $\tilde{T}_{v, v'}$) that only adds smooth points.
    Then the morphism $\tilde{\Phi}$ extends to a morphism
    \[
    \Psi: {V}\coprod {T}\coprod (\coprod_{v, v'}{T}_{v, v'}) \to W.
    \] 
    We set $U^0$ to be an open subset of $U^s$ where $f: f^{-1}(U^0) \to U^0$ factors as $f^{-1}(U^0)\to \tilde{f}^{-1}(U^0) \to U^0$.
\end{proof}

\section{Lawson homology and the filtrations on cohomology}\label{sec:complex}
\subsection{Lawson homology}
Let $X$ be a complex projective variety and we fix a very ample line bundle $\OO(1)$.
All the degrees are taken with respect to this line bundle.
Let $\ch_{r, d}(X)$ be the Chow variety parameterizing degree $d$, $r$-dimensional cycles of $X$ and 
\[
\ch_r(X)=\coprod_{d\geq 0} \ch_{r, d}(X),
\]
where $\ch_{r, 0}(X)$ is defined to be a single point corresponding to the  zero-cycle.
We give the set $\ch_r(X)(\CC)$ the structure of a topological monoid, where the topological structure comes from the analytic topology on $\ch_{r, d}(X)(\CC)$ and the monoid structure is the sum of cycles.
Define $Z_r(X)$ to be the group completion of $\ch_r(X)(\CC)$.
It has a topological group structure. 
The topology can be described in several equivalent ways.
These were studied by Lima-Filho \cite{LF_fl_eq_chow}.

\begin{defn}\label{def:eq}
We first define the category $I^\eq$. The objects are pairs $(S, \Gamma)$ consisting of a normal variety $S$ and a family of equi-dimensional $r$-dimensional cycles $\Gamma$ (Definition \ref{def:RelCycle}), and whose morphisms between $(S, \Gamma)$ and $(S', \Gamma')$ are all the morphisms $f: S \to S'$ such that $\Gamma=f^*\Gamma'$. This category is essentially small.

Define the topological space $Z_r(X)^\eq$ as the colimit of all the topological spaces $S(\CC)$ over the category $I^\eq$.

More precisely, each $(S, \Gamma)$ in $I^\eq$ gives a map of sets $\phi_{(S, \Gamma)}: S(\CC) \to Z_r(X)$. The topology of $Z_r(X)^\eq$ is defined in such a way that a subset $T \subset Z_r(X)$ is closed if and only if $\phi_{(S, \Gamma)}^{-1}(T)$ is closed for all $(S, \Gamma)$.
\end{defn}

\begin{lem}\label{lem:smproj}
In the definition of $ Z_r(X)^\eq$, we may take a set consisting of family of equi-dimensional cycles over normal projective varieties  (or smooth projective varieties).
\end{lem}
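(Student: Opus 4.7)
I would prove the lemma by showing that the colimit topologies produced by the three indexing categories $I^{\mathrm{sp}}\subset I^{\mathrm{np}}\subset I^{\eq}$ of smooth projective, normal projective, and arbitrary normal bases all coincide on the underlying set $Z_r(X)$. The inclusions of categories give one direction for free: a subset $T\subset Z_r(X)$ that is closed when tested against the full $I^{\eq}$ is automatically closed when tested against either subcategory. The content of the lemma is the reverse implication, and I would reduce it step-by-step.

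For the reduction of an arbitrary $(S,\Gamma)\in I^{\eq}$ with $S$ normal to a normal projective base, I would use the Chow variety as a mediator. Writing $\Gamma=\Gamma^{+}-\Gamma^{-}$ as a difference of effective equidimensional families, each part gives a classifying morphism to $\ch_r(X)$, producing $f\colon S\to \ch_r(X)\times \ch_r(X)$. Let $Z$ be the Zariski closure of $f(S)$ in this product and $\widetilde{Z}\to Z$ its normalization. Since $S$ is normal, $f$ factors uniquely as $g\colon S\to \widetilde{Z}$ followed by $\widetilde{Z}\to Z$, and the pullbacks of the two universal families on $\ch_r(X)\times \ch_r(X)$ restrict to an equidimensional family $\widetilde{\Gamma}$ on $\widetilde{Z}$ with $g^{*}\widetilde{\Gamma}=\Gamma$. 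Thus $(\widetilde{Z},\widetilde{\Gamma})$ is an object of $I^{\mathrm{np}}$ dominating $(S,\Gamma)$ and $\phi_S=\phi_{\widetilde{Z}}\circ g$; continuity of $g$ for the analytic topologies then propagates closedness from $\widetilde{Z}(\CC)$ down to $S(\CC)$.

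For the passage from $I^{\mathrm{np}}$ to $I^{\mathrm{sp}}$, I would invoke Hironaka's resolution of singularities (available because we are over $\CC$). Given $(\widetilde{Z},\widetilde{\Gamma})\in I^{\mathrm{np}}$, pick a resolution $h\colon Z'\to \widetilde{Z}$ with $Z'$ smooth projective and $h$ proper, birational, and surjective, and set $\Gamma'=h^{*}\widetilde{\Gamma}$, so that $(Z',\Gamma')\in I^{\mathrm{sp}}$. Here the morphism points the \emph{wrong} way relative to the previous step, so continuity alone is insufficient. Instead, I would use that $h$ is a proper surjection, hence a closed surjection in the analytic topology, so that the identity $\phi_{\widetilde{Z}}^{-1}(T)=h\bigl(h^{-1}(\phi_{\widetilde{Z}}^{-1}(T))\bigr)=h\bigl(\phi_{Z'}^{-1}(T)\bigr)$ transfers closedness from $\phi_{Z'}^{-1}(T)$ to $\phi_{\widetilde{Z}}^{-1}(T)$. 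Composed with the first step, this yields the lemma.

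The main technical subtlety I anticipate is verifying the Chow-variety step in sufficient generality: one needs that $\ch_r(X)$ carries a well-defined universal equidimensional family in the sense of Definition~\ref{def:RelCycle}, and that forming its pullback along $\widetilde{Z}\to \ch_r(X)\times \ch_r(X)$ and then along $g\colon S\to \widetilde{Z}$ recovers $\Gamma^{+}-\Gamma^{-}$ as a family of relative cycles, including the field-of-definition and fat-point conditions. These points are standard within the framework of \cite{Kollar96}, but require some bookkeeping to handle the positive and negative parts symmetrically and to keep the construction inside the correct connected-component stratification of $\ch_r(X)\times \ch_r(X)$ indexed by bidegrees.
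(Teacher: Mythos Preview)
Your proposal is correct, and the overall strategy---checking that closedness tested against the smaller subcategory already forces closedness against all $(S,\Gamma)$---matches the paper. The execution, however, differs in a useful way.

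The paper argues uniformly: given any $(S,\Gamma)$, it produces a normal (resp.\ smooth) projective $T$, an open subset $T^0\subset T$, and a \emph{proper surjective} morphism $p\colon T^0\to S$ such that the family on $T$ restricts over $T^0$ to the pullback of $\Gamma$. Since a proper surjection of complex varieties induces the quotient topology on the target, closedness of $\phi_T^{-1}(A)$ in $T(\CC)$ forces closedness of $\phi_{T^0}^{-1}(A)$ (subspace topology) and then of $\phi_S^{-1}(A)$. This handles the normal and smooth cases in one stroke.

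Your argument instead splits. For the normal projective step you build a morphism in the \emph{forward} direction, $g\colon S\to\widetilde{Z}$, by classifying into the Chow variety and normalizing the image closure; then bare continuity of $g$ suffices. This is slightly slicker than the paper here, at the cost of making the dependence on the universal family over $\ch_r(X)$ explicit (which, as you note, is unproblematic over $\CC$ but requires the bookkeeping in \cite{Kollar96}). For the smooth step you resolve $\widetilde{Z}$ and use that a proper surjection is a closed map; this is exactly the paper's mechanism, just applied one level up. So your two-step route and the paper's uniform route ultimately rest on the same topological input (proper surjections are topological quotients), but you trade uniformity for a more transparent normal case and an explicit invocation of the Chow variety.
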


\begin{proof}
Given any family of equi-dimensional cycles $\Gamma \to S$, we may find a normal projective variety (resp. smooth projective variety) $T$, a family $\Gamma_T \to T$, and an open subset $T^0$ of $T$ such that there is a surjective proper map $p: T^0 \to S$ and $\Gamma_T|_{T^0}$ is $\Gamma \times_S T^0$. 

Note that we have a factorization $T^0(\CC) \to S(\CC) \to Z_r(X)$.
A set in $S(\CC)$ is closed if and only if its inverse image under $p^{-1}$ in $T^0(\CC)$ is closed.
 That is, the topology of $S(\CC)$ is the quotient topology coming from $T^0(\CC) \to S(\CC)$.  

Thus the topology on $Z_r(X)^\eq$ is determined by families over normal varieties (resp. smooth varieties) such that the family has an extension over a normal (resp. smooth) projective compactification.

Therefore, when defining $Z_r(X)^\eq$ as a colimit, we may take only normal (resp. smooth) projective varieties.
\end{proof}

\begin{defn}\label{def:chow}
Define the topological space $Z_r(X)^\ch$ as the quotient of 
\[
\ch_r(X)(\CC) \times \ch_r(X)(\CC)
\]
by $\ch_r(X)(\CC)$, where the action is $(a, b) \mapsto (a+c, b+c)$ for $c \in \ch_r(X) (\CC)$.
\end{defn}

\begin{thm}[\cite{LF_fl_eq_chow}, Theorem 3.1, Theorem 5.2, Corollary 5.4]
The identity map induces a homeomorphism
\[
 Z_r(X)^\eq \cong Z_r(X)^\ch.
\]
\end{thm}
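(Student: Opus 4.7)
The plan is to verify continuity of the identity map in both directions, exploiting the universal properties of the colimit topology on $Z_r(X)^\eq$ and the quotient topology on $Z_r(X)^\ch$, together with the standard fact that proper surjective morphisms of complex varieties induce topological quotient maps on $\CC$-points.

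For the direction $Z_r(X)^\eq \to Z_r(X)^\ch$, by the colimit definition of the source topology (Definition \ref{def:eq}) it suffices to show that for each $(S, \Gamma) \in I^\eq$, the resulting map $\phi_S: S(\CC) \to Z_r(X)^\ch$ is continuous. I would decompose $\Gamma = \Gamma^+ - \Gamma^-$ into positive and negative parts. Since $S$ is normal (hence semi-normal), the universal property of the Chow scheme produces morphisms $\phi^{\pm}: S \to \ch_r(X)$ classifying the effective equidimensional families $\Gamma^{\pm}$. The combined map $(\phi^+, \phi^-): S \to \ch_r(X) \times \ch_r(X)$ is a morphism of complex varieties, hence continuous on $\CC$-points, and composing with the quotient defining $Z_r(X)^\ch$ recovers $\phi_S$.

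For the reverse direction, by the quotient topology on $Z_r(X)^\ch$ (Definition \ref{def:chow}) it suffices to show that the signed evaluation map
\[
\psi: \ch_r(X) \times \ch_r(X) \to Z_r(X)^\eq, \quad (a, b) \mapsto [a] - [b]
\]
is continuous. I would restrict to each component $\ch_{r,d_1}(X) \times \ch_{r,d_2}(X)$, pass to the normalization $Y$ of each of its irreducible components, and pull back the two universal families to obtain an equidimensional (not necessarily effective) family $\Gamma_1 - \Gamma_2$ on $Y$. This provides an object of $I^\eq$, yielding by definition a continuous map $Y(\CC) \to Z_r(X)^\eq$. Since the normalization $Y \to \ch_{r,d_1}(X) \times \ch_{r,d_2}(X)$ is proper and surjective, it is a closed surjective continuous map on $\CC$-points, hence a topological quotient map. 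Therefore $\psi$ is continuous on each product of components, and so on the whole disjoint union.

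The main technical obstacle is the non-normality of the Chow variety: $\ch_r(X)$ itself does not live in $I^\eq$, so its tautological family cannot be used directly to produce a continuous map into $Z_r(X)^\eq$. The resolution, essentially already built into the proof of Lemma \ref{lem:smproj}, is to replace each component by its normalization and transfer continuity back down using the topological quotient property of proper surjective morphisms in the analytic topology.
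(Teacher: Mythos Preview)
The paper does not supply its own proof of this statement: it is quoted directly from Lima-Filho's paper \cite{LF_fl_eq_chow} (Theorems 3.1, 5.2 and Corollary 5.4), so there is nothing in the present article to compare your argument against line by line.

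Your sketch is a reasonable direct verification and captures the essential mechanism. A couple of points deserve care if you want it to stand on its own. In the direction $Z_r(X)^{\ch}\to Z_r(X)^{\eq}$ you normalize each irreducible component of $\ch_{r,d_1}(X)\times\ch_{r,d_2}(X)$ separately; the resulting map from the disjoint union of normalizations is proper and surjective onto the product, hence a topological quotient on $\CC$-points, but you should make explicit that the induced maps to $Z_r(X)^{\eq}$ from the various components are \emph{compatible} on preimages of intersection loci (they are, because both compute the same underlying cycle, and the colimit topology only tests closedness of preimages, not any gluing data). You are also implicitly using that over $\CC$ the Chow variety carries a universal family of effective equidimensional cycles, so that its pullback to any normal base lands in $I^{\eq}$; this is exactly the characteristic-zero fact noted in the paper's remarks following Theorem~\ref{thm:filtered}. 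Lima-Filho's original treatment passes through additional comparison topologies (a ``flat'' topology, among others) and proves a somewhat more general package of equivalences, which is why three separate results are cited; your argument is more economical for the specific equivalence stated here.
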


Here is the definition of Lawson homology, first studied in \cite{Lawson_LawsonHomology}.
\begin{defn}\label{def:lawson}
Let $X$ be a complex projective variety. Define the Lawson homology $L_rH_{n+2r}(X)$ as the homotopy group $\pi_n(Z_r(X))$.
\end{defn}

\begin{ex}[Dold-Thom isomorphism]\label{doldthom}
Consider $Z_0(X)$, the group of zero-cycles on $X$. The classical Dold-Thom theorem implies that there is an isomorphism
\[
L_0H_n(X) \cong H_n(X, \ZZ).
\]
\end{ex}

\begin{ex}[Hurewicz map]
The diagonal $\Delta\subset X\times X$ induces an inclusion $X \to Z_0(X)$. The Hurewicz map is induced by this inclusion: 
\[
\pi_k(X) \to \pi_k(Z_0(X)) \cong H_k(X, \ZZ).
\]
\end{ex}

Now we introduce another ingredient.
\begin{lem}\cite[Page 709, 1.2.1]{Friedlander_Mazur_AIF}\label{s-map}
There is a continuous map, the \emph{s-map}: $Z_r(X) \wedge \PP^1 \to Z_{r-1}(X)$ inducing the s-map on Lawson homology $s: L_rH_k(X) \to L_{r-1}H_k(X)$.
\end{lem}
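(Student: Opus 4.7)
The plan is to construct the $s$-map at the level of cycle spaces, since the homological version follows by taking homotopy groups and using the homeomorphism $\PP^1 \simeq S^2$ to shift the index by $2$ (so that $\pi_n(\PP^1 \wedge Z_r(X)) = \pi_{n-2}(Z_r(X)) = L_rH_{n-2+2r}(X) = L_rH_{k}(X)$ with $k = n-2+2r$, matching $L_{r-1}H_k(X) = \pi_{k-2(r-1)}(Z_{r-1}(X)) = \pi_n(Z_{r-1}(X))$).

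The main ingredient I would use is the projective bundle formula (equivalently, the algebraic join/suspension theorem of Lawson and Friedlander-Gabber), which gives a splitting of topological groups
\[
Z_r(X \times \PP^1) \simeq Z_r(X) \times Z_{r-1}(X).
\]
Here the first factor is pulled back from the projection $X \times \PP^1 \to X$, and the second factor records the ``residue'' in the $\PP^1$ direction, built from the suspension isomorphism $Z_{r-1}(X) \xrightarrow{\sim} \Omega Z_r(\Sigma X)$ together with the identification of $\Sigma X$ with the quotient $X \times \PP^1/(X \times \{\infty\} \cup \{pt\} \times \PP^1)$.

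With this splitting in hand, I would build the $s$-map as follows. First, define a continuous pairing
\[
\mu: \PP^1 \times Z_r(X) \longrightarrow Z_r(X \times \PP^1), \qquad (t, Z) \longmapsto \{t\} \times Z,
\]
which is well-defined since $\{t\}\times Z$ is a genuine $r$-cycle on $X \times \PP^1$ depending continuously on $(t,Z)$ (and extending $\ZZ$-linearly in $Z$). Next, compose $\mu$ with the projection $p_2: Z_r(X \times \PP^1) \to Z_{r-1}(X)$ coming from the splitting. The composition $p_2 \circ \mu$ sends the base point $\infty \in \PP^1$ (choosing this as base point of $\PP^1$) and the identity $0 \in Z_r(X)$ to the identity of $Z_{r-1}(X)$, and by construction it factors through the smash $\PP^1 \wedge Z_r(X)$, producing the desired continuous map
\[
s: \PP^1 \wedge Z_r(X) \longrightarrow Z_{r-1}(X).
\]

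The main obstacle is verifying that this really recovers the Friedlander-Mazur $s$-map; the definition in \cite{Friedlander_Mazur_AIF} is given more intrinsically in terms of the Lawson suspension theorem, and checking compatibility amounts to showing that $p_2$ equals the connecting map in the fibration sequence defining the suspension equivalence. This is where I would appeal directly to \cite{Friedlander_Mazur_AIF}, as the verification is standard but technical and orthogonal to the applications in this paper. The induced homomorphism on homotopy groups $L_rH_k(X) \to L_{r-1}H_k(X)$ is then automatic from the functoriality of $\pi_\bullet$ applied to $s$.
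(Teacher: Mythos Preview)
The paper does not prove this lemma; it simply cites Friedlander--Mazur and, in the remark that follows, offers a geometric description: given a cycle $\Gamma$ and a general pencil of divisors $\{D_t\}_{t\in\PP^1}$, the $s$-map sends $([\Gamma],t)$ to $\Gamma\cdot D_t - \Gamma\cdot D_0$. Your sketch is already more detailed than anything the paper supplies, and both you and the paper ultimately defer to the original reference, noting that the construction rests on Lawson's algebraic suspension theorem.

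One small slip worth flagging: with the splitting you wrote down (first factor $=$ cycles of the form $W\times\PP^1$), the element $\mu(\infty,Z)=\{\infty\}\times Z$ does \emph{not} lie in the first factor, so $p_2\circ\mu$ need not vanish on $\{\infty\}\times Z_r(X)$ and the smash factoring is not automatic. The fix is to take the first factor as $Z_r(X\times\{\infty\})$ (or, equivalently, to pass to the cofiber $Z_r(X\times\PP^1)/Z_r(X\times\{\infty\})$ and then invoke the suspension theorem); with that choice the factoring through $\PP^1\wedge Z_r(X)$ is immediate. This is exactly the sort of ``standard but technical'' point you already said you would defer to \cite{Friedlander_Mazur_AIF} for, so there is no substantive gap.
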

\begin{rem}
The construction of the s-map depends on a deep result: Lawson's algebraic suspension theorem. A geometric way of describing the s-map is the following. Given a cycle $\Gamma$, take a general pencil of divisors $D_t (t \in \PP^1)$ that intersect $\Gamma$ properly, and the s-map sends $([\Gamma], t)$ to the cycle $\Gamma \cdot D_{t}-\Gamma \cdot D_0$.
\end{rem}

\begin{defn}
Let $Y$ be a semi-normal variety. Let $Z \subset Y \times X$ be a family of $r$-dimensional cycle over $Y$ corresponding to a morphism $f: Y \to Z_r(X)$. We define the \emph{correspondence homomorphism}
\[
\Phi_f: H_k(Y, \ZZ) \to H_{k+2r}(X, \ZZ)
\]
as the composition
\[
H_k(Y, \ZZ) \cong \pi_k(Z_0(Y)) \to \pi_k(Z_r(X)) \xrightarrow{s^{\circ k}} \pi_{k+2r}(Z_0(X)) \cong H_{k+2r}(X, \ZZ),
\]
where the map $\pi_k(Z_r(X)) \to \pi_{k+2r}(Z_0(X))$ is induced by $k$-th iterations of the $s$-map.
\end{defn}
\begin{thm}[\cite{Friedlander_Mazur_AIF} Theorem 3.4]\label{thm:corr}
Let $Y$ be a smooth projective variety and $\Gamma \to Y$ be an equi-dimensional family of $r$-cycle over $Y$ corresponding to a morphism $f: Y \to Z_r(X)$.
We have 
\[
\Phi_f=\Gamma_*: H_k(Y, \ZZ) \to H_{k+2r}(X, \ZZ),
\]
where $\Gamma_*$ is the map defined using $\Gamma$ as a correspondence.
\end{thm}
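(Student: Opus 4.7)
The plan is to show that both maps $\Phi_f$ and $\Gamma_*$ coincide by identifying the iterated $s$-map with the classical ``Lawson realization'' of a family class as a singular homology class, and then matching this with the slant product formula defining $\Gamma_*$. By additivity it suffices to treat a single prime correspondence $\Gamma$, and throughout we use naturality of all constructions to reduce to universal statements.

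First, via the Dold--Thom theorem, the isomorphism $H_k(Y,\ZZ)\cong\pi_k(Z_0(Y))$ lets us represent $\alpha\in H_k(Y,\ZZ)$ by a pointed map $\phi:(S^k,\ast)\to(Z_0(Y),0)$. The morphism $f:Y\to Z_r(X)$ extends uniquely to a continuous group homomorphism $\tilde f:Z_0(Y)\to Z_r(X)$, sending $\sum n_i y_i$ to $\sum n_i\Gamma_{y_i}$, where $\Gamma_{y_i}$ is the cycle-theoretic fiber of $\Gamma$ over $y_i$. The composition $\tilde f\circ\phi$ then represents $f_*(\alpha)\in\pi_k(Z_r(X))$, so what must be computed is $s^{\circ r}[\tilde f\circ\phi]\in H_{k+2r}(X)$.

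Second, I would analyze the iterated $s$-map $s^{\circ r}:\pi_k(Z_r(X))\to\pi_{k+2r}(Z_0(X))\cong H_{k+2r}(X)$. The key claim is that $s^{\circ r}$ is the Lawson realization morphism: on $\pi_0$ it sends a class $[Z]$ to the fundamental class $[Z]\in H_{2r}(X)$, and more generally for a family of $r$-cycles $\mathcal{Z}\subset W\times X$ parametrized by a smooth projective variety $W$ giving a class in $\pi_k Z_r(X)$ via Dold--Thom on $W$, it produces the image $(p_X)_*[\mathcal{Z}]\in H_{k+2r}(X)$. This should be derived from Lawson's algebraic suspension theorem, which identifies $s$ up to homotopy with the geometric operation of intersecting with a generic hyperplane pencil, combined with the naturality of the s-map under maps $Z_r(Y)\to Z_r(X)$ induced by cycle correspondences.

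Third, I would compare with the classical definition. If $\alpha\in H_k(Y)$ is represented Poincar\'e-dually by a smooth cycle $W\subset Y$, then $p_Y^*\alpha$ corresponds to $W\times X$ and $[\Gamma]\cap p_Y^*\alpha$ is the restriction $\Gamma|_{W\times X}$, a cycle of dimension $k+2r$ in $Y\times X$. Pushing forward by $p_X$ yields $\Gamma_*(\alpha)=(p_X)_*(\Gamma|_{W\times X})$. On the other hand, $\tilde f|_W:W\to Z_r(X)$ is precisely the family $\Gamma|_{W\times X}$ viewed as a morphism to $Z_r(X)$, so by the second step $s^{\circ r}\circ f_*(\alpha)=(p_X)_*(\Gamma|_{W\times X})$, matching $\Gamma_*(\alpha)$. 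The main obstacle is the second step: making the identification of $s^{\circ r}$ with the realization map precise at the chain level, so that the comparison with the slant product is rigorous rather than heuristic. This requires a careful choice of compatible chain-level models for singular chains on $Z_r(X)$ and for the s-map, and is the technical heart of Friedlander--Mazur's Theorem~3.4.
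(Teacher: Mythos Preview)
The paper does not give its own proof of this statement: it is quoted verbatim from Friedlander--Mazur \cite{Friedlander_Mazur_AIF}, Theorem~3.4, and used as a black box. So there is no ``paper's proof'' to compare your proposal against.

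Your outline is a reasonable high-level sketch of the strategy in Friedlander--Mazur, and you yourself correctly identify the real difficulty: making the identification of the iterated $s$-map with the topological realization map precise at the chain or spectrum level so that it matches the slant-product definition of $\Gamma_*$. What you have written is a heuristic roadmap rather than a proof; the actual argument in \cite{Friedlander_Mazur_AIF} requires the machinery of Lawson's suspension theorem and a careful model for the $s$-map, which you allude to but do not carry out. Since the paper under review treats this as an external input, there is nothing further to compare.
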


\subsection{Comparing the filtrations}
First we prove the main technical theorem of the section.
\begin{thm}\label{thm:surjectivity}
Let $X$ be a complex smooth projective variety.  Then for any loop $L$ in $Z_1(X)$ (i.e. a continuous map from the circle to $Z_1(X)$), there is a connected projective algebraic set $W$ with a nodal family of one-cycles $W \leftarrow \Gamma \to X$ over $W$ such that the map
\[
\Phi: L_0H_1(W)=\pi_1(Z_0(W)) \to L_1H_3(X)=\pi_1 (Z_1(X))
\] 
induced by the family $\Gamma$ contains the class $[L]$ in $L_1 H_3(X)$.

If  $X$ is separably rationally connected in codimension one, there is an equi-dimensional family $W \leftarrow \Gamma \to X$ over a smooth projective variety $W$ such that the induced map
\[
\Phi: L_0H_1(W)=\pi_1(Z_0(W)) \to L_1H_3(X)=\pi_1 (Z_1(X))
\] 
contains the class $[L]$ in $L_1 H_3(X)$.
\end{thm}

We first introduce some notations. Given a projective algebraic set $S$ parameterizing an equi-dimensional family of one-cycles of $X$, there is an induced continuous map between topological groups:
\[
Z_0(S) \to Z_1(X).
\]
We denote by $I(S)$ the image of this map, i.e. the closed subgroup of $Z_1(X)$ generated by the cycles over $S$, and $K(S)$ the kernel of this map.

The first observation in the proof of Theorem \ref{thm:surjectivity} is the following.
\begin{lem}\label{lem:single}
Let $X$ be a complex smooth projective variety.
For any class $[L]$ in $L_1H_3(X) =\pi_1(Z_1(X))$,
there is a normal projective variety $U$ and a family of equi-dimensional one-cycles $\gamma_U$ over $U$ such that $[L]$ is represented by a continuous map
\[
I=[0, 1] \to U \to Z_1(X).
\]
\end{lem}

Note that in general the map $I \to U$ does not map $0, 1$ to the same point.

\begin{proof}
Denote by $Z_1(X)^0$ the neutral component  of the topological group $Z_1(X)$, i.e., the connected component containing the identity.
We may assume $L$ lies in $Z_1(X)^0$.
Cycles in $Z_1(X)^0$ are precisely the cycles algebraically equivalent to $0$.
By Proposition \ref{prop:filter_1}, the topological group $Z_1(X)^0$ is a filtered colimit over closed subgroups generated by one-dimensional cycles parameterized by normal projective varieties.

Homotopy groups commutes with filtered colimits.
Thus there is an irreducible normal projective variety $S$ with a family of one dimensional cycles over $S$ such that the image of the induced map 
\[
 \pi_1(I(S)) \to \pi_1(Z_1(X)) \cong L_1H_{3}(X) 
\] 
contains the class $[L]$ in $\pi_1(Z_1(X))$.

The fibration $K(S) \to Z_0(S) \to I(S)$ gives a long exact sequence of homotopy groups:
\[
\ldots \to \pi_1(Z_0(S)) \to \pi_1(I(
S)) \to \pi_0(K(S)) \to \ldots.
\]
A loop in $I(S)$ lifts to a continuous map from the unit interval $I=[0, 1]$ to $Z_0(S)$, such that $0, 1$ map to two points in $Z_0(S)$ that parameterize the same cycle in $X$.
 
We may assume that the family over $I$ is the difference of two families of effective $0$-cycles of degree ${(d+)}/{(d-)} $ in $S$. That is, it corresponds to the difference of two continuous maps $f^+: I \to S^{(d+)}, f^-: I \to S^{(d-)}$, which is the same as a continuous map $f: I \to S^{(d+)} \times S^{(d-)}$ with $0$ mapping to a point $x=(x^+, x^-)$ and $1$ mapping to a point $y=(y^+, y^-)$. 

A family of one-cycles over $S$ induces a  family of one-cycles over $S^{(d+)}$ and $S^{(d-)}$. 
Let us denote them by $\Gamma_{d+}, \Gamma_{d-}$.

The loop is the composition $ I \to S^{(d+)} \times S^{(d-)} \to Z_0(S) \to Z_1(X)$, where the middle map is taking the difference.

Let us use a different family of cycles $\pi_+^* \Gamma_{d+}-\pi_{d-}^*\Gamma_{d-}$ on the product
$S^{(d+)} \times S^{(d-)}$, where $\pi_{+/-}$ is the projection to $S^{(d+)/(d-)}$.
This family of cycles induces a continuous map $S^{(d+)} \times S^{(d-)} \to Z_1(X)$ such that the composition $I \to S^{(d+)} \times S^{(d-)} \to Z_1(X)$ is the loop $L$.

We take $U$ to be $S^{(d+)} \times S^{(d-)}$ and $\gamma_U$ to be $\pi_+^* \Gamma_{d+}-\pi_{d-}^*\Gamma_{d-}$.
\end{proof}

\begin{proof}[Proof of Theorem \ref{thm:surjectivity}]
By Lemma \ref{lem:single}, there is a normal projective variety $U$ and an equi-dimensional family of one-cycles $\gamma_U$ over $U$ such that $[L]$ is represented by a continuous map
\[
f: I=[0, 1] \to U \to Z_1(X).
\]
In particular, $[L]$ comes from $\pi_1(I(U))$.
Denote by $L_U$ the class in $\pi_1(I(U))$ represented by $f$.
Denote by $x, y \in U$ the image of $0, 1$ by $f$. The cycle over $x, y$ are the same by assumption. 

\textit{1. Recap of Theorem \ref{thm:filtered_sm} and \ref{thm:filtered}.}

We apply Theorem \ref{thm:filtered_sm} (Theorem \ref{thm:filtered} if $X$ is SRC in codimension one) to the family of cycles $\gamma_U$ over $U$, with $u_0=x, u_1=y$. 
This yields
\begin{itemize}
    \item a normal projective variety $V$;
    \item  a connected projective algebraic set $W$ (a smooth projective variety $W$ if $X$ is SRC in codimension one);
    \item a nodal family (an equi-dimensional family if $X$ is SRC in codimension one) of one-cycles $W \leftarrow \Gamma \to X$ over $W$;
    \item a surjective projective morphism $p: V \to  U$ and a morphism $F: V \to W$;
    \item liftings $x_V, y_V \in V$ of the points $x, y$,
\end{itemize}   such that
\begin{enumerate}
    \item \label{c1} the image $F(x_V)$ and $F(y_V)$ are joined by a connected curve $T_W$ in $W$ parameterizing constant one-cycles, which is the image of the curve $T$ in the statement of Theorem \ref{thm:filtered_sm} (or \ref{thm:filtered} if $X$ is SRC in codimension one);
    
    \item \label{c3} there is a general point $u$ in $U$, such that for any two points  $a, b \in p^{-1}(u)$, $F(a), F(b)$ are joined by a connected curve $C_{a, b}$ in $W$ parameterizing constant one-cycles,  which is the image of the curve $T_{a, b}$ in the statement of Theorem \ref{thm:filtered_sm} (or \ref{thm:filtered} if $X$ is SRC in codimension one). 
\end{enumerate}

Our goal is to prove that $[L]$ lies in the image of 
\[
\pi_1(Z_0(W)) \to \pi_1(Z_1(X))
\]
induced by the nodal/equi-dimensional family of one-cycles $W \leftarrow \Gamma \to X$ over $W$.

\textit{2. The commutative diagram and the obstruction class.}

The morphism $p: V \to U$ induces a continuous surjective map between topological groups $p_*: Z_0(V) \to Z_0(U)$. 
Denote by $K$ the kernel topological group. As a group, $K$ is generated by elements of the form $(a-b)$, where $a, b$ are points in the same fiber of $V \to U$.
It follows that the group $\pi_0(K)$ is generated by classes of the form $[a-b]$ for $a, b$ as above.

Note that $I(V)=I(U)$. Thus we have a fibration sequence of topological groups:
\[
0 \to K \to K(V) \to K(U) \to 0.
\]

We have a commutative diagram of exact sequences, which are part of the long exact sequence of homotopy groups of fibrations:
\[
\begin{CD}
\pi_1(Z_0(W)) @>>> \pi_1(I(W)) @>>> \pi_0(K(W))\\
@AAF_*A @AAF_*A @AAF_*A\\
\pi_1(Z_0(V)) @>>> \pi_1(I(V)) @>>> \pi_0(K(V)) \\
@VVp_*V @VV\cong V @VVp_*V\\
\pi_1(Z_0(U)) @>>> \pi_1(I(U)) @>>> \pi_0(K(U))\\
\end{CD}
\]
Denote by $L_V$ the image of $L_U$ under the isomorphism $\pi_1(I(U)) \cong \pi_1(I(V))$.
Denote by $\mathfrak{o}_V$ (resp. $\mathfrak{o}_U$) the image of the class $L_V$ (resp. $L_U$) in $\pi_0(K(V))$ (resp. $\pi_0(K(U))$).
Clearly $p_*(\mathfrak{o}_V)=\mathfrak{o}_U$ by the above commutative diagram.

Denote by $L_W \in \pi_1(I(W))$ the push-forward class $F_*(L_V)$, and denote by $\mathfrak{o}_W$ the image of $L_W$ in $\pi_0(K(W))$, which is just the push-forward $F_*(\mathfrak{o}_V)$.

Clearly, the class $L_W$ comes from $\pi_1(Z_0(W))$ if and only if $\mathfrak{o}_W=0$. We call $\mathfrak{o}_W$ the obstruction class.

\textit{3. Computation of the obstruction class.}

We first compute the class of $\mathfrak{o}_U$ 
in $\pi_0(K(U))$.

We have already seen that $L_U$ comes from $f:I \to U \subset Z_0(U)$, where the inclusion is induced by the diagonal.  
It follows that its image in $\pi_0(K(U))$ is the class $[f(1)-f(0)]$, i.e. $[y-x]$.

The fibration of topological groups 
\[
0 \to K \to K(V) \to K(U) \to 0.
\]
induces a long exact sequence of homotopy groups, part of which is
\[
\pi_0(K) \to \pi_0(K(V)) \xrightarrow{p_*} \pi_0(K(U)) \to 0.
\]
We have $$p_*([y_V-x_V])=[p(y_V)-p(x_V)]=[y-x]=\mathfrak{o}_U,$$ and thus  
\[
[y_V-x_V]\equiv\mathfrak{o}_V \pmod{\pi_0(K)}.
\]

We claim the following.
\begin{claim}\label{claim:pi_0}
     Let $p: V \to U$ be a generically finite surjective morphism between normal projective complex varieties. Denote by $K$ the kernel topological group of $Z_0(V) \to Z_0(U)$. The connected components of $K$, i.e. $\pi_0(K)$, is finitely generated as an abelian group by classes of the form $[a-b]$, where $a, b$ are points in the fiber over any chosen general point in $U$.
\end{claim} 

We assume this claim for the moment and give the proof afterwards.
This claim implies that 
\[
\mathfrak{o}_W\equiv F_*([y_V-x_V]) \pmod{\langle F_*([a-b])| a, b \in p^{-1}(u)\rangle}.
\]
Here we take the general point $u$ as in (\ref{c3}).

\textit{4. Vanishing of the obstruction class $\mathfrak{o}_W$.}

 We first prove that for the any two points $a, b \in p^{-1}(u)$ as in (\ref{c3}), the push-forward $F_*([a-b])$ is zero. 
 
By (\ref{c3}), there is a family of constant one-cycles over $C_{a, b}\subset W$. 
This induces a continuous map $C_{a, b} \to K(W)$, which maps $c \in C_{a, b}$ to the  $0$-cycle $c-F(b)$ in $W$.  
Since $C_{a, b}$ is connected and $F(a), F(b)\in C_{a, b}$,
\[
F_*([a-b])=[F(a)-F(b)]\cong [F(b)-F(b)]=0 \in \pi_0(K(W)).
\]
So $$\mathfrak{o}_W=F_*(\mathfrak{o}_V)=F_*([y_V-x_V])=[F(y_V)-F(x_V)].$$

 Next we prove that $$F_*([x_V-y_V])=0 \in \pi_0(K(W)).$$
 
 This follows from a similar argument as above. By (\ref{c1}), there is a family of constant one-cycles over $T_W$ joining $F(x_V)$ and $F(y_V)$.   
So we have a continuous map $T_W \to K(W)$, which maps $t \in T_W$ to $t-F(x_V) \in K(W)$.
Since $T_W$ is connected, we have
\[
F_*([y_V-x_V])=[F(y_V)-F(x_V)]\cong [F(x_V)-F(x_V)]=0 \in \pi_0(K(W)).
\]

Combining the above two vanishing results, we have $$\mathfrak{o}_W=F_*(\mathfrak{o}_V)=0.$$
Thus the class $L_W\in \pi_1(I(W))$ comes from $\pi_1(Z_0(W))$.
\end{proof}
As promised, we provide the proof of Claim \ref{claim:pi_0} now. One should compare this to Lemma \ref{lem:move}, which is similar in both the statement and the proof.

First, we treat a special case.
\begin{lem}[Compare with Lemma \ref{lem:flat_moving}]\label{lem:easy_pi_0}
    Let $X \to Y$ be a flat and finite morphism between projective complex varieties, where $Y$ is normal (but $X$ is not necessarily normal). 
Denote by $K$ the kernel topological group of the map $Z_0(X) \to Z_0(Y)$.
Then $\pi_0(K)$ is finitely generated as an abelian group by class of the form $[t_1-t_2]$, for $t_1, t_2$ in the fiber of any chosen general point in $Y$.
\end{lem}

\begin{proof}
Clearly $\pi_0(K)$ is generated by class of the form $[x_1-x_2]$ for all the pairs of points $x_1, x_2 \in X$ with the same image in $Y$.
We will show that for any chosen general point $t \in Y$, there is a continuous path in $K$ connecting $[x_1-x_2]$ to a point $[t_1-t_2]$ for some points $t_1, t_2$ in the fiber over $t$. 

Consider the correspondence $X \times_Y X\subset X \times X$.
Since $X \to Y$ is assumed to be flat and finite, $X \times_Y X \to X$ is flat and finite.
We take an irreducible component $D$ containing $(x_1, x_2)$, which dominates (and thus surjects onto) $X$. 
Denote by $f, f_1, f_2$ the morphisms $D \subset X \times_Y \times X \to Y, D \subset X \times_Y X \xrightarrow{p_1} X, D \subset X \times_Y X \xrightarrow{p_2} X$, where $p_1, p_2$ are projections to the first and second factor.
 There are two points $x_D, t_D$ in $D$ such that \[
 f(x_D)=y, f(t_D)=t, f_1(x_D)=x_1, f_2(x_D)=x_2.
 \] 
This implies that we can find a continuous path in $K$ starting from $[x_1- x_2]$ and ending at $[f_1(t_D)-f_2(t_D)]$, Therefore, the class $[x_1-x_2]$ is the same  as $[f_1(t_D)-f_2(t_D)]$.
\end{proof}

\begin{proof}[Proof of Claim \ref{claim:pi_0}]
    By the flattening lemma \ref{lem:flattening}, 
    there is a birational projective  morphism $U' \xrightarrow{q} U$ such that the strict transform $V'$ of $V$ in $V \times_U U'$ is flat over $U'$. That is, we have a commutative diagram:
\[
\xymatrix{
V' \ar[rd]_{p'} \ar@{^{(}->}[r]_i \ar@/^1pc/[rr]^{q'}
&V\times_U U' \ar[d]\ar[r]
&V \ar[d]^p\\
&U' \ar[r]^q &U
}
\]
where the square is cartesian, $i$ is a closed immersion, $q, q'$ are birational and projective, and $p'$ is flat.
Note that flatness of $p'$ implies that it is quasi-finite and thus also finite.

We may replace $U'$ with its normalization and replace $V'$ with the base change of $V'$ along the normalization of $U'$. So in the following we assume that $U'$ is normal.

We use the same notations $p, p'$ etc. to denote the induced map on the topological group of zero-cycles (e.g. $p:Z_0(V) \to Z_0(U)$). We use $K(p)$ etc. to denote the kernel topological group of $p: Z_0(V) \to Z_0(U)$ etc.. 
We have a commutative diagram
\[
\begin{CD}
    \pi_1(Z_0(U')) @>>> \pi_0(K(p'))@>>> \pi_0(Z_0(V')) @>>> \pi_0(Z_0(U'))\\
    @VVq_*V @VVq_*'V @V\cong Vq_*' V @V\cong Vq_*V\\
    \pi_1(Z_0(U)) @>>> \pi_0(K(p))@>>> \pi_0(Z_0(V)) @>>> \pi_0(Z_0(U))\\
\end{CD}
\]

We first show that 
\[
q_*: \pi_1(Z_0(U'))  \to \pi_1(Z_0(U)) 
\]
is surjective. 
For this, it suffices to show that $\pi_0(K(q))$ is trivial.
 The group $\pi_0(K(q))$ is generated by classes of the form $[a-b]$ for $a, b$ points in a fiber of $q$. Since $q$ is a birational projective morphism and $U$ is normal, $q$ has connected fibers. We conclude that the class $[a-b]$ is connected to $0=[b-b]$ by a continuous path in $K(q)$. That is, $K(q)$ is connected and $\pi_0(K(q))$ is trivial as desired.

Since the Dold-Thom theorem (Example \ref{doldthom}) gives isomorphisms
\[
\pi_0(Z_0(U')) \xrightarrow{\cong} H_0(U', \ZZ) \cong \ZZ,  \pi_0(Z_0(U))\xrightarrow{\cong}H_0(U', \ZZ) \cong \ZZ, 
\]
\[
\pi_0(Z_0(V')) \xrightarrow{\cong}H_0(V', \ZZ) \cong \ZZ, \pi_0(Z_0(V)) \xrightarrow{\cong}H_0(V, \ZZ) \cong \ZZ, 
\]
and since these isomorphisms are compatible with push-forward along the continuous maps $q': Z_0(V') \to Z_0(V), q: Z_0(U') \to Z_0(U)$,
we conclude that
\[
\pi_0(K(p')) \to \pi_0(K(p))
\]
is surjective by a diagram chasing for the above commutative diagram.
Thus the statement follows from Lemma \ref{lem:easy_pi_0}.
\end{proof}

Now we can prove the following results over complex numbers.

\begin{thm}\label{thm:image}
Let $X$ be a complex smooth projective variety of dimension $d$. 
There is a projective curve $C$ with a nodal family of one-cycles $\Gamma \subset C \times X$ inducing a map $f: C \to Z_1(X)$ such that
\[
\Phi_f: H_1(C, \ZZ) \to H_{3}(X, \ZZ) 
\]
has the same image as the s-map $s: L_1H_{3}(X) \to H_{3}(X, \ZZ)\cong H^{2d-3}(X, \ZZ)$, which is the same as the subgroup $N^{d-2}H^{2d-3}(X, \ZZ)$.

If $X$ is separably rationally connected in codimension one, the following two subgroups on $H^{2d-3}(X, \ZZ)$ agree
\[\tilde{N}^{d-2}H^{2d-3}(X, \ZZ)=N^{d-2}H^{2d-3}(X, \ZZ).\]
\end{thm}
\begin{proof}
Recall that there is an isomorphism $L_0H_k(S) \cong H_k(S)$ for any projective algebraic set $S$ by the Dold-Thom theorem.
We have a commutative diagram:
\[
\begin{CD}
L_0H_1(Y) @>f_*>> L_1H_3(X)\\
@V\cong VV @V s VV\\
H_1(Y) @>\Phi_f>> H_3(X)
\end{CD}
\]

The image of the s-map is finitely generated. Thus we may find finitely many  projective algebraic sets $Y_i$ and families of semistable curves $\Gamma_i \to Y_i \times X$ such that the generators of the image of s-map are contained in the image of the correspondence homomorphisms $\Phi_{i*}: H_1(Y_i) \to H_3(X)$.
Then we take $Y$ to be the product $\Pi_i Y_i$ and $\Gamma=\sum_i \pi_i^* \Gamma_i$.
Clearly $\Phi_*$ contains the image of all the $\Phi_{i*}$.

By taking general hyperplane sections containing   all the singularities and  all the one dimensional irreducible components, we may find a  projective curve $C \subset Y$ such that $\pi_1(C_1) \to \pi_1(Y)$ is surjective. Then we simply restrict the family of cycles to $C$. 

If $X$ is separably rationally connected in codimension one, we may take the families of cycles over smooth projective varieties which generate the image of s-map by Theorem \ref{thm:surjectivity}. 
We then take $C$ to be a one-dimensional general complete intersection of very ample divisors, which is a smooth projective curve.

Finally, the image of the s-map $s: L_1H_3(X) \to H_3(X)$ is $N^{d-2}H_3(X, \ZZ)$ by \cite[Proposition 2.8]{Walker_Morphic_AJ}. Then by Lemma \ref{lem:inclusion}, we have the equality of the two subgroups on $H^{2d-3}(X, \ZZ)$.
\end{proof}

On any smooth threefold $X$, the quotient $H^3(X, \ZZ)/N^1H^3(X, \ZZ)$ is torsion free (Corollary \ref{cor:torsionfree}). 
On the other hand, we know that it is torsion if $X$ is SRC in codimension one (Corollary \ref{cor:2d-3torsion}). So we have the following.
\begin{thm}\label{thm:surj_dim3}
Let $X$ be a complex smooth projective $3$-fold that is separably rationally connected in codimension one. Then the following subgroups of $H^{3}(X, \ZZ)$ introduced in Definition \ref{def:filtration} equal the whole cohomology group:
\[
 \tilde{N}_{1, \cyl}H^{3}(X, \ZZ) = \tilde{N}^{1}H^{3}(X, \ZZ) =N^1H^{3}(X, \ZZ)=H^{3}(X, \ZZ).
\]
\end{thm}

\section{Chow sheaves and the filtrations on cohomology}\label{sec:general}
In this section we work over an algebraically closed field $k$. 

Sometimes we may ``invert $p$", by taking the tensor product of a sheaf with $\ZZ[\frac{1}{p}]$. In this scenario, we understand that $p$ is the exponential characteristic of the field.
We use $\ell$-adic \'etale cohomology for $\ell$ a prime number, non-zero in the field $k$. 

One can also define Lawson homology with $\ZZ_\ell$-coefficients in this context \cite{Friedlander_LawsonHomology}. But the construction of the analogue of $Z_r(X)$ is more complicated. 
Also Lawson homology in this context is much less studied.
Many of the known results for complex varieties have not been explicitly stated to hold, even though one can imagine that they are still true.
For example, the author does not know a reference for the construction of the s-map. 
Neither could the author find the analogue of Friedlander-Mazur's result (Theorem \ref{thm:corr}) explicitly stated.
If one had developed all the necessary results in this general context, presumably the argument in last section works in a similar way.

So we use another approach, i.e. the theory of Chow sheaves as introduced by Suslin and Voevodsky in \cite{SV_Chow_Sheaves}, for the general case.

\subsection{Chow sheaves}

\begin{defn}
A finite correspondence from a scheme $Y$ to a scheme $X$ is an equi-dimensional family of relative cycles of dimension $0$ with proper support over $Y$.
\end{defn}
\begin{defn}
Let $\text{Sch}_k$ be the category of finite type separated $k$-schemes.
Let $\text{Cor}_k$ be category whose objects are separated finite type $k$-schemes and morphisms finite correspondences.
Let $\text{SmCor}_k$ be the full subcategory whose objects are smooth.
In this subcategory a finite correspondence between from $X$ to  $Y$ is a linear combination of closed subvarieties of $X \times Y$ that are finite surjective onto one of the irreducible components of $X$.
A \emph{presheaf with transfers} is a contravariant functor on $\text{SmCor}_k$.
\end{defn}
\begin{defn}[{\cite[Section 4.1]{SV_Chow_Sheaves}}]
    A morphism of schemes $p:X\to Y$ is called a \emph{topological epimorphism} if the underlying Zariski topological space of $Y$ is a quotient space of the underlying Zariski topological space of $X$ (i.e. $p$ is surjective and a subset $A$ of $Y$ is open if and only if $p^{-1}(A)$ is open in $X$), and $p$ is called a \emph{universal topological epimorphism} if for any scheme $Z$ over $Y$, the morphism $p_Z : X \times_Y Z \to Z$ is a topological epimorphism.

An h-cover of a scheme $X$ is a finite family of morphisms of finite type $\{p_i : X_i \to X\}$ such that $\coprod p_i: \coprod X_i \to X$ is a universal topological epimorphism.

A qfh-cover is an h-cover that is also quasi-finite.

The qfh topology (resp. h topology) is the topology generated by qfh-covers (resp. h-covers).
\end{defn}

Later we will only use the fact that a surjective proper morphism is an h-cover (since proper morphisms are universally closed). 

\begin{defn}
We define the presheaf $Z_\eq(X, r)$ on the category $\text{Sch}_k$ whose value on  a scheme $S$ is the group of equi-dimensional families of cycles in $X$ of dimension $r$ over $S$ (Definition \ref{def:RelCycle}).

We define $Z_r(X)$ as the h-sheaf associated to $Z_\eq(X, r)$.
\end{defn}

Given a presheaf $\mathcal{F}$ on $\text{Sch}_k$, we use $C_*(\mathcal{F})$ to denote the Suslin complex of presheaves (with non-positive degrees) \cite[Definition 2.14]{Mazza_Voevodsky_Weibel_Lecture_motivic_cohomology}.
That is, for each $i\geq 0$, the degree $-i$ term of the complex is $$C_{i}(\mathcal{F})(S)=\mathcal{F}(S \times \Delta^i),$$ where $\Delta^i=\SP k[t_0, \ldots, t_i]/\langle \sum_j t_j=1 \rangle$ is the algebraic $i$-simplex.

If $\mathcal{F}$ is a torsion, homotopy invariant \'etale sheaf with transfers, or a qfh, or h-sheaf on the category of schemes over $k$, with torsion order prime to $p$, the Suslin rigidity theorem (\cite[Theorem 4.5]{Suslin_Voevodsky_homology}, \cite[Theorem 7.20]{Mazza_Voevodsky_Weibel_Lecture_motivic_cohomology}) implies that $\mathcal{F}$ is a locally constant sheaf. Since we work over an algebraically closed field, locally constant is the same as constant. For any torsion h-sheaf (resp. qfh-sheaf, \'etale sheaf with transfer) $\mathcal{G}$ with torsion order prime to $p$, the presheaf $X \mapsto h^{-i}(C_*\mathcal{G}(X)), i\geq 0,$ of the cohomology of its Suslin complex $C_*(\mathcal{G})$ is homotopy invariant (\cite[Corollary 7.5]{Suslin_Voevodsky_homology}, \cite[Corollary 2.19]{Mazza_Voevodsky_Weibel_Lecture_motivic_cohomology}), and thus the Suslin complex $C_*(\mathcal{G})$ is quasi-isomorphic to a complex of constant h-sheaves (resp. qfh-sheaves, \'etale sheaves). 
Moreover if $\mathcal{F}$ is a constant \'etale sheaf, we have isomorphisms (\cite[Theorem 10.2, 10.7]{Suslin_Voevodsky_homology})
\[
H^i_\text{\'et}(X, \mathcal{F}) \cong H^i_\text{qfh}(X, \mathcal{F^{\text{qfh}}}) \cong H^i_\text{h}(X, \mathcal{F^\text{h}}).
\]
Since we assume that $k$ is algebraically closed, $\SP k$ has no higher cohomology for any sheaf in any of these three topologies. 
Therefore for any complex $K^*$ that is quasi-isomorphic to a complex of constant sheaves, we also have the isomorphism of hypercohomology for $X=\SP k$ by the degeneration of the spectral sequence $$E_2^{p, q}=H^p(\SP k, H^q(K^*))\implies \mathbb{H}^{p+q}(\SP k, K^*)$$ at $E_2$-page.
In  particular, above discussions apply to  the complex $C_*(Z_\eq(X, r))\otimes \ZZ/N \ZZ$.
We may identify the cohomology of this complex.
\begin{thm}\label{thm:computation}
Let $X$ be a  quasi-projective variety defined over an algebraically closed field $k$. Let $N$ be an integer, non-zero in the field. We have the following isomorphisms for each non-negative integer $i$.
\begin{align*}
    &H^{-i}_\text{h}(\SP k, C_*(Z_\eq^\text{h}(X, r)\otimes \ZZ/N \ZZ)) \cong H^{-i}_\text{qfh}(\SP k, C_*(Z_\eq^\text{qfh}(X, r)\otimes \ZZ/N \ZZ))\\
\cong &H^{-i}_\text{\'et}(\SP k, C_*(Z_\eq(X, r)\otimes \ZZ/N \ZZ)) \cong H^{-i}_\text{Ab}(C_*(Z_\eq(X, r)\otimes \ZZ/N \ZZ)(\SP k))\\
\cong &\text{CH}_{r}(X, i, \ZZ/N \ZZ).
\end{align*}
\end{thm}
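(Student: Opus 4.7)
The plan is to chain together three comparison results, each of which is essentially in the Suslin--Voevodsky toolbox; the theorem then reduces to an identification already present in the literature.

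First, I would handle the three topological comparisons at the left end. The presheaves $Z_\eq(X,r)\otimes\ZZ/N\ZZ$, $Z_\eq^{\text{qfh}}(X,r)\otimes\ZZ/N\ZZ$, and $Z_\eq^{\text{h}}(X,r)\otimes\ZZ/N\ZZ$ are related by sheafification. By Proposition \ref{prop:sheafrelation} (and the discussion recorded just before the theorem in the paper), after inverting $p$ these presheaves become qfh--, resp.\ h--sheaves, and they are homotopy invariant presheaves with transfers. The Suslin rigidity theorem \cite[Theorem 4.5]{Suslin_Voevodsky_homology} says that any torsion homotopy invariant sheaf with transfers (of torsion order prime to $p$) is locally constant in the \'etale topology, hence constant over an algebraically closed field. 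Thus termwise, each of the three complexes in the \'etale, qfh, and h topology is a complex of constant sheaves with the same stalks. Now for a constant \'etale sheaf $\mcF$, the comparisons $H^i_\et(\SP k,\mcF)\cong H^i_{\text{qfh}}(\SP k,\mcF^{\text{qfh}})\cong H^i_{\text{h}}(\SP k,\mcF^{\text{h}})$ are \cite[Theorem 10.2, 10.7]{Suslin_Voevodsky_homology}; applying this termwise and taking hypercohomology gives the first two isomorphisms of the theorem.

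Second, I would reduce hypercohomology on $\SP k$ to the cohomology of the complex of global sections. Since $k$ is algebraically closed, $\SP k$ has trivial higher cohomology for any sheaf in the \'etale (equivalently qfh or h) topology. By the hypercohomology spectral sequence applied to our complex of constant sheaves (with stalks $C^{-i}(Z_\eq(X,r))(\SP k)\otimes\ZZ/N\ZZ$, that is $Z_\eq(X\times\Delta^i,r)\otimes\ZZ/N\ZZ$), only the $E_2^{0,q}$ line survives, yielding $H^i_\et(\SP k, C^*(Z_\eq(X,r)\otimes\ZZ/N\ZZ))\cong H^i(C^*(Z_\eq(X,r)\otimes\ZZ/N\ZZ)(\SP k))$. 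This gives the third isomorphism.

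Third, the final identification with $CH_r(X,-i,\ZZ/N\ZZ)$ is the Suslin--Voevodsky / Friedlander--Voevodsky identification of the Suslin homology of the sheaf of equidimensional cycles with Bloch's higher Chow groups. Concretely, the complex $C^*(Z_\eq(X,r))(\SP k)$ whose $(-i)$-th term is $Z_\eq(X\times\Delta^i,r)$ is quasi-isomorphic, after tensoring with $\ZZ/N\ZZ$, to Bloch's cycle complex $z_r(X,\bullet)\otimes\ZZ/N\ZZ$; the latter computes $CH_r(X,-i,\ZZ/N\ZZ)$ by definition. This is the content of the cycle/Suslin complex comparison (moving lemma style arguments using that $X$ is quasi-projective so that equidimensional cycles are cofinal among cycles meeting faces properly, together with the proper base change / rigidity that lets one pass to torsion coefficients without needing resolution of singularities).

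The main obstacle is the last step: the equality of the ``equidimensional'' Suslin-style complex with Bloch's higher Chow complex. For general $X$ in characteristic $p$ this requires either resolution of singularities or a reduction that exploits the fact we have already imposed that $N$ is invertible in $k$. I would therefore quote the Suslin--Voevodsky comparison theorem for torsion coefficients prime to the characteristic, which is exactly the setting we are in, and invoke it termwise.
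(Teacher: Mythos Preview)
Your approach is essentially the same as the paper's: the first three isomorphisms come from Suslin rigidity plus the \'etale/qfh/h comparison for constant sheaves, the fourth from the vanishing of higher cohomology on $\SP k$, and the last is the identification of the Suslin complex of equidimensional cycles with Bloch's higher Chow groups.

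The one place where you are imprecise is the last step. Your description of it as ``moving lemma style arguments'' and your appeal to a ``Suslin--Voevodsky comparison theorem for torsion coefficients prime to the characteristic'' do not quite match what is actually needed. The relevant result is Suslin's theorem \cite[Theorem 3.2]{Suslin_Higher_Chow}, whose proof goes through cdh descent and hence assumes resolution of singularities; the removal of that hypothesis is due to Kelly \cite[Theorem 5.6.4]{Kelly}, who replaces resolution by Gabber's prime-to-$\ell$ alterations. So the torsion hypothesis enters not through rigidity or proper base change at this stage, but through the alteration argument that substitutes for resolution. You should cite these two results explicitly rather than gesturing at a general Suslin--Voevodsky comparison.
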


Note that the Suslin complex is a cochain complex in negative degrees, while the usual convention for higher Chow groups is homological in positive degrees $i$. 
From now on, we write hypercohomology the same way as cohomology.
\begin{proof}
The first three cohomology groups are isomorphic as discussed above. They are all equal to the cohomology of the complex $C_*(Z_\eq(X, r)\otimes \ZZ/N \ZZ)(\SP k))$, since this complex computes the cohomology group in the qfh and \'etale topology.

Finally,  under the hypothesis that resolution of singularities holds, Suslin \cite[Theorem 3.2]{Suslin_Higher_Chow} proves that for any quasi-projective variety $X$, there is a quasi-isomorphism between 
$C_*(Z_\eq(X, r)\otimes \ZZ/N \ZZ)(\SP k)$ and the complex defining higher Chow groups (Definition \ref{def:higherchow}), which induces
an isomorphism
\[
H^{-i}_\text{Ab}(C_*(Z_\eq(X, r)\otimes \ZZ/N \ZZ)(\SP k))
\cong \text{CH}_{r}(X, i, \ZZ/N \ZZ).
\]
Using Gabber's refinement of de Jong's alteration theorem, Kelly \cite[Theorem 5.6.4]{Kelly} removed the resolution of singularities hypothesis.
\end{proof}

\begin{cor}\label{cor:finiteness}
Let $X$ be a  quasi-projective variety defined over an algebraically closed field $k$. Let $N$ be an integer, non-zero in the field. We have the following isomorphisms. 
\begin{align*}
    &H^{-i}_\text{\'et}(\SP k, C_*(Z_\eq(X, 0)\otimes \ZZ/N \ZZ)) \cong H^{-i}_\text{qfh}(\SP k, C_*(Z_\eq(X, 0)\otimes \ZZ/N \ZZ))\\
\cong &H^{-i}_\text{h}(\SP k, C_*(Z_\eq(X, 0)\otimes \ZZ/N \ZZ)) \cong H^{-i}_\text{Ab}(C_*(Z_\eq(X, 0)\otimes \ZZ/N \ZZ)(\SP k))\\
\cong &\text{CH}_{0}(X, i, \ZZ/N \ZZ) \cong H_{i}^{\text{BM}}(X, \ZZ/N \ZZ).
\end{align*}
In particular, all the groups are finite. 
\end{cor}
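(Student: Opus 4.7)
The first four equalities in the displayed chain are nothing more than Theorem~\ref{thm:computation} specialised to $r=0$, so they require no additional argument. The only genuinely new content of the corollary is (i) the identification of the higher Chow group $CH_0(X,-i,\ZZ/N\ZZ)$ with the \'etale cohomology group $H^{2d-i}_{\et}(X,\ZZ/N\ZZ)$, and (ii) the finiteness assertion. My plan is therefore to prove these two points.

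For (i), I would combine Suslin's equidimensional comparison theorem (Theorem~3.2 of \cite{Suslin_Higher_Chow}, or its resolution-of-singularities-free refinement, Theorem~5.6.4 of \cite{Kelly}), already invoked in the proof of Theorem~\ref{thm:computation}, with the Suslin--Voevodsky singular homology comparison theorem of \cite{Suslin_Voevodsky_homology}. Concretely: the discussion preceding Theorem~\ref{thm:computation} shows that, since $N$ is invertible in $k$, the h-sheafification of $C^*(Z_\eq(X,0))\otimes \ZZ/N\ZZ$ is a complex of locally constant \'etale sheaves with transfers. Rigidity, applied degree-wise to this complex, and the Suslin--Voevodsky comparison identify its hypercohomology over $\SP k$ with the \'etale cohomology of $X$ with finite coefficients. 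Over the algebraically closed field $k$ the Tate twist $\mu_N^{\otimes d}$ trivialises, and a dimension/index chase against the equidimensional comparison yields the asserted isomorphism with $H^{2d-i}_{\et}(X,\ZZ/N\ZZ)$.

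For (ii), once the identification with $H^{2d-i}_{\et}(X,\ZZ/N\ZZ)$ is in hand, finiteness is a standard theorem: the \'etale cohomology groups of a finite type separated scheme over an algebraically closed field, with coefficients in a finite abelian group of order prime to the characteristic, are finite (SGA 4, Exp.~XVI, or Deligne's finiteness theorem in SGA 4$\tfrac{1}{2}$). This applies verbatim to the quasi-projective $X$ over the algebraically closed $k$ considered here.

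The step I expect to require the most care is not conceptual but bookkeeping: aligning the cochain indexing of $C^*$, the homological indexing of higher Chow groups, the direction of the rigidity isomorphism, and the correct version of the Suslin--Voevodsky comparison (equidimensional, possibly Borel--Moore, and with the right Tate twist). Once these conventions are pinned down the two inputs (rigidity plus Suslin--Voevodsky for (i), and \'etale finiteness for (ii)) combine to give the corollary at once; all four of the cohomology groups in the statement are then automatically finite, as each is isomorphic to $H^{2d-i}_{\et}(X,\ZZ/N\ZZ)$.
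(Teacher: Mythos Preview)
Your proposal is correct and follows essentially the same route as the paper: the first four isomorphisms are the $r=0$ case of Theorem~\ref{thm:computation}, the last isomorphism is the Suslin--Voevodsky comparison between algebraic singular homology and \'etale cohomology (the paper cites \cite[Corollary~7.8]{Suslin_Voevodsky_homology} together with the alteration-based extensions \cite[Theorem~3.5]{Geisser_Applications_of_alterations} and \cite[Theorem~5.6.1]{Kelly}, whereas you mention only Kelly), and finiteness is immediate from the finiteness of \'etale cohomology with finite coefficients. Your unpacking of the rigidity mechanism behind the Suslin--Voevodsky theorem is more explicit than the paper's bare citations, but the argument is the same.
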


\begin{proof}
The last equality follows from Corollary \ref{cor:ch_BM}. Clearly the Borel-Moore homology group is finite.
\end{proof}

\subsection{Some easy lemmas}
We first make one simple but useful remark. 
\begin{rem}\label{rem:ch0}
    Let $X$ be a connected projective algebraic set over an algebraically closed field. Then the degree map gives an isomorphism $\deg: CH_0(X)\otimes \ZZ/N\cong \ZZ/N$. 
To see this, one simply note that any two points are algebraically equivalent and $CH_0(X)_\text{alg}$ is divisible. 
Moreover, the class of any point generates this Chow group.  
So any morphism $p: X \to Y$ between projective algebraic sets induces an isomorphism $p_*:CH_0(X)\otimes \ZZ/N\cong CH_0(Y)\otimes \ZZ/N$.

\end{rem}
\begin{lem}\label{lem:curve_vanishing}
Let $T$ be a connected projective algebraic set over an algebraically closed field $k$, and let $x, y$ be two points in $T$. Let $\mathcal{F}$ be a sheaf of abelian groups in the qfh or h topology, or an \'etale sheaf with transfers.
Fix an integer $N$ invertible in $k$.
Write $F_x$ (resp. $F_y$) for the pull-back of $F \in \mathcal{F} \otimes \ZZ/N \ZZ (T)$ to $x$ (resp. $y$) via the inclusion. Then $[F_x]=[F_y]$ in $H^0(\SP k, C_*(\mathcal{F})\otimes \ZZ/N \ZZ)$, where the cohomology is taken in the \'etale topology, qfh topology or h topology.
\end{lem}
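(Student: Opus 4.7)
The plan is to apply Suslin--Voevodsky rigidity to the cohomology presheaf $\underline{\mathcal{H}}^0:=\underline{h}^0(C^*(\mathcal{F})\otimes\ZZ/N\ZZ)$. This presheaf is homotopy invariant with transfers and torsion of order $N$ prime to $p$, so by \cite[Theorem 4.5]{Suslin_Voevodsky_homology} (and its qfh/h variants) its sheafification in the relevant topology is a locally constant \'etale sheaf. For any locally constant \'etale sheaf $\mathcal{L}$ on a smooth connected scheme $U$ over an algebraically closed field $k$, a global section $s\in\mathcal{L}(U)$ is determined by a $\pi_1^{\et}(U)$-invariant element of the stalk, and evaluation at any $k$-point simply recovers this invariant element; hence the two restrictions $a^*(s)$ and $b^*(s)$ at $k$-points $a,b\in U$ agree in $\mathcal{L}(\SP k)=H^0(\SP k,C^*(\mathcal{F})\otimes\ZZ/N\ZZ)$. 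Since $N$ is invertible in $k$, the comparison results cited in the excerpt let us compute the target cohomology in any of the three topologies.

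Next I would reduce to the case where $T$ is a smooth projective connected curve. Connectedness and projectivity of $T$ allow us to link $x$ and $y$ by a chain of irreducible closed projective curves $T_1,\ldots,T_m\subset T$ with $x\in T_1$, $y\in T_m$ and $T_i\cap T_{i+1}\neq\emptyset$: inside any irreducible projective variety of positive dimension two points can be connected by an irreducible curve (cut by a general complete intersection of hyperplane sections passing through both), and between distinct irreducible components of $T$ we use the connectedness of the incidence graph of components. Since equality in $H^0(\SP k,C^*(\mathcal{F})\otimes\ZZ/N\ZZ)$ is transitive, it suffices to handle two $k$-points $a,b$ lying on a single irreducible curve $T_i$. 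Passing to the normalization $\nu:\tilde{T}_i\to T_i$, which is a proper surjection (an h-cover, and a qfh cover since it is generically finite and proper), the section $F|_{T_i}$ pulls back to $\tilde{F}=\nu^*F\in(\mathcal{F}\otimes\ZZ/N\ZZ)(\tilde{T}_i)$, and any $k$-preimages $\tilde{a},\tilde{b}$ of $a,b$ satisfy $\tilde{F}_{\tilde{a}}=F_a$ and $\tilde{F}_{\tilde{b}}=F_b$ by functoriality of pullback.

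Finally, on the smooth projective connected curve $\tilde{C}=\tilde{T}_i$, the section $\tilde{F}$ determines a class $[\tilde{F}]\in\underline{\mathcal{H}}^0(\tilde{C})$ whose image in the sheafification is a global section of a locally constant \'etale sheaf on $\tilde{C}$. By the first paragraph, $[F_a]=\tilde{a}^*[\tilde{F}]=\tilde{b}^*[\tilde{F}]=[F_b]$ in $H^0(\SP k,C^*(\mathcal{F})\otimes\ZZ/N\ZZ)$, as desired. The main obstacle I anticipate is uniformly justifying the rigidity input across all three allowed sheaf types (\'etale with transfers, qfh, and h) and checking that restriction to a $k$-point commutes with sheafification of $\underline{\mathcal{H}}^0$; the latter comes down to the fact that the stalk of an \'etale (pre)sheaf at a $k$-point is unchanged by sheafification, and the former is handled by the corresponding rigidity theorems in \cite{Suslin_Voevodsky_homology} and its follow-ups.
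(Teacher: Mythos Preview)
Your proof is correct, but it takes a different route from the paper. The paper's argument is shorter and more structural: it observes that a section $F\in(\mathcal{F}\otimes\ZZ/N)(T)$ induces a morphism of sheaves $Z_0(T)\otimes\ZZ/N\to\mathcal{F}\otimes\ZZ/N$ (by Yoneda in the transfers case, and by the identification of the qfh sheafification of $Z_0(T)[1/p]$ with the free sheaf $\ZZ[1/p][T]$ in the qfh/h cases). Hence $F_x,F_y$ are the images of $[x],[y]$ under the induced map $H^0(\SP k,C^*(Z_0(T))\otimes\ZZ/N)\to H^0(\SP k,C^*(\mathcal{F})\otimes\ZZ/N)$, and the source is $CH_0(T)\otimes\ZZ/N\cong\ZZ/N$ by the degree map, where $[x]$ and $[y]$ obviously coincide.

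By contrast, you bypass the reduction to $Z_0(T)$ and instead apply rigidity directly to the homotopy-invariant presheaf $\underline{\mathcal{H}}^0=h^0(C^*(\mathcal{F})\otimes\ZZ/N)$, reduce to a smooth connected curve by a chain-of-curves plus normalization argument, and then use that the \'etale sheafification of $\underline{\mathcal{H}}^0$ is locally constant (indeed constant, since $k$ is algebraically closed) so that restriction to any two $k$-points agrees. This is a perfectly valid alternative; the paper's approach buys brevity and a concrete computation of the target group, while yours avoids invoking the representability/free-sheaf statement and is closer in spirit to how one usually deduces ``two fibers agree'' from rigidity. Your anticipated obstacles (uniform transfers structure across the three cases, and compatibility of restriction with sheafification at a $k$-point) are genuine but easily handled: qfh and h sheaves carry transfers, so $\underline{\mathcal{H}}^0$ does in all cases, and sheafification does not change values over $\SP k$ for $k$ algebraically closed.
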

\begin{proof}
Every element in $\mathcal{F} \otimes \ZZ/N\ZZ (T)$ induces a unique morphism of presheaves
\[
Z_\eq(T, 0) \otimes \ZZ/N \to \mathcal{F}\otimes \ZZ/N \ZZ.
\]
If $\mathcal{F}$ is an \'etale sheaf with transfers, this is the Yoneda Lemma. 
If $\mathcal{F}$ is a qfh-sheaf or h-sheaf, this follows from Yoneda Lemma and the fact that the qfh sheafification of $Z_\eq(T, 0)[\frac{1}{p}]$ is the free sheaf $\ZZ [\frac{1}{p}] [T]$ generated by the presheaf of sets $\text{Hom}(\cdot, T)$ (\cite[Theorem 6.7]{Suslin_Voevodsky_homology}).

In the following, we consider the case that $\mathcal{F}$ is an h-sheaf. 
We have induced morphism of h-sheaves:
\[
Z_0(T)\otimes \ZZ/N\ZZ=Z_\eq^h(T, 0) \otimes \ZZ/N \to \mathcal{F}\otimes \ZZ/N \ZZ,
\]

The class $[\mathcal{F}_x]$ (resp. $[\mathcal{F}_y]$) is the image of $[x]$ (resp. $[y]$) under the map
\[
H^0(\SP k, C_*(Z_0(T))\otimes \ZZ/N \ZZ) \to H^0(\SP k, C_*(\mathcal{F})\otimes \ZZ/N \ZZ).
\]
So it suffices to show that $[x]=[y]$ in $H^0(\SP k, C_*(Z_0(T)\otimes \ZZ/N \ZZ))$.
But the latter cohomology group is $CH_0(T) \otimes \ZZ/N \ZZ \cong \ZZ/N\ZZ$ by Corollary \ref{cor:finiteness} and the isomorphism is given by the degree map (Remark \ref{rem:ch0}).
Any two points $x, y$ give the same class in $H^0(\SP k, C_*(Z_0(T))\otimes \ZZ/N \ZZ)$.

To prove the other cases, one only needs to replace the sheaf $Z_0(T)$ by the sheafification of $Z_\eq(T, 0)$ in the corresponding topology (in fact, $Z_\eq(T, 0)\otimes \ZZ[\frac{1}{p}]$ is already a qfh, and thus \'etale, sheaf \cite[Proposition 4.2.5]{SV_Chow_Sheaves}) and the cohomology group in h topology by the cohomology group in the corresponding topology.
\end{proof}

\begin{lem}\label{lem:flat_moving}
Let $X \to Y$ be a flat and finite morphism defined over an algebraically closed field $k$, where $Y$ is a normal variety (but $X$ is not necessarily normal). Let $N$ be an integer invertible over $k$.
Denote by $K$ the kernel h-sheaf of $Z_0(X) \to Z_0(Y)$.
Then $H_h^{0}(\SP k, C_*(K)\otimes \ZZ/N\ZZ)$ is generated by classes of the form $[t_1-t_2]$, for $t_1, t_2$ in the fiber of any chosen general point $t$ in $Y$.
\end{lem}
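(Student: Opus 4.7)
The plan is as follows. By Suslin rigidity (i.e.\ the identification of $H^0_h(\SP k, C^*(K)\otimes \ZZ/N\ZZ)$ with sections modulo the image of the face map from $K(\Delta^1)$), any class is represented by a section $\alpha \in K(\SP k)$, namely a $0$-cycle on $X$ whose $\pi$-push-forward vanishes on $Y$. For each $y$ in the support of $\pi(\alpha)$, the coefficients of $\alpha$ supported in $\pi^{-1}(y)$ must sum to zero, so $\alpha$ decomposes as a $\ZZ/N$-linear combination of differences $[p]-[q]$ with $\pi(p)=\pi(q)$. It therefore suffices to show that each such $[p]-[q]$ is equivalent in $H^0_h(\SP k, C^*(K)\otimes \ZZ/N\ZZ)$ to a difference $[t_1]-[t_2]$ with $t_1,t_2 \in \pi^{-1}(y_0)$.

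Fix such a pair with $\pi(p)=\pi(q)=y\neq y_0$. Since $Y$ is irreducible of positive dimension, choose an irreducible curve $C \subset Y$ through $y$ and $y_0$, and let $\tilde C \to C$ be its normalization. The base change $\tilde X := X \times_Y \tilde C \to \tilde C$ is finite and flat of degree $d$. Pick lifts $\tilde p, \tilde q \in \tilde X$ of $p,q$ lying over a common point $c \in \tilde C$ above $y$, let $Z_p, Z_q$ be the (reduced) irreducible components of $\tilde X$ through $\tilde p, \tilde q$, and consider their normalizations $\tilde Z_p, \tilde Z_q$. Each $\tilde Z_{\bullet}\to \tilde C$ is finite, dominant, and hence (by miracle flatness) a finite flat morphism of smooth curves. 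The crucial construction is the fibre product $W := \tilde Z_p \times_{\tilde C} \tilde Z_q$ together with the two induced morphisms $\sigma_p, \sigma_q \colon W \to X$ obtained from $\tilde Z_\bullet \to Z_\bullet \hookrightarrow \tilde X \to X$. Since both $\pi\sigma_p$ and $\pi\sigma_q$ factor through $W \to \tilde C \to Y$, the equidimensional family of $0$-cycles $\gamma := [\sigma_p]-[\sigma_q]$ on $X$ over $W$ has vanishing push-forward to $Y$ and thus defines an element of $K(W)$.

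Let $W_0 \subset W$ be an irreducible component containing the $k$-point corresponding to $(\tilde p, \tilde q)$; since $W_0 \to \tilde C$ is finite and dominant it is surjective, so $W_0$ meets the fibre over some lift $c_0 \in \tilde C$ of $y_0$ at a point $(\tilde t_1, \tilde t_2)$. Let $\tilde W_0 \to W_0$ be the normalization and pull $\gamma$ back to $\gamma' \in K(\tilde W_0)$. The restrictions of $\gamma'$ at the two $k$-points $w_0, w_1 \in \tilde W_0$ over $(\tilde p,\tilde q)$ and $(\tilde t_1, \tilde t_2)$ equal $[p]-[q]$ and $[t_1]-[t_2]$ respectively, with $t_i \in \pi^{-1}(y_0)$. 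The rigidity argument of Lemma \ref{lem:curve_vanishing} now identifies their images in $H^0_h(\SP k, C^*(K)\otimes \ZZ/N\ZZ)$: even if $\tilde W_0$ is not projective, any two $k$-points $w_0, w_1$ give the same class in $CH_0(\tilde W_0)/N$, since the degree-zero divisor $[w_0]-[w_1]$ lies in $\mathrm{Pic}^0$ of the smooth projective compactification of $\tilde W_0$, which is $N$-divisible over an algebraically closed field.

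The main obstacle I anticipate is not a deep one but rather the careful bookkeeping around the fibre product $W$: verifying that $\gamma$ genuinely satisfies the equidimensionality, field-of-definition, and fat-point axioms of Definition \ref{def:RelCycle}, that the chosen component $\tilde W_0$ actually meets the fibre over $c_0$ (so the target cycle lives in $\pi^{-1}(y_0)$), and that the rigidity of Lemma \ref{lem:curve_vanishing} survives the drop from a projective to merely quasi-projective smooth curve. All of these reduce to standard facts — flat base change of equidimensional families, surjectivity of finite dominant maps between irreducible curves, and divisibility of the Jacobian of a smooth projective curve — but they require being written out to make the argument airtight.
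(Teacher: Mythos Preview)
Your proof is correct and uses the same core idea as the paper: realize the pair $(p,q)$ and a pair over the chosen general point as two fibres of a family in $K$ parametrized by a connected space, then apply Lemma~\ref{lem:curve_vanishing}. The paper's execution is more streamlined: rather than first slicing $Y$ down to a curve $C$ and performing several normalizations, it works directly with the fibre product $X\times_Y X$ (flat over $X$ because $X\to Y$ is flat), takes an irreducible component $D$ through $(x_1,x_2)$, observes that $D$ therefore surjects onto $Y$, picks any $t_D\in D$ over the chosen general point $t$, and applies Lemma~\ref{lem:curve_vanishing} to $D$ with the two projections $f_1,f_2:D\to X$. Your construction of $W=\tilde Z_p\times_{\tilde C}\tilde Z_q$ is effectively building, by hand, a curve inside (the normalization of) this $D$, so the two arguments are equivalent; your extra care about non-projectivity of the parameter curve and the divisibility of $\mathrm{Pic}^0$ is a valid refinement that the paper sidesteps because in its applications $X$ and $Y$ are projective.
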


\begin{proof}
Clearly $H_h^{0}(\SP k, C_*(K)\otimes \ZZ/N\ZZ)$ is generated by classes of the form $[x_1-x_2]$ for all the pairs of points with the same image in $Y$.
Let $x_1, x_2$ be two points in the fiber over $y \in Y$.
We will show that for any chosen general point $t \in Y$, the class $[x_1-x_2]$ is equivalent to a class $[t_1-t_2]$ for some points $t_1, t_2$ in the fiber over $t$. 

Consider the correspondence $X \times_Y X\subset X \times X$.
Since $X \to Y$ is assumed to be flat and finite, $X \times_Y X \to X$ is flat and finite.
We take an irreducible component $D \subset X\times_Y X$ containing $(x_1, x_2)$, which dominates (and thus surjects onto) $X$. 
 Denote by $f, f_1, f_2$ the morphisms $D \subset X \times_Y \times X \to Y, D \subset X \times_Y X \xrightarrow{p_1} X, D \subset X \times_Y X \xrightarrow{p_2} X$, where $p_1, p_2$ are projections to the first and second factor.
 There are two points $x_D, t_D$ in $D$ such that \[
 f(x_D)=y,\quad f(t_D)=t,\quad f_1(x_D)=x_1,\quad f_2(x_D)=x_2.
 \] 
 We have an element $D_1-D_2\in K(D)$ where $D_i=\{(d, f_i(d))| d \in D\}\subset D \times X$, $i=1, 2$.
Then by Lemma \ref{lem:curve_vanishing}, the class $[x_1-x_2]=[f_1(x_D)-f_2(x_D)]$ is the same  as $[f_1(t_D)-f_2(t_D)]$.
\end{proof}

\begin{lem}\label{lem:move}
Let $p: X \to Y$ be a generically finite surjective morphism between normal projective varieties over an algebraically closed field $k$.
 Let $N$ be an integer invertible over $k$.
Denote by $K$ the kernel h-sheaf of $p_*: Z_0(X) \to Z_0(Y)$.
Then the cohomology group $H_h^{0}(\SP k, C_*(K)\otimes \ZZ/N\ZZ)$ is generated by classes of the form $[t_1-t_2]$, for $t_1, t_2$ in the fiber of any chosen general point in $Y$.
\end{lem}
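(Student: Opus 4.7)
The plan is to reduce the generically finite case to the finite flat case already established in Lemma~\ref{lem:flat_moving}. I would first use the Stein factorization of $p: X \to Y$ to write $p = \beta \circ \alpha$, where $\alpha: X \to X'$ is projective birational with connected fibers and $\beta: X' \to Y$ is finite surjective; since $X$ is normal, $X'$ is normal as well. Because $\alpha$ and $\beta$ are proper surjective (hence h-covers), the induced maps $Z_0(X) \to Z_0(X') \to Z_0(Y)$ of h-sheaves are both surjective, and one checks that they fit into a short exact sequence
\[
0 \to K_\alpha \to K_p \to K_\beta \to 0
\]
of h-sheaves, where $K_p = K$ and $K_\alpha, K_\beta$ are the corresponding kernels. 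The associated long exact sequence in h-cohomology reduces the lemma to verifying the ``moving to the fiber over a chosen general point'' statement separately for $K_\alpha$ and $K_\beta$.

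For the birational part, the fibers of $\alpha$ are connected and projective; Lemma~\ref{lem:curve_vanishing} then gives that $[x_1]$ and $[x_2]$ agree in $H^0_h(\SP k, C^*(Z_0(X)) \otimes \ZZ/N)$ whenever $\alpha(x_1) = \alpha(x_2)$. Chasing this through the long exact sequence, I would deduce that any contribution coming from $H^0_h(\SP k, C^*(K_\alpha)/N)$ can be absorbed into classes coming from the finite piece $K_\beta$, after attaching an auxiliary irreducible subvariety of $X$ dominating $Y$. For the finite piece, $\beta: X' \to Y$ is finite surjective between normal projective varieties but possibly non-flat. Here I would invoke Raynaud--Gruson's platification theorem to produce a projective birational morphism $\pi: \widetilde{Y} \to Y$ from a normal projective variety such that the strict transform $\widetilde{X}' \subset X' \times_Y \widetilde{Y}$ becomes finite flat over $\widetilde{Y}$. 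Applying Lemma~\ref{lem:flat_moving} to $\widetilde{X}' \to \widetilde{Y}$ then produces the desired factorization of classes over $\widetilde{Y}$, which I would transfer back to $Y$ using that $\pi$ is projective birational, with Lemma~\ref{lem:curve_vanishing} once more absorbing the $\pi$-exceptional locus; a chosen general point $t \in Y$ will lift uniquely to a general point of $\widetilde{Y}$, so the notion of ``general'' is preserved.

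The main obstacle is the delicate bookkeeping at the interface between the birational step $\alpha$ and the finite step $\beta$, in particular ensuring that the diagram chase in the long exact sequence really produces classes of the desired form $[t_1 - t_2]$ with $t_1, t_2$ in the fiber over a single general $t \in Y$. A cleaner alternative that I would also attempt in parallel is to skip the Stein factorization and platify $p: X \to Y$ directly, obtaining a projective birational $\widetilde{Y} \to Y$ (which I may take normal) with $\widetilde{X} \to \widetilde{Y}$ flat and generically finite, together with a projective birational $\widetilde{X} \to X$. Applying Lemma~\ref{lem:flat_moving} over $\widetilde{Y}$ together with Lemma~\ref{lem:curve_vanishing} twice, to absorb the exceptional loci of both $\widetilde{X} \to X$ and $\widetilde{Y} \to Y$, should yield the statement; the subtle point is that normalization can break flatness, but since Lemma~\ref{lem:flat_moving} only requires the base $\widetilde{Y}$ to be normal and not the total space, this is not an actual obstruction, making the direct platification route likely the more efficient one to carry out.
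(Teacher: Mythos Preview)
Your proposal is correct, and your second alternative---platifying $p:X\to Y$ directly via Raynaud--Gruson to obtain a birational $\widetilde{Y}\to Y$ with flat (hence finite) strict transform $\widetilde{X}\to\widetilde{Y}$, then applying Lemma~\ref{lem:flat_moving} and using Lemma~\ref{lem:curve_vanishing} to kill the contributions from the two birational maps $\widetilde{X}\to X$ and $\widetilde{Y}\to Y$---is exactly the paper's proof. The paper organizes the diagram chase by showing that $H^0_h(\SP k, C^*(K(q))/N)$ and $H^0_h(\SP k, C^*(K(q'))/N)$ both vanish (connected fibers plus Lemma~\ref{lem:curve_vanishing}, using normality of $X$ and $Y$ and Zariski's main theorem), which forces $H^0_h(\SP k, C^*(K(p'))/N)\to H^0_h(\SP k, C^*(K(p))/N)$ to be surjective; your Stein-factorization route would work but is a detour you yourself flag as less efficient.
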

\begin{proof}
By the flattening lemma \ref{lem:flattening}, there is a birational projective  morphism $Y' \xrightarrow{q} Y$ such that the strict transform $X'$ of $X$ in $X \times_Y Y'$ is flat over $Y'$. That is, we have a commutative diagram:
\[
\xymatrix{
X' \ar[rd]_{p'} \ar@{^{(}->}[r]_i \ar@/^1pc/[rr]^{q'}
&X\times_Y Y' \ar[d]\ar[r]
&X \ar[d]^p\\
&Y' \ar[r]^q &Y
}
\]
where the square is cartesian, $i$ is a closed immersion, $q, q'$ are birational and projective, and $p'$ is flat.
Note that flatness of $p'$ implies that it is quasi-finite and thus also finite (since it is proper).
Up to replacing $Y'$ with its normalization and $X'$ with its base change along the normalization of $Y'$, we may assume that $Y'$ is normal.

We use the same notations $p, p'$ etc. to denote the induced map on the h-sheaves of zero-cycles (e.g. $p:Z_0(X) \to Z_0(Y)$).
We use $K(p)$ etc. to denote the kernel h-sheaf of $p: Z_0(X) \to Z_0(Y)$ etc.. 
There is a commutative diagram of short exact sequences of h-sheaves
\[
\begin{CD}
0 @>>> K(p') @>>> Z_0(X') @>p'>> Z_0(Y') @>>> 0\\
@VVq'V @VVV @Vq'VV @VqVV @VVV\\
0 @>>> K(p) @>>> Z_0(X) @>p>> Z_0(Y) @>>> 0,
\end{CD}
\]
which also gives a commutative diagram of exact sequences after tensoring with $\ZZ/N\ZZ$, since the sheaves $Z_0(X)$ etc. are torsion free.
Then we have a commutative diagram of long exact sequences, part of which is the following:
\[
\begin{CD}
     CH_0(Y', 1) @>>> H_h^{0}(\SP k, C_*(K(p'))/N)
    @>>> CH_0(X')  @>p_*'>> CH_0(Y')\\
    @VVq_* V @VVq_*' V @V\cong Vq_*'V @V\cong Vq_* V\\
     CH_0(Y, 1) @>>> H_h^{0}(\SP k, C_*(K(p))/N)
    @>>>CH_0(X)  @>p_*>> CH_0(Y),
\end{CD}
\]
where all the (higher) Chow groups are
with $\ZZ/N$-coefficients.

We first show that 
\[
q_*: CH_0(Y', 1, \ZZ/N\ZZ)  \to CH_0(Y, 1, \ZZ/N\ZZ) 
\]
is surjective. For this, it suffices to show that $H_h^{0}(\SP k, C_*(K(q))\otimes \ZZ/N\ZZ)$ vanishes.
The morphism $Y' \to Y$ has connected fibers. 
So for any two points $y_1', y_2'$ in the same fiber of $Y' \to Y$, by Lemma \ref{lem:curve_vanishing}, the class of the difference $[y_1'-y_2']$ is the same as $[y_1'-y_1']=0$.

Since
\[
CH_0(X', 0, \ZZ/N)  \xrightarrow{q_*'} CH_0(X, 0, \ZZ/N\ZZ), \quad
CH_0(Y', 0, \ZZ/N)  \xrightarrow{q_*} CH_0(Y, 0, \ZZ/N\ZZ) 
\]
are isomorphisms (Remark \ref{rem:ch0}), a simple diagram chasing of the above commutative diagram shows that
\[
H_h^{0}(\SP k, C_*(K(p'))\otimes \ZZ/N\ZZ) \to H_h^{0}(\SP k, C_*(K(p))\otimes \ZZ/N)
\]
is surjective.
Thus the statement follows from Lemma \ref{lem:flat_moving}.
\end{proof}

\begin{lem}\label{lem:}
Let $p: X \to Y$ be a generically finite surjective morphism between normal projective varieties over an algebraically closed field $k$.
 Let $N$ be an integer invertible over $k$.
Assume that $\deg p$ is relatively prime to $N$. Then we have a surjection
\[
p_*: CH_0(X, 1, \ZZ/N \ZZ) \to CH_0(Y, 1, \ZZ/N \ZZ).
\]
Moreover, denote by $K$ the kernel h-sheaf of $Z_0(X) \to Z_0(Y)$.
We have the vanishing of cohomology: $H^0_h(\SP k, C_*(K)/N)=0$.
\end{lem}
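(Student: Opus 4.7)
The plan is to combine a standard norm/transfer argument for a finite flat morphism with a flattening of $p$ and the surjectivity already established in Lemma \ref{lem:move}. First, I would apply flatification by blowing up (Raynaud--Gruson) to obtain a birational projective morphism $q : Y' \to Y$ from a projective variety $Y'$ such that the strict transform $p' : X' \to Y'$ of $p$ is flat. After replacing $Y'$ by its normalization (which remains projective and birational over $Y$) and pulling $X'$ back accordingly, which preserves both properness and flatness, I may assume $Y'$ is normal. Being proper, flat, and quasi-finite, $p'$ is then finite flat of constant degree $d = \deg p$. Denoting the induced projective birational map $X' \to X$ by $q'$, we obtain a commutative square
\[
\begin{CD}
X' @>q'>> X \\
@V p' VV @V p VV \\
Y' @>q>> Y.
\end{CD}
\]

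Since $q$ is a projective birational morphism between normal projective varieties, it has connected fibers. The very argument used in Lemma \ref{lem:move} (combine Lemma \ref{lem:curve_vanishing} with the long exact sequence associated to the kernel of $Z_0(Y') \to Z_0(Y)$) then shows that $q_* : CH_0(Y', 1, \ZZ/N\ZZ) \to CH_0(Y, 1, \ZZ/N\ZZ)$ is surjective. On the other hand, for the finite flat morphism $p'$ of constant degree $d$, Bloch's higher Chow groups carry a flat pullback $(p')^*$, and the standard projection formula yields $p'_* \circ (p')^* = d \cdot \mathrm{id}$ on $CH_0(Y', 1, \ZZ/N\ZZ)$. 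Since $\gcd(d, N) = 1$, multiplication by $d$ is invertible modulo $N$, hence $p'_*$ is surjective as well.

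Combining via the diagram, $p_* \circ q'_* = q_* \circ p'_* : CH_0(X', 1, \ZZ/N\ZZ) \to CH_0(Y, 1, \ZZ/N\ZZ)$ is a composition of surjections, so it is surjective, and therefore $p_*$ is surjective. The main obstacle is a small bookkeeping one: arranging the flatification and normalization so that $p'$ really is finite flat of degree $d$, and invoking the compatibility of flat pullback with proper pushforward on Bloch's cycle complex with $\ZZ/N$-coefficients that underlies $p'_*(p')^* = d$. This compatibility is classical; beyond it, everything reduces to diagram chasing together with the surjectivity statement of Lemma \ref{lem:move}.
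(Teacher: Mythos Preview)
Your proposal is correct. Both your approach and the paper's rest on the same two ingredients: (i) surjectivity of $CH_0(-,1,\ZZ/N)$ under a projective birational map to a normal target (this is established inside the proof of Lemma~\ref{lem:move}), and (ii) the transfer identity $p'_*\circ (p')^* = d\cdot\mathrm{id}$ for a finite flat morphism of degree $d$ prime to $N$. The genuine difference is how flatness is arranged. The paper reduces, via Lemma~\ref{lem:move} and the long exact sequence for the kernel sheaf $K$, to showing that the obstruction classes $[x_1-x_2]$ vanish for points over a general $y\in Y$; it then restricts to a general complete intersection curve $H\subset Y$ chosen (by a Bertini argument for \'etale fundamental groups) so that its preimage $H'\to H$ is finite flat and irreducible, and runs the norm argument on this curve. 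You instead flatify $p$ globally by Raynaud--Gruson and then apply the norm argument directly on the full blow-up, combining with the birational surjectivity from Lemma~\ref{lem:move} via the commutative square. Your route is a bit more streamlined and avoids the detour through the kernel sheaf and its $H^0_h$; the paper's route stays more concrete by working on curves, at the cost of tracking the class $[x_1-x_2]$ through the kernel cohomology.
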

\begin{proof}
We have the long exact sequence
\begin{align*}
    &CH_0(X, 1, \ZZ/N \ZZ) \xrightarrow{p_*} CH_0(Y, 1, \ZZ/N \ZZ) \to H^0_h(\SP k, C_*(K)/N)\\
    \to &CH_0(X, 0, \ZZ/N) \xrightarrow[\cong]{p_*} CH_0(Y, 0, \ZZ/N).
\end{align*}
Since $p_*$ induces an isormorphism $CH_0(X, 0, \ZZ/N) \xrightarrow[\cong]{p_*} CH_0(Y, 0, \ZZ/N)\cong \ZZ/N$ (Remark \ref{rem:ch0}), the two statements in the lemma are equivalent.

By Lemma \ref{lem:move}, it suffices to show that for a general point $y \in Y$ and any two points $x_1, x_2$ in the fiber of $y$, the class $[x_1-x_2]$ is zero in $H^0_h(\SP k, C_*(K)/N)$.

By the Lefschetz theorem for \'etale fundamental groups, there is a general complete intersection curve $H$ such that the inverse image $H'$ in $X$ is irreducible. 
For $H$ general, the morphism $H' \to H$ is flat and finite of degree prime to $N$.
We have a long exact sequence
\begin{align*}
&CH_0(H', 1, \ZZ/N\ZZ)  \to CH_0(H, 1, \ZZ/N\ZZ) \to H_h^{0}(\SP k, C_*(K_H)\otimes \ZZ/N\ZZ)\\
  &\to CH_0(H', \ZZ/N\ZZ) \to CH_0(H, \ZZ/N\ZZ),
\end{align*}
where $K_H$ is the kernel h-sheaf of $Z_0(H') \to Z_0(H)$.
The map
\[
CH_0(H', \ZZ/N\ZZ) \to CH_0(H, \ZZ/N\ZZ) 
\]
is an isomorphism (Remark \ref{rem:ch0}).
On the other hand, since $p: H'\to H$ is flat and finite, we have pull-back and push-forward on all the higher Chow groups. 
The composition of pull-back and push-forward
\[
CH_0(H, 1, \ZZ/N\ZZ) \xrightarrow{p^*} CH_0(H', 1, \ZZ/N\ZZ)  \xrightarrow{p_*} CH_0(H, 1, \ZZ/N\ZZ)
\]
is multiplication by $\deg p$.
Since the degree of the map is relatively prime to $N$,
\[
 CH_0(H', 1, \ZZ/N\ZZ)  \xrightarrow{q_*} CH_0(H, 1, \ZZ/N\ZZ)
\]
is surjective.
Thus for any two points $t_1, t_2$ over a general point $t \in H \subset Y$, the class $[t_1-t_2]$ vanishes in $ H_h^{0}(\SP k, C_*(K_H)\otimes \ZZ/N\ZZ)$. So does its push-forward in  $H_h^{0}(\SP k, C_*(K)\otimes \ZZ/N\ZZ)$.
\end{proof}

\subsection{The key exact sequence}
Recall that we use $A_i(X)$ to denote the group of $i$-dimensional cycles in $X$ modulo algebraic equivalence (Definition \ref{def:alg_equiv}). 

For any integer $N$ invertible in the field $k$, 
Lemma \ref{lem:modN} applied to the exact sequence $0 \to \ZZ \xrightarrow{\times N} \ZZ \xrightarrow{\mod N} \ZZ/N \to 0$ gives a long exact sequence:
\begin{align*}
    \ldots \to &\text{CH}_i(X, 1, \ZZ) \xrightarrow{\times N} \text{CH}_i(X, 1, \ZZ) \xrightarrow{\mod N} \text{CH}_i(X, 1, \ZZ/N)\\
    \to &\text{CH}_i(X, 0, \ZZ)\xrightarrow{\times N} \text{CH}_i(X, 0, \ZZ)\ldots.
\end{align*}
So we have a surjective homomorphism
$\text{CH}_i(X, 1, \ZZ/N\ZZ) \to \text{CH}_i(X)[N]$.
Since the group $\text{CH}_i(X)_\text{alg}$ consisting of cycles that are algebraically trivial is divisible, we have a surjective map
$\text{CH}_i(X, 0, \ZZ)[N] \to A_i(X)[N]$.
Composing these two maps, we have a surjective homomorphism for $i=1$:
$\text{CH}_1(X, 1, \ZZ/N \ZZ) \to A_1(X)[N]$.

Given an equi-dimensional family of one-cycles $\Gamma\subset S \times X$ parameterized by a smooth projective variety $S$, we have the following commutative diagram, where the vertical maps are induced by the correspondence $\Gamma_*$:
\[
\xymatrix{
 CH_0(S,1, \ZZ/N)\ar[r]\ar[d]^{{\Gamma}_*} & CH_0(S)[N] \ar@{->>}[r]\ar[d]^{{\Gamma}_*} &A_0(S)[N] \ar[d]^{{\Gamma}_*}\\
  \text{CH}_1(X, 1, \ZZ/N) \ar[r] & \text{CH}_1(X)[N] \ar@{->>}[r] & A_1(X)[N].}
\]
Since $A_0(S)[N]=0$, the image ${\Gamma}_*(CH_0(S, 1, \ZZ/N))$ is contained in the kernel of 
\[
\text{CH}_1(X, 1, \ZZ/N) \to A_1(X)[N].
\]
Now we can state the main technical result of this section, which says that when $X$ is separably rationally connected in codimension one, the kernel of the above map is precisely generated by the image of such correspondences.

\begin{thm}\label{thm:sheaf_surj}
Let $X$ be a smooth projective variety defined over an algebraically closed field $k$ of characteristic $p$.
 Fix an integer $N$ that is invertible in $k$.
For any class $[L]$ in the kernel of the map
\[
H_h^{-1}(\SP k, C_* (Z_1(X))\otimes \ZZ/N\ZZ) \cong \text{CH}_1(X, 1, \ZZ/N) \to A_1(X)[N],
\]
there is a connected projective algebraic set $Z$ and a nodal family of one-cycles over $Z$ such that the class $[L]$ is in the image of 
\[
H_h^{-1}(\SP k, C_*(Z_0(Z))\otimes \ZZ/N \ZZ) \to H_h^{-1}(\SP k, C_* (Z_1(X)) \otimes \ZZ/N \ZZ)
\]
induced by this family of cycles.

Assume furthermore that $X$ is separably rationally connected in codimension one.
We have the following exact sequence
\[
\oplus_{(S, \Gamma_S)} \text{CH}_0(S,1, \ZZ/N)\xrightarrow{\oplus {\Gamma_S}_*} \text{CH}_1(X, 1, \ZZ/N)  \to A_1(X)[N] \to 0,
\]
where the direct sum is taken over isomorphism classes of all smooth projective varieties $S$ and equi-dimensional families of one-cycles $\Gamma_S$.
\end{thm}

\begin{rem}
This result is a priori weaker than Theorem \ref{thm:surjectivity} over complex numbers.
We have a short exact sequence of Lawson homology
\[
0 \to L_1H_3(X)/N \to L_1H_3(X, \ZZ/N) \to L_1H_2(X)[N]\to 0.
\]
Since $L_1H_3(X, \ZZ/N) \cong \text{CH}_1(X, 1, \ZZ/N)$ (\cite[Theorem 9.1]{Suslin_Voevodsky_homology}) and  $L_1H_2(X)[N] \cong A_1(X)[N]$, Theorem \ref{thm:sheaf_surj} only says that for $X$ SRC in codimension one, classes in $L_1H_3(X)/N$ comes from a smooth projective variety.

If we know that $L_1H_3(X)$ is finitely generated, then we can find the lift. Conjecture \ref{conj:tian_lawson} predicts that this group is isomorphic to $H_3(X, \ZZ)$ for $X$ SRC in codimension one, thus finitely generated.
\end{rem}

The proof of Theorem \ref{thm:sheaf_surj} is analogous to that of Theorem \ref{thm:surjectivity}.
We first prove the analogue of Lemma \ref{lem:single}.
\begin{lem}\label{lem:single_alg}
Let $X$ be a smooth projective variety defined over an algebraically closed field $k$ of characteristic $p$. Fix an integer $N$ that is invertible in $k$.
For any class $[L]$ in
\[
H_h^{-1}(\SP k, C_* (Z_1(X))\otimes \ZZ/N\ZZ) \cong \text{CH}_1(X, 1, \ZZ/N \ZZ),
\]
there is a normal projective variety $S$, an equi-dimensional family of one-cycles $\gamma_S$ over $S$ and a morphism $f: \Delta^1 \to S$ such that $[L]$ is represented by $f^*\gamma_S$ over $\Delta^1$.
\end{lem}

\begin{proof}
We could translate the proof of Lemma \ref{lem:single} in the context of h-sheaves.
But here is an easier argument using Hilbert schemes.

The class $[L]$ is represented by a family of cycles $\sum_i m_i \Gamma_i, m_i \in \ZZ/N$ over $\Delta^1$, where $\Gamma_i \subset \Delta^1 \times X$ is an integral subvariety. Since $\Delta^1$ is one dimensional, the projection $\Gamma_i \to \Delta^1$ is flat. 
Thus we get a morphism $f_i$ from  $\Delta^1$ to the Hilbert scheme of $X$.
The universal subscheme over the Hilbert scheme gives a family of cycles over the Hilbert scheme.
Therefore, we may take $S$ to be the normalization of products of irreducible components of the Hilbert scheme and $\gamma_S$ to be the family of cycles (with appropriate multiplicity) coming from universal subschemes.
\end{proof}

Now we begin the proof of Theorem \ref{thm:sheaf_surj}.
\begin{proof}[Proof of Theorem \ref{thm:sheaf_surj}]
\textit{1. Setup and notations.}

Given a projective algebraic set $S$ and an equi-dimensional family of one-cycles  $\Gamma_S$ over $S$, there is an induced morphism of h-sheaves:
$Z_0(S) \to Z_1(X)$.
We denote by $I(S)$ (resp. $K(S)$) the image h-sheaf (resp. kernel h-sheaf) of this map.

By Lemma \ref{lem:single_alg}, there is a normal projective variety $S$, an equi-dimensional family of one-cycles $\gamma_S$ over $U$ and a morphism $f: \Delta^1 \to S$ such that $[L]$ is represented by $f^*\gamma_S$ over $\Delta^1$.

Denote by $\gamma_0, \gamma_1$ the restriction of the family of cycles $\gamma_S$ over $S$ to $0, 1 \in \Delta^1$. 
Then $\gamma_0-\gamma_1=N(\gamma_{0, 1})$ for some cycle $\gamma_{0, 1}$. 
The image of $[L]$ under the map
\[
\text{CH}_1(X, 1, \ZZ/N) \to A_1(X)[N]
\]
is the class  $[\gamma_{0, 1}]\in A_1(X)[N]$ .

If $[\gamma_{0, 1}]$ is zero in $A_1(X)[N]$, that is, if $\gamma_{0, 1}$ is algebraically equivalent to $0$, 
then by Proposition \ref{prop:filter_1}, 
there is a smooth projective curve $D$ with a family of cycles $\gamma_D$ and two points $d_1, d_2\in D$ such that $\gamma_D|_{d_1}$ is $0$ and $\gamma_D|_{d_2}$ is $\gamma_{0, 1}$. 

Consider the product $U=S \times D$.
We have a family of cycles $\gamma=\pi_S^* \gamma_S+N \pi_D^* \gamma_D$.

Denote by $g$ the composition of morphisms $$g:  \Delta^1 \xrightarrow{f} S \cong S \times \{d_1\}\to S \times D=U.$$ Then  $[L]$ is represented by $g^*\gamma$ over $\Delta^1$.
That is, the class $[L]$ is contained in the image of
$$H^{-1}_h(\SP k, C_*(I(U))\otimes \ZZ/N\ZZ) \to H^{-1}_h(\SP k, C_*(Z_1(X)\otimes\ZZ/N))\otimes \ZZ/N\ZZ).$$
Denote by $L_U \in H^{-1}_h(\SP k, C_*(I(U))\otimes \ZZ/N\ZZ)$ the class induced by $g^*\gamma$.

Similar to the proof of Theorem \ref{thm:surjectivity}, our goal is to find a family of cycles connecting the two points $x=g(0), y=g(1)$ that is a family of constant cycles modulo $N$.
But unlike the situation in Theorem \ref{thm:surjectivity}, we cannot directly apply Theorem \ref{thm:filtered_sm}.
This is because the cycles over $x, y$ are only assume to be equal modulo $N$, and thus not necessarily the same as cycles.
So we have to introduce an auxiliary point $z$.

There are three points in $U=S \times D$, 
\begin{equation}\label{xyz}
    x=g(0)=(f(0), d_1), y=g(1)=(f(1), d_1), z=(f(1), d_2)
\end{equation}
such that
\begin{itemize}
   \item $\gamma_x=\gamma_z=\gamma_0$.
    \item There is an irreducible curve $D_U(=\{f(1)\}\times D)$ containing $y, z$ such that for every point $d \in D_U$, the cycle $\gamma_d$ equals $\gamma_1$ in $Z_1(X) \otimes \ZZ/N (\SP k)$ (but $\gamma_d$ may of course vary as one-cycles with $\ZZ$-coefficient). 
\end{itemize}

\textit{2. Recap of Theorem \ref{thm:filtered_sm} and \ref{thm:filtered}.}

We apply Theorem \ref{thm:filtered_sm} (Theorem \ref{thm:filtered} if $X$ is SRC in codimension one) to the family of cycles $\gamma$ over $U$, with $u_0=x, u_1=z$.
So we are in the following situation.
\begin{sit}\label{situ}
There is
    \begin{itemize}
    \item a normal projective variety $V$;
    \item a connected projective algebraic set $W$ (a normal projective variety $W$ if $X$ is SRC in codimension one);
    \item a nodal family of one-cycles $W\leftarrow \Gamma_W \to X$ over $W$ (an equi-dimensional family of one-cycles $W\leftarrow \Gamma_W \to X$ over $W$ if $X$ is SRC in codimension one);
    \item a surjective projective morphism $p: V \to  U$, a morphism $F: V \to W$, and liftings $x_V, y_V, z_V \in V$ of the points $x, y, z \in U$ (defined in (\ref{xyz})),
\end{itemize} 
such that the followings hold.
\begin{enumerate}
    \item[(i)]  The image $F(x_V)$ and $F(z_V)$ are joined by a connected curve $T_W$ in $W$ parameterizing constant one-cycles, which is the image of the curve $T$ in the statement of Theorem \ref{thm:filtered_sm} (Theorem \ref{thm:filtered} if $X$ is SRC in codimension one).
    
    \item[(ii)]  For a general point $u$ in $U$, and for any two points  $a, b \in p^{-1}(u)$, $F(a), F(b)$ are joined by a connected curve $C_{a, b}$ in $W$ parameterizing constant one-cycles,  which is the image of the curve $T_{a, b}$ in the statement of Theorem \ref{thm:filtered_sm} (Theorem \ref{thm:filtered} if $X$ is SRC in codimension one).

    \item[(iii)]  In the algebraic set $p^{-1}(D_U)$, we choose a connected projective curve $D_V$, which surjects onto $D_U$, and which contains the point $z_V$.
    We also choose the lifting $y_V \in D_V$ of $y$. 
    As a result, $F(x_V)$ and $F(y_V)$ are connected by the connected curve $D_W=F(D_V)$.
    By construction, both of the curves $D_V$ and $D_W$ parameterize a families of cycles that becomes constant modulo $N$. 
\end{enumerate}
\end{sit}

    Denote by $K$ the kernel of the morphism between h-sheaves 
$p_*: Z_0(V) \to Z_0(U)$.
Since $p: V \to U$ is proper and surjective, the above morphism of h-sheaves is surjective. 
Then we have an equality of sub-h-sheaves of $Z_1(X)$:
$p_*: I(V) \cong I(U)$.

    We use $L_V$ to denote the image of the class $L_U$ under the isomorphism $$p^{-1}_*:H^{-1}_h(\SP k, C_*(I(U))/N) \xrightarrow{\cong} H^{-1}_h(\SP k, C_*(I(V))/N)$$ induced by the isomorphism $p^{-1}_*:I(U)\cong I(V)$.
Define $$L_W=F_*(L_V) \in H^{-1}_h(\SP k, C_*(I(W))/N).$$
Denote by $\mathfrak{o}_V$ (resp. $\mathfrak{o}_U$, $\mathfrak{o}_W$) the image of the class $L_V$ (resp. $L_U$, $L_W$) under the map $$H^{-1}_h(\SP k, C_*(I(V))/N) \to H^0_h(\SP k, C_*(K(V))/N)$$ (resp. $$H^{-1}_h(\SP k, C_*(I(W))/N) \to H^0_h(\SP k, C_*(K(U))/N),$$
$$H^{-1}_h(\SP k, C_*(I(W))/N) \to H^0_h(\SP k, C_*(K(W))/N)).$$

Our first goal is to prove that $L_W$ lies in the image of 
\[
H_h^{-1}(\SP k, C_*(Z_0(W))\otimes \ZZ/N \ZZ) \to H_h^{-1}(\SP k, C_* (I(W)) \otimes \ZZ/N \ZZ)
\]
induced by the family of cycles $[\Gamma_W]\to W$.
This will prove the part for all smooth projective varieties. 
The case for varieties that are SRC in codimension one requires one further step to be discussed at the end of this proof.

\textit{3. The commutative diagram and the obstruction class.}

Since we have an equality of sub-h-sheaves of $Z_1(X)$:
$p_*: I(V) \cong I(U)$,
there is a short exact sequence of h-sheaves:
\[
0 \to K \to K(V) \xrightarrow{p_*} K(U) \to 0.
\]

We have a commutative diagram of long exact sequences, part of which is the following (where all hypercohomology groups are for $\SP k$):
\[
\begin{CD}
H^{-1}_h( C_*(Z_0(W))/N) @>>> H^{-1}_h( C_*(I(W) /N)) @>>> H^{0}_h( C_*(K(W))/N)\\
@AA F_*A @AAF_*A @AA F_* A\\
H^{-1}_h( C_*(Z_0(V))/N) @>>> H^{-1}_h( C_*(I(V))/N) @>>> H^{0}_h( C_*(K(V))/N) \\
@VV p_* V @V\cong Vp_*V @VV p_* V\\
H^{-1}_h(C_*(Z_0(U))/N) @>>> H^{-1}_h(C_*(I(U))/N) @>>> H^{0}_h(C_*(K(U))/N)\\
\end{CD}
\]

Clearly $p_*(\mathfrak{o}_V)=\mathfrak{o}_U$ and $F_*(\mathfrak{o}_V)=\mathfrak{o}_W$ by the above commutative diagram.

The class $L_W$ comes from $H^{-1}_h(\SP k, C_*(Z_0(W))/N)$ if and only if $\mathfrak{o}_W=0$. We call $\mathfrak{o}_W$ the obstruction class.

\textit{4. Computation of the obstruction class.}

We first compute the class of $\mathfrak{o}_U$ 
in $H^0_h(\SP k, C_*(K(U))/N)$.

We have already seen that $L_U$ comes from $g^*\gamma$ over $\Delta^1$ induced by the morphism $g: \Delta^1 \to U$. That is, it lifts to an element in $C_1(Z_0(U))(\SP k)\otimes\ZZ/N \ZZ$. 
So its image in $H^0_h(\SP k, C_*(K(U))/N)$ is the class $[g(1)-g(0)]$, i.e. $[y-x]$.

The short exact sequence 
$0 \to K \to K(V) \to K(U) \to 0$
induces a long exact sequence of hypercohomology of Suslin complex, part of which is
\[
H^0_h(\SP k, C_*(K)/N) \to H^0_h(\SP k, C_*(K(V))/N) \xrightarrow{p_*} H^0_h(\SP k, C_*(K(U))/N)
\]
We have $$p_*([y_V-x_V])=[p(y_V)-p(x_V)]=[y-x]=\mathfrak{o}_U,$$ and thus  
\[
[y_V-x_V]\equiv \mathfrak{o}_V \pmod{H^0_h(\SP k, C_*(K)/N)}.
\]

By Lemma \ref{lem:move}, the group $H^{0}_h(\SP k, C_*(K)\otimes \ZZ/N\ZZ)$ is generated by classes of the form $[a-b]$, where $a, b$ are points in the fiber over any fixed general point in $U$.
So we have
\[
\mathfrak{o}_W\equiv F_*([y_V-x_V]) \pmod{\langle F_*([a-b])| a, b \in p^{-1}(u)\rangle}.
\]
Here we take the general point $u$ as in (\ref{situ}.(ii)).

\textit{5. Vanishing of the obstruction class $\mathfrak{o}_W$.}

 We first prove that for the any two points $a, b \in p^{-1}(u)$ as in (\ref{situ}.(ii)), the push-forward $F_*([a-b])$ is zero. 
 
By (\ref{situ}.(ii)), there is a family of constant one-cycles over $C_{a, b}\subset W$. 
So we have an element of $K(W)(C_{a, b})$, which corresponds to the family of $0$-cycles (in $W$) $\{[c-F(b)], c \in C_{a, b}\}$ over $C_{a, b}$.  By Lemma \ref{lem:curve_vanishing}, we have
\[
F_*([a-b])=[F(a)-F(b)]= [F(b)-F(b)]=0 \in H^0 _h(\SP k, C_*(K(W))/N).
\]
So $$\mathfrak{o}_W=F_*(\mathfrak{o}_V)=F_*([y_V-x_V])=[F(y_V)-F(x_V)].$$

 Next we prove that $$F_*([x_V-y_V])=0 \in H^0_h(\SP k, C_*(K(W))/N).$$
 
 This follows from a similar argument as above. By (\ref{situ}.(i)), there is a family of constant one-cycles over $T_W$ joining $F(x_V)$ and $F(z_V)$.  
 By (\ref{situ}.(iii)), there is a family of one-cycles over $D_W$ joining $F(y_V)$ and $F(z_V)$. Moreover, this family of one-cycles over $D_W$ are constant modulo $N$.
So we have an element of $K(W)\otimes \ZZ/N (D_W \cup T_W)$, which corresponds to the family of $0$-cycles (in $W$) $\{[w-F(x_V)], w \in T_W\cup D_W\}$ over $T_W\cup D_W \subset W$. 
By Lemma \ref{lem:curve_vanishing} (and the fact that $T_W\cup D_W \subset W$ is a connected closed subset connecting $F(y_V)$ and $F(x_V)$), we have
$F_*([y_V-x_V])=[F(y_V)-F(x_V)]\cong [F(x_V)-F(x_V)]=0$.

Combining the above two vanishing results, we have $$\mathfrak{o}_W=F_*(\mathfrak{o}_V)=0.$$
Thus the class $L_W$ comes from $H^{-1}_h(\SP k, C_*(Z_0(W))/N).$

For a smooth projective variety $X$, we take $Z$ in the statement of the theorem to be $W$ and use the nodal families of one-cycles $W \leftarrow \Gamma_W \to X$ on $W$. 
This proves the statement about a smooth projective variety.

\textit{6. The case of varieties that are SRC in codimension one.}

If $X$ is separably rationally connected in codimension one, the previous argument only proves that the variety $W$ is normal and projective. 
To find a smooth projective variety as required in the statement, we use Gabber's refinement of de Jong's alteration and proceed in the following way.

Let $N=\prod_i \ell_i^{m_i}$ be the factorization of $N$ into distinct prime factors. 
Denote by $[L_i]$ the class of $[L]$ in $\text{CH}_1(X, 1, \ZZ/\ell_i^{m_i})$ under the decomposition
$$\text{CH}_1(X, 1, \ZZ/N)\cong \oplus_i \text{CH}_1(X, 1, \ZZ/\ell_i^{m_i}).$$
For each prime factor $\ell_i$ of $N$, there is a smooth projective variety $Z_i$ and a projective alteration $p_i:Z_i \to W$ whose degree is relatively prime to $\ell_i$. Then 
\[
p_{i*}: CH_0(Z, 1, \ZZ/\ell_i^{m_i}\ZZ)  \to CH_0(W, 1, \ZZ/\ell_i^{m_i}\ZZ) 
\]
is surjective by Lemma \ref{lem:}.
Pulling back the families of cycles over $W$ gives a family of cycles $\Gamma_i=p_i^*\Gamma_W$ over $Z_i$. 
The composition
\[
CH_0(Z_i, 1, \ZZ/\ell_i^{m_i})  \xrightarrow{p_{i*}} CH_0(W, 1, \ZZ/\ell_i^{m_i}) \xrightarrow{\Gamma_*} \text{CH}_1(X, 1, \ZZ/\ell_i^{m_i})
\]
is just $\Gamma_{i*}$.
Therefore $[L_i]$ is in the image of $\Gamma_{i*}$.

We take $Z=\prod Z_i$ with the family of cycles $\Gamma_Z$ over $Z$ to be the sum of the pull-back of $\Gamma_i$'s.
By Corollary \ref{cor:ch_BM}, we have an isomorphism
\[
CH_0(Z, 1, \ZZ/N)\cong H_1(Z, \ZZ/N)\cong \oplus_i H_1(Z_i, \ZZ/N) \cong \oplus_i CH_0(Z_i, 1, \ZZ/N).
\]
Then $\Gamma_{Z*}(CH_0(Z, 1, \ZZ/\ell_i^{m_i}))\subset \text{CH}_1(X, 1, \ZZ/\ell_i^{m_i})$ contains the image of $\Gamma_{i*}$, in particular $[L_i]$.
Since $$\text{CH}_1(X, 1, \ZZ/N)\cong \oplus_i \text{CH}_1(X, 1, \ZZ/\ell_i^{m_i}), $$ we are done.
\end{proof}

\subsection{Comparing the filtrations}
In this section, we work over an algebraically closed field and omit the Tate twists (most of the time) for simplicity of notations.
Our goal is to prove the following theorem.

\begin{thm}\label{thm:image_sheaf}
Let $X$ be a $d$-dimensional smooth projective variety defined over an algebraically closed field, which is separably rationally connected in codimension one. Fix a prime number $\ell$ different from the characteristic of $k$. 
Then the following two subgroups of $H^{2d-3}_{\et}(X, \ZZ_\ell)$ agree:
\[
\tilde{N}^{d-2}H_{\et}^{2d-3}(X, \ZZ_\ell)=N^{d-2}H^{2d-3}_\et(X, \ZZ_\ell).
\]
\end{thm}
We first observe the following.
\begin{lem}\label{lem:surface_CH_1_1}
    For any reduced finite type two dimensional scheme $\Sigma$ defined over an algebraically closed field $k$, the higher cycle class map induces an isomorphism
\[
cl_{1, 1}: \text{CH}_1(\Sigma, 1, \ZZ/{\ell^n}) \cong H_3^{\text{BM}}(\Sigma, \ZZ/\ell^n).
\]
\end{lem}
\begin{proof}
    When this scheme $\Sigma$ is smooth, we argue in the following way. We have an isomorphism (by Corollary \ref{cor:chow_zar})
\[
\text{CH}_1(\Sigma, 1, \ZZ/\ell^n) \xrightarrow{\cong} H^1(\Sigma, \tau^{\leq 1}R\pi_* \ZZ/\ell^n),
\]
where $\pi$ is the morphism between \'etale and Zariski site of $\Sigma$, $\pi: \Sigma_{\et}\to \Sigma_{\text{Zar}}$, and $\tau$ is the good truncation functor.
Thus we have the desired isomorphism
(keeping in mind that $H_3^{\text{BM}}\cong H^1_\et$ for smooth surfaces):
\[
\text{CH}_1(\Sigma, 1, \ZZ/\ell^n) \xrightarrow{\cong} H^1(\Sigma, \tau^{\leq 1}R\pi_* \ZZ/\ell^n) \xrightarrow{\cong} H^1(\Sigma, R\pi_* \ZZ/\ell^n)\xrightarrow{\cong} H^1_\et(\Sigma, \ZZ/\ell^n).
\]

For the general case, we use the localization sequence for higher Chow groups and Borel-Moore homology and the following commutative diagram (where the vertical maps are higher cycle class maps and the commutativity follows from Proposition \ref{prop:func}):
\[
\xymatrix{
\text{CH}_1(Z, 1, \ZZ/\ell^n) \ar[r] \ar[d] &\text{CH}_1(\Sigma, 1, \ZZ/\ell^n) \ar[r] \ar[d] &\text{CH}_1(U, 1, \ZZ/\ell^n) \ar[r] \ar[d]&\text{CH}_1(Z, \ZZ/\ell^n) \ar[d]\\   
    H_3^{\text{BM}}(Z, \ZZ/\ell^n) \ar[r]  &H_3^{\text{BM}}(\Sigma, \ZZ/\ell^n) \ar[r] &H_3^{\text{BM}}(U, \ZZ/\ell^n) \ar[r] &H_2^{\text{BM}}(Z, \ZZ/\ell^n).}
\]
Here $U$ is the smooth locus of $\Sigma$ and $Z=\Sigma\backslash U$ the singular locus with the reduced scheme structure. Note that $Z$ is at most one dimensional. So the cycle class map $\text{CH}_1(Z, \ZZ/\ell^n) \to H_2^{\text{BM}}(Z, \ZZ/\ell^n)$ is an isomorphism and 
\[
\text{CH}_1(Z, 1, \ZZ/\ell^n)\xrightarrow{\cong} H_3^{\text{BM}}(Z, \ZZ/\ell^n)=0.
\]
Then the isomorphism follows from the five lemma.
\end{proof}

\begin{proof}[Proof of Theorem {\ref{thm:image_sheaf}}]
In the following, we use Borel-Moore homology (with coefficient $A$). 
For simplicity of notations, we only write them as $H_i(\cdot, A)$.
Denote by $\text{NH}_3(X, \ZZ/\ell^n)$ the group $N^{d-2}H_3(X, \ZZ/\ell^n)$.

Given an equi-dimensional family of one-cycles $\Gamma_S$ over a smooth projective variety $S$, we have a commutative diagram
\begin{equation}\label{eq:cd1}
\xymatrix{
 CH_0(S,1, \ZZ/\ell^n) \ar[r]^{ \Gamma_{S*}}\ar[d]^{ \Gamma_{S*}\circ cl^S_{1, 1}} &\text{CH}_1(X, 1, \ZZ/\ell^n) \ar@{->>}[r]\ar[d]^{cl_{1, 1}^X} &A_1(X)[\ell^n] \ar[d]^{cl_{1}}\\
   H_3(X, \ZZ/\ell^n)  \ar[r]^= & H_3(X, \ZZ/\ell^n) \ar[r] & H_2(X, \ZZ_\ell)[\ell^n]}.
\end{equation}

The left vertical map is the composition of cycle class map $cl^S_{1, 1}$ for $S$ and action of the correspondence $\Gamma_S$ on cohomology.
The middle and the right vertical maps are (higher) cycle class maps. 

The left square is commutative by Proposition \ref{class_cor}. The right square is commutative by Proposition \ref{prop:bockstein} and the fact that cycle class map is trivial on cycles that are algebraically equivalent to $0$.

The first observation is that the image of left and middle vertical maps actually lands in a smaller group.

\begin{claim}\label{claim:mid}
    The image of the higher cycle class map
\[
cl_{1, 1}^X: \text{CH}_1(X, 1, \ZZ/\ell^n) \to H_3(X, \ZZ/\ell^n)
\]
is $\text{NH}_3(X, \ZZ/\ell^n)\subset H_3(X, \ZZ/\ell^n)$.
\end{claim}
\begin{proof}
We first note that \[
cl_{1, 1}^X: \text{CH}_1(X, 1, \ZZ/\ell^n) \to H_3(X, \ZZ/\ell^n)
\]
factors through $\text{NH}_3(X, \ZZ/\ell^n)\subset H_3(X, \ZZ/\ell^n)$.
This is because any element in $\text{CH}_1(X, 1, \ZZ/\ell^n)$ is represented by a family of  one-cycles over $\Delta^1\cong \AAA^1$, and thus comes from $\text{CH}_1(\Sigma, 1, \ZZ/\ell^n)$ for some surface $\Sigma$ (which can be taken as the closure in $X$ of the locus swept out by the support of this family of one-cycles over $\Delta^1$). Therefore the higher cycle class map factors through $\text{NH}_3(X, \ZZ/\ell^n)$ by the functoriality of higher cycle class maps under proper push-forward (Proposition \ref{prop:func}).

Since the higher cycle class map is compatible with proper push-forward (Proposition \ref{prop:func}), we have a commutative diagram 
\[
\begin{CD}
    \oplus_\Sigma \text{CH}_1(\Sigma, 1, \ZZ/\ell^n) @>cl_{1,1}^\Sigma>\cong> \oplus_\Sigma H_3(\Sigma, \ZZ/\ell^n)\\
    @VVV @VVV\\
    \text{CH}_1(X, 1, \ZZ/\ell^n) @>cl_{1, 1}^X>> \text{NH}_3(X, \ZZ/\ell^n),
\end{CD}
\]
where the direct sum is over all $2$-dimensional finite type connected reduced schemes $\Sigma$ with a proper morphism to $X$.
In the commutative diagram,
the upper horizontal arrow is an isomorphism induced by the higher cycle class map (Lemma \ref{lem:surface_CH_1_1}),
and the right vertical arrow is surjective by the definition of $\text{NH}_3(X, \ZZ/\ell^n)$. Therefore the map
\[
\text{CH}_1(X, 1, \ZZ/\ell^n) \to \text{NH}_3(X,\ZZ/\ell^n)
\]
is surjective.
\end{proof}
\begin{claim}\label{claim:left1}
The map 
\[
\Gamma_{S*}\circ cl^S_{1, 1}: CH_0(S, 1, \ZZ/\ell^n) \to H_3(X, \ZZ/\ell^n)
\]
factors through $H_3(X, \ZZ_\ell)/\ell^n \subset H_3(X, \ZZ/\ell^n)$.
\end{claim}
\begin{proof}
 For any variety $Y$,
we have a short exact sequence
\[
0 \to H_1(Y, \ZZ_\ell)/\ell^n \to H_1(Y, \ZZ/\ell^n) \to H_0(Y, \ZZ_\ell)[\ell^n] \to 0.
\]
Therefore $H_1(Y, \ZZ_\ell)/\ell^n \xrightarrow{\cong} H_1(Y, \ZZ/\ell^n)$,
since $H_0(Y, \ZZ_\ell)\xrightarrow{\cong} \ZZ_\ell$ is torsion free.

The cycle class map induces an isomorphism (Corollary \ref{cor:ch_BM})$$cl_{0, 1}:CH_0(S,1, \ZZ/\ell^n) \xrightarrow{\cong} H_1(S, \ZZ/\ell^n)\xrightarrow{\cong} H_1(S, \ZZ_\ell)/\ell^n.$$

This fact, combined with the following commutative diagram:
\[
\begin{CD}
    H_1(S, \ZZ_\ell)@>\mod \ell^n >> H_1(S, \ZZ_\ell)/\ell^n @>\cong >> H_1(S, \ZZ/\ell^n)\\
    @VV\Gamma_{S*}V @VV\Gamma_{S*}V @VV\Gamma_{S*} V\\
    H_3(X, \ZZ_\ell)@>\mod \ell^n>> H_3(X, \ZZ_\ell)/\ell^n @>>> H_3(X, \ZZ/\ell^n),
\end{CD}
\]
shows that the left vertical arrow in the commutative diagram (\ref{eq:cd1}) is the composition
\[
    CH_0(S, 1, \ZZ/\ell^n) \xrightarrow{\cong} H_1(S, \ZZ/\ell^n)\cong H_1(S, \ZZ_\ell)/\ell^n \xrightarrow{\Gamma_{S*}} H_3(X, \ZZ_\ell)/\ell^n\to H_3(X, \ZZ/\ell^n).
\]
This proves the claim.
\end{proof}

Furthermore, the image of
\[
\Gamma_{S*}\circ cl^S_{1, 1}: CH_0(S, 1, \ZZ/\ell^n) \to H_3(X, \ZZ/\ell^n)
\]
is also contained in $\text{NH}_3(X, \ZZ/\ell^n)$ since the middle vertical map has image contained in it (Claim \ref{claim:mid}).
So we have an induced commutative diagram:
\begin{equation}\label{eq:cd}
\xymatrix{
\oplus_{(S, \Gamma_S)} CH_0(S,1, \ZZ/\ell^n)\ar[r]^{\oplus \Gamma_{S*}}\ar[d]^{\oplus \Gamma_{S*}\circ cl_{1, 1}^S} &\text{CH}_1(X, 1, \ZZ/\ell^n) \ar@{->>}[r]\ar[d]^{cl_{1, 1}^X} &A_1(X)[\ell^n] \ar[d]^{cl_{1}} \to 0\\
  0 \to H_3(X, \ZZ_\ell)/\ell^n \cap \text{NH}_3(X, \ZZ/\ell^n) \ar@{^{(}->}[r] & \text{NH}_3(X, \ZZ/\ell^n) \ar[r] & H_2(X, \ZZ_\ell)[\ell^n]},
\end{equation}
where the direct sum is taken over isomorphism classes of equi-dimensional families of one-cycles over smooth projective varieties. 

By Theorem \ref{thm:sheaf_surj}, the upper row is exact. The lower row is also exact, since it comes from
\[
0 \to H_3(X, \ZZ_\ell)/\ell^n \to H_3(X, \ZZ/\ell^n) \to H_2(X, \ZZ_\ell)[\ell^n] \to 0.
\]

We can be even more precise about the image of the left vertical map.
\begin{claim}\label{claim:left}
    The image of the left vertical map in the commutative diagram (\ref{eq:cd}) equals the image of $\tilde{N}H_3(X, \ZZ_\ell)/\ell^n \to H_3(X, \ZZ_\ell)/\ell^n \cap \text{NH}_3(X, \ZZ/\ell^n)$.
\end{claim}
\begin{proof}[Proof of Claim \ref{claim:left}]
   By the proof of Claim \ref{claim:left1}, the left vertical arrow in the commutative diagram (\ref{eq:cd}) is the direct sum of the  compositions
\[
CH_0(S, 1, \ZZ/\ell^n) \xrightarrow{\cong} H_1(S, \ZZ/\ell^n)\cong H_1(S, \ZZ_\ell)/\ell^n \xrightarrow{\Gamma_{S*}} H_3(X, \ZZ_\ell)/\ell^n.\]

Therefore, by Lemma \ref{cyl_equiv}, the left vertical map in the commutative diagram (\ref{eq:cd}) has the same image as $$\tilde{N}_{1, \cyl}H_3(X, \ZZ_\ell)/\ell^n \to H_3(X, \ZZ_\ell)/\ell^n \cap \text{NH}_3(X, \ZZ/\ell^n).$$
 By Lemma \ref{lem:inclusion}, $\tilde{N}_{1, \cyl}H_3(X, \ZZ_\ell)=\tilde{N}^{d-2}H_3(X, \ZZ_\ell)$. This proves the claim.
\end{proof}

Note that $\tilde{N}H_3(X, \ZZ_\ell)\subset \text{NH}_3(X, \ZZ_\ell)\subset H_3(X, \ZZ_\ell)$ are finitely generated $\ZZ_\ell$-modules. Thus we have isomorphisms 
\[
\tilde{N}H_3(X, \ZZ_\ell) \xrightarrow{\cong}\lim \limits_{\xleftarrow[n]{}} \tilde{N}H_3(X, \ZZ_\ell)/\ell^n,\quad
 \text{NH}_3(X, \ZZ_\ell) \xrightarrow{\cong}
\lim \limits_{\xleftarrow[n]{}} \text{NH}_3(X, \ZZ_\ell)/\ell^n.
\]

We have a factorization
\[
\tilde{N}H_3(X, \ZZ_\ell)/\ell^n \to {N}H_3(X, \ZZ_\ell)/\ell^n \to H_3(X, \ZZ_\ell)/\ell^n \cap \text{NH}_3(X, \ZZ/\ell^n) \subset H_3(X, \ZZ_\ell)/\ell^n.
\]
Taking inverse limit, we get maps
\begin{equation*}
    \tilde{N}H_3(X, \ZZ_\ell) \to \text{NH}_3(X, \ZZ_\ell) \to \lim \limits_{\xleftarrow[n]{}} (H_3(X, \ZZ_\ell)/\ell^n \cap \text{NH}_3(X, \ZZ/\ell^n)) \subset H_3(X, \ZZ_\ell).
\end{equation*}
Here the last inclusion is preserved due to left exactness of inverse limit.

Since the compositions
\[
\tilde{N}H_3(X, \ZZ_\ell) \to {N}H_3(X, \ZZ_\ell) \to \lim \limits_{\xleftarrow[n]{}} H_3(X, \ZZ_\ell)/\ell^n \cap \text{NH}_3(X, \ZZ/\ell^n) \to H_3(X, \ZZ_\ell)
\]
are the inclusions of subgroups, we have the following claim.
\begin{claim}\label{claim:inj}
    We have inclusions
    \[
\tilde{N}\text{H}_3(X, \ZZ_\ell) \subset \text{NH}_3(X, \ZZ_\ell) \subset \lim \limits_{\xleftarrow[n]{}} H_3(X, \ZZ_\ell)/\ell^n \cap \text{NH}_3(X, \ZZ/\ell^n) \subset H_3(X, \ZZ_\ell)
\]
\end{claim}

Now our next claim is:
\begin{claim}\label{claim:torfree}
    The cokernel of the inclusion
\begin{equation}\label{eq:torsionfree}
     \tilde{N}H_3(X, \ZZ_\ell) \to \lim \limits_{\xleftarrow[n]{}} H_3(X, \ZZ_\ell)/\ell^n \cap \text{NH}_3(X, \ZZ/\ell^n)
\end{equation}
is torsion free. 
So is the cokernel of the inclusion \[
\tilde{N}^{d-2}H_3(X, \ZZ_\ell)\to N^{d-2}H_3(X, \ZZ_\ell).\]
\end{claim}
\begin{proof}[Proof of Claim \ref{claim:torfree}]

By the snake lemma and Claims \ref{claim:mid}, \ref{claim:left}, we have an exact sequence:
\begin{equation}\label{eq:exact}
    \tilde{N}H_3(X, \ZZ_\ell)/\ell^n \to H_3(X, \ZZ_\ell)/\ell^n \cap \text{NH}_3(X, \ZZ/\ell^n) \to C_n \to 0,
\end{equation}
where 
$C_n$ is the cokernel of
\begin{equation}\label{eq:alpha}
    \alpha_n: \text{Ker}(\text{CH}_1(X, 1, \ZZ/\ell^n) \to \text{NH}_3(X, \ZZ/\ell^n)) \to \text{Ker}(A_1[\ell^n] \to H_2(X, \ZZ_\ell)[\ell^n]).
\end{equation}

Since $C_n$ is a quotient of $\text{Ker}(A_1[\ell^n] \to H_2(X, \ZZ_\ell)[\ell^n])$, the kernel of connecting maps $C_{n+m} \to C_n$ contains the $\ell^m$-torsions $C_{n+m}[\ell^m]$. 
Therefore the inverse limit $\lim \limits_{\xleftarrow[n]{}} C_n$ is torsion free. 

Every term involved in the exact sequence (\ref{eq:exact}) is a finite abelian group, and thus the exact sequence (\ref{eq:exact}) satisfies the Mittag-Leffler condition and the inverse limit of (\ref{eq:exact}) is exact. 
So the cokernel of 
\[
\lim \limits_{\xleftarrow[n]{}}\tilde{N}H_3(X, \ZZ_\ell)/\ell^n \to \lim \limits_{\xleftarrow[n]{}} H_3(X, \ZZ_\ell)/\ell^n \cap \text{NH}_3(X, \ZZ/\ell^n)
\]
is $\lim \limits_{\xleftarrow[n]{}} C_n$ and torsion free.

In view of the inclusions in Claim \ref{claim:inj}, the cokernel of $\tilde{N}H_3 \to \text{NH}_3$ is a sub-$\ZZ_\ell$-module of the cokernel of $\tilde{N}H_3(X, \ZZ_\ell) \to \lim \limits_{\xleftarrow[n]{}} H_3(X, \ZZ_\ell)/\ell^n \cap \text{NH}_3(X, \ZZ/\ell^n)$, and thus also torsion free. This proves the claim. 
\end{proof}

On the other hand, we know that the quotient $N^{d-2}H_3(X, \ZZ_\ell)/\tilde{N}^{d-2}H_3(X, \ZZ_\ell)$ is torsion since the strong coniveau filtration and coniveau filtration agree for $\QQ_\ell$-coefficient. 
So it has to be zero. 
That is, the two subgroups coincide:
\[
\tilde{N}^{d-2}H_3(X, \ZZ_\ell)=N^{d-2}H_3(X, \ZZ_\ell).\]
\end{proof}

For later use, we note the following. 
\begin{claim}\label{claim:equal}
    Assume that $X$ is SRC in codimension one. The inclusions
\begin{equation}\label{eq:equality}
\tilde{N}^{d-2}H_3(X, \ZZ_\ell) \to N^{d-2}H_3(X, \ZZ_\ell) \xrightarrow{\psi} \lim \limits_{\xleftarrow[n]{}} H_3(X, \ZZ_\ell)/\ell^n \cap \text{NH}_3(X, \ZZ/\ell^n)  
\end{equation}
are isomorphisms.
\end{claim}
\begin{proof}
    The first isomorphism is already shown above.
We have already shown that the second map is injective (Claim \ref{claim:inj}).
The cokernel of $\psi$ is torsion since  it is a subgroup of the cokernel of $N^{d-2}H_3(X, \ZZ_\ell) \to H_3(X, \ZZ_\ell)$, which is torsion by Corollary \ref{cor:2d-3torsion}. By the first isomorphism and the fact that the composition has torsion free cokernel (Claim \ref{claim:torfree}), the cokernel of $\psi$ is also torsion free, and thus zero.
\end{proof}

\begin{rem}
    If $X$ is a threefold that is SRC in codimension one, the proof of Theorem \ref{thm:image_sheaf} can be simplified. In fact, in this case, we know that $A_1(X)[\ell^n] \to H_2(X, \ZZ_\ell)[\ell^n]$ is injective by Bloch-Srinivas \cite[Theorem 1, (ii)]{BlochSrinivas}. So 
    \begin{equation*}
\tilde{N}^{1}H_3(X, \ZZ_\ell)/\ell^n \to  H_3(X, \ZZ_\ell)/\ell^n \cap \text{NH}_3(X, \ZZ/\ell^n)  
\end{equation*}
 is surjective by the snake lemma and Claim \ref{claim:mid}. The surjectivity is preserved when taking inverse limit since the groups are finite and satisfy the Mittag-Leffler condition. Then
    \begin{equation*}
\tilde{N}^{1}H_3(X, \ZZ_\ell) \to N^{1}H_3(X, \ZZ_\ell) \to \lim \limits_{\xleftarrow[n]{}} H_3(X, \ZZ_\ell)/\ell^n \cap N^1H_3(X, \ZZ/\ell^n)  
\end{equation*}
are isomorphisms (by claim \ref{claim:inj}).
\end{rem}

\begin{lem}\label{3fold}
    Let $X$ be a smooth projective $3$-fold defined over an algebraically closed field that is separably rationally connected in codimension one. Fix a prime number $\ell$ different from the characteristic of $k$.  Then 
    \[
    N^1H^3_\et(X, \ZZ_\ell)=H^3_\et(X, \ZZ_\ell)
    \]
    and the higher cycle class map induces a surjective map
    \[
    \lim \limits_{\xleftarrow[n]{}}\text{CH}_1({X}, 1, \ZZ/\ell^n \ZZ) \to H^{3}_\text{\'et}({X}, \ZZ_\ell(2)).
    \]
\end{lem}
\begin{proof}
By Corollary \ref{cor:torsionfree}, the quotient $H^3_\et(X, \ZZ_\ell)/N^1H^3_\et(X, \ZZ_\ell)$ is torsion free. 
On the other hand, by Corollary \ref{cor:2d-3torsion}, the quotient $H^3_\et(X, \ZZ_\ell)/N^1H^3_\et(X, \ZZ_\ell)$ is torsion. So it must be zero.

    Claim \ref{claim:mid} shows that the higher cycle class map $$cl_{1, 1}^X: \text{CH}_1(X, 1, \ZZ/\ell^n) \to N^1H^3_\et(X, \ZZ/\ell^n)$$ is surjective.
 By Corollary \ref{cor:chow_zar}, we have 
\[
\text{CH}_1(X, 1, \ZZ/\ell^n) \cong H^3(X, \tau^{\leq 2}R\pi_*{\mu_{\ell^n}^{\otimes 2}}),
\]
where $\pi: X_\et \to X_\text{Zar}$ is the morphism from the \'etale site to the Zariski site and $\tau$ is the good truncation functor.
It follows that the higher cycle class map 
\[
\text{CH}_1(X, 1, \ZZ/\ell^n) \xrightarrow{\cong} H^3(X, \tau^{\leq 2}R\pi_*{\mu_{\ell^n}^{\otimes 2}}) \to  H^3(X, R\pi_*{\mu_{\ell^n}^{\otimes 2}})\xrightarrow{\cong} H^3_\et(X, \mu_{\ell^n}^{\otimes 2})
\]
is an injection,
since $H^2_\et(X, R^3\pi_*{\mu_{\ell^n}^{\otimes 2}}[-3])=0$.
Therefore the higher cycle class map induces an isomorphism $$cl_{1, 1}^X: \text{CH}_1(X, 1, \ZZ/\ell^n) \xrightarrow{\cong} N^1H^3_\et(X, \ZZ/\ell^n).$$

We have maps between finite abelian groups
\[
N^1H^3(X, \ZZ_\ell)/\ell^n \to N^1H^3_\et(X, \ZZ/\ell^n) \to H^3_\et(X, \ZZ/\ell^n).
\]
Taking inverse limits, we get
\[
N^1H^3(X, \ZZ_\ell) \to \lim \limits_{ \xleftarrow[n]{}} N^1H^3_\et(X, \ZZ/\ell^n) \cong \lim \limits_{\xleftarrow[n]{}} \text{CH}_1(X, 1, \ZZ/\ell^n)  \to H^3_\et(X, \ZZ_\ell).
\]

Since $N^1H^3_\et(X, \ZZ_\ell)=H^3_\et(X, \ZZ_\ell)$, we have the desired surjection
\[
 \lim \limits_{\xleftarrow[n]{}} \text{CH}_1(X, 1, \ZZ/\ell^n)  \to H^3_\et(X, \ZZ_\ell(2)).
\]
\end{proof}
 
Combined with Theorem \ref{thm:image_sheaf}, this implies the following.
 \begin{thm}\label{thm:surj_dim3_chow}
 Let $X$ be a smooth projective $3$-fold over an algebraically closed field. 
 Fix a prime number $\ell$ different from the characteristic of $k$. 
 Assume that $X$ is separably rationally connected in codimension one. Then the following subgroups on $H^{3}_\et(X, \ZZ_\ell)$ introduced in Definition \ref{def:filtration} equal the whole cohomology group:
\[
 \tilde{N}_{1, \cyl}H^{3}_\et(X, \ZZ_\ell) = \tilde{N}^{1}H^{3}_\et(X, \ZZ_\ell) =N^1H^{3}_\et(X, \ZZ_\ell)=H^{3}_\et(X, \ZZ_\ell).
\]
 \end{thm}

\subsection{Arithmetic part of the cycle class map}
Over finite fields, we have the following corollary of Theorem \ref{thm:image_sheaf}.

\begin{cor}\label{cor:IHC}
Let $X$ be a smooth projective  variety of dimension $d$ defined over a finite field $\mathbb{F}_q$, that is separably rationally connected in codimension one.
Fix a prime number $\ell$ different from the characteristic of $k$. 
Assume one of the followings
\begin{enumerate}
    \item $N^{d-2}H_\et^{2d-3}(\overline{X}, \ZZ_\ell(d-1))=H_\et^{2d-3}(\overline{X}, \ZZ_\ell(d-1))$.
    \item The higher cycle class map induces a surjective map
\[
cl: \lim \limits_{\xleftarrow[n]{}}\text{CH}_1(\overline{X}, 1, \ZZ/\ell^n) \to H^{2d-3}_{\text{\'et}}(\overline{X}, \ZZ_\ell(d-1)).
\]

\end{enumerate}
Then every class in $H^1(\FF_q, H^3(\overline{X}, \ZZ_\ell(d-1)))$ is the class of an algebraic cycle defined over $\FF_q$.
In particular, this holds if $X$ has dimension $3$.
\end{cor}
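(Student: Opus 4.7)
The plan is to combine Theorem \ref{thm:image_sheaf} with the classical surjectivity of the Abel--Jacobi map for $0$-cycles on a curve over $\FF_q$. The core point is that the smooth projective curve produced by Theorem \ref{thm:image_sheaf} automatically transports a geometric surjectivity statement into the arithmetic one we want, via $H^1(\FF_q,-)$.

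First, I would apply Theorem \ref{thm:image_sheaf} to the base change $\bar X$, producing a smooth projective curve $C_0$ together with a family of $1$-cycles $\Gamma_0\subset C_0\times\bar X$ whose induced pushforward
\[
\Gamma_{0*}\colon H^1(\bar C_0,\ZZ_\ell(1))\twoheadrightarrow N^1 H^{2d-3}(\bar X,\ZZ_\ell(d-1))
\]
is surjective; here Borel--Moore $H_1$ on $\bar C_0$ is identified with $H^1(\bar C_0,\ZZ_\ell(1))$ by Poincar\'e duality, and the Tate twist is determined by the codimension $d-1$ of $\Gamma_0$ in $C_0\times\bar X$. The data $(C_0,\Gamma_0)$ is defined over some finite extension $\FF_{q^n}/\FF_q$; replacing it by the disjoint union of its $\operatorname{Gal}(\FF_{q^n}/\FF_q)$-conjugates, together with the sum of the conjugate correspondences, produces a pair $(C,\Gamma)$ defined over $\FF_q$ for which the surjection above persists (the original $C_0$ reappears as a connected component of $\bar C$).

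Next I would pass to Galois cohomology. For the smooth projective curve $C/\FF_q$, the Kummer sequence on $\operatorname{Pic}^0(C)$ together with Lang's theorem $H^1(\FF_q,\operatorname{Pic}^0(\bar C))=0$ gives the classical isomorphism
\[
CH_0(C)_{\text{alg}}\otimes\ZZ_\ell\xrightarrow{\ \cong\ } H^1(\FF_q,H^1(\bar C,\ZZ_\ell(1))).
\]
Since the correspondence $\Gamma$ is defined over $\FF_q$, it induces $\Gamma_*\colon CH_0(C)_{\text{alg}}\to CH_1(X)_{\text{alg}}$ compatibly with its cohomological avatar on $H^1(\FF_q,-)$. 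Because $\operatorname{cd}(\FF_q)=1$, the functor $H^1(\FF_q,-)$ preserves surjections of finitely generated continuous $\ZZ_\ell$-Galois modules; applied to the surjection from the first step, this makes the composite
\[
CH_0(C)_{\text{alg}}\otimes\ZZ_\ell \xrightarrow{\Gamma_*} CH_1(X)_{\text{alg}}\otimes\ZZ_\ell \longrightarrow H^1(\FF_q, N^1 H^{2d-3}(\bar X,\ZZ_\ell(d-1)))
\]
surjective, which forces the second arrow to be surjective. This is the unconditional surjection in the first part of the corollary.

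Finally, under hypothesis (1) we have $N^1H^{2d-3}(\bar X,\ZZ_\ell(d-1))=H^{2d-3}(\bar X,\ZZ_\ell(d-1))$ by assumption, and the conclusion is immediate. Under hypothesis (2), the image of the higher Chow cycle class map is supported on proper closed subschemes, hence contained in $N^1H^{2d-3}$; its assumed surjectivity onto $H^{2d-3}$ then forces the same equality $N^1H^{2d-3}=H^{2d-3}$, reducing to case (1). When $\dim X=3$, Theorem \ref{thm:surj_dim3_chow} provides hypothesis (1) automatically. The main technical nuisance will be the careful bookkeeping of Tate twists together with the descent from $\FF_{q^n}$ to $\FF_q$; the Galois-cohomology step itself is formal once $\operatorname{cd}(\FF_q)=1$ and the geometric surjection from Theorem \ref{thm:image_sheaf} are in hand.
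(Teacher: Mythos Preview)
Your argument under hypothesis (1) is correct and follows essentially the paper's route: produce a curve via Theorem \ref{thm:image_sheaf}, descend it to $\FF_q$, then transport surjectivity through $H^1(\FF_q,-)$. The paper packages the last two steps as a citation of Scavia--Suzuki \cite[Proposition 7.6]{ScaviaSuzuki} (stated here as Theorem \ref{thm:SS3}), whereas you unpack that argument explicitly with Lang's theorem and $\operatorname{cd}(\FF_q)=1$; these are the same proof.

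There is, however, a real gap in your reduction of (2) to (1). You assert that the image of the inverse-limit cycle class map $\lim_n CH_1(\bar X,1,\ZZ/\ell^n) \to H^{2d-3}(\bar X,\ZZ_\ell)$ lies in $N^1 H^{2d-3}(\bar X,\ZZ_\ell)$ because higher-Chow cycle classes are ``supported on proper closed subschemes.'' This is true level by level: for each $n$ the mod-$\ell^n$ cycle class lands in $N^1 H^{2d-3}(\bar X,\ZZ/\ell^n)$. But an element of the inverse limit is a compatible system whose supporting closed sets may vary with $n$, so you only get a class in $\lim_n N^1 H^{2d-3}(\bar X,\ZZ/\ell^n)$, and it is not automatic that this equals $N^1 H^{2d-3}(\bar X,\ZZ_\ell)$. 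The paper handles exactly this via the equality (\ref{eq:equality}) established inside the proof of Theorem \ref{thm:image_sheaf} --- which itself relies on the SRC-in-codimension-$1$ hypothesis through the strong-coniveau $=$ coniveau statement --- together with a torsion/torsion-free argument to conclude $N^1 H^{2d-3}(\bar X,\ZZ_\ell) = H^{2d-3}(\bar X,\ZZ_\ell)$. You should either invoke that equality or reproduce the inverse-limit analysis; the one-line justification you give does not suffice.
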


\begin{proof}[Proof of Corollary \ref{cor:IHC}]
We first show that  the surjectivity of the map 
\[
cl: \lim \limits_{\xleftarrow[n]{}}\text{CH}_1(\overline{X}, 1, \ZZ/\ell^n) \to H^{2d-3}_{\text{\'et}}(\overline{X}, \ZZ_\ell(d-1))
\]
implies that $N^{d-2}H_\et^{2d-3}(\overline{X}, \ZZ_\ell(d-1))=H_\et^{2d-3}(\overline{X}, \ZZ_\ell(d-1))$.
In fact, we have a factorization of the above map
\begin{align*}
    & \lim \limits_{\xleftarrow[n]{}}\text{CH}_1(\overline{X}, 1, \ZZ/\ell^n) \to \lim \limits_{\xleftarrow[n]{}} N^{d-2}H_\et^{2d-3}(\overline{X}, \ZZ/\ell^n(d-1))\\
    \to & \lim \limits_{\xleftarrow[n]{}}H^{2d-3}_\et(\overline{X}, \ZZ/\ell^n(d-1))= H^{2d-3}_{\text{\'et}}(\overline{X}, \ZZ_\ell(d-1)).
\end{align*}

Therefore the map
$\lim \limits_{\xleftarrow[n]{}} N^{d-2}H_\et^{2d-3}(\overline{X}, \ZZ/\ell^n(d-1)) \to \lim \limits_{\xleftarrow[n]{}}H^{2d-3}_\et(\overline{X}, \ZZ/\ell^n(d-1))$
is surjective. On the other hand, since $N^{d-2}H_\et^{2d-3}(X, \ZZ/\ell^n)$ is a subgroup of $H_\et^{2d-3}(X, \ZZ/\ell^n)$, the inverse limit is injective, hence an isomorphism.
We have an exact sequence
\begin{align*}
 & 0 \to \lim \limits_{\xleftarrow[n]{}} H_\et^{2d-3}(\overline{X}, \ZZ_\ell(d-1))/\ell^n \cap N^{d-2}H_\et^{2d-3}(\overline{X}, \ZZ/\ell^n(d-1))\\
\xrightarrow{\phi} &\lim \limits_{\xleftarrow[n]{}} N^{d-2}H_\et^{2d-3}(\overline{X}, \ZZ/\ell^n(d-1)) \to \lim \limits_{\xleftarrow[n]{}}H^{2d-2}_\et(\overline{X}, \ZZ_\ell(d-1))[\ell^n],
\end{align*}
where the first inverse limit is $N^{d-2}H^{2d-3}_\et(\overline{X}, \ZZ_\ell(d-1))$ by Claim \ref{claim:equal}, and the last inverse limit is torsion free.
Since the quotient
\[
H_\et^{2d-3}(\overline{X}, \ZZ_\ell(d-1))/N^{d-2}H_\et^{2d-3}(\overline{X}, \ZZ_\ell(d-1))
\]
is torsion by Corollary \ref{cor:2d-3torsion}, we know that $\phi$ is an isomorphism and thus 
\[
N^{d-2}H^{2d-3}_\et(\overline{X}, \ZZ_\ell) \to H^{2d-3}_\et(\overline{X}, \ZZ_\ell)
\]is an isomorphism.

By Theorem \ref{thm:image_sheaf}, the two subgroups $N^{d-2}H^{2d-3}_\et(\overline{X}, \ZZ_\ell), \Tilde{N}^{d-2}H^{2d-3}_\et(\overline{X}, \ZZ_\ell)$ agree. So they equal the whole cohomology group $H^{2d-3}_\et(\overline{X}, \ZZ_\ell)$ by the previous paragraph. 
Then the corollary follows from \cite[Proposition 5.6]{ScaviaSuzuki2023coniveau}, the statement of which is recalled in Proposition \ref{thm:SS3}.

The $3$-fold case follows from Theorem \ref{thm:surj_dim3_chow}.
\end{proof}


\section{The integral Tate conjecture, the local-global principle, and the unramified cohomology}\label{sec:ITateHasse}

For general discussions on the relation between the integral Tate conjecture, unrammified cohomology, and the local-global principle for zero-cycles over global function fields, one can consult \cite{CTKahnCycleCodim2,CT_Scavia_ITC}.
\subsection{The integral Tate conjecture and the local-global principle for zero-cycles}
Let $X$ be a smooth projective geometrically irreducible variety of dimension $d$ defined over a finite field $\FF$. We have the cycle class maps:
\begin{equation}\label{int_Tate_2}
\text{CH}^r(X) \otimes \ZZ_\ell \to H^{2r}(X, \ZZ_\ell(r)),
\end{equation}
\[
\text{CH}^r(X) \otimes \ZZ_\ell \to H^{2r}_{\text{{\'e}t}}(X, \ZZ_\ell(r)) \to H^{2r}_{\text{{\'e}t}}(\overline{X}, \ZZ_\ell(r))^G.
\]

We also have the corresponding cycle class maps after tensoring with $\QQ_\ell$.
The Tate conjecture predicts that the cycle class map on codimension $r$ cycles
\[
\text{CH}^r(X) \otimes \mathbb{Q}_\ell \to H^{2r}_{\text{\'et}}(X, \mathbb{Q}_\ell(r))
\]
is surjective for any smooth projective variety $X$ defined over a finite field.
While the cycle class map (\ref{int_Tate_2}) is in general not surjective for $\ZZ_\ell$ coefficients (\cite[Th\'eor\`eme 2.1]{CTSzamuelyIntegralTate}), one is still interested in knowing in which cases surjectivity still hold.
This is usually called the integral Tate conjecture (even though it is not true in general).

Of particular interest to this paper is the following.
\begin{conj}\label{q:SRCTate}
For a smooth projective variety $X$ defined over $\FF$,  the cycle class map for one-cycles (i.e. (\ref{int_Tate_2}) for $r=d-1$) is surjective.
\end{conj}

See Section \ref{sec:questions} for the explanation why we expect this integral Tate conjecture for one-cycles to be true for varieties that are SRC in codimension one.

The connection between the integral Tate conjecture for one-cycles and Conjectures \ref{conj:CT1}, \ref{conj:CT2} is the following.
\begin{thm}[{\cite[Proposition 3.2]{CTLocalGlobalChow}}, {\cite[Corollary (8-6)]{SaitoMotivicCoh_ArithmeticScheme}}]\label{thm:TateImpiesCT}
Let $\FF$ be a finite field, $C$ a smooth projective geometrically connected curve over $\FF$, and $K$ the function field of $C$. 
Let $\mcX$ be a smooth projective geometrically connected variety of dimension $d+1$ defined over $\FF$, equipped with a morphism $p: \mathcal{X} \to C$, whose generic fiber $X$ is smooth and geometrically irreducible. Let $l$ be a prime different from the characteristic.
\begin{enumerate}
\item \label{itc1} If the cycle class map
\[
\text{CH}^d(\mcX) \otimes \ZZ_\ell \to H^{2d}_{\text{{\'e}t}}(\mcX, \ZZ_\ell(d))
\]
is surjective, Conjectures \ref{conj:CT1} and \ref{conj:CT2} are true for the generic fiber $X$.
\item \label{itc2} If the cycle class map
\[
\text{CH}^d(\mcX) \otimes \ZZ_\ell \to H^{2d}_{\text{{\'e}t}}(\mcX, \ZZ_\ell(d)) \to H^{2d}_{\text{{\'e}t}}(\overline{\mcX}, \ZZ_\ell(d))^G
\]
is surjective, or if 
\[
\text{CH}^d(\mcX) \otimes \ZZ_\ell \to H^{2d}_{\text{{\'e}t}}(\mcX, \ZZ_\ell(d))
\]
is surjective modulo torsion,
Conjecture \ref{conj:CT2} is true for the generic fiber $X$. 
\end{enumerate}
\end{thm}

\begin{rem}
The cited references only contain a proof of the first statement. But the second statement follows from the same proof. The general result of Saito produces a cohomology class $\xi \in H^{2d}(\mcX, \ZZ_\ell(d))$ whose restriction to each local place coincide with the class of $z_\mu$(\cite[Proposition 3.1]{CTLocalGlobalChow}). The various types of integral Tate conjecture are simply used to find a global cycle whose class agrees with $\xi$ in various cohomology groups. See also Page 19 of the slide of Colliot-Th\'el\`ene's lecture at Cambridge in 2008 (available at \url{https://www.imo.universite-paris-saclay.fr/~jean-louis.colliot-thelene/expocambridge240809.pdf}).
\end{rem}

We mention another closely related question.
\begin{ques}\label{q:CH0}
Let $X$ be a smooth projective variety defined over a henselian local field with finite residue field. Is the cycle class map
\[
CH_0(X)\hat{\otimes} \ZZ_\ell \to H^{2d}(X, \ZZ_\ell(d))
\]
injective? Here $\ell$ is a prime number invertible in the residue field.
\end{ques}
One should expect a negative answer in general. But the author is not aware of an explicit example in the literature.
\begin{rem}\label{rem:WE}
Question \ref{q:CH0} has a positive answer if $X$ is a geometrically rational surface, and has a regular model with SNC central fiber (\cite[Theorem 3.1]{WittenbergEsnault_0_cycle} in general and \cite[Theorem A]{Saito_torsion_codim_2} for the case of $p$-adic fields without requiring having SNC central fiber).
In this case, the proof in \cite{WittenbergEsnault_0_cycle} also shows that the closed fiber also satisfies a version of the integral Tate conjecture.
For $X$ defined over a Laurent field $\FF_q \Semr{t}$, a regular model with SNC central fiber always exists since we have resolution of singularities for $3$-folds.
\end{rem}
If Question \ref{q:CH0} has a positive answer for the generic fiber $X$, then Conjectures \ref{conj:CT1} and \ref{conj:E} are equivalent for $X$.

\subsection{Degree 3 unramified cohomology}
By Lefschetz hyperplane theorem, one reduces the integral Tate conjecture for one-cycles for all varieties to the same conjecture for threefolds. In this case, the conjecture is about codimension $2$ cycles, and thus closely related to the degree $3$ unramified cohomology by the work of \cite{CTKahnCycleCodim2}. 

 We summarize the relevant points. Denote by $\pi: X_\et \to X_\text{Zar}$ the morphism from the \'etale site to the Zariski site of $X$. For a prime number $\ell$ relatively prime to the characteristic of the base field, and an integer $n$, define $\mathcal{H}^i(\mu_{\ell^n}^{\otimes j})=R^i\pi_* \mu_{\ell^n}^{\otimes j}$.

 We define the degree $i$ unramified cohomology as $H^i_{\text{nr}}(X, \mu_{\ell^n}^{\otimes j})=H^0(X, \mathcal{H}^i(\mu_{\ell^n}^{\otimes j}))$ and $H^i_{\text{nr}}(X, \QQ_{\ell}/\ZZ_\ell(j))$ as the inductive limit $\lim \limits_{\xrightarrow[n]{}} H^0(X, \mathcal{H}^i(\mu_{\ell^n}^{\otimes j}))$.

 \begin{thm}[{\cite[Th\'eor\`eme 2.2, Proposition 3.2]{CTKahnCycleCodim2}}]\label{thm:ctk}
 Let $X$ be a smooth projective geometrically irreducible variety defined over a field $k$.
 \begin{enumerate}
     \item Assume that $k$ has finite $\ell$-cohomological dimension. Let $M$ be the cokernel of the cycle class map $\text{CH}^2(X) \otimes \ZZ_\ell \to H^4(X, \ZZ_\ell(2))$. Then the torsion part of $M$ is isomorphic to the quotient of $H^3_{\text{nr}}(X, \QQ_\ell/\ZZ_\ell(2))$ by its maximal divisible subgroup.
     \item Assume that $k$ is finite or separably closed. If the Chow group of zero-cycles with rational coefficients is universally supported in a surface, then the group $H^3_{\text{nr}}(X, \QQ_\ell/\ZZ_\ell(2))$ is finite.
 \end{enumerate}
     
 \end{thm}
  
 One should also note that the Tate conjecture for codimension $2$ cycles would imply that the cokernel $M$ as above has to be torsion.
 
 In \cite{CTKahnCycleCodim2}, the authors deduced a short exact sequence relating various Chow groups of codimension $2$ cycles and degree $3$ unramified cohomology. 
 Their short exact sequence for varieties over finite fields reads the following (\cite[Th\'eor\`eme 6.8]{CTKahnCycleCodim2}):
 \begin{align}\label{eq:CTK}
    \nonumber 0 \to &\text{Ker}(\text{CH}^2(X) \to \text{CH}^2
(\overline{X})) \to H^1(\FF, \oplus_{\ell} H^3_\text{\'et}(X, \ZZ_\ell(2))_\text{tors})\\
 \to &\text{Ker}(
H^3_\text{nr}(X, \QQ/\ZZ(2)) \to H^3_\text{nr}(\overline{X}, \QQ/\ZZ(2)))\\
\nonumber
\to &\text{Coker}(\text{CH}^2
(X) \to \text{CH}^2(\overline{X})^G)\to 0
 \end{align}
There is a similar exact sequence for $\ell$-primary torsions (\cite{CTKahnCycleCodim2} proved this result by first proving the exact sequence for each prime $\ell$ separately).

\subsection{Proof of main theorems}\label{proof}
\begin{proof}[Proof of Theorem \ref{thm:integralTate}]
Recall that $G=\text{Gal}(\bar{\FF}_q/\FF_q)$ is the absolute Galois group.
By Theorem \ref{thm:G_inv_cycle} (\cite[Theorem 7]{Kollar_Tian}), we have an isomorphism
\[
A_1(\mcX) \cong A_1(\overline{\mcX})^G.
\]
Under the assumptions (A), (B) of Theorem \ref{thm:integralTate}, we know that there is an isomorphism of $\text{Gal}(\bar{\FF}_q/\FF_q)$-modules:
\[
A_1(\overline{\mcX}) \otimes \ZZ_\ell \cong H^{2d}(\overline{\mcX}, \ZZ_\ell(d)).
\]

Note that $G$ is generated by the Frobenius $F$ and $A_1(\overline{\mcX})^G$ is the kernel of $F^*-\text{id}$. Since $\ZZ_\ell$ is a flat $\ZZ$-module, we have $A_1(\overline{\mcX})^G\otimes \ZZ_\ell$ is the kernel of $$(F^*-\text{id})\otimes \text{id}_{\ZZ_\ell}: A_1(\overline{\mcX})\otimes \ZZ_\ell \to A_1(\overline{\mcX})\otimes \ZZ_\ell.$$ That is,
\[
A_1(\overline{\mcX})^G \otimes \ZZ_\ell \cong (A_1(\overline{\mcX})\otimes \ZZ_\ell)^G \cong H^{2d}(\overline{\mcX}, \ZZ_\ell(d))^G,
\]
where $\ZZ_\ell$ is equipped with the trivial action of $G$.

This proves part (\ref{1.10.1}) of the theorem. Part (\ref{1.10.2}) of the theorem is just Corollary \ref{cor:IHC}.
\end{proof}
For an algebraic subvariety $Z$ of dimension $d$ in a variety $X$, we have its fundamental class $\eta_Z \in H_{2d}^{\text{BM}}(Z, \ZZ_\ell(d))\cong \ZZ_\ell$. 
See the last line of \cite[Example 2.1]{BlochOgus} for the case of $\ZZ/\ell^m$-coefficient. The $\ZZ_\ell$-coefficient case follows by taking inverse limit.
The $\ell$-adic cycle class map is defined by $Z \mapsto i_*(\eta_
Z)$, where $i_*: H_{2d}^{\text{BM}}(Z, \ZZ_\ell(d))\to H_{2d}^{\text{BM}}(X, \ZZ_\ell(d))$ is the push-forward of Borel-Moore homology via $i:Z \to X$.

We will use the following.
\begin{lem}[{\cite[First paragraph of Section 4.3]{WittenbergEsnault_0_cycle}}]\label{lem:surj_fiber}
    Let $\mathfrak{X} \to \SP R$ be a regular scheme of relative dimension $2$ over a henselian DVR with separably closed residue field, such that the central fiber is a simple normal crossing divisor. 
    Denote by $A$ the reduced closed subscheme of the central fiber. Assume that the geometric generic fiber is rational.
The $\ell$-adic cycle class map
\[
\text{CH}_1(A) {\otimes}\ZZ_\ell \to H_{2}^{\text{BM}}(A, \ZZ_\ell(1)).
\]
is surjective.
\end{lem}
\begin{proof}
    This follows from computations in \cite{WittenbergEsnault_0_cycle}, discussed in the first paragraph of Section 4.3 of the paper. 
    
More precisely, in the course of the proving Theorems 3.1 and 4.1 of \cite{WittenbergEsnault_0_cycle}, Esnault and Wittenberg have shown that, under the hypothesis of the lemma, certain complex ((4.6) op. cit.) is injective, which, by an argument similar to Section 2.2 op. cit., turns out to be equivalent to the surjectivity of the cycle class map ((4.7) op. cit.):
\[
\text{CH}_1(A) \hat{\otimes}\ZZ_\ell \to H^{4}_A(\mathfrak{X}, \ZZ_\ell(2)).
\]
 Denote by $\{A_i, i\in I\}$ the set of reduced irreducible components of $A$. Each $A_i$ is a smooth projective surface.
Thus $A_1(A_i)$ is finitely generated (Theorem of the base \cite[XIII Th\'eor\`eme 5.1]{SGA6}).
Therefore the natural map $$\text{CH}_1(A_i)\otimes \ZZ_\ell \to A_1(A_i) \otimes \ZZ_\ell \cong A_1(A_i)\hat{\otimes}\ZZ_\ell\cong \text{CH}_1(A_i)\hat{\otimes} \ZZ_\ell$$ is surjective, where the last equality holds because $\text{CH}_1(A_i)_\text{alg}$ is divisible.
We also have a surjection $\oplus \text{CH}_1(A_i)/\ell^m \to \text{CH}_1(A)/\ell^m$. Since $\oplus \text{CH}_1(A_i)/\ell^m$ is a finite group, the Mittag-Leffler condition is satisfied and we have a surjection $\oplus \text{CH}_1(A_i)\hat{\otimes} \ZZ_\ell \to \text{CH}_1(A) \hat{\otimes} \ZZ_\ell$ after taking the inverse limit.
Therefore the $\ell$-adic cycle class map
\[
\oplus \text{CH}_1(A_i)\otimes \ZZ_\ell \to \text{CH}_1(A) \hat{\otimes}\ZZ_\ell \to H^{4}_A(\mathfrak{X}, \ZZ_\ell(2))
\]
is surjective. 
Since the above map factors through $\text{CH}_1(A) {\otimes}\ZZ_\ell$, the $\ell$-adic cycle class map
\[
\text{CH}_1(A) {\otimes}\ZZ_\ell \to H^{4}_A(\mathfrak{X}, \ZZ_\ell(2)).
\]
is surjective.

We have a canonical isomorphism
\[
H_2^{\text{BM}}(A, \ZZ/\ell^m(1)) \cong H^{4}_A(\mathfrak{X}, \ZZ/\ell^m(2)).
\]
To prove this isomorphism, consider the following commutative diagram:
\[
\begin{CD}
    A @>\iota>> \mathfrak{X}\\
    @VVgV @VVfV\\
    \SP \kappa @>h>> \SP R
\end{CD}
\]
For $\Lambda=\ZZ/\ell^m$, 
 we have $g^!\Lambda=\iota^!\Lambda(d+1)[2d+2]$ (where every functor is derived) in the derived category of \'etale sheaves of $\Lambda$-modules \cite[(2.6)]{WittenbergEsnault_0_cycle}, where $d=2$ is the relative dimension of $\mathfrak{X} \to \SP R$. Since $\iota^!$ is the same as the derived functor of $\Gamma_{A}$, the global section with support functor (\cite[XVIII 3.1.8]{SGA4_3}), we have this isomorphism.
 Taking inverse limit, we have an isomorphism
 \[
H_2^{\text{BM}}(A, \ZZ_\ell(1)) \cong H^{4}_{A}(\mathfrak{X}, \ZZ_\ell(2)).
\]
Moreover, this isomorphism is compatible with the cycle class map from $\text{CH}_1(A)\otimes \ZZ_\ell$ by the commutativity of the following diagram, where $Z\subset A$ is a closed subvariety of dimension one.
\[
\begin{CD}
    H_2^{\text{BM}}(Z, \ZZ_\ell(1)) @>>> H^{4}_{Z}(\mathfrak{X}, \ZZ_\ell(2))\\
    @VVV @VVV\\
    H_2^{\text{BM}}(A, \ZZ_\ell(1)) @>>> H^{4}_{A}(\mathfrak{X}, \ZZ_\ell(2)).
\end{CD}
\]
\end{proof}
\begin{lem}\label{lem:H2d}
    Let $\mcX^0 \to B^0$ be a smooth projective family of relative dimension $d$ of separably rationally connected varieties over a smooth \emph{affine} curve $B^0$ defined over an algebraically closed field.
    Then the \'etale cohomology group $H^{2d}(\mcX^0, \ZZ_\ell(d))$ is generated by the class of a section.
\end{lem}
\begin{proof}
    We compute $H^{2d}({\mcX}^0, \ZZ_\ell(d))$ via the Leray spectral sequence.
Since the base $B^0$ is an affine curve, the $E_2$-page has only two columns: $H^0(B^0, \mathcal{H}^q), H^1(B^0, \mathcal{H}^{q})$, where $\mathcal{H}^q$ is the $\ell$-adic local system coming from degree $q$ $\ell$-adic cohomology of the fibers ($q=0, \ldots, 2d$), and thus the spectral sequence degenerates at $E_2$.
Moreover, since the smooth fibers have no $H^{2d-1}$ since smooth projective SRC varieties are simply connected, we have an isomorphism
\[
H^{2d}({\mcX}^0, \ZZ_\ell(d))\xrightarrow{\cong}H^0(B^0,\mathcal{H}^{2d})\cong \ZZ_\ell.
\]
We have a section of ${\mcX}^0 \to {B}^0$ by the main theorem in \cite{deJongStarr_GHS}.
The cycle class of a section generates the group $H^0(B^0,\mathcal{H}^{2d})\cong \ZZ_\ell$.
\end{proof}
\begin{proof}[Proof of Theorem \ref{thm:integralTateSurface}]
We first show that hypotheses (A)-(D) in Theorem \ref{thm:integralTate} are satisfied for the smooth projective model $\mcX$.

For hypothesis (A), note that the surjectivity of the cycle class map for one-cycles is a birational invariant (Lemma \ref{lem:birational}). 
So using resolution of singularities for $3$-folds \cite{Resolution1, Resolution2, AbhyankarResolution}, we may assume that the singular fibers are SNC divisors.

We take $\cup \mcX_i$ to be a non-empty finite union of fibers that contains all the singular fibers of the fibration $\mcX \to B$ and $\mcX^0$ its complement.
We have a commutative diagram of localization exact sequences, where the horizontal maps are localization sequences for Chow groups \cite[Proposition 1.8]{Fulton98} (resp. \'etale Borel-Moore homology) applied to the triple $(\overline{\mcX}, \cup \overline{\mcX}_{i, \text{red}}, \overline{\mcX}^0)$ (where $\overline{\mcX}_{i, \text{red}}$ means the reduced closed subscheme of $\overline{\mcX}_{i}$ ), and the vertical maps are cycle class maps for Chow groups:
\begin{equation}\label{eq:loc}
    \begin{CD}
 \oplus \text{CH}_1(\overline{\mcX}_{i, \text{red}})\otimes \ZZ_\ell @>>> \text{CH}_1(\overline{\mcX})\otimes \ZZ_\ell @>>>\text{CH}_1 (\overline{\mcX}^0)\otimes \ZZ_\ell @>>> 0 \\
@VVV @VVV @VVV\\
\oplus H_{2}^{\text{BM}}(\overline{\mcX}_{i, \text{red}}, \ZZ_\ell(1)) @>>>H_2^{\text{BM}}(\overline{\mcX}, \ZZ_\ell(1))@>>>H_2^{\text{BM}}(\overline{\mcX}^0, \ZZ_\ell(1))\\
\end{CD}
\end{equation}
  The commutativity of the left square follows from functoriality of Borel-Moore homology under proper push-forward. The commutativity of the right square follows from the fact that fundamental class commutes with restriction to an open subset (See \cite[1.3.4, and Example 2.1]{BlochOgus} for the case of $\ZZ/\ell^m$-coefficient. The $\ZZ_\ell$-coefficient case follows by taking inverse limit).

We apply Lemma \ref{lem:surj_fiber} in our situation. 
For each fiber $\overline{\mcX}_i$ over a point $b_i \in B(\bar{\FF}_q)$, we base change $\overline{\mcX} \to \bar{B}$ to ${\mathfrak{X}}_i \to \SP \OO_{\bar{B}, b_i}^{\text{h}}$, where $\OO_{B, b_i}^{\text{h}}$ is the henselization of the local ring $\OO_{\bar{B}, b_i}$ at $b_i$. Still denote by $\overline{\mcX}_i$ the closed fiber of $\mathfrak{X}_i$.
The $\ell$-adic cycle class map
\[
\text{CH}_1(\overline{\mcX}_{i, \text{red}})\otimes \ZZ_\ell \to H_2^{\text{BM}}(\overline{\mcX}_{i, \text{red}}, \ZZ_\ell(1))
\]
is surjective by Lemma \ref{lem:surj_fiber}.

Since we assume that the union $\cup \mcX_i$ is non-empty, the open complement $\overline{\mcX}^0$ is over an affine curve $B^0\subset B$.
By Lemma \ref{lem:H2d},
the cycle class of a section generates the group $ H_2^{\text{BM}}(\overline{\mcX}^0, \ZZ_\ell(1))\cong H^0(B^0,\mathcal{H}^4)\cong \ZZ_\ell$, and
thus the right vertical cycle class map in the commutative diagram (\ref{eq:loc}) is surjective.
So the middle map in the commutative diagram (\ref{eq:loc}) is also surjective.
This proves that hypothesis (A) in Theorem \ref{thm:integralTate} is satisfied.

The result of Bloch-Srinivas \cite[Theorem 1, (iii)]{BlochSrinivas} shows that the Griffiths group of one-cycles on $\overline{\mcX}$ is $p$-torsion (since for $\overline{\mcX}$ the rational Chow group of zero-cycles is universally supported on a curve by Corollary \ref{rem:support}). So hypothesis (B) in Theorem \ref{thm:integralTate} is satisfied.

Hypotheses (C) and (D) in Theorem \ref{thm:integralTate} are also satisfied in this case by Lemma \ref{3fold}.

The Hochschild-Serre spectral sequence gives a short exact sequence:
\[
0 \to H^1(\FF_q, H^{2d-2r-1}_\et(\overline{X}, \ZZ_\ell(d-r))) \to H^{2d-2r}_\et(X, \ZZ_\ell(d-r)) \to H^{2d-2r}_\et(\overline{X}, \ZZ_\ell(d-r))^G \to 0,
\]
where $G$ is the absolute Galois group of $\overline{\FF}_q/\FF_q$.
Thus Theorem \ref{thm:integralTate} implies that $\text{CH}_1(\mcX) \otimes \ZZ_\ell \to H^4(\mcX, \ZZ_l(2))$ is surjective.
\end{proof}

\begin{proof}[Proof of Theorem \ref{thm:zerocycle}]
This follows from Theorem \ref{thm:TateImpiesCT}, \ref{thm:integralTateSurface}, and Remark \ref{rem:WE}.
\end{proof}

\begin{proof}[Proof of Theorem \ref{thm:delpezzo4}]
If there is a rational point everywhere locally and the local points satisfy the Brauer-Manin constraint, there is a zero-cycle of degree $1$ over the function field by Theorem \ref{thm:zerocycle}.
A del Pezzo surface of degree $4$ is a complete intersection of $2$ quadrics in $\PP^4$. 
Such a complete intersection has a rational point if and only if it has an odd degree $0$-cycle \cite{Brumer_CI22}.
Hence we have the result.
\end{proof}

\begin{proof}[Proof of Corollary \ref{cor:CTK}]
The vanishing of $H^3_{\text{nr}}(X, \QQ_\ell/\ZZ_\ell(2))$ follows from Theorem \ref{thm:integralTateSurface} and \cite[Th\'eor\`eme 2.2, Proposition 3.2]{CTKahnCycleCodim2} (recalled in Theorem \ref{thm:ctk}), and the fact that $CH_0(\overline{X})\otimes \QQ$ is universally supported in a curve (Corollary \ref{rem:support}).

We have already shown that the cycle class map is surjective in Theorem \ref{thm:integralTateSurface}. But it is also injective on torsions \cite[Th\'eor\`eme 4]{CT_Sansuc_Soule_torsion}. In particular, it is injective on the algebraically trivial part, i.e. $\text{CH}_1(X)_{\text{alg}} \otimes \ZZ_\ell \to H^1(\FF, H^3(\overline{X}, \ZZ_\ell(2))$ is injective. We also have an isomorphism $A_1(X) \otimes \ZZ_\ell \cong A_1(\overline{X})^G\otimes \ZZ_\ell \cong H^4(\overline{X}, \ZZ_\ell(2))^G$. This proves that the cycle class map is an isomorphism.

It follows from the exact sequence (\ref{eq:CTK}) that we have the description of the Chow groups of $X$ and $\overline{X}$.
\end{proof}

\section{Examples}\label{sec:example}
We give some examples where one can check the conditions in Theorem \ref{thm:integralTate}.
We first make a simple observation about the coniveau filtration.
\begin{lem}\label{lem:sm_fil}
    Let $\mcX^0 \to B^0$ be a smooth projective family of relative dimension $d$ over an affine curve $B^0$ defined over an algebraically closed field.
    Fix a prime number $\ell$ different from the characteristic.
    Assume that a general fiber $\mcX_b$ is separably rationally connected, and that the cycle class map induces a surjective map $$A_1(\mcX_b)\otimes \ZZ_\ell \to H^{\text{BM}}_2(\mcX_b, \ZZ_\ell(d-1)).$$
    Then $N^{d-1}H^{2d-1}(\mcX^0, \ZZ_\ell(d))=H^{2d-1}(\mcX^0, \ZZ_\ell(d))$.
\end{lem}
\begin{proof}
    We use the Leray spectral sequence for $\mcX^0 \to B^0$.
    Since $B^0$ is affine, the $E_2$-page has only two columns: $H^0(B^0, \mathcal{H}^q)$ and $H^1(B^0, \mathcal{H}^q)$, where $\mathcal{H}^{q}$ is the $\ell$-adic local system corresponding to the degree $q$ $\ell$-adic cohomology of the fiber, and thus the spectral sequence degenerates.
    Since a general fiber is separably rationally connected, it is simply connected. 
    In particular, a general fiber has no $H^{2d-1}$.
    So we have the isomorphism 
\[
H^{2d-1}(\mcX^0, \ZZ_\ell(d))\cong H^1(B^0, \mathcal{H}^{2d-2}).
\]

By assumption, for a general fiber $\mcX_b$, $H^{2d-2}(\mcX_b, \ZZ_\ell(d-1))$ is generated by the classes of finitely many algebraic curves.
Since $\mcX_b$ is separably rationally connected, the usual argument involving adding very free curves and smoothing shows that $H^{2d-2}(\mcX_b, \ZZ_\ell(d-1))$ is generated by classes of finitely many curves that deforms to general fibers (see for example \cite[Theorem 46]{Kollar_Tian}). 
For each of these curves, choose a deformation $T \leftarrow \Sigma_i \to \mcX$ which surjects onto $B^0$.
Then 
\[
H^{2d-1}(\mcX^0, \ZZ_\ell(d))\cong H^1(B^0, \mathcal{H}^{2d-2})
\]
is supported on the union of $\cup_i \Sigma_i$.
\end{proof}

\begin{cor}\label{cor:sm_fil}
    Let $\mcX \to B$ be a flat projective family over a smooth projective curve defined over an algebraically closed field.
    Assume that the generic fiber is smooth projective and separably rationally connected. 
    Fix a prime number $\ell$ different from the characteristic.
    If the following two conditions hold:
    \begin{enumerate}
        \item for a general fiber $\mcX_b$, the cycle class map induces a surjective map $$A_1(\mcX_b)\otimes \ZZ_\ell \to H^{\text{BM}}_2(\mcX_b, \ZZ_\ell(d-1));$$
        \item for every singular fiber $\mcX_b$, $H_3^{\text{BM}}(\mcX_b, \ZZ_\ell(1))$ is supported on a two dimensional Zariski closed subset of $\mcX_b$,
    \end{enumerate}
    we have
    \[
    N^{d-2}H^{2d-1}(\mcX, \ZZ_\ell(d))=H^{2d-1}(\mcX, \ZZ_\ell(d)).
    \]
\end{cor}
\begin{proof}
 Apply the localization exact sequence to the finite union of singular fibers and its complement.
\end{proof}
To determine $A_1(\overline{\mcX})$ seems to be a difficult problem in general.
We provide a simple example where one can use some global geometry to do so.
\begin{lem}\label{lem:multisection}
    Let $\mcX \subset \PP_B(E)$ be a family of complete intersections of degree $d_1, \ldots, d_c$ in $\PP^n$ over a smooth projective curve $B$ over an algebraically closed field.
    Assume that $\sum d_i^2\leq n$.
    \begin{enumerate}
        \item The cycle class of any two sections $\sigma_1, \sigma_2 \subset \mcX$ of $\mcX \to B$ are rationally equivalent modulo lines in general fibers.
        \item More generally, if $C\subset \mcX$ is any irreducible curve such that $C \to \mcX \to B$ has degree $d\geq 1$, then the cycle class of $C$ is rationally equivalent to $d[\sigma_0]$ for a section $\sigma_0$ modulo lines in general fibers.
        \item Any curve in a fiber is rationally equivalent to a sum of lines in general fibers.
        \item The group of one-cycles modulo algebraic equivalence, $A_1(\mcX)$, is a free abelian group of rank $2$, generated by the class of a section and a line in a general fiber.
    \end{enumerate}

\end{lem}
Note that Tsen-Lang theorem implies that there are sections of $\mcX \to B$ under the assumptions on degrees.
\begin{proof}
Let $H$ be the scheme parameterizing the space of chains of two lines passing through $\sigma_1$ and $\sigma_2$, that is, curves of the form $(L_1\cup L_2, x_0, x_1, x_2)$, where $L_1$ and $L_2$ are two lines in a fiber $\mcX_b, b \in B$ intersecting at a single point $x_0$, $x_1 \in L_1, x_2 \in L_2$ and $x_1=\sigma_1(b), x_2=\sigma_2(b)$. 
The morphism $H \to B$ is proper, because a degeneration of a chain of two lines is again a chain of two lines.

    The space of chains of two lines passing through two points in a complete intersection of degree $d_1, \ldots, d_c$ in $\PP^n$ is defined by equations of degree $1,1,2,2, \ldots, d_1-1, d_1-1, d_1, 1, 1, \ldots, d_2-1, d_2-1, d_2, \ldots, 1, 1, \ldots, d_c-1, d_c-1, d_c$ in $\PP^n$ (see, for example, Lemma 3.4 in \cite{Pan_conic}).
    
The classical Tsen-Lang theorem applied to the generic fiber of $H \to B$ implies that there is a rational section $B \dashrightarrow H$. Since $B$ is a curve, we can extend this to a section.
This is equivalent to the following.
The family $\mcX \to B$ contains two ruled surfaces over $B$: $B \leftarrow \Sigma_1 \subset \mathcal{X}, B \leftarrow \Sigma_2 \subset \mathcal{X}$, such that $\sigma_1 \subset \Sigma_1$ (resp. $\sigma_2 \subset \Sigma_2$) is a section.
Moreover, $\Sigma_1\cap \Sigma_2=\sigma_0$ is a common section of both ruled surfaces. And $\Sigma_1\cup_{\sigma_0} \Sigma_2 \subset \mcX$ is a family of chains of two lines connecting $\sigma_1, \sigma_2$.

Any  two sections in a $\PP^1$-bundle over a curve are rationally equivalent modulo general fibers.
Thus both $\sigma_1, \sigma_2$  are rationally equivalent to $\sigma_0$ up to lines in general fibers.
Pushing forward this rational equivalence to $\mcX$ proves the first statement.

Given $C\subset \mcX$ such that $C \to B$ is dominant of degree $d$, we make a base change $\mcX_C=\mcX \times_B C \xrightarrow{(p, q)} \mcX \times C$. 
The family $\mcX_C \to C$ admits two sections: $\sigma_C$, induced by the inclusion $C \subset \mcX$, and $\sigma_B$, induced by $C \to B \to \sigma_0 \subset \mcX$.
By previous argument, $[\sigma_C]$ and $[\sigma_B]$ are rationally equivalent modulo lines in general fibers.
We push forward this rational equivalence relation to $\mcX$ via $p$. Since $p_*[\sigma_C]=[C], p_*[\sigma_B]=d[\sigma_0]$, and the push-forward of lines in general fibers are still lines in general fibers, we prove the second statement.

Any curve in a fiber is rationally equivalent to the difference of two multi-sections of the same degree.
So it is also rationally equivalent to a sum of cycle class of lines in general fibers.

Finally, since the Fano scheme of lines of a complete intersection of degree $(d_1, \ldots, d_c)$ in $\PP^n$ is connected as long as $2n-c-\sum d_i>2$ \cite[Th\'eor\`eme 2.1]{DebarreManivelFanoScheme}, all lines in the fibers are algebraically equivalent.
This proves the fourth statement.
\end{proof}

\begin{prop}\label{ci}
Let $\mcX \subset \PP_B(E)$ be a family of complete intersections of degree $d_1, \ldots, d_c, d_i \geq 2,$ in $\PP^n$ over a smooth projective curve $B$ over $\FF_q$. 
Assume that the generic fiber $X$ is smooth separably rationally connected of dimension $d=n-c$, that $\sum d_i^2\leq n$, and that the total space $\mcX$ is smooth.
If one of the following conditions holds
\begin{enumerate}
    
    \item $c=1$, i.e. $X$ is a smooth hypersurface;
    \item for every geometric fiber $X_b$, $H^\text{BM}_2(X_b, \ZZ_\ell)$ is generated by the class of a line and $H^\text{BM}_3(X_b, \ZZ_\ell)=0$.
\end{enumerate}
Then the cycle class map 
\[
\text{CH}^d(\mcX) \otimes \ZZ_\ell \to H^{2d}_{\text{{\'e}t}}(\mcX, \ZZ_\ell(d))
\]
is surjective and Conjectures \ref{conj:CT1} and \ref{conj:CT2} hold for the generic fiber $X$ over $\FF_q(B)$.
\end{prop}
\begin{rem}
    Under the assumptions, there is a rational point for the generic fiber $X$ since $\FF_q(B)$ is $C_2$ and normic forms exist. Thus the Hasse principle part (Conjecture \ref{conj:CT2}) is automatic.
\end{rem}

\begin{rem}
 In general it is still an open question if a smooth Fano complete intersection is separably rationally connected. However, one can show that if the characteristic $p$ is larger than all the $d_i$, then every smooth Fano complete intersection of degree $d_1, \ldots, d_c$ is separably rationally connected \cite{WASTZ2018}.
\end{rem}

\begin{rem}\label{rem:bm_condition}
The numerical conditions imply that $X$ is either a quadric of dimension $3$ or has dimension at least $4$.
Thus the condition \ref{ci}.(2) on Borel-Moore homology holds for all smooth fibers by Lefschetz hyperplane theorem.
It also holds for singular fibers with ordinary double points.
\end{rem}

\begin{proof}[Proof of Propostion \ref{ci}]By Theorem \ref{thm:integralTate} and  by Theorem \ref{thm:TateImpiesCT},
it suffices to show:
\begin{enumerate}
    \item $A_1(\overline{\mcX})\otimes \ZZ_\ell \cong H^{2d}(\overline{\mcX}, \ZZ_\ell(d)).$
    \item $N^{d-1}H^{2d-1}(\overline{\mcX}, \ZZ_\ell(d))  
=H^{2d-1}(\overline{\mcX}, \ZZ_\ell(d)).$
\end{enumerate}
There is an open subset $B^0 \subset \bar{B}$ such that $\mcX^0=\overline{\mcX}|_{B^0}$ is a complete intersection of ample divisors in $\PP(E)|_{B^0}$.
In case \ref{ci}.(1), we take $B^0$ in such a way that $B\backslash B^0$ is a single point with smooth fiber.
In case \ref{ci}.(2), we take $B^0$ so that ${\mcX}^0 \to B^0$ is smooth.

Write $\mcX_0$ to be the disjoint union of all the fibers over $B\backslash B^0$.
Denote by $\PP^0$ the open subscheme $\overline{\PP_B(E)}|_{B^0}$, and $U^0$ the open subscheme $\PP^0\backslash \mcX^0$.

We use the localization sequence of \'etale cohomology
\begin{align*}
   \ldots \to &H^{2d-1}_{\mcX_0}(\overline{\mcX}, \ZZ_\ell(d-1)) \to H^{2d-1}(\overline{\mcX}, \ZZ_\ell(d)) \to H^{2d-1}({\mcX}^0, \ZZ_\ell(d)) \\
    \to&H^{2d}_{\mcX_0}(\overline{\mcX}, \ZZ_\ell(d-1)) \to H^{2d}(\overline{\mcX}, \ZZ_\ell(d)) \to H^{2d}({\mcX}^0, \ZZ_\ell(d))\\ \to &H^{2d+1}(\mcX_0, \ZZ_\ell(d-1))\ldots,
\end{align*}
We have the isomorphisms \[
H^{2d-3}_{\mcX_0}(\overline{\mcX}, \ZZ_\ell(d-1))\cong H^\text{BM}_3(\mcX_0, \ZZ_\ell(1))=0,\]
\[
H^{2d-2}_{\mcX_0}(\overline{\mcX}, \ZZ_\ell(d-1))\cong \oplus_{b \in B\backslash B^0} H^{\text{BM}}_2(\mcX_b, \ZZ_\ell(1)) \cong \oplus_{b \in B\backslash B^0} \ZZ_\ell\cdot [L_b],\]
where $[L_b]$ is the class of a line in $\mcX_b$.
In the case where $X$ is a hypersurface,
the above isomorphisms follow from Remark \ref{rem:bm_condition} and standard facts about cohomology of smooth hypersurfaces.
In the case of complete intersections, the above isomorphisms follow from our assumption.

Moreover the map
\[
H^{2d-2}_{\mcX_0}(\overline{\mcX}, \ZZ_\ell(d-1)) \to H^{2d}(\overline{\mcX}, \ZZ_\ell(d))
\]
has one-dimensional image spanned by the class of a line in the total space by Lemma \ref{lem:multisection}, part (3) and (4).

In case \ref{ci}.(1), we use the following localization long exact sequence:
\begin{align*}
&H^{2d}(U^0, \ZZ_\ell(d+1))\to H^{2d-1}(\overline{\mcX}|_{B^0}, \ZZ_\ell(d)) \xrightarrow{\cong} H^{2d+1}(\PP^0, \ZZ_\ell(d+1)) \\
    &H^{2d+1}(U^0, \ZZ_\ell(d+1))\to H^{2d}(\overline{\mcX}|_{B^0}, \ZZ_\ell(d)) \xrightarrow{\cong} H^{2d+2}(\PP^0, \ZZ_\ell(d+1)) \\
    \to &H^{2d+2}(U^0, \ZZ_\ell(d+1))\ldots .
\end{align*}
The open complement $U^0$ is an affine variety of dimension $n+1$. Under our assumptions on $d$ and $n$, we have $2d=2n-2>n+1$.
So by Artin vanishing for \'etale cohomology, 
\[
H^{2d}(U^0, \ZZ_\ell(d+1))= H^{2d+1}(U^0, \ZZ_\ell(d+1))= H^{2d+2}(U^0, \ZZ_\ell(d+1))=0.
\]
Therefore we have isomorphisms
\[
H^{2d-1}(\overline{\mcX}|_{B^0}, \ZZ_\ell(d)) \xrightarrow{\cong} H^{2d+1}(\PP^0, \ZZ_\ell(d+1)), \]
\[ H^{2d}(\overline{\mcX}|_{B^0}, \ZZ_\ell(d)) \xrightarrow{\cong} H^{2d+2}(\PP^0, \ZZ_\ell(d+1)).
\]
We know that $H^{2d}(\PP^0, \ZZ_\ell(d+1))$ is spanned by the class of a section by Lemma \ref{lem:H2d}, and that $H^{2d-1}(\PP^0, \ZZ_\ell(d+1))$ is supported on a surface swept out by a family of lines by Lemma \ref{lem:sm_fil}.
Since we also have a section of $\overline{\mcX} \to B$ and we may choose the family of lines to lie in $\overline{\mcX}$, the same is true for $\mcX^0=\overline{\mcX}|_{B^0}$ in this case.

In case \ref{ci}.(2), the group $H^{2d}(\overline{\mcX}|_{B^0}, \ZZ_\ell(d))$ is spanned by the class of a section by Lemma \ref{lem:H2d}.
So $H^{2d}(\overline{\mcX}, \ZZ_\ell(d))$ is spanned by the class of a section and a line.
By Corollary \ref{cor:sm_fil}, our assumption on singular fibers also implies that $H^{2d-1}(\overline{\mcX}, \ZZ_\ell(d))$ is supported on a surface.

Over the algebraic closure $\overline{\FF}_q$,  by Lemma \ref{lem:multisection}, the group $A_1(\overline{\mcX})$ is a free abelian group of rank $2$, generated by the class of a section and a line in a general fiber.
As a result, \[
A_1(\overline{\mcX})\otimes \ZZ_\ell \cong H^{2d}_\et(\overline{\mcX}, \ZZ_\ell(d)).\]
This concludes the proof.
\end{proof}

\begin{prop}
Let $X$ be a smooth projective variety defined over $\FF_q(B)$.
Assume that 
\begin{enumerate}
    \item $X$ admits a regular projective model $\mcX \to B$ over $\FF_q$.
    \item The base change of $X$ to $\bar{\FF}_q(B)$ becomes stably rational (over $\bar{\FF}_q(B)$).
\end{enumerate}
Then the cycle class map
\[
\text{CH}^d(\mcX) \otimes \ZZ_\ell \to H^{2d}_{\text{{\'e}t}}(\mcX, \ZZ_\ell(d))
\]
is surjective (where $d$ is the dimension of $X$) and Conjecture \ref{conj:CT2} holds for $X$.
\end{prop}

\begin{proof}
The validity of the hypotheses in Theorem \ref{thm:integralTate} only depends on the stable birational class of the generic fiber over $\overline{\FF}_q(B)$ by Lemma \ref{lem:birational}. They are clearly true if the generic fiber over $\overline{\FF}_q(B)$ is $\PP^n$.
\end{proof}

\begin{rem}
    Certain moduli spaces of stable vector bundles on curves defined over $\FF_q(B)$ become rational/stably rational over $\overline{\FF}_q(B)$ (e.g. if the curve has a rational point over $\overline{\FF}_q(B)$ \cite[Corollary 6.2]{ Hoffmann_rationality}). 
    Unfortunately, it is not known if there exists a regular model for such moduli spaces over $B$ in the general case.
\end{rem}

\section{Questions and expectations}\label{sec:questions}
\subsection{Validity of hypotheses in Theorem \ref{thm:integralTate}} The main questions left open from the current work is the validity of the hypotheses (A)-(D) in Theorem \ref{thm:integralTate}.

A result of Schoen \cite{SchoenIntegralTate} says that if the Tate conjecture is true for divisors on all smooth projective surfaces defined over finite fields, then for any smooth projective variety $V$ defined over a finite field $\FF$, the cycle class map
 \[
 \text{CH}_1(\bar{V})\otimes \ZZ_l \to \cup_{K/\FF} H^{2d-2}(\bar{V}, \ZZ_\ell(d-1))^{\text{Gal}(\bar{\FF}/K)}(\subset H^{2d-2}(\bar{V}, \ZZ_\ell(d-1)))
 \]
 is surjective, where $\bar{V}$ is the base change of $V$ to an algebraic closure $\overline{\FF}$ of $\FF$, and the union is taken over all finite extensions $\FF\subset K \subset \overline{\FF}$.

 If furthermore $V$ is SRC in codimension one, since its rational Chow group of zero-cycles is universally supported in a curve (Corollary \ref{rem:support}), it is easy to see that every class in $H^{2d-2}(\bar{V}, \QQ_\ell(d-1))$ is algebraic. Thus every class in $H^{2d-2}(\bar{V}, \ZZ_\ell(d-1))$ is fixed by some open subgroup of the Galois group.
 So in this case, Schoen's theorem implies that  we always have a surjection
 \[
 \text{CH}_1(\bar{V}) \otimes \ZZ_\ell \to H^{2d-2}(\bar{V}, \ZZ_\ell(d-1)), 
 \]
 provided that the Tate conjecture holds for all surfaces.
 
The author has made conjectures on the Kato homology of rationally connected fibrations over an algebraically closed field of characteristic $0$ in \cite[Conjecture 1.16]{zerocycleLaurent}. A special case of the conjectures in loc. cit. is the following:
\begin{conj}\label{conj:tian}
    Let $X$ be a smooth projective variety defined over an algebraically closed field of characteristic $0$. Assume that $X$ is separably rationally connected in codimension one. Then the cycle class map on higher Chow groups induces an isomorphism
    \[
    \text{CH}_1(X, i, \ZZ/N) \cong H_{2+i}(X, \ZZ/N)
    \]
    for all $i$, where the homology group is the Borel-Moore homology.
\end{conj}
This conjecture implies that for a smooth projective variety that is separably rationally connected in codimension one defined over an algebraically closed field of characteristic $0$, the statements of hypotheses (A), (B), (C), and (D) hold. It is quite reasonable to believe that the same is true in characteristic $p>0$ with $N$ relatively prime to $p$.

Over the complex numbers, Conjecture \ref{conj:tian} can be formulated in terms of Lawson homology with $\ZZ$-coefficients. Using the decomposition of the diagonal argument (See the proof of Proposition 5.2, 5.3, 5.4, 5.5 in \cite{sstK3fold} for examples of the argument in dimension $3$ and $4$), one can show that when $X$ is SRC in codimension one, for each $i$, the kernel and cokernel of s-maps $L_1H_i(X) \to H_i(X)$ are $N$-torsion.
We have a commutative diagram of long exact sequences
\[
\begin{CD}
L_1H_i(X, \ZZ) @>\cdot N>>L_1H_i(X, \ZZ) @>>>L_1H_i(X, \ZZ/N)  @>>>L_1H_{i-1}(X, \ZZ)\\
@VVV @VVV @VVV @VVV\\
H_{i}(X, \ZZ) @>\cdot N>> H_{i}(X, \ZZ) @>>> H_{i}(X, \ZZ/N) @>>>H_{i-1}(X, \ZZ).\\
\end{CD}
\]

By results of Suslin-Voevodsky \cite[Theorem 9.1]{Suslin_Voevodsky_homology} and the Beilinson-Lichtenbaum conjecture proved by Voevodsky (Theorem \ref{bl}),
there is an isomorphism 
\[
L_1H_{2+i}(X, \ZZ/N) \xrightarrow{\cong} \text{CH}_1(X, i, \ZZ/N) \xrightarrow{\cong} \mathbb{H}^i(X, \tau^{\leq \dim X-1 }R\pi_*(\ZZ/N))
\] 
between torsion Lawson homology, Bloch's higher Chow group,  and certain Zariski cohomology group, where $\pi: X_{cl} \to X_{zar}$ is the continuous map from $X(\CC)$ with the analytic topology to $X$ with the Zariski topology (See Corollary \ref{cor:chow_zar} for the last isomorphism).

Then Conjecture \ref{conj:tian} is equivalent to the following.
\begin{conj}\label{conj:tian_lawson}
     Let $X$ be a smooth projective complex variety that is SRC in codimension one. The s-maps $$s:L_1H_i(X, \ZZ) \to H_i(X, \ZZ)$$ are isomorphisms for all $i$.
\end{conj}

\subsection{Injectivity of cycle class map}
Even though the author believes that Question \ref{q:CH0} has a negative answer in general, when we restrict ourselves to separably rationally connected varieties, there might be a chance of having a positive answer.
\begin{ques}\label{ques}
Let $X$ be a smooth projective separably rationally connected variety defined over a henselian local field with finite or separably closed residue field. Is the cycle class map
\[
CH_0(X)\hat{\otimes} \ZZ_\ell \to H^{2d}(X, \ZZ_\ell(d))
\]
injective? Here $\ell$ is a prime number invertible in the residue field.
\end{ques}

By the work of Saito-Sato \cite[Theorem 1.16]{SaitoSato_0_cycle}, if $X$ admits a regular model $\mathfrak{X} \to \SP R$ such that the cental fiber is a simple normal crossing (SNC) divisor, the cycle class map induces an isomorphism 
\[
\text{CH}^d(\mathfrak{X})/\ell^n \xrightarrow{\cong} H^{2d}_\et(\mathfrak{X}, \mu_{\ell^n}),
\]
where $d$ the the dimension of $X$.
We have the following commutative diagram of localization sequences for Chow groups and \'etale cohomology:
\[
\begin{CD}
    \text{CH}^{d-1}(\mathfrak{X}_0)/\ell^n @>>> \text{CH}^d(\mathfrak{X})/\ell^n @>>> \text{CH}^d(X)/\ell^n \to 0\\
    @VVV @VV\cong V @VVV\\
    H^{2d}_{\mathfrak{X}_0}(\mathfrak{X}, \mu_{\ell^n}^{\otimes d})@>>>H^{2d}(\mathfrak{X}, \mu_{\ell^n}^{\otimes d})@>>> H^{2d}(X, \mu_{\ell^n}^{\otimes d}),
\end{CD}
\]
where the left vertical map maybe identified with the cycle class map to the Borel-Moore homology
\[
\text{CH}_1(\mathfrak{X}_0)/\ell^n \to H_2^{\text{BM}}(\mathfrak{X}_0, \mu_{\ell^n}).
\]
By a simple diagram chasing, we would have a positive answer to Question \ref{ques} if this cycle class map were surjective (i.e. the central fiber $\mathfrak{X}_0$ satisfies the integral Tate conjecture for one-cycles).

The author studied this question for rationally connected varieties defined over the Laurent field over $\CC$ in \cite{zerocycleLaurent}. The main technical result in \cite{zerocycleLaurent} is a d\'evissage statement, a special case of which implies that the surjectivity of cycle class maps of one-cycles for SNC degenerations of rationally connected varieties over Laurent fields would follow from the corresponding surjectivity for smooth projective rationally connected varieties. The proof of this d\'evissage only depends on properties of the minimal model program. Since the minimal model program is expected to hold in positive and mixed characteristic as well, we may expect that such a d\'evissage type result is also true in positive and mixed characteristic (but in positive and mixed characteristic, one has to be careful about what should be the right class of varieties, since rational connectedness is not as well-behaved). Then combined with the expectations from previous subsection (and Theorem \ref{thm:integralTate}),  we may expect a positive answer to Question \ref{ques} when $X$ is separably rationally connected.

\bibliographystyle{alpha}
\bibliography{MyBib}

\end{document}